\newcommand{\C}{\mathbb{C}}
\newcommand{\Pic}{\mathrm{Pic}}
\newcommand{\Div}{\mathrm{Div}}
\newcommand{\sm}{\mathrm{sm}}
\newcommand{\Supp}{\mathrm{Supp}}
\newcommand{\Sing}{\mathrm{Sing}}
\newcommand{\red}{\mathrm{red}}
\newcommand{\nc}{\mathrm{nc}}
\newtheorem{thm}{Theorem}[section]
\newtheorem{lema}[thm]{Lemma}
\newtheorem{prop}[thm]{Proposition}
\newtheorem{cor}[thm]{Corollary}
\theoremstyle{definition}
\newtheorem{ej}[thm]{Example}
\newtheorem{defn}[thm]{Definition}
\title{Notas}
\title{The Generalized Torelli Problem through the geometry of the Gauss map}
\author{Sebasti\'an Rahausen Rodr\'iguez}
\address{S. Rahausen \\ Departamento de Matem\'atica, Universidad T\'ecnica Federico Santa Mar\'ia, San Joaqu\'in, Santiago\\Chile}
\email{srahausen@gmail.com}
\keywords{Abel-Jacobi map, Generalized Torelli Problem, linear systems, Gauss map}
\subjclass[2020]{14C34,14C20,14H40, 14H51}
\begin{document}
	
\begin{abstract}
	Given a non-hyperelliptic curve $C\in\mathscr{M}_g$ and $2\leq n\leq g-2$, we prove that the generic fiber of the Gauss map on $W_n$ has one element and we characterize its multiple locus. Assuming that $C$ doesn't have a $\mathfrak{g}_{n+k+1}^{k+1}$, for $1\leq k\leq n-1\leq g-3$, we solve the problem of reconstructing each $\mathfrak{g}_{n+k}^k$ and the dual hypersurface of the image of its associated morphism, through information encoded in the Gauss map. For this purpose we introduce the notion of $\left(n+k\right)$-intersection loci and we study their dimensions. In the hyperelliptic case we prove that the image of the Gauss map is a union of sets whose closures are birational to their complete $\mathfrak{g}_{n+k}^k$'s, for each $1\leq k\leq n\leq g-1$, and that these also contain a copy of the dual hypersurface of the image of its associated morphism. From the case $k=n$ we deduce that the closure of the image of the Gauss map is birational to $\mathbb{P}^n$.
\end{abstract}
\maketitle
\section{Introduction}

Torelli's Theorem, introduced by the Italian mathematician Ruggiero Torelli (1884-1915), is a classic result in algebraic geometry that states that a smooth complex projective curve $C$ (compact Riemann surface) is completely determined by its Jacobian variety $J\left(C\right)$ and its theta divisor $\Theta\simeq W_{g-1}$. More precisely, given two smooth projective curves $C,C'\in\mathscr{M}_g$, then $\left(J\left(C\right),\Theta\right)$ and $\left(J\left (C'\right),\Theta'\right)$ are isomorphic as principally polarized abelian varieties if and only if $C$ and $C'$ are isomorphic as algebraic curves \cite{T}. The same statement is true over any algebraically closed field of characteristic $p\neq2$ \cite{A}. This theorem has had several important generalizations such as \cite{BR, CG, C, D, DO, RH, L, PZ}. In particular the \textit{``Generalized Torelli Problem''} that we will study in this paper. To explain what this problem consists of, we will introduce some definitions. 

Let $C\in\mathscr{M}_g$ be a smooth projective curve over an algebraically closed field $\mathbb{F}$ of characteristic zero. For $n\geq1$ we denote $C^{(n)}$ the \textit{symmetric product of order} $n$, which is the quotient of the cartesian product $C^n$ by the natural action of the symmetric group of degree $n$. The natural projection $C^n\to C^{(n)}$ gives $C^{(n)}$ the structure of an irreducible smooth projective variety of dimension $n$. The elements of $C^{(n)}$ can be considered as effective divisors of degree $n$ on $C$.

Consider the morphism $\rho:C^{(n)}\to\Pic^n\left(C\right)$, $D\mapsto\mathscr{O}_C\left(D\right)$. The image $W_n:=\rho\left(C^{(n)}\right)$ is a projective and irreducible subvariety of $\Pic^n\left(C\right)$ of dimension $n$ for $1\leq n\leq g$. Note that $W_n$ is the subset of $\Pic^n\left(C\right)$ of line bundles with a non-empty linear system, and for each line bundle $L\in\Pic^n\left(C\right)$, the fiber $\rho^{-1}\left(L\right)$ is the complete linear system $|L|$ by \cite[Abel's Theorem]{ACGH}. For a fixed divisor $E\in\Div^n\left(C\right)$ we consider the morphism $\alpha:C^{(n)}\to J\left(C\right)$, $D\mapsto\mathscr{O}_C\left(D-E\right)$ which is called the \textit{Abel-Jacobi map}.

Roughly speaking the \textit{``Generalized Torelli Problem''} consists of recovering information from a curve $C\in\mathscr{M}_g$ and families of linear systems on $C$ through the geometry of the Abel-Jacobi map and the varieties $W_n$. Connections between linear systems of a curve and its variety $W_n$ have been studied previously. We will mention some papers about it. 

First of all, given a projective curve $C\in\mathscr{M}_g$, it is known that its complete linear systems of positive dimension and degree $n\leq g-1$ are related to the singular locus of $W_n$ through the well-known \cite[Riemann-Kempf Singularity Theorem]{K}. This theorem was first proven by Riemann in $1865$ for
the case $n=g-1$, and then generalized by Kempf in $1973$ for $1\leq n\leq g-1$. Se also \cite[Riemann's Singularity Theorem]{GH} for the result over $\C$. Relations between linear systems of a curve $C$ and the smooth locus of $W_n$ have also been studied. For example, if we restrict the Generalized Torelli Problem to the case over $\C$ of complete pencils of linearly equivalent divisors and assuming that $W_n$ is smooth, in \cite{STR1} there is a result under these conditions that we will now state. To do this, we need a definition.

Consider the canonical morphism $\phi:C\to\mathbb{P}^{g-1}$, i.e., the morphism associated to a canonical divisor in $C$. For each divisor $D$ in the symmetric product $C^{(n)}$, we define the \textit{linear span of} $D$, denoted by $\overline{\phi\left(D\right) }$, as the intersection of all the hyperplanes $H$ in $\mathbb{P}^{g-1}$ such that $D\leq\phi^*H$. We define the map $\overline{\phi}:C^{(n)}\dasharrow\mathbb{G}\left(n-1,g-1\right)$, by $D\mapsto \overline{\phi\left(D\right)}$. By \cite[Riemann-Kempf Singularity Theorem]{K} and the Geometric Riemann-Roch Theorem \cite[p. 12]{ACGH} this map is well defined on the open subset of $C^{(n)}$ which consists of the divisors $D\in C^{(n)}$ such that $\ell\left(D\right)=1$. It's a known fact that the projectivized differential $\alpha_*$ of the Abel-Jacobi map is equal to $\overline{\phi}$.

Now we state the main result of \cite{STR1} mentioned above. For the notion of multiple locus of a morphism, see Chapter \ref{lm}. We will write $\mathfrak{g}_d^r$ for a \textit{linear system} of dimension $r$ and degree $d$, i.e., the projectivization of a vector subspace of dimension $r+1$ of $L\left(D\right)$, where $D\in\Div^d\left(C\right)$.

\begin{thm}\label{i1}
	Let $C\in\mathscr{M}_g$ be a smooth projective curve defined over $\C$. Assume that $C$ has a $\mathfrak{g}_{n+1}^1$, but not a $\mathfrak{g}_n^1$ nor a $\mathfrak{g}_{n+2} ^2$, for some $n$ such that $n\leq g-2$. Consider the map $\overline{\phi}:C^{(n)}\to\mathbb{G}\left(n-1,g-1\right)$, $R$ its multiple locus, and $B=\overline{\phi}\left(R\right)$, then the restriction $\overline{\phi}:R \to B$ is a ``universal'' $\mathfrak{g}_{n+1}^1$ for $C$.
\end{thm}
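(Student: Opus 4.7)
The plan is to use the Geometric Riemann--Roch Theorem to translate the equality of spans $\overline{\phi(D)}=\overline{\phi(D')}$ into the existence of a maximal ambient divisor moving in a positive-dimensional linear system, and then to use the numerical hypotheses to force this system to be a $\mathfrak{g}_{n+1}^1$.

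First, since $C$ has no $\mathfrak{g}_n^1$, every $D\in C^{(n)}$ satisfies $\ell(D)=1$, so $\overline{\phi}$ is a morphism on all of $C^{(n)}$ and $\dim\overline{\phi(D)}=n-1$ for every $D$, by the Geometric Riemann--Roch Theorem. Given $D\neq D'$ in a common fiber with span $\Lambda$, I would set $D_\Lambda$ to be the largest effective divisor on $C$ satisfying $D_\Lambda\leq\phi^{*}H$ for every hyperplane $H\supseteq\Lambda$. By construction $D,D'\leq D_\Lambda$ and $\overline{\phi(D_\Lambda)}=\Lambda$, so writing $\deg D_\Lambda=n+k$, the Geometric Riemann--Roch Theorem applied to $D_\Lambda$ yields $\dim|D_\Lambda|=k$.

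The inequality $D\neq D'$ forces $k\geq 1$, while the hypothesis that $C$ has no $\mathfrak{g}_{n+2}^2$ excludes $k\geq 2$ (a $\mathfrak{g}_{n+k}^k$ with $k\geq 2$ produces a $\mathfrak{g}_{n+2}^2$ by imposing $k-2$ general base points and removing them afterward). Hence $k=1$ and $D$ is a degree-$n$ subdivisor of some $E$ in a $\mathfrak{g}_{n+1}^1$ of $C$. Conversely, for such an $E$ and any $p\in\mathrm{supp}(E)$, the no-$\mathfrak{g}_n^1$ hypothesis gives $\ell(E-p)=1$, so $\dim\overline{\phi(E-p)}=n-1$; since $\overline{\phi(E-p)}\subseteq\overline{\phi(E)}$ and both have dimension $n-1$, they coincide. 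Thus the $n+1$ degree-$n$ subdivisors of $E$ all lie in a single fiber of $\overline{\phi}$, and $R$ is exactly this locus of subdivisors.

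Finally, this description realizes $\overline{\phi}\colon R\to B$ as the incidence variety $\{(p,E)\colon p\in\mathrm{supp}(E),\ E\in\mathfrak{g}_{n+1}^1\}$ projecting $(n+1)$-to-$1$ onto $B\cong\mathbb{P}^1$ via $E-p\mapsto\overline{\phi(E)}$, which identifies it with the universal $\mathfrak{g}_{n+1}^1$ for $C$. I expect the main obstacle to be establishing the structural properties of $D_\Lambda$: one must show that it is a well-defined effective divisor of finite degree and that its span is exactly $\Lambda$ (not a strictly smaller linear subspace), which relies on the fact that the hyperplanes through $\Lambda$ cut out $\Lambda$ scheme-theoretically on $\phi(C)$, together with a careful dimension count via Geometric Riemann--Roch.
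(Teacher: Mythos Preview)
The paper does not prove Theorem~\ref{i1}: it is quoted from \cite{STR1} as motivation, and the paper instead proves the generalization Theorem~\ref{tg4}. Comparing your argument to the paper's proof of that generalization (specialized to $k=1$), your approach is essentially the same. Your divisor $D_\Lambda$ is exactly the paper's intersection divisor $(\overline{D}\cdot C)$; your use of Geometric Riemann--Roch to get $\dim|D_\Lambda|=k$ matches Proposition~\ref{lmi2}; and your exclusion of $k\geq 2$ via the non-existence of a $\mathfrak{g}_{n+2}^2$ is the $k=1$ case of Proposition~\ref{tg2}. The paper packages the reconstruction through the bijection $\beta:C_{n+1}^1\to B_{n,1}$ of Proposition~\ref{tg12} and the decomposition of $B_{n,1}$ into rational curves in Proposition~\ref{tg3}, while you go directly to the incidence description, but the content is the same.

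Two points deserve more care. First, you have really characterized $S_n$ (the strict multiple locus), not its closure $R$. Non-reduced divisors $E$ in a $\mathfrak{g}_{n+1}^1$ may have a unique degree-$n$ subdivisor, which then lies in $R\setminus S_n$; the paper spends all of Theorem~\ref{lm1} and its supporting lemmas proving that the closure is exactly $\mathcal{C}_n\cap\mathcal{W}_n$, and this is not automatic. Second, your final identification writes $B\cong\mathbb{P}^1$ and fibers over a single pencil, but $C$ may carry several complete $\mathfrak{g}_{n+1}^1$'s (indeed a positive-dimensional family $W_{n+1}^1(C)$). The ``universal'' in the statement means the family over all of them: $B$ is a disjoint union of $\mathbb{P}^1$'s indexed by $\mathcal{L}_{n+1}^1(C)$, as in Corollary~\ref{tg5}, and $R\to B$ restricts over each component to the incidence correspondence you describe.
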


Note that the hypothesis that the curve $C$ doesn't have a $\mathfrak{g}_n^1$ in this theorem implies that $C$ is non-hyperelliptic and that $\ell\left(D\right)=1$ for all $D\in C^{(n)}$ (see Theorem \ref{gwn1}), so in this case the map $\overline{\phi}$ is defined on all $C^{(n)}$.

The case over $\C$ of complete nets of linearly equivalent divisors has been studied in the paper \cite{STR2}, where the same authors prove the following result. Here we consider the \textit{Gauss map} $\mathcal{G}:W_n\dashrightarrow\mathbb{G}\left(n-1,g-1\right)$ defined as $\mathcal{G}\left(\mathscr{O }_C\left(D\right)\right)=\overline{\phi\left(D\right)}=\overline{\phi}\left(D\right)$ for $\mathscr{O}_C\left(D\right)\in\left(W_n\right)_\sm$.

\begin{thm}\label{i2}
	Let $C\in\mathscr{M}_g$ be a complex non-hyperelliptic curve with $g\geq5$. Assume that $C$ has a $\mathfrak{g}_{n+2}^2$, with $3\leq n\leq g-2$, such that the plane model $\Gamma\subset\mathbb{P} ^2$ determined by the $\mathfrak{g}_{n+2}^2$ is birational to $C$ and is non-singular or only has nodes as singularities. If $\mathcal{G}:W_n\dashrightarrow\mathbb{G}\left(n-1,g-1\right)$ is the Gauss map on $W_n$, $R$ is its multiple locus and $B= \mathcal{G}\left(R\right)$, then the dual curve $\Gamma^*$ and the linear system $\mathfrak{g}_{n+2}^2$ can be reconstructed from the branch locus of the restriction of the map $\mathcal{G}$ to $R$, i.e., from $\mathcal{G}:R\to B$.
\end{thm}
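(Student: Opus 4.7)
The strategy is to identify the multiple locus $R$ with the family of sub-divisors of the divisors in the $\mathfrak{g}_{n+2}^2$, analyze the resulting finite covering $R\to B$, and recover $\Gamma^*$ as its branch locus; from $\Gamma^*$ one then recovers $\Gamma$ and the $\mathfrak{g}_{n+2}^2$ by classical plane-curve biduality.

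The first step is a Geometric Riemann-Roch computation. For a general line $\Lambda\subset\mathbb{P}^2$, write $D_\Lambda\in C^{(n+2)}$ for the divisor on $C$ obtained by pulling back $\Lambda\cdot\Gamma$. Since $D_\Lambda$ lies in the $\mathfrak{g}_{n+2}^2$, one has $\ell(D_\Lambda)\geq 3$ and hence $\dim\overline{\phi(D_\Lambda)}\leq (n+2)-1-2=n-1$. For a decomposition $D_\Lambda=D+E$ with $\deg E=2$, the residual $D$ has degree $n$, and for $\Lambda$ generic $\ell(D)=1$ under the hypotheses, so $\dim\overline{\phi(D)}=n-1$. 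The inclusion $\overline{\phi(D)}\subseteq\overline{\phi(D_\Lambda)}$ together with equality of dimensions forces $\overline{\phi(D)}=\overline{\phi(D_\Lambda)}$. Consequently, all $\binom{n+2}{2}$ degree-$n$ sub-divisors of $D_\Lambda$ share a single image under $\mathcal{G}$; since $C$ is non-hyperelliptic (no two distinct effective divisors of degree $2$ are linearly equivalent), these sub-divisors define $\binom{n+2}{2}$ distinct points of $W_n$. Letting $\Sigma\subset W_n$ be the closure of this locus, I obtain a two-dimensional subvariety on which $\mathcal{G}$ is generically $\binom{n+2}{2}$-to-$1$ onto a surface $\mathcal{G}(\Sigma)\simeq(\mathbb{P}^2)^*$, and I would then argue that $R=\Sigma$ and $B=\mathcal{G}(\Sigma)$ via the characterization of the multiple locus.

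For the branching analysis, suppose $\Lambda$ is tangent to $\Gamma$ at a smooth point $p$ and write $\Lambda\cdot\Gamma=2p+p_1+\cdots+p_n$. The effective degree-$2$ sub-divisors of $2\tilde p+\tilde p_1+\cdots+\tilde p_n$ are $2\tilde p$, the $n$ divisors $\tilde p+\tilde p_i$, and the $\binom{n}{2}$ divisors $\tilde p_i+\tilde p_j$, a total of $\binom{n+2}{2}-n$, so the fiber of $\mathcal{G}|_R$ drops by exactly $n$ above $\overline{\phi(D_\Lambda)}$ and the cover ramifies there. For a line $\Lambda$ through a node $q$ of $\Gamma$, by contrast, the two branches lift to distinct points $\tilde q_1,\tilde q_2\in C$, so $D_\Lambda$ still consists of $n+2$ distinct points and no fiber collapse occurs; the non-hyperelliptic hypothesis rules out any further linear-equivalence coincidences among degree-$2$ divisors. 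Hence the branch locus of $\mathcal{G}|_R\colon R\to B$ is precisely the dual curve $\Gamma^*\subset(\mathbb{P}^2)^*\simeq B$. From $\Gamma^*$ one recovers $\Gamma$ by plane-curve biduality $(\Gamma^*)^*=\Gamma$, and then the $\mathfrak{g}_{n+2}^2$ is the pullback to $C$ of the complete linear system of lines in the $\mathbb{P}^2$ containing $\Gamma$.

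The main obstacle will be the exactness of the identifications $R=\Sigma$ and that the branch locus equals $\Gamma^*$: a priori $R$ might contain extra components, and the branch locus might acquire spurious contributions from bitangents, flex lines, or lines through nodes. Controlling these possibilities requires the full force of the hypotheses $3\leq n\leq g-2$, $C$ non-hyperelliptic, and $\Gamma$ having at worst nodes as singularities; together these should suffice to bound the relevant Brill-Noether loci and ensure that $\Gamma^*$ is recovered as a reduced curve in $B$ with no extraneous components.
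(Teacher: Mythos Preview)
The paper does not prove Theorem~\ref{i2}: it is quoted in the introduction as the main result of \cite{STR2} and serves only as motivation for the paper's own Theorem~\ref{tg4}. There is therefore no proof in this paper to compare your proposal against directly.

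Your outline follows the Smith--Tapia-Recillas strategy and is broadly sound, but the gap you flag at the end is real and is not closed by the hypotheses you list. Nothing in Theorem~\ref{i2} excludes $\mathfrak{g}_{n+1}^1$'s on $C$, and each such pencil contributes to the multiple locus by the same span-equality mechanism; indeed the paper proves (Theorem~\ref{lm1}) that $R_n=\mathcal{C}_n\cap\mathcal{W}_n$, which records all of $C_{n+1}^1$, not just pencils residual to the given net. Pencil components of $R$ are generically $2$-to-$1$ onto curves in $B$ and carry their own branch loci, so the branch locus of $\mathcal{G}|_R$ is not simply $\Gamma^*$. The nodal hypothesis on $\Gamma$ governs the singularities of $\Gamma^*$ but does nothing to separate it from these pencil contributions; one needs an additional invariant --- for instance the dimension of the component of $B$ over which a given branch component lies, which is $2$ for the net and $1$ for an isolated pencil --- to single out $\Gamma^*$. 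Your proposal does not supply this step.

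It is worth noting that the paper's own Theorem~\ref{tg4} sidesteps this difficulty altogether: rather than work with the full multiple locus $R=R_{n,1}$, it passes to the finer $(n+2)$-intersection locus $R_{n,2}$ and replaces the nodal-plane-model hypothesis by the assumption that $C$ has no $\mathfrak{g}_{n+3}^3$. Under that assumption $B_{n,2}$ is the disjoint union of copies $\beta(L)\simeq\mathbb{P}^2$, one for each complete net $L$, and the dual curve appears directly as the closed subset $\beta(L\setminus L_\red)$. No branch-locus analysis is needed and pencil contributions are filtered out automatically, at the cost of a different non-existence assumption on linear series.
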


In this theorem reconstructing means that $B$ contains a copy of the $\mathfrak{g}_{n+2}^2$ and a birational copy of the dual curve $\Gamma^*$. There was an attempt to generalize Theorem \ref{i1} by Faucette in his notes \cite{F} for the case of a complex non-hyperelliptic curve. In these notes Faucette tried to reconstruct all the complete linear systems $\mathfrak{g}_d^r$ of a curve $C\in\mathscr{M}_g$, for integers $r,d$ such that $d\leq g $, through the Gauss map on $W_{d-r}$, under some hypothesis of smoothness of $W_{d-r}$ and non-existence of $\mathfrak{g}_{d+1}^{r+1 }$'s, but his notes have a mistake. He identifies some algebraic curves with certain analytic $r$-folds, which cannot be for dimension reasons if $r\geq2$. On the other hand, the smoothness hypothesis of $W_{d-r}$ implies that the general curve $C\in\mathscr{M}_g$ doesn't have a $\mathfrak{g}_d^r$ if $r\geq3$ by Brill-Noether theory, and in fact it isn't clear if there exists a curve that satisfies these conditions. Moreover, the two cases $r\in\{1,2\}$ have been studied by Smith and Tapia-Recillas in the papers cited above. Now we state the main results of this work.

\subsection*{Non-hyperelliptic case}

Following this line, in Subsection \ref{tg} we will prove a certain generalization of Theorems \ref{i1} and \ref{i2} for a non-hyperelliptic curve defined over an arbitrary algebraically closed field $\mathbb{F}$ of characteristic zero. Assuming that $C$ doesn't have a $\mathfrak{g}_{n+k+1}^{k+1}$, we reconstruct all the complete $\mathfrak{g}_{n+k}^k$'s (where $1\leq k\leq n-1$) and the dual hypersurfaces of the images of its associated morphisms, via information encoded in the geometry of the Gauss map on $W_n$. We will now state the precise result that also appears in Subsection \ref{tg}. In the following $\mathcal{W}_n$ denotes $\rho^{-1}\left(\left(W_n\right)_\sm\right)$. We observe that $\mathcal{W}_n$ is isomorphic to $\left(W_n\right)_\sm$ vía $\rho$, and we will use this identification to define the \textit{Gauss map} on $\mathcal{W}_n$, i.e., $\mathcal{G}\left(D\right)=\overline{\phi\left(D\right)}$ for $D\in\mathcal{W}_n$.

\begin{thm}\label{i}
Let $1\leq k\leq n-1\leq g-3$ and $C\in\mathscr{M}_g$ be a non-hyperelliptic curve. Assume that $C$ has a $\mathfrak{g}_{n+k}^k$, but not a $\mathfrak{g}_{n+k+1}^{k+1}$. Now let $\mathcal{G}:\mathcal{W}_n\to\mathbb{G}\left(n-1,g-1\right)$ be the Gauss map on $\mathcal{W}_n$. Let $L$ be an arbitrary $\mathfrak{g}_{n+k}^k$ in $C$ and consider the morphism $\phi_L:C\to\mathbb{P}^k$ associated to $L$. Then the linear system $L$ and the dual hypersurface $\phi_L\left(C\right)^*$ can be reconstructed via the restriction of the Gauss map to the $(n+k)$-intersection locus $R_{n,k}$, i.e., from $\mathcal{G}:R_{n,k}\to\mathcal{G}\left(R_{n,k}\right)$.
\end{thm}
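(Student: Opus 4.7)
The strategy is to exploit the Geometric Riemann--Roch theorem to translate linear equivalence on $C$ into coincidences of Gauss images in $\mathbb{G}(n-1,g-1)$. If $L$ is any $\mathfrak{g}_{n+k}^k$ and $E\in|L|$, then $\ell(E)=k+1$, so $\overline{\phi(E)}$ has dimension $(n+k-1)-k=n-1$; consequently, for every degree-$n$ subdivisor $D\leq E$ with $D\in\mathcal{W}_n$ one has $\overline{\phi(D)}\subseteq\overline{\phi(E)}$, and equality holds generically because both spans are $(n-1)$-planes. Thus the $\binom{n+k}{n}$ degree-$n$ subdivisors of a general $E\in|L|$ are all collapsed by $\mathcal{G}$ to the single point $\overline{\phi(E)}$. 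The first step is to match this phenomenon with $R_{n,k}$, verifying that for $E\in|L|$ general the fiber of $\mathcal{G}|_{R_{n,k}}$ over $\overline{\phi(E)}$ consists of exactly these $\binom{n+k}{n}$ subdivisors. The hypothesis that $C$ carries no $\mathfrak{g}_{n+k+1}^{k+1}$ is crucial here: any coincidence of Gauss images not coming from subdivisors of a common $E\in|L|$ would enlarge $\ell(E)$ beyond $k+1$ and produce a forbidden linear system.

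The second step extracts the full linear system $L$. From a fiber $F\subset\mathcal{W}_n$ of $\mathcal{G}|_{R_{n,k}}$ over a point of the form $\overline{\phi(E)}$, the divisor $E$ itself is recovered through the identity $\sum_{D\in F}D=\binom{n+k-1}{n-1}E$, since each point of $E$ appears in $\binom{n+k-1}{n-1}$ degree-$n$ subdivisors. Letting $B_L\subset\mathcal{G}(R_{n,k})$ be the image of $|L|$ under $E\mapsto\overline{\phi(E)}$, this map is injective: otherwise the span of $E+E'$ would have dimension $\leq n-1$, yielding via Geometric Riemann--Roch a linear series of degree $\leq n+k+1$ and projective dimension $\geq k+1$, contradicting the hypothesis. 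Hence $B_L$ is an irreducible $k$-dimensional subvariety of $\mathcal{G}(R_{n,k})$ which, together with the reconstruction above, acquires the projective structure of $|L|$; varying $L$ among the finitely many complete $\mathfrak{g}_{n+k}^k$'s on $C$ singles out each $B_L$ as a distinguished component, so every complete $L$ is reconstructed intrinsically.

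The third step recovers $\phi_L(C)^*$ as the branch locus of $\mathcal{G}|_{R_{n,k}}$ over $B_L$. A hyperplane $H\subset\mathbb{P}^k$ lies in $\phi_L(C)^*$ exactly when $H$ is tangent to $\phi_L(C)$, i.e.\ when the associated divisor $E=\phi_L^*H\in|L|$ carries a point of multiplicity $\geq 2$. At such $E$ several of the $\binom{n+k}{n}$ degree-$n$ subdivisors of $E$ collide as divisors on $C$, so the fiber of $\mathcal{G}|_{R_{n,k}}$ drops in cardinality and $\mathcal{G}|_{R_{n,k}}$ ramifies; conversely, a fiber drop forces two subdivisors to coincide and hence forces a multiple point of $E$. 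Under the identification $B_L\simeq|L|\simeq(\mathbb{P}^k)^*$ built in the previous step, the branch locus of $\mathcal{G}|_{R_{n,k}}$ inside $B_L$ therefore coincides with $\phi_L(C)^*$, completing the reconstruction.

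The main obstacle I anticipate is the first step: pinning down the precise structure of $R_{n,k}$ and proving that its generic fiber under $\mathcal{G}$ consists of exactly the expected $\binom{n+k}{n}$ subdivisors, with no extraneous contributions. This demands a careful dimension estimate for the $(n+k)$-intersection loci together with a systematic use of the absence of $\mathfrak{g}_{n+k+1}^{k+1}$'s, so that the clustering produced by complete $\mathfrak{g}_{n+k}^k$'s is captured exactly---and exclusively---by $R_{n,k}$. Once that fiber description is secured, the second and third steps reduce to controlled applications of Geometric Riemann--Roch and a standard ramification analysis.
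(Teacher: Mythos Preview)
Your overall strategy is sound and close in spirit to the paper, but the execution differs and one step is imprecise enough to count as a gap.

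The paper does not analyze the fibers of $\mathcal{G}|_{R_{n,k}}$ by cardinality. Instead it introduces the map $\beta:C_{n+k}^k\to B_{n,k}$, $E\mapsto\overline{\phi(E)}$, and shows directly that under the hypothesis of no $\mathfrak{g}_{n+k+1}^{k+1}$ this is a regular \emph{bijection}: indeed $E=(\overline{E}\cdot C)$ because $\deg(\overline{E}\cdot C)=n+k$ exactly. Your fiber description (the degree-$n$ subdivisors of $E$) and your combinatorial recovery formula $\sum_{D\in F}D=\binom{n+k-1}{n-1}E$ are correct and pleasant, but they are a roundabout way of reconstructing the inverse of $\beta$, which the paper writes down in one line as $W\mapsto (W\cdot C)$.

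The real gap is in your second step. You assert that one can ``single out each $B_L$ as a distinguished component'' by ``varying $L$ among the finitely many complete $\mathfrak{g}_{n+k}^k$'s on $C$.'' Neither claim is justified: the complete $\mathfrak{g}_{n+k}^k$'s on $C$ can move in positive-dimensional families (nothing in the hypotheses forces $W_{n+k}^k(C)$ to be finite), and $\beta(L)$ need not be an irreducible component of $B_{n,k}$. The paper resolves this with a genuinely different idea: it proves that the closed $k$-dimensional subvarieties of $B_{n,k}$ birational to $\mathbb{P}^k$ are \emph{exactly} the $\beta(L)$, by pulling back such a subvariety via $\beta$, composing with $\rho:C^{(n+k)}\to\mathrm{Pic}^{n+k}(C)$, and invoking the fact that any morphism from a rational variety to an abelian variety is constant. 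This rigidity argument is what makes the reconstruction of $L$ intrinsic; your proposal is missing an equivalent characterization.

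For the dual hypersurface, your branch-locus description is essentially equivalent to the paper's: the paper identifies $\phi_L(C)^*$ with $\beta(L\setminus L_{\mathrm{red}})$, the image of the non-reduced divisors in $L$, which is precisely where your fiber cardinality drops. The paper also treats separately the case where $L$ has base points, which you should not overlook.
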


In this theorem we remove the hypothesis of smoothness of $W_n$ in order to ensure the existence of the linear systems in question, using Brill-Noether Theory. In this theorem the notion of the $\left(n+k\right)$-intersection locus plays a central role (see Chapter \ref{lmi} for the definition of this locus). These loci turn out to be proper closed subsets of $\mathcal{W}_n$ (Proposition \ref{lmi2}), which is due to the fact that the generic fiber of the Gauss map has only one element (Theorem \ref{g1}(c)). To understand a motivation for defining this locus see Proposition \ref{g4}. To reconstruct these dual hypersurfaces we use Bertini's Theorem.

\subsection*{Hyperelliptic case}

We study an analogous result for the hyperelliptic case. Consider the set $\mathcal{L}_{n+k}^k\left(C\right):=\{L:L\text{ is a complete }\mathfrak{g}_{n+k}^k \text{ in }C\}$. In the case that $C$ is hyperelliptic the image of the Gauss map $\mathcal{G}\left(\mathcal{W}_n\right)$ also contains information of the linear systems $L\in\mathcal{L} _{n+k}^k\left(C\right)$. In this case we obtained a geometric description for the image $\mathcal{G}\left(\mathcal{W}_n\right)$ that depends on the linear systems in $C$. We also show that $\overline{\mathcal{G}\left(\mathcal{W}_n\right)}$ contains copies of the dual hypersurfaces of the images of the morphisms associated to these linear systems. To state the precise result we need to define $\beta:\bigcup_{L\in\mathcal{L}_{n+k}^k\left(C\right)}L\to\mathbb{G}\left( n-1,g-1\right)$ by $F\mapsto\overline{\phi\left(F\right)}$. We will now state the precise result that also appears in Subsection \ref{tg0}.

\begin{thm}\label{2}
Let $C\in\mathscr{M}_g$ be hyperelliptic, and $1\leq k\leq n\leq g-1$. Consider the Gauss map $\mathcal{G}:\mathcal{W}_n\to\mathbb{G}\left(n-1,g-1\right)$ on $\mathcal{W}_n$. Then $\mathcal{G}\left(\mathcal{W}_n\right)$ is a union of sets whose closures are projective varieties birational to $\mathbb{P}^k$. Moreover, given $L\in\mathcal{L}_{n+k}^k\left(C\right)$, consider the morphism $\phi_L:C\to\mathbb{P}^k$ associated to $L$. Then $\beta\left(L\right)\subset\overline{\mathcal{G}\left(\mathcal{W}_n\right)}$ contains a copy of the dual hypersurface $\phi_L\left(C \right)^*$.
\end{thm}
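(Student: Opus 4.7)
The plan is to leverage the classification of complete linear systems on hyperelliptic curves and identify each span $\overline{\phi(F)}$ with an explicit subspace of the canonical image. Let $\mathfrak{g}_2^1$ denote the hyperelliptic pencil and $\sigma$ the hyperelliptic involution. Applying Clifford's theorem with equality, every $L\in\mathcal{L}_{n+k}^k(C)$ factors uniquely as $L=k\mathfrak{g}_2^1+E$ for some base divisor $E\in C^{(n-k)}$, so $|L|\cong|k\mathfrak{g}_2^1|\cong\mathbb{P}^k$. The morphism $\phi_L$ factors through $\pi:C\to\mathbb{P}^1$ and is $2$-to-$1$ onto a rational normal curve $\Gamma=\phi_L(C)\subset\mathbb{P}^k$ of degree $k$. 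Likewise, the canonical morphism factors as $\phi:C\xrightarrow{\pi}\mathbb{P}^1\hookrightarrow X\subset\mathbb{P}^{g-1}$ with $X$ the rational normal curve of degree $g-1$.

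The next step is to compute $\overline{\phi(F)}$ explicitly. For generic $F=F_0+E\in|L|$ with $F_0=\sum_{i=1}^k(p_i+\sigma p_i)$, translating the condition $\phi^*H\geq F$ into a condition on $H\cap X$ yields $\overline{\phi(F)}=\langle\phi(E),Q_1,\ldots,Q_k\rangle$, where $Q_i=\phi(p_i)\in X$; this subspace has dimension $n-1$ by Geometric Riemann--Roch since $\ell(F)=k+1$. (On the degeneration locus where $F_0$ has repeated pairs, the $Q_i$'s are replaced by tangent lines and higher osculating subspaces of $X$.) This formula drives both main assertions. First, from a generic $V=\beta(F)$ one recovers $\{Q_1,\ldots,Q_k\}$ as $(V\cap X)\setminus\phi(E)$, so $\beta$ is generically injective and $\overline{\beta(L)}$ is birational to $|L|\cong\mathbb{P}^k$. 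Second, the sub-divisor $D=\sum_i p_i+E\leq F$ of degree $n$ contains no hyperelliptic pair for generic $F$ (giving $\ell(D)=1$ and $D\in\mathcal{W}_n$), and since $\overline{\phi(D)}\subseteq\overline{\phi(F)}$ and both spaces have dimension $n-1$, they coincide; hence $\mathcal{G}(D)=\beta(F)$ and $\beta(L)\subset\overline{\mathcal{G}(\mathcal{W}_n)}$.

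For the union decomposition, conversely, any $D\in\mathcal{W}_n$ contains no hyperelliptic pair (since any such pair would force $\ell(D)\geq 2$), so picking any $k$ points of the support of $D$ and adjoining their $\sigma$-conjugates yields a divisor $F$ with exactly $k$ hyperelliptic pairs, hence lying in some complete $L\in\mathcal{L}_{n+k}^k(C)$ with $\mathcal{G}(D)=\beta(F)\in\beta(L)$. This produces $\mathcal{G}(\mathcal{W}_n)=\bigcup_L(\beta(L)\cap\mathcal{G}(\mathcal{W}_n))$, a union of dense subsets of the varieties $\overline{\beta(L)}$, each birational to $\mathbb{P}^k$.

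For the dual hypersurface claim, the pullback $H\mapsto\phi_L^*H+E$ gives an isomorphism $\check{\mathbb{P}}^k\cong|L|$ (since $\Gamma$ is nondegenerate in $\mathbb{P}^k$), and tangent hyperplanes $H\in\Gamma^*\subset\check{\mathbb{P}}^k$ correspond exactly to those divisors $F(H)\in|L|$ whose $F_0$-part contains a doubled hyperelliptic pair. Composing with $\beta$ yields a morphism $\Gamma^*\to\beta(L)$ which is generically injective (distinct tangent hyperplanes give distinct pullbacks and hence distinct spans via the generic injectivity of $\beta$), producing the required copy of $\Gamma^*$ inside $\beta(L)\subset\overline{\mathcal{G}(\mathcal{W}_n)}$. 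The main obstacle I anticipate is the careful treatment of the degeneration locus in $|L|$: the explicit formula for $\overline{\phi(F)}$ must be validated when $F_0$ has repeated pairs (where the span involves osculating subspaces of $X$ rather than points), and the generic injectivity of $\beta$ must be shown to persist on a dense open subset of $\Gamma^*$; both should follow from standard osculation dimension counts for rational normal curves.
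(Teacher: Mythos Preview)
Your proof is correct and complete enough for the stated theorem, but it follows a different route from the paper's argument. You work explicitly with the factorization of the canonical map through the rational normal curve $X\subset\mathbb{P}^{g-1}$, computing $\overline{\phi(F)}$ as a concrete secant space $\langle\phi(E),Q_1,\ldots,Q_k\rangle$ and recovering the $Q_i$ from $V\cap X$ to obtain generic injectivity of $\beta|_L$. The paper instead argues more algebraically: it proves that $\beta|_L:L\to\beta(L)$ is an honest \emph{bijection} (not merely generically injective) by a short combinatorial argument comparing the multisets $\{\phi(p_i)\}$ and $\{\phi(q_i)\}$ directly from the equality of spans, without ever invoking the rational normal curve structure or worrying about osculating subspaces. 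This sidesteps entirely the degeneration analysis you flag as the main obstacle.

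Your geometric approach has the advantage of being very concrete and making the birationality transparent (the inverse map is explicit), and it gives extra insight into \emph{why} the spans behave well. The paper's approach buys simplicity: full bijectivity of $\beta|_L$ drops out in two lines, and the dual-hypersurface copy is then obtained as a regular bijection $\phi_L(C)^*\to\beta(L\setminus L_\red)$ rather than just a generically injective morphism, with no residual concern about the locus where $F_0$ has repeated pairs. Both routes prove the theorem; the paper's is shorter and avoids the technical caveat you anticipated.
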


To prove this theorem we use a characterization of the complete and special linear systems on hyperelliptic curves found in \cite[I. D-9]{ACGH}. Applying this theorem in the particular case $k=n$, we obtain that $\overline{\mathcal{G}\left(\mathcal{W}_n\right)}$ is a rational variety of dimension $n$.

\begin{cor}
	Let $C\in\mathscr{M}_g$ be a hyperelliptic curve, and $1\leq n\leq g-1$. Let $L$ be the unique complete $\mathfrak{g}_{2n}^n$ in $C$. Then $\overline{\mathcal{G}\left(\mathcal{W}_n\right)}=\beta\left(L\right)$ is birational to $\mathbb{P}^n$.
\end{cor}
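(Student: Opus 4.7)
The plan is to derive this corollary directly from Theorem \ref{2} specialized to $k=n$. The first step is to identify the unique complete $\mathfrak{g}_{2n}^n$: writing the hyperelliptic pencil as $\mathfrak{g}_2^1=|g_0|$, where $g_0$ is any fiber of the double cover $\pi\colon C\to\mathbb{P}^1$, the classification of complete special linear systems on hyperelliptic curves (\cite[I.\,D-9]{ACGH}) forces $L=|ng_0|$ to be the only element of $\mathcal{L}_{2n}^n(C)$ for $1\leq n\leq g-1$, since any such system has the form $r\mathfrak{g}_2^1+F$ with $F\geq 0$ fixed, and matching degree $2n$ with dimension $n$ yields $r=n$, $F=0$.

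Theorem \ref{2} then already supplies the inclusion $\beta(L)\subseteq\overline{\mathcal{G}(\mathcal{W}_n)}$ together with the fact that $\overline{\beta(L)}$ is a projective variety birational to $\mathbb{P}^n$. The heart of the argument is therefore the reverse inclusion $\mathcal{G}(\mathcal{W}_n)\subseteq\beta(L)$. The key geometric input is the classical factorization of the canonical morphism on a hyperelliptic curve,
\[
\phi=\nu_{g-1}\circ\pi\colon C\xrightarrow{\pi}\mathbb{P}^1\xrightarrow{\nu_{g-1}}\mathbb{P}^{g-1},
\]
with $\nu_{g-1}$ the $(g-1)$-Veronese embedding. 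This yields $\phi\circ\sigma=\phi$, where $\sigma$ denotes the hyperelliptic involution, and consequently every hyperplane pullback $\phi^{*}H$ is $\sigma$-stable as an effective divisor on $C$.

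From this I would argue that for any $D=\sum p_i\in\mathcal{W}_n$ and any hyperplane $H\subset\mathbb{P}^{g-1}$, the $\sigma$-invariance of $\phi^{*}H$ forces $p_i$ and $\sigma(p_i)$ to appear with the same multiplicity, so the condition $D\leq\phi^{*}H$ is equivalent to $D+\sigma(D)\leq\phi^{*}H$. Intersecting over all such $H$ gives $\overline{\phi(D)}=\overline{\phi(D+\sigma(D))}$. Since $D+\sigma(D)=\pi^{*}\pi_{*}D$ is linearly equivalent to $ng_0$, it lies in $L$, and hence
\[
\mathcal{G}(D)=\overline{\phi(D)}=\overline{\phi(D+\sigma(D))}=\beta(D+\sigma(D))\in\beta(L).
\]
Passing to closures and combining with Theorem \ref{2} yields $\overline{\mathcal{G}(\mathcal{W}_n)}=\overline{\beta(L)}$, which is birational to $\mathbb{P}^n$.

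The main delicacy I foresee is the multiplicity comparison $D\leq\phi^{*}H\iff D+\sigma(D)\leq\phi^{*}H$ at divisors $D$ involving Weierstrass points (where $p_i=\sigma(p_i)$) or repeated points, since the definition of $\overline{\phi(D)}$ weighs these via inequalities of effective divisors. The $\sigma$-stability of $\phi^{*}H$ handles all cases uniformly, but it deserves careful bookkeeping at the ramification locus of $\pi$, where a single point already equals its $\sigma$-image and so contributes twice to $D+\sigma(D)$.
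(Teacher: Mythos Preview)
Your argument is correct and matches the paper's approach: specialize Theorem~\ref{2} (equivalently Theorem~\ref{tg8}) to $k=n$, where $\mathcal{L}_{2n}^n(C)$ reduces to the single element $L=n\mathfrak{g}_2^1$, so that $\overline{\mathcal{G}(\mathcal{W}_n)}=\overline{\beta(L_{\nc})}=\beta(L)$. Your direct verification of $\mathcal{G}(\mathcal{W}_n)\subseteq\beta(L)$ via the factorization $\phi=\nu_{g-1}\circ\pi$ is exactly the $k=n$ instance of the argument in Proposition~\ref{tg7}, and your caution about Weierstrass points is resolved by noting that $D\in\mathcal{W}_n$ forces multiplicity at most one at any such point $p$ (otherwise $2p\leq D$ gives $\ell(D)\geq 2$), while $\phi^*H=\pi^*(\nu_{g-1}^*H)$ always has even multiplicity at ramification points of $\pi$.
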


Now we present a brief summary of the main results of each section: The second section is a short compendium of results on the geometry of $W_n$ (Theorem \ref{gwn1}). In the third section we study the cardinality of the fibers of the Gauss map. In particular, for a non-hyperelliptic curve $C\in\mathscr{M}_g$ and $2\leq n\leq g-2$, we prove that the generic fiber of the Gauss map on $W_n$ has one element (Theorem \ref{g1}). In the fourth section we characterize the multiple locus of the Gauss map (Theorem \ref{lm1}). In the fifth section we introduce the notion of larger intersection loci and we find some bounds for its dimensions (Proposition \ref{lmi4}). In the sixth and final section we use these loci to study the Generalized Torelli Problem in the non-hyperelliptic case, i.e., we prove Theorem \ref{i}. Then we study an analogous result for the hyperelliptic case (Theorem \ref{2}).

\section{Geometry of $W_n$}

This short section is a compendium of results on the geometry of $W_n$, which is summarized in the following theorem. We will use the notation $W_{-1}:=\varnothing$ and $W_0:=\{*\}$ a singleton.

\begin{thm}\label{gwn1}

	Let $1\leq n\leq g-1$. We have the following:
	\begin{itemize}
		\item[(a)] For each $C\in\mathscr{M}_g$ we have that $W_n$ is smooth if and only if $C$ doesn't have a $\mathfrak{g}_n^1$.
		\item[(b)] If $C\in\mathscr{M}_g$ is hyperelliptic, then $\Sing\left(W_n\right)$ is birational to $W_{n-2}$. In particular $\Sing\left(W_n\right)$ is irreducible and
		$$
		\dim\left(\Sing\left(W_n\right)\right)=n-2.
		$$
		\item[(c)] If $n\geq2$ and if $C\in\mathscr{M}_g$ is such that $W_n$ is smooth, then $C$ is non-hyperelliptic.
		\item[(d)] If $n\geq\frac{g}{2}+1$, then for each $C\in\mathscr{M}_g$ we have that $W_n$ is singular.
		\item[(e)] If $n<\frac{g}{2}+1$, then for general $C\in\mathscr{M}_g$ we have that $W_n$ is smooth.
	\end{itemize}
\end{thm}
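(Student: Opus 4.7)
The plan is to dispatch the five parts by combining the Riemann-Kempf Singularity Theorem with Brill-Noether theory and the classification of hyperelliptic linear systems.

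For part (a) I would invoke the Riemann-Kempf Singularity Theorem, which identifies $(W_n)_\sm$ with the set of $L\in W_n$ satisfying $h^0(L)=1$. Thus $W_n$ is smooth iff every $L\in W_n$ has $h^0(L)=1$, equivalently iff $C$ supports no $\mathfrak{g}_n^1$. Part (c) will follow at once from (a) and (b): if $C$ is hyperelliptic and $n\geq 2$, then (b) makes $\Sing(W_n)$ birational to the non-empty variety $W_{n-2}$, so $W_n$ cannot be smooth.

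Part (b) is the core of the argument. Let $\eta$ be the hyperelliptic line bundle on $C$ (so $\deg\eta=2$ and $h^0(\eta)=2$) and consider the morphism $\mu:W_{n-2}\to W_n$, $M\mapsto M\otimes\eta$; being the restriction of a translation in $\Pic(C)$, it is a closed embedding of irreducible varieties. For each $M\in W_{n-2}$ the pencil $|\eta|$ embeds in $|M\otimes\eta|$, so $h^0(M\otimes\eta)\geq 2$ and by (a) the image of $\mu$ lies in $\Sing(W_n)$. Conversely, \cite[I. D-9]{ACGH} classifies the complete special linear systems of positive dimension on a hyperelliptic curve as $|r\eta+B|$ with $B\geq 0$ effective and $r\geq 1$; thus every $L\in\Sing(W_n)$ is of the form $\eta\otimes\mathcal{O}_C((r-1)\eta+B)$ with $(r-1)\eta+B\in W_{n-2}$, i.e.\ lies in the image of $\mu$. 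Hence $\mu:W_{n-2}\to\Sing(W_n)$ is a bijective morphism between irreducible projective varieties of the same dimension, which delivers both the birational equivalence and the dimension formula $\dim\Sing(W_n)=n-2$.

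Both (d) and (e) reduce to Brill-Noether theory via the Brill-Noether number $\rho(g,1,n)=g-2(g-n+1)=2n-g-2$. For (d) I would apply the existence half of Brill-Noether to conclude $W_n^1(C)\neq\varnothing$ whenever $\rho\geq 0$, i.e.\ when $n\geq g/2+1$; hence $C$ has a $\mathfrak{g}_n^1$ and (a) produces a singularity. For (e) I would use the non-existence half for the general curve: when $\rho<0$ (equivalently $n<g/2+1$), a general $C\in\mathscr{M}_g$ carries no $\mathfrak{g}_n^1$, so (a) yields smoothness of $W_n$. The main obstacle is part (b), where the classification of hyperelliptic linear systems must be used to promote the inequality $h^0(L)\geq 2$ to the explicit factorization $L\simeq\eta\otimes L'$ with $L'\in W_{n-2}$; once this is secured, the remainder of the proof is a matter of quoting Riemann-Kempf and Brill-Noether.
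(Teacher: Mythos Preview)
Your proposal is correct and follows essentially the same route as the paper: part (a) via Riemann--Kempf, part (b) via tensoring with the hyperelliptic bundle and invoking \cite[I.~D-9]{ACGH} for surjectivity, part (c) as a consequence of (b), and parts (d)--(e) via the sign of $\rho(g,1,n)$ and \cite[V.~(1.1),(1.5)]{ACGH}.

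The one place where your argument is actually sharper than the paper's is part (b). The paper proves that the translation map $\Phi:W_{n-2}\to\Sing(W_n)$ is bijective and then quotes \cite[6, Lemma 2.4]{SH} and \cite[Proposition B.1]{FS} to deduce birationality. You observe directly that $\mu$ is the restriction of a translation automorphism of $\Pic(C)$, hence a closed embedding; combined with surjectivity this gives an \emph{isomorphism} $W_{n-2}\cong\Sing(W_n)$, which is stronger than the stated birationality and obtained with less machinery. Your final sentence in the (b) paragraph slightly undersells this: once you know $\mu$ is a closed embedding onto $\Sing(W_n)$, you do not need to appeal to ``bijective morphism between varieties of the same dimension''---the isomorphism (and hence irreducibility and the dimension count) is immediate.
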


In what follows of this section we will prove Theorem \ref{gwn1}.

\begin{proof}[Proof of (a)]
	Observe that $\Sing\left(W_n\right)\neq\varnothing$ if and only if there exists $\mathscr{O}_C\left(D\right)\in\Sing\left(W_n\right)$, if and only if $\ell\left(D\right)\geq2$ (by the \cite[Riemann-Kempf Singularity Theorem]{K}), if and only if $C$ has a $\mathfrak{g}_n^1$. This proves item (a).
\end{proof}

\begin{proof}[Proof of (b)]
In this item we generalize the argument of \cite[Proposition 11.2.8.]{BL} for $1\leq n\leq g-1$.

If $n=1$, then $W_1\simeq C$ is smooth, so
	$$
	\Sing\left(W_1\right)=\varnothing=W_{-1},
	$$
	so that
	$$
	\dim\left(\Sing\left(W_1\right)\right)=-1=1-2.
	$$
	
	This proves the case $n=1$.
	
	Let's now assume that $n=2$. Let $p\in C$. We claim that
	$$
	\Sing\left(W_2\right)=\{\mathscr{O}_C\left(p+\iota\left(p\right)\right)\},
	$$
	where $\iota$ is the hyperelliptic involution.
	
	First, it is clear that $\mathscr{O}_C\left(p+\iota\left(p\right)\right)\in\Sing\left(W_2\right)$, since $\ell\left(p+\iota\left(p\right)\right)=2$.
	
	Conversely, let $\mathscr{O}_C\left(p_1+p_2\right)\in\Sing\left(W_n\right)$, i.e., $\ell\left(p_1+p_2\right)\geq2$ by \cite[Riemann's Singularity Theorem]{GH}. We must have that $\ell\left(p_1+p_2\right)=2$, because if $\ell\left(p_1+p_2\right)\geq3$, we would have $\ell\left(p_1\right) \geq2$, which would imply $C\simeq\mathbb{P}^1$, which is a contradiction. Thus, we see that $|p_1+p_2|$ is the unique $\mathfrak{g}_2^1$ in $C$, since $C$ is hyperelliptic. Then $p_1+p_2\sim p+\iota\left(p\right)$, which implies $\mathscr{O}_C\left(p_1+p_2\right)\simeq\mathscr{O}_C\left( p+\iota\left(p\right)\right)$. Therefore
	$$
	\Sing\left(W_2\right)=\{\mathscr{O}_C\left(p+\iota\left(p\right)\right)\},
	$$
	so
	$$
	\dim\left(\Sing\left(W_2\right)\right)=0=2-2,
	$$
	which proves the case $n=2$.
	
	Now, let's assume that $n\geq3$. Let $p\in C$ and consider the morphism
	\begin{equation*}
		\begin{split}
			\Phi:W_{n-2}\to\Sing\left(W_{n-2}\right), \
			\mathscr{O}_C\left(D\right)\mapsto\mathscr{O}_C\left(D\right)\otimes\mathscr{O}_C\left(p+\iota\left(p\right)\right).
		\end{split}
	\end{equation*}
	
	We will prove that $\Phi$ is birational.

	Since $\Phi$ is proper, it is enough to prove that $\Phi$ is bijective and use \cite[6, Lemma 2.4]{SH}, \cite[Proposition B.1]{FS}. First we will prove that $\Phi$ is surjective. Let $\mathscr{O}_C\left(D\right)\in\Sing\left(W_{n-2}\right)$, i.e., $\ell\left(D\right)\geq2$ by \cite[Riemann's Singularity Theorem]{GH}. Then $|D|$ is a complete $\mathfrak{g}_n^{\ell\left(D\right)-1}$. Moreover, since $D$ is a special divisor, by \cite[I. D-9]{ACGH} the divisor $D$ must be of the form
	$$
	D=q+\iota\left(q\right)+p_1+\dots+p_{n-2},
	$$
	for some $q,p_1,\dots,p_{n-2}\in C$. This implies that
	\begin{equation*}
		\begin{split}
			\mathscr{O}_C\left(D\right)&=\mathscr{O}_C\left(q+\iota\left(q\right)+p_1+\dots+p_{n-2}\right)\\
			&\simeq\mathscr{O}_C\left(p_1+\dots+p_{n-2}\right)\otimes\mathscr{O}_C\left(q+\iota\left(q\right)\right)\\
			&=\Phi\left(\mathscr{O}_C\left(p_1+\dots+p_{n-2}\right)\right)\in\Phi\left(W_{n-2}\right),
		\end{split}
	\end{equation*}
	which proves that $\Phi$ is surjective.

	Now we will prove that $\Phi$ is injective. Let's assume that
	$$
	\Phi\left(\mathscr{O}_C\left(D\right)\right)\simeq\Phi\left(\mathscr{O}_C\left(E\right)\right),
	$$
	i.e.,
	$$
	\mathscr{O}_C\left(D\right)\otimes\mathscr{O}_C\left(p+\iota\left(p\right)\right)\simeq\mathscr{O}_C\left(E\right)\otimes\mathscr{O}_C\left(p+\iota\left(p\right)\right),
	$$
	which implies
	$$
	\mathscr{O}_C\left(D\right)\simeq\mathscr{O}_C\left(E\right),
	$$
	so $\Phi$ is injective.

	This shows that $\Phi$ is bijective and therefore birational. Then, in particular $\Sing\left(W_n\right)$ is irreducible and
	$$
	\dim\left(\Sing\left(W_n\right)\right)=\dim\left(W_{n-2}\right)=n-2.
	$$
	
	This proves item (b).
\end{proof}

\begin{proof}[Proof of (c)]
	It's clear by item (b).	
\end{proof}

\begin{proof}[Proof of (d) and (e)]
Recall the definition of the \textit{Brill-Noether Number} 
$$
\rho\left(g,r,d\right):=g-\left(r+1\right)\left(g-d+r\right).
$$ 

Items (d) and (e) are obtained directly from item (b) and from Brill-Noether Theory found in \cite[V. (1.1)]{ACGH}, from where it is obtained that
	\begin{itemize}
		\item if $\rho\left(g,1,n\right)\geq0$, then each $C\in\mathscr{M}_g$ has a $\mathfrak{g}_n^1$;
		\item if $\rho\left(g,1,n\right)<0$, then the general curve $C\in\mathscr{M}_g$ doesn't have a $\mathfrak{g}_n^1$.
		
	\end{itemize}
	Then, the results are obtained by the fact
	
	$$
	\rho\left(g,1,n\right)\geq0\Leftrightarrow n\geq\tfrac{g}{2}+1.
	$$
	
	This proves items (d) and (e).
\end{proof}

\section{Cardinality of the fibers of the Gauss map}

This section is divided into two subsections. In the first we determine the cardinality of the generic fiber of the Gauss map for an arbitrary curve $C\in\mathscr{M}_g$ and for each $1\leq n\leq g-1$. We remind the reader the well-known cases: the hyperelliptic case with $1\leq n\leq g-1$, and the non-hypereliptic case with $n\in\{1,g-1\}$. In the non-hyperelliptic case with $2\leq n\leq g-2$ we prove that the generic fiber of the Gauss map has only one element. In the second subsection we study conditions under which the fibers of the Gauss map have lower cardinality than the generic fiber in the hyperelliptic cases with $1\leq n\leq g-1$, and non-hyperelliptic with $n=g-1$. We develop an example of the non-hyperelliptic case with $g=4$ and $n=3$. 

\subsection{Cardinality of the generic fiber}\label{ck}

In this subsection we will determine the cardinality of the generic fiber of the Gauss map for an arbitrary curve $C\in\mathscr{M}_g$ and $1\leq n\leq g-1$, which is summarized in the following result. We will write $\mathcal{G}_n$ when we want to emphasize that the Gauss map is defined on $\mathcal{W}_n$. When this is understood we will simply write down $\mathcal{G}$.

\begin{thm}\label{g1}
	Let $C\in\mathscr{M}_g$ be a curve and $1\leq n\leq g-1$. We have the following:
	\begin{itemize}
		\item[(a)] If $C$ is hyperelliptic, then the generic fiber of $\mathcal{G}_n$ has $2^n$ elements.
		\item[(b)] If $C$ is non-hyperelliptic, then the generic fiber of $\mathcal{G}_{g-1}$ has $\binom{2g-2}{g-1}$ elements .
		\item[(c)] If $C$ is non-hyperelliptic and $1\leq n\leq g-2$, then the generic fiber of $\mathcal{G}_n$ has one element.
	\end{itemize}
\end{thm}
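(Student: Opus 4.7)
The plan is to handle the three cases separately, in each exploiting the identity $\mathcal{G}(D)=\overline{\phi(D)}$ together with Geometric Riemann--Roch; the common shape is that $\mathcal{G}(E)=\Lambda$ forces $\phi(E)\subset\Lambda$, so $E$ is an effective degree-$n$ sub-divisor of the scheme-theoretic intersection $\phi^{-1}(\Lambda\cap\phi(C))$, and the problem reduces to describing this intersection for generic $\Lambda$ and counting its degree-$n$ sub-divisors modulo linear equivalence. In case (a), since $C$ is hyperelliptic, the canonical morphism factors as $C\xrightarrow{\pi}\mathbb{P}^{1}\xrightarrow{v}\phi(C)$ with $\pi$ the $2{:}1$ hyperelliptic cover and $v$ the $(g-1)$-uple Veronese, so $\phi(p)=\phi(\iota p)$ and $\phi(C)$ is a rational normal curve meeting any $(n-1)$-plane in exactly $n$ points; a generic $D=p_{1}+\cdots+p_{n}$ therefore yields $2^{n}$ divisors $D_{\epsilon}:=\iota^{\epsilon_{1}}(p_{1})+\cdots+\iota^{\epsilon_{n}}(p_{n})$ with the same $\mathcal{G}$-image, and Theorem \ref{gwn1}(b) together with the fact that each $\ell(D_{\epsilon})=1$ generically shows they represent $2^{n}$ pairwise distinct elements of $\mathcal{W}_{n}$, which exhaust the fiber.

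Case (b) is similar in spirit: $\mathcal{G}(D)$ is now a hyperplane, and for generic $H\in(\mathbb{P}^{g-1})^{*}$ Bertini applied to the non-hyperelliptic canonical embedding gives $\phi^{*}H=q_{1}+\cdots+q_{2g-2}$ with distinct points in uniform general position. Every $E\in\mathcal{G}^{-1}(H)$ is a degree-$(g-1)$ sub-divisor of $\phi^{*}H$, accounting for the $\binom{2g-2}{g-1}$ candidates; general position ensures pairwise non-equivalence and $\ell(E)=1$, so that each $\mathscr{O}_{C}(E)$ lies in $(W_{g-1})_{\sm}$ by the Riemann--Kempf Singularity Theorem, giving exactly $\binom{2g-2}{g-1}$ fiber elements.

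The main work is case (c). Given a generic $D\in\mathcal{W}_{n}$ and $\Lambda:=\mathcal{G}(D)$, I would show that $\phi^{-1}(\Lambda\cap\phi(C))$ equals $D$ as a divisor, which would force $E=D$ for every $E\in\mathcal{G}^{-1}(\Lambda)$. Otherwise there exists an effective $D'>0$ of some degree $m\geq 1$ with $\dim\overline{\phi(D+D')}\leq n-1$, and Geometric Riemann--Roch forces $\ell(D+D')\geq m+1$, i.e.\ the class of $D+D'$ lies in $W_{n+m}^{m}$. For non-hyperelliptic $C$, the strict form of Martens' theorem yields $\dim W_{n+m}^{m}\leq n-m-1$ in the relevant range, so the preimage of $W_{n+m}^{m}$ in $C^{(n+m)}$ has dimension at most $n-1$; projecting via $(D,D')\mapsto D$ onto $C^{(n)}$ still keeps the dimension below $n$, so summing over the finitely many admissible $m$ exhibits a proper closed subset outside of which the fiber is a single point. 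The hard part here is the boundary regime: Martens' strict bound requires $n+m\leq g-1$, so the cases where $n+m$ meets or exceeds $g-1$ must be handled by passing to the Serre-dual locus $W^{\,g-n-1}_{2g-2-n-m}$ via Riemann--Roch, and one must also verify that $\phi^{-1}(\Lambda\cap\phi(C))$ is generically reduced so that no embedded point supplies an unexpected extra sub-divisor.
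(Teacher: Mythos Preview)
Your approach is correct overall, but for part (c) it differs from the paper's and you are making it harder than necessary.

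For (a) and (b) the paper simply declares these cases to be well known and gives no argument; your sketches are reasonable fill-ins. For (c), the paper does not use Martens' theorem at all. Instead it applies the General Position Theorem to a generic canonical divisor $K=p_1+\cdots+p_{2g-2}$ to produce, explicitly, a divisor $E=p_1+\cdots+p_n\in\mathcal{W}_n$ with $|K-E|$ base-point-free; a short Riemann--Roch computation then shows that $(\overline{E}\cdot C)=E$, so $\mathcal{G}^{-1}(\overline{E})=\{E\}$. In other words, the paper exhibits one good point, whereas you bound the dimension of the bad locus. Both strategies work, and yours is arguably more conceptual, but you should simplify it as follows.

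The stratification over all $m\geq1$ is superfluous, and this is why you are running into phantom boundary problems. The condition $(\overline{D}\cdot C)\neq D$ as divisors is equivalent to the existence of a \emph{single} point $q\in C$ with $\overline{D+q}=\overline{D}$, i.e.\ $\ell(D+q)=2$; so the entire bad locus is already the $m=1$ stratum $\mathcal{C}_n:=p\bigl(s^{-1}(C_{n+1}^1)\bigr)$. Since $n\leq g-2$ gives $n+1\leq g-1$, Martens' strict inequality for non-hyperelliptic curves applies directly and yields $\dim W_{n+1}^1\leq n-2$, hence $\dim C_{n+1}^1\leq n-1$, hence $\dim\mathcal{C}_n\leq n-1<n$. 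No Serre-dual passage is needed. Your reducedness worry is likewise a non-issue: the intersection divisor $(\overline{D}\cdot C)$ is by definition the $\gcd$ of the hyperplane sections through $\overline{D}$, a well-defined effective divisor; if its degree is $n$ it equals $D$ on the nose, and every $E\in\mathcal{G}^{-1}(\overline{D})$ satisfies $E\leq(\overline{D}\cdot C)=D$, forcing $E=D$.
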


In what follows our goal will be to prove Theorem \ref{g1}. We will begin by proving that the Gauss map doesn't have fibers of positive dimension.

\begin{prop}
	For $1\leq n\leq g-1$ each fiber of the Gauss map $\mathcal{G}$ is a finite set.
\end{prop}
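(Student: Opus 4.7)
The plan is to show, for each $\Lambda\in\mathbb{G}(n-1,g-1)$, that every divisor in the fiber $\mathcal{G}^{-1}(\Lambda)$ is supported on a common finite set of points of $C$, and conclude that there can only be finitely many such effective divisors of degree $n$.

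First I would recall that $D\in\mathcal{W}_n$ forces $\ell(D)=1$ by the Riemann-Kempf theorem, so that $\mathcal{G}(D)=\overline{\phi(D)}$ is well defined as the intersection of all hyperplanes $H\subset\mathbb{P}^{g-1}$ with $D\leq\phi^*H$. Given any $D'\in\mathcal{G}^{-1}(\Lambda)$ and any $p\in\Supp(D')$, the relation $D'\leq\phi^*H$ holds for every hyperplane $H\supset\Lambda$, and therefore $\phi(p)\in H$ for each such $H$. Taking the intersection over these hyperplanes gives
\[
\phi(p)\in\bigcap_{H\supset\Lambda}H=\Lambda,
\]
so $\Supp(D')\subset\phi^{-1}(\Lambda\cap\phi(C))$, a set that does not depend on $D'$.

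Next I would show this common support is finite. The image $\phi(C)$ is an irreducible non-degenerate curve of $\mathbb{P}^{g-1}$: the canonical image if $C$ is non-hyperelliptic, and the rational normal curve of degree $g-1$ if $C$ is hyperelliptic. Since $1\leq n\leq g-1$, the subspace $\Lambda$ has dimension $n-1\leq g-2$, hence it is a proper linear subspace of $\mathbb{P}^{g-1}$, and non-degeneracy of $\phi(C)$ forces $\phi(C)\not\subset\Lambda$. Therefore $\Lambda\cap\phi(C)$ is a proper closed subset of the irreducible curve $\phi(C)$, i.e.\ a finite set of points. The canonical morphism $\phi$ is finite (of degree $1$ in the non-hyperelliptic case and of degree $2$ in the hyperelliptic case), so $\phi^{-1}(\Lambda\cap\phi(C))$ is a finite set of points of $C$, and there are only finitely many effective divisors of degree $n$ supported on it; in particular $\mathcal{G}^{-1}(\Lambda)$ is finite.

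The only step that demands a little care is treating the hyperelliptic case uniformly with the non-hyperelliptic one: since $\phi$ is no longer an embedding, the condition $D'\leq\phi^*H$ must be interpreted via the scheme-theoretic pull-back of $H$, but once one uses that $\phi$ is finite and $\phi(C)$ is non-degenerate the argument goes through verbatim. No other obstacle arises, since the statement reduces entirely to non-degeneracy of $\phi(C)$ and finiteness of $\phi$.
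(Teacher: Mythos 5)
Your proof is correct and follows essentially the same route as the paper's: both arguments observe that every divisor in $\mathcal{G}^{-1}\left(\Lambda\right)$ is supported on $\phi^{-1}\left(\Lambda\cap\phi\left(C\right)\right)$, show that this set is finite, and conclude that only finitely many effective divisors of degree $n$ can be supported there. The only cosmetic difference is how finiteness of $\Lambda\cap\phi\left(C\right)$ is obtained --- you invoke non-degeneracy of $\phi\left(C\right)$, while the paper bounds the cardinality by $\deg\left(\phi\left(C\right)\right)\leq 2g-2$ after enclosing $\Lambda$ in a hyperplane; both are valid.
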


\begin{proof}
	
	Recall that $\phi$ is the canonical map of $C$.	Let $W\in\mathbb{G}\left(n-1,g-1\right)$. We will prove that $\mathcal{G}^{-1}\left(W\right)$ is finite. If $\mathcal{G}^{-1}\left(W\right)=\varnothing$, it is finite. Let's then assume that $\mathcal{G}^{-1}\left(W\right)\neq\varnothing$. 
	
	We claim that $W\cap\phi\left(C\right)$ is finite. Indeed, taking a hyperplane $H$ in $\mathbb{P}^{g-1}$ such that $W\subset H$, we have $W\cap\phi\left(C\right)\subset H \cap\phi\left(C\right)$. Since $\deg\left(\phi\left(C\right)\right)\in\{g-1,2g-2\}$, we have
	$$
	\#\left(W\cap\phi\left(C\right)\right)\leq\#\left(H\cap\phi\left(C\right)\right)\leq2g-2,
	$$
	so $W\cap\phi\left(C\right)$ is finite.
	
	Then we have
	$$
	W\cap\phi\left(C\right)=\{\phi\left(p_1\right),\dots,\phi\left(p_k\right)\}
	$$
	for some $k$. Now since
	$$
	\mathcal{G}^{-1}\left(W\right)\subset\{q_1+\dots+q_n\in C^{(n)}:\phi\left(q_j\right)\in\{\phi\left(p_1\right),\dots,\phi\left(p_k\right)\}\}
	$$
	and this last set is finite, then $\mathcal{G}^{-1}\left(W\right)$ is finite too.
\end{proof}

As we said before, items (a) and (b) of Theorem \ref{g1} are well-known, so in what follows we will prove item (c) of this theorem. To do this we will need some previous results. For $W\in\mathbb{G}\left(n-1,g-1\right)$ we define the \textit{intersection divisor} $\left(W\cdot C\right)$ as the greatest common divisor of the divisors $ \{H\cap C:H\text{ hyperplane in }\mathbb{P}^{g-1}\text{ such that }H\supset W\}$.

\begin{prop}\label{g2}
	Let $C\in\mathscr{M}_g$ be non-hyperelliptic, $1\leq n\leq g-2$, and let $K$ be a canonical divisor on $C$. Then there exists $E\in\mathcal{W}_n$ such that $|K-E|$ is base point free.
\end{prop}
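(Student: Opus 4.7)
The plan is to reformulate base-point-freeness of $|K-E|$ as a pointwise condition on the divisors $E+p$, and then produce such $E$ by a dimension count using Martens' theorem.

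First, suppose $E\in\mathcal{W}_n$, i.e.\ $\ell(E)=1$ by Riemann--Kempf. Riemann--Roch gives $\ell(K-E)=g-n$, and applied to $E+p$ it yields $\ell(K-E-p)=\ell(E+p)+g-n-2$ for every $p\in C$. A point $p$ is a base point of $|K-E|$ if and only if $\ell(K-E-p)=\ell(K-E)$, which translates to $\ell(E+p)=2$. Since the exact sequence $0\to\mathscr{O}_C(E)\to\mathscr{O}_C(E+p)\to k_p\to 0$ forces $\ell(E+p)\leq\ell(E)+1=2$, base-point-freeness of $|K-E|$ amounts to $\ell(E+p)=1$ for every $p\in C$, or equivalently (Riemann--Kempf again) $E+p\notin T:=\rho^{-1}(\Sing W_{n+1})$ for every $p$.

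To produce such an $E$, I would bound $\dim T$ via the stratification $T=\bigsqcup_{r\geq 1}T_r$ with $T_r=\rho^{-1}(W^r_{n+1}\setminus W^{r+1}_{n+1})$, where $W^r_{n+1}\subset\Pic^{n+1}(C)$ is the locus of line bundles with $\ell\geq r+1$. The fibres of $\rho$ over this stratum have dimension $r$. Since $C$ is non-hyperelliptic and $2\leq n+1\leq g-1$, Martens' theorem gives $\dim W^r_{n+1}\leq n-2r$ (and Clifford's inequality guarantees $W^r_{n+1}=\varnothing$ for $r>(n+1)/2$). Therefore $\dim T_r\leq(n-2r)+r=n-r\leq n-1$, so $\dim T\leq n-1$. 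Now consider the addition map $\mu:C^{(n)}\times C\to C^{(n+1)}$, $(E,p)\mapsto E+p$; it is finite (of degree $n+1$), so $Z:=\mu^{-1}(T)$ satisfies $\dim Z\leq\dim T\leq n-1$. Its projection to $C^{(n)}$ has dimension strictly less than $n$, hence is a proper closed subset, and any $E$ in the complement has $\ell(E+p)=1$ for every $p\in C$; in particular $\ell(E)\leq\ell(E+p)=1$, so $E\in\mathcal{W}_n$ and $|K-E|$ is base-point-free.

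The main obstacle is the dimension estimate for $T$: the bare equality $\dim\Sing(W_{n+1})=n-2$ is not quite enough, because the fibres of $\rho$ over $W^r_{n+1}$ contribute extra dimension $r$. The argument requires Martens' theorem (a refined Brill--Noether bound valid precisely because $C$ is non-hyperelliptic) applied to every stratum $W^r_{n+1}$, so that the contribution $n-2r+r=n-r$ stays $\leq n-1$ uniformly in $r$.
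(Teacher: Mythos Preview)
Your proof is correct but follows a genuinely different route from the paper's. The paper argues constructively: take a general hyperplane section $K=p_1+\cdots+p_{2g-2}$ of the canonical curve, which by the General Position Theorem has every subset of at most $g-1$ points in linearly general position; set $E=p_1+\cdots+p_n$, and check via Geometric Riemann--Roch that a base point of $|K-E|$ (necessarily some $p_{n+1}$) would force $\{p_1,\ldots,p_{n+1}\}$ to be linearly dependent, a contradiction since $n+1\leq g-1$. Your approach instead bounds the dimension of the bad locus $\{E:\ell(E+p)\geq 2\text{ for some }p\}$ via Martens' theorem, showing it has dimension $\leq n-1$ and is therefore proper in $C^{(n)}$.

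The paper's argument is more elementary---the General Position Theorem is lighter machinery than Martens---and produces an explicit $E$. Your argument is more structural and yields more: since your $\pi(Z)$ coincides with the paper's closed set $\mathcal{C}_n=p(s^{-1}(C_{n+1}^1))$, you obtain not only the existence statement of this proposition but simultaneously the density statement of the subsequent Proposition~\ref{g3} (that $\mathcal{U}_n=\mathcal{W}_n\setminus\mathcal{C}_n$ is nonempty, indeed open dense), together with the bound $\dim\mathcal{C}_n\leq n-1$.
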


\begin{proof}
	Let $H$ be a general hyperplane in $\mathbb{P}^{g-1}$ so that $K=\left(H\cdot C\right)$. Since $|K|$ is base point free, by Bertini's Theorem we can assume that $K$ is reduced. Since $\deg\left(C\right)=2g-2$, we have
	$$
	H\cap C=\{p_1,\dots,p_{2g-2}\},
	$$
	where the $p_1,\dots,p_{2g-2}\in C$ are different, and by the General Position Theorem \cite[p. 109]{ACGH} each subset of $k\leq g-1$ elements of $H\cap C$ is linearly independent. Let $E:=p_1+\dots+p_n$. Since $n\leq g-2$, then $\{p_1,\dots,p_n\}$ is linearly independent, so by the Geometric Riemann-Roch Theorem
	\begin{equation*}
		\begin{split}
			\ell\left(E\right)&=\deg\left(E\right)-\dim\left(\overline{E}\right)\\
			&=n-\left(n-1\right)=1.
		\end{split}
	\end{equation*}
	
	By the Riemann-Roch Theorem
	\begin{equation*}
		\begin{split}
			\ell\left(K-E\right)&=\ell\left(E\right)-\deg\left(E\right)+g-1\\
			&=1-n+g-1=g-n.
		\end{split}
	\end{equation*}
	
	We claim that $|K-E|=|p_{n+1}+\dots+p_{2g-2}|$ is base point free. We will prove it by contradiction. Assume that $|K-E|$ has a base point, say $p\in C$. Then, $p\leq p_{n+1}+\dots+p_{2g-2}$, so, without loss of generality we can assume that $p=p_{n+1}$. Since $p$ is a base point of $|K-E|$, we have
	$$
	\ell\left(K-E-p\right)=\ell\left(K-E\right)=g-n,
	$$
	so, by the Riemann-Roch theorem
	\begin{equation*}
		\begin{split}
			\ell\left(E+p\right)&=\ell\left(K-E-p\right)+\deg\left(E+p\right)-g+1\\
			&=\left(g-n\right)+\left(n+1\right)-g+1=2.
		\end{split}
	\end{equation*}
	
	Then, by the Geometric Riemann-Roch theorem
	\begin{equation*}
		\begin{split}
			\dim\left(\overline{E+p}\right)&=\deg\left(E+p\right)-\ell\left(E+p\right)\\
			&=\left(n+1\right)-2=n-1,
		\end{split}
	\end{equation*}
	so $\{p_1,\dots,p_{n+1}\}$ is linearly dependent. On the other hand, this set is linearly independent, since $n+1\leq g-1$, which is a contradiction. Therefore $|K-E|$ must be base point free.
\end{proof}

For each $r\geq0$, $d\geq1$ the set
$$
C_d^r:=\{D\in C^{(d)}:\dim|D|\geq r\}
$$
is a closed subvariety of $C^{(d)}$ (see \cite[IV. \textsection3]{ACGH}).

For each $k\geq1$ consider the diagram
$$
\begin{tikzcd}
	C^{(n)}\times C^{(k)} \arrow[swap]{d}{p} \arrow{r}{s} & C^{(n+k)} \\
	C^{(n)}
\end{tikzcd}
$$
where the morphisms $s$ and $p$ are given by the sum and the projection respectively. The set $s^{-1}\left(C_{n+k}^k\right)$ is a proper closed subset of $C^{(n)}\times C^{(k)}$. Since $p$ is a proper morphism, then the set $\mathcal{C}_{n,k}:=p\left(s^{-1}\left(C_{n+k}^k\right)\right)$ is a proper closed subset of $C^{(n)}$. We denote $\mathcal{C}_n:=\mathcal{C}_{n,1}$.

\begin{prop}\label{g3}
	Let $C\in\mathscr{M}_g$ be non-hyperelliptic and $1\leq n\leq g-2$. Then the set $\mathcal{U}_n:=\mathcal{W}_n\setminus\mathcal{C}_n$ is non-empty. Therefore $\mathcal{U}_n$ is an open (dense) subset of $\mathcal{W}_n$.
\end{prop}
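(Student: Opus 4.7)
The strategy is to exhibit an explicit element of $\mathcal{W}_n$ that avoids $\mathcal{C}_n$, using Proposition \ref{g2} directly. Recall that $E \in \mathcal{C}_n$ means there exists $q \in C$ with $E + q \in C_{n+1}^1$, i.e., $\ell(E+q) \geq 2$. So my task is to produce an $E \in \mathcal{W}_n$ for which $\ell(E+q) = 1$ for every $q \in C$.

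The plan is to take the divisor $E$ furnished by Proposition \ref{g2}: it satisfies $\ell(E) = 1$ (hence $E \in \mathcal{W}_n$) and $|K - E|$ is base point free. First I would compute $\ell(K - E)$ via Riemann-Roch: since $\ell(E) - \ell(K-E) = \deg(E) - g + 1 = n - g + 1$, we get $\ell(K-E) = g - n$. The base-point-freeness of $|K-E|$ means exactly that for every point $q \in C$, the subtraction of $q$ imposes an independent condition, so $\ell(K - E - q) = \ell(K-E) - 1 = g - n - 1$.

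Next I apply Riemann-Roch to $E + q$:
\begin{equation*}
\ell(E+q) - \ell(K - E - q) = \deg(E+q) - g + 1 = n - g + 2,
\end{equation*}
which combined with the previous step gives $\ell(E+q) = (g - n - 1) + (n - g + 2) = 1$ for every $q \in C$. Hence $E + q \notin C_{n+1}^1$ for any $q$, which means $(E, q) \notin s^{-1}(C_{n+1}^1)$ for all $q$, and therefore $E \notin p(s^{-1}(C_{n+1}^1)) = \mathcal{C}_n$. Together with $E \in \mathcal{W}_n$ this yields $E \in \mathcal{U}_n$, proving non-emptiness.

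For the final assertion, openness of $\mathcal{U}_n$ in $\mathcal{W}_n$ is immediate from the fact, already recorded just before the statement, that $\mathcal{C}_n$ is a closed subset of $C^{(n)}$. Density then follows because $\mathcal{W}_n$ is a non-empty open subset of the irreducible variety $C^{(n)}$, hence irreducible, so any non-empty open subset of $\mathcal{W}_n$ is dense in it. The only genuinely substantive input here is Proposition \ref{g2}; the rest is a short Riemann-Roch bookkeeping argument, so I do not anticipate a real obstacle.
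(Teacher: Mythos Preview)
Your proof is correct and follows essentially the same approach as the paper: both take the divisor $E$ from Proposition~\ref{g2} and use Riemann--Roch together with the base-point-freeness of $|K-E|$ to show $\ell(E+q)=1$ for every $q\in C$, hence $E\notin\mathcal{C}_n$. The only cosmetic difference is that the paper argues by contradiction (assuming $E\in\mathcal{C}_n$ and deriving a base point of $|K-E|$), whereas you compute $\ell(E+q)$ directly.
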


\begin{proof}
	If $C_{n+1}^1=\varnothing$, we have $\mathcal{C}:=p\left(s^{-1}\left(\varnothing\right)\right)=\varnothing$ , so $\mathcal{U}_n:=\mathcal{W}_n\neq\varnothing$. Suppose then that $C_{n+1}^1\neq\varnothing$. Then $\mathcal{C}_n:=p\left(s^{-1}\left(C_{n+1}^1\right)\right)\neq\varnothing$. We observe that $C_n^1\subset\mathcal{C}$. Indeed, if $C_n^1=\varnothing$, clearly we have the inclusion. Let's then assume that $C_n^1\neq\varnothing$. Let $D\in C_n^1$, i.e., $\dim|D|\geq1$. Then, given $q\in C$ we have $\dim|D+q|\geq\dim|D|\geq1$, so $s\left(D,q\right)=D+q\in C_ {n+1}^1$, i.e., $(D,q)\in s^{-1}\left(C_{n+1}^1\right)$. Therefore $D=p\left(D,q\right)\in p\left(s^{-1}\left(C_{n+1}^1\right)\right)=:\mathcal{C }$, which proves the inclusion. This implies that $\mathcal{C}_n^c\subset\left(C_n^1\right)^c=\mathcal{W}_n$. Note also that $\mathcal{C}_n\setminus C_n^1\neq\varnothing$ if and only if there exist $D\in\mathcal{W}_n$ and $q\in C$ such that $\ell\left( D+q\right)=2$.
	
	Now we will prove that $\mathcal{U}_n:=\mathcal{W}_n\setminus\mathcal{C}_n\neq\varnothing$. By Proposition \ref{g2} there exists $E\in\mathcal{W}_n$ such that $|K-E|$ is base point free. We will prove that $E\notin\mathcal{C}_n$. Note that, by the Riemann-Roch Theorem
	\begin{equation*}
		\begin{split}
			\ell\left(K-E\right)&=\ell\left(E\right)-\deg\left(E\right)+g-1\\
			&=1-n+g-1=g-n.
		\end{split}
	\end{equation*}
	
	Now, by contradiction, suppose that $E\in\mathcal{C}$. Then, there exists $q\in C$ such that $E=p\left(E,q\right)$, with $\left(E,q\right)\in s^{-1}\left(C_{ n+1}^1\right)$, so that $E+q=s\left(E,q\right)\in C_{n+1}^1$, i.e., $\dim|E+q |\geq1$, but since $\dim|E|=0$, then we must have $\dim|E+q|=1$. By the Riemann-Roch theorem
	\begin{equation*}
		\begin{split}
			\ell\left(K-E-q\right)&=\ell\left(E+q\right)-\deg\left(E+q\right)+g-1\\
			&=2-\left(n+1\right)+g-1=g-n,
		\end{split}
	\end{equation*}
	so $\ell\left(K-E-q\right)=\ell\left(K-E\right)$. This implies that $L(K-E-q)=L(K-E)$, i.e., $q$ is a base point of $|K-E|$, which is a contradiction, since $|K-E|$ is base point free. Therefore we must have that $E\notin\mathcal{C}_n$, so $\mathcal{U}_n$ is a non-empty open (dense) subset of $\mathcal{W}_n$.
\end{proof}

To prove item (c) of Theorem \ref{g1} we will also need the following result.

\begin{prop}\label{g4}
	Let $C\in\mathscr{M}_g$ be non-hyperelliptic and $1\leq n\leq g-2$. Then there exists an open (dense) subset $\mathcal{U}_n$ of $\mathcal{W}_n$ such that $\left(\overline{D}\cdot C\right)=D$ for all $D\in\mathcal{U}_n$.
\end{prop}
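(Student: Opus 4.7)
The plan is to show that the open set $\mathcal{U}_n=\mathcal{W}_n\setminus\mathcal{C}_n$ already constructed in Proposition \ref{g3} does the job. So we take $D\in\mathcal{U}_n$, i.e. $\ell(D)=1$ and there is no $q\in C$ with $\ell(D+q)\geq 2$, and argue by contradiction that $(\overline{D}\cdot C)=D$. Since $\ell(D)=1$, the Geometric Riemann--Roch Theorem gives $\dim\overline{D}=n-1$, so $\overline{D}$ is an $(n-1)$-plane and the intersection divisor $(\overline{D}\cdot C)$ is well-defined.

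First I would record the inequality $D\leq(\overline{D}\cdot C)$. This is immediate from the definition of the linear span: every hyperplane $H\supseteq\overline{D}$ is a linear combination of hyperplanes appearing in the definition of $\overline{D}$, hence satisfies $D\leq\phi^{*}H$, and by the definition of intersection divisor as a g.c.d. one concludes $D\leq(\overline{D}\cdot C)$.

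Now suppose for contradiction that $E:=(\overline{D}\cdot C)-D$ is non-zero; pick any $q\in\Supp(E)$. Then $D+q\leq(\overline{D}\cdot C)\leq\phi^{*}H$ for every hyperplane $H\supseteq\overline{D}$, so each such $H$ belongs to the family of hyperplanes used to define $\overline{D+q}$; intersecting over this family yields
\[
\overline{D+q}\subseteq\bigcap_{H\supseteq\overline{D}}H=\overline{D},
\]
and in particular $\dim\overline{D+q}\leq n-1$. Applying the Geometric Riemann--Roch Theorem to $D+q$ gives
\[
\ell(D+q)=\deg(D+q)-\dim\overline{D+q}\geq (n+1)-(n-1)=2,
\]
so $D+q\in C_{n+1}^{1}$ and hence $D=p(D,q)\in p(s^{-1}(C_{n+1}^{1}))=\mathcal{C}_n$, contradicting $D\in\mathcal{U}_n$. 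Therefore $E=0$ and $(\overline{D}\cdot C)=D$.

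The main delicate point is the step $\overline{D+q}\subseteq\overline{D}$: one must be careful to use the $g.c.d.$ definition of $(\overline{D}\cdot C)$ (so that $D+q\leq\phi^{*}H$ for \emph{every} hyperplane through $\overline{D}$, not merely for the ones originally cutting out $D$), and to invoke non-degeneracy of $\phi(C)$ (i.e. $C$ non-hyperelliptic, so the canonical map is an embedding) to ensure that Geometric Riemann--Roch can be applied to both $D$ and $D+q$. The rest is a clean bookkeeping of the definitions.
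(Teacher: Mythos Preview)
Your proof is correct and follows essentially the same approach as the paper's own proof: take $\mathcal{U}_n=\mathcal{W}_n\setminus\mathcal{C}_n$, observe $D\leq(\overline{D}\cdot C)$, and if the inequality is strict pick a point $q$ to force $\ell(D+q)\geq 2$ via Geometric Riemann--Roch, contradicting $D\notin\mathcal{C}_n$. The only cosmetic difference is that the paper asserts the equality $\overline{D+q}=\overline{D}$ directly (using also the trivial inclusion $\overline{D}\subseteq\overline{D+q}$) and thus obtains $\ell(D+q)=2$ rather than your inequality $\geq 2$, but this changes nothing in the argument.
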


\begin{proof}
	Let $\mathcal{U}_n:=\mathcal{W}_n\setminus\mathcal{C}_n$ be the open (dense) subset of $\mathcal{W}_n$ of Proposition \ref{g3}. Let $D\in\mathcal{U}_n$.

	We will prove that $\left(\overline{D}\cdot C\right)=D$. Indeed, as $D\leq\left(\overline{D}\cdot C\right)$, we have $\deg\left(\overline{D}\cdot C\right)\geq n$. It is enough to prove $\deg\left(\overline{D}\cdot C\right)=n$. By contradiction, suppose that $\deg\left(\overline{D}\cdot C\right)\geq n+1$. Then, there exists $q\in C$ such that $D\leq D+q\leq\left(\overline{D}\cdot C\right)$. Note that $\overline{D+q}=\overline{D}\in\mathbb{G}\left(n-1,g-1\right)$. Then, by the Geometric Riemann-Roch Theorem
	\begin{equation*}
		\begin{split}
			\ell\left(D+q\right)&=\deg\left(D+q\right)-\dim\left(\overline{D+q}\right)\\
			&=\left(n+1\right)-\left(n-1\right)=2,
		\end{split}
	\end{equation*}
	i.e., $\dim|D+q|=1$, so that $s\left(D,q\right)=D+q\in C_{n+1}^1$. Then $(D,q)\in s^{-1}\left(C_{n+1}^1\right)$, which implies $D=p\left(D,q\right)\in p\left(s^{-1}\left(C_{n+1}^1\right)\right)=:\mathcal{C}_n$, but this contradicts the fact that $D\in\mathcal{U}_n$ . Therefore we must have that $\deg\left(\overline{D}\cdot C\right)=n$, so that $\left(\overline{D}\cdot C\right)=D$.
\end{proof}

Now we will finish the proof of Theorem \ref{g1} by proving item (c).

\begin{proof}[Proof of (c)]
	If $n=1$, then $\mathcal{G}=\phi$ is injective, so each fiber of $\mathcal{G}$ has one element. 
	
	Assume $n\geq2$. Let $\mathcal{U}_n:=\mathcal{W}_n\setminus\mathcal{C}_n$ be the open (dense) subset of $\mathcal{W}_n$ of Proposition \ref{g3}. Note that $\mathcal{G}\left(\mathcal{U}_n\right)$ is a constructible set, so it contains a dense open $O_n$ of $\overline{\mathcal{G}\left(\mathcal{U}_n\right)}$ (see \cite[1.3, Proposition]{BO}). We will prove that $\mathcal{G}^{-1}\left(\overline{D}\right)=\{D\}$ for each $\overline{D}\in O_n$. Let $\overline{D}=\mathcal{G}\left(D\right)\in O_n$, with $D\in\mathcal{U}_n$. It is clear that $D\in\mathcal{G}^{-1}\left(\overline{D}\right)$.
	
	Conversely, let $E\in\mathcal{G}^{-1}\left(\overline{D}\right)$, i.e.,
	$$
	\overline{E}=\mathcal{G}\left(E\right)=\overline{D}
	$$
	
	Note that
	$$
	E\leq\left(\overline{E}\cdot C\right)=\left(\overline{D}\cdot C\right)=D,
	$$
	where the last equality is held by Proposition \ref{g4}. Then $D-E\geq0$ and $\deg\left(D-E\right)=0$, which implies $D-E=0$, i.e., $E=D$. This proves that $\mathcal{G}^{-1}\left(\overline{D}\right)=\{D\}$ for every $\overline{D}\in O_n$ and $O_n$ is a open dense subset of $\overline{\mathcal{G}\left(\mathcal{U}_n\right)}$. Thus the generic fiber of $\mathcal{G}$ has one element, which proves item (c) of Theorem \ref{g1}. This completes the proof of Theorem \ref{g1}.
\end{proof}

\subsection{Fibers of lower cardinality}

In this subsection we will analyze conditions under which the fibers of the Gauss map are of lower cardinality than the generic fiber. We will divide this study into two cases: the hyperelliptic case with $1\leq n\leq g-1$, and the non-hyperelliptic case with $n=g-1$. We will give an example of the non-hyperelliptic case with $g=4$ and $n=3$.

\subsubsection{Hyperelliptic case, $1\leq n\leq g-1$}\label{g0}

\begin{prop}\label{g6}
	Let $C\in\mathscr{M}_g$ be hyperelliptic and $1\leq n\leq g-1$. Let $D\in\mathcal{W}_n$, $W:=\mathcal{G}\left(D\right)$ and $q\in C$ such that $q\notin\Supp\left(D\right)$. Then $\phi\left(q\right)\in W$ if and only if $q=\iota\left(p_i\right)$ for some $p_i\in\Supp\left(D\right)$.
\end{prop}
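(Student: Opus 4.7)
The plan is to translate the condition $\phi(q)\in W$ into a base-point condition on the residual linear system $|K-D|$, and then invoke the characterization of complete special linear systems on a hyperelliptic curve from \cite[I. D-9]{ACGH}. The easy direction is immediate: if $q=\iota(p_i)$ for some $p_i\in\Supp(D)$, then because the canonical morphism $\phi$ on a hyperelliptic curve factors through the hyperelliptic quotient $C\to\mathbb{P}^1$ (composed with the Veronese), we have $\phi(q)=\phi(\iota(p_i))=\phi(p_i)$, and this point lies in $W$ by the very definition of the linear span.

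For the converse, I would first reinterpret hyperplanes through $W$ in terms of canonical divisors. By the definition of $W=\overline{\phi(D)}$, a hyperplane $H\subset\mathbb{P}^{g-1}$ contains $W$ if and only if $D\leq\phi^*H$, and such $H$'s correspond bijectively to the effective divisors $E\in|K-D|$ via $\phi^*H=D+E$. Hence $\phi(q)\in W$ amounts to $q\in\Supp(D+E)$ for every $E\in|K-D|$, which under the hypothesis $q\notin\Supp(D)$ reduces to $q\in\Bs|K-D|$. Since $\ell(D)=1$ (because $D\in\mathcal{W}_n$), Riemann-Roch gives $\ell(K-D)=g-n$ and $\ell(D+q)=\ell(K-D-q)+n-g+2$, so the base-point condition $\ell(K-D-q)=\ell(K-D)$ is equivalent to $\ell(D+q)=2$; that is, $D+q$ is a complete special divisor with $\ell\geq 2$.

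The final step would be to apply \cite[I. D-9]{ACGH}, which ensures that any complete special divisor $F$ on a hyperelliptic curve with $\ell(F)\geq 2$ contains a pair of the form $p+\iota(p)$. Since $\ell(D)=1$, $D$ itself cannot contain such a pair (any $p+\iota(p)\leq D$ would let the $\mathfrak{g}_2^1$ move inside $|D|$ and force $\ell(D)\geq 2$). Consequently, the pair $p+\iota(p)\leq D+q$ must involve $q$, which forces $q=\iota(p_i)$ for some $p_i\in\Supp(D)$, as claimed. The only delicate step is the dictionary between hyperplanes through $W$ and elements of $|K-D|$; once that is in place the rest is mechanical Riemann-Roch bookkeeping together with the ACGH classification.
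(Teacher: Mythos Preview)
Your proof is correct and follows essentially the same route as the paper: both arguments reduce the condition $\phi(q)\in W$ to $\ell(D+q)=2$ and then invoke \cite[I.~D-9]{ACGH} to produce a conjugate pair inside $D+q$, which must involve $q$ since $D\in\mathcal{W}_n$. The only cosmetic difference is that the paper computes $\dim\overline{\phi(D+q)}=n-1$ directly via the intersection of hyperplanes and applies Geometric Riemann--Roch, whereas you pass through the equivalent base-point condition $q\in\Bs|K-D|$ and ordinary Riemann--Roch; these are two phrasings of the same identity.
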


\begin{proof}
	If $n=1$, then $D$ is a point of $C$, say $D=p\in C$ and $\mathcal{G}=\phi$. In this case $W=\{\phi\left(p\right)\}$, so since $q\neq p$, we have that $\phi\left(q\right)=\phi\left( p\right)$ if and only if $q=\iota\left(p\right)$.
	
	Let's then assume $n\geq2$. Let's write $D=p_1+\dots+p_n$, $W=H_1\cap\dots\cap H_{g-n}$ where the $H_j$ are hyperplanes in $\mathbb{P}^{g-1}$.
	
	Let's first assume that $\phi\left(q\right)\in W$. We will prove that $q=\iota\left(p_j\right)$ for some $j\in\{1,\dots,n\}$. Since $W=\overline{\phi\left(D\right)}$ and $\phi\left(q\right)\in W$, for each $j\in\{1,\dots,g-n\} $ we have
	$$
	D=p_1+\dots+p_n\leq\phi^*H_j,
	$$
	$$
	q\leq\phi^*H_j,
	$$
	which implies
	$$
	D+q=p_1+\dots+p_n+q\leq\phi^*H_j
	$$
	for all $j\in\{1,\dots,g-n\}$. Then
	$$
	W\subset\overline{\phi\left(D+q\right)}\subset H_1\cap\dots\cap H_{g-n}=W,
	$$
	i.e., $\overline{\phi\left(D+q\right)}=W$. Now, by the Geometric Riemann-Roch Theorem
	\begin{equation*}
		\begin{split}
			\ell\left(D+q\right)&=\deg\left(D+q\right)-\dim\left(W\right)\\
			&=\left(n+1\right)-\left(n-1\right)=2,
		\end{split}
	\end{equation*}
	i.e., $|D+q|$ is a complete $\mathfrak{g}_{n+1}^1$. Since $D+q$ is a special divisor, by \cite[I. D-9]{ACGH} we have that
	$$
	|D+q|=\mathfrak{g}_2^1+P_1+\dots+P_{n-1},
	$$
	where no two of the $P_j\in C$ are conjugated under $\iota$. In particular, there must exist $p\in C$ such that
	$$
	p_1+\dots+p_n+q=D+q=p+\iota\left(p\right)+P_1+\dots+P_{n-1}.
	$$
	Since $D\in\mathcal{W}_n$, without loss of generality we can assume that $p_j=P_j$ for all $j\in\{1,\dots,n-1\}$, $p_n=p $, $q=\iota\left(p\right)$, which proves the first implication.
	
	Conversely, let us assume that $q=\iota\left(p_1\right)$. For each $j\in\{1,\dots,g-n\}$, let $L_j$ be a linear form such that $H_j=V(L_j)$. Then
	$$
	L\left(\phi\left(q\right)\right)=L_j\left(\phi\left(\iota\left(p_1\right)\right)\right)=L_j\left(\phi\left (p_1\right)\right)=0,
	$$
	for all $j\in\{1,\dots,g-n\}$, since $\phi\left(p_1\right)\in W=H_1\cap\dots\cap H_{g-n}$. Thus $\phi\left(q\right)\in H_1\cap\dots\cap H_{g-n}=W$.
	
	This proves the proposition.
\end{proof}

\begin{prop}
	Let $C\in\mathscr{M}_g$ be hyperelliptic and $1\leq n\leq g-1$. Let $D\in\mathcal{W}_n$, $W:=\mathcal{G}\left(D\right)$. Then $\#\left(\mathcal{G}^{-1}\left(W\right)\right)<2^n$ if and only if $D$ is non-reduced or some $p\in\Supp \left(D\right)$ is fixed by $\iota$.
\end{prop}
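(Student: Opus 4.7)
The plan is to describe $\mathcal{G}^{-1}(W)$ explicitly and count it. Write $D=\sum_{i=1}^{s}m_ip_i$ with the $p_i$ distinct and $n=\sum m_i$, and let $r$ be the number of Weierstrass points among $p_1,\dots,p_s$. I will prove that $\#\mathcal{G}^{-1}(W)=2^{s-r}$, and then compare with $2^n$. Two consequences of $\ell(D)=1$ come first: every Weierstrass $p_i\in\mathrm{Supp}(D)$ has $m_i=1$, since $2p\sim p+\iota(p)$ would otherwise force $\ell(D)\geq 2$ via the $\mathfrak{g}_2^1$; and no two distinct points of $\mathrm{Supp}(D)$ are $\iota$-conjugate, by the same reasoning. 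In particular the $p_i$ together with the $\iota(p_i)$ for non-Weierstrass $i$ form $2s-r$ pairwise distinct points, and by Proposition~\ref{g6} every $E\in\mathcal{G}^{-1}(W)$ satisfies $\mathrm{Supp}(E)\subseteq\bigcup_i\{p_i,\iota(p_i)\}$, so that $E=\sum_i(a_ip_i+b_i\iota(p_i))$ with $a_i,b_i\geq 0$ and $\sum(a_i+b_i)=n$.

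To decide which such $E$ actually satisfy $\mathcal{G}(E)=W$, factor $\phi=\iota_0\circ\pi$, where $\pi:C\to\mathbb{P}^1$ is the hyperelliptic double cover and $\iota_0:\mathbb{P}^1\hookrightarrow\mathbb{P}^{g-1}$ is the Veronese embedding of degree $g-1$. Using $\phi^{*}H=\pi^{*}\iota_0^{*}H$ together with the ramification of $\pi$ at the Weierstrass points, the condition $E\leq\phi^{*}H$ is equivalent on $\mathbb{P}^1$ to $F_E\leq\iota_0^{*}H$, where $F_E:=\sum_{p_i\text{ non-W}}\max(a_i,b_i)\,\pi(p_i)+\sum_{p_i\text{ W}}\lceil c_i/2\rceil\,\pi(p_i)$ and $c_i$ denotes the multiplicity of the Weierstrass $p_i$ in $E$. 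Hence $\overline{\phi(E)}=\overline{\iota_0(F_E)}$ and analogously $W=\overline{\iota_0(F_D)}$. On the rational normal curve of degree $g-1$, the osculating-plane correspondence $F\mapsto\overline{\iota_0(F)}$ is injective on effective divisors of degree $\leq g-1$, and $\deg F_E=\deg F_D=n\leq g-1$, so $\mathcal{G}(E)=W$ iff $F_E=F_D$.

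The equality $F_E=F_D$ combined with the degree constraint $\sum c_i+\sum(a_i+b_i)=n$ pins $E$ down completely. For Weierstrass $i$, $m_i=1$ and $\lceil c_i/2\rceil=1$ force $c_i\in\{1,2\}$, so $c_i-m_i\in\{0,1\}$; for non-Weierstrass $i$, $\max(a_i,b_i)=m_i$ yields $a_i+b_i-m_i\geq 0$. All these excesses are non-negative and sum to zero, so each vanishes: $c_i=1$ and $(a_i,b_i)\in\{(m_i,0),(0,m_i)\}$. Each resulting swap $E$ lies in $\mathcal{W}_n$, since it contains no $\iota$-conjugate pair and its Weierstrass support has multiplicity one, so $\ell(E)=1$ by the hyperelliptic characterization of positive-dimensional complete special linear systems \cite[I.\,D-9]{ACGH}. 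This gives one choice per Weierstrass index and two per non-Weierstrass index, hence $\#\mathcal{G}^{-1}(W)=2^{s-r}$.

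Finally, since $n=r+\sum_{i\text{ non-W}}m_i$ and $s-r$ is the number of non-Weierstrass points in $\mathrm{Supp}(D)$, one has $2^{s-r}<2^n$ iff $r>0$ or some non-Weierstrass $m_i\geq 2$; equivalently, iff some $p\in\mathrm{Supp}(D)$ is fixed by $\iota$ or $D$ is non-reduced, which is the stated dichotomy. The main obstacle is the second paragraph: correctly tracking the ramification of $\pi$ at the Weierstrass points in the translation $E\leq\phi^{*}H\Leftrightarrow F_E\leq\iota_0^{*}H$, and verifying that $F\mapsto\overline{\iota_0(F)}$ is injective in this degree range.
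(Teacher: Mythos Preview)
Your argument is correct and in fact sharper than the paper's: you compute $\#\mathcal{G}^{-1}(W)=2^{s-r}$ exactly, whereas the paper only proves the inequality in each direction. One small slip: at the point where you write ``$\deg F_E=\deg F_D=n$'' you have not yet justified $\deg F_E=n$; what you do know is $\deg F_E\le n\le g-1$ (since $\max(a_i,b_i)\le a_i+b_i$ and $\lceil c_i/2\rceil\le c_i$), and that is all the injectivity statement needs. The equality $\deg F_E=n$ only follows \emph{after} $F_E=F_D$, or alternatively from comparing dimensions of spans.

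The route is genuinely different from the paper's. The paper never factors $\phi$ through the rational normal curve; instead it argues set--theoretically. It first shows $W\cap\phi(C)=\{\phi(p_1),\dots,\phi(p_n)\}$ using Proposition~\ref{g6}, then treats the two implications separately: if $D$ is reduced with no Weierstrass points, it asserts $\mathcal{G}^{-1}(W)=\{q_1+\dots+q_n:q_j\in\{p_j,\iota(p_j)\}\}$ and counts $2^n$; if $D$ is non--reduced or has a Weierstrass point, it matches intersection multiplicities along $W\cap\phi(C)$ to force a repetition or a fixed coordinate among the $q_j$, giving a containment in a set of size at most $2^{n-1}$. Your approach via $F_E$ replaces this multiplicity bookkeeping by a single clean equivalence $\mathcal{G}(E)=W\Leftrightarrow F_E=F_D$ on $\mathbb{P}^1$, which is what allows the exact count. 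The paper's approach is more elementary and self--contained (no appeal to properties of the rational normal curve), but yours yields a stronger conclusion and handles all cases uniformly.
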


\begin{proof}
	If $n=1$, then $D$ is a point of $C$, say $D=p\in C$ and $\mathcal{G}=\phi$. In this case we have $\phi^{-1}\left(\phi\left(p\right)\right)=\{p,\iota\left(p\right)\}$, so $\#\left(\phi^{-1}\left(\phi\left(p\right)\right)\right)=1$ if and only if $p$ is a fixed point of $\iota$.
	
	Let's then assume that $n\geq2$, and write $D=p_1+\dots+p_n$.
	
	First we will prove that $W\cap\phi\left(C\right)=\{\phi\left(p_1\right),\dots,\phi\left(p_n\right)\}$. Indeed, on the one hand it is clear that $\{\phi\left(p_1\right),\dots,\phi\left(p_n\right)\}\subset W\cap\phi\left(C\right) $, since $W=\overline{\phi\left(D\right)}$. Conversely, let $\phi\left(q\right)\in W\cap\phi\left(C\right)$. If $q\in\Supp\left(D\right)$, it's done. Let's assume $q\notin\Supp\left(D\right)$. Then, by Proposition \ref{g6} we have $q=\iota\left(p_j\right)$ for some $j\in\{1,\dots,n\}$, so $\phi\left( q\right)=\phi\left(\iota\left(p_j\right)\right)=\phi\left(p_j\right)$, which proves the other inclusion.
	
	Now, suppose that $D$ is reduced and that no $p_j$ is a fixed point of $\iota$. By the equality
	$$
	\mathcal{G}^{-1}\left(W\right)=\{q_1+\dots+q_n\in\mathcal{W}_n:q_j\in\{p_j,\iota\left(p_j\right) \}\},
	$$
	it follows that $\#\left(\mathcal{G}^{-1}\left(W\right)\right)=2^n$. By contrapositive we obtain that if $\#\left(\mathcal{G}^{-1}\left(W\right)\right)<2^n$, then $D$ is non-reduced or some $p_j$ is a fixed point of $\iota$.
	
	Now we will prove the converse. Let's first assume that $D$ is non-reduced. Without loss of generality we can assume that $p_1=p_2$. We claim that
	$$
	\mathcal{G}^{-1}\left(W\right)\subset\{2q_2+q_3+\dots+q_n\in\mathcal{W}_n:q_j\in\{p_j,\iota\left(p_j \right)\}\}.
	$$
	
	Indeed, let $E=q_1+\dots+q_n\in\mathcal{G}^{-1}\left(W\right)$, i.e., $\overline{\phi\left(E\right)}= W=\overline{\phi\left(D\right)}$.
	
	As before, we can prove that $W\cap\phi\left(C\right)=\{\phi\left(q_1\right),\dots,\phi\left(q_n\right)\}$, so
	$$
	\{\phi\left(q_1\right),\dots,\phi\left(q_n\right)\}=\{\phi\left(p_1\right),\dots,\phi\left(p_n\right )\}.
	$$
	
	Furthermore, the intersection numbers of $W$ and $\phi\left(C\right)$ in the $\phi\left(p_j\right)$, which are the multiplicities of the $p_j$ in $D$ , must coincide with the intersection numbers of $W$ and $\phi\left(C\right)$ in the $\phi\left(q_i\right)$, which are the multiplicities of the $q_i$ in $E $. Then we must have that
	$$
	\left(\phi\left(q_{j_1}\right),\dots,\phi\left(q_{j_n}\right)\right)=\left(\phi\left(p_1\right),\dots ,\phi\left(p_n\right)\right),
	$$
	for some $j_i$ such that $\{j_1,\dots,j_n\}=\{1,\dots,n\}$. Relabeling if necessary, without loss of generality we can assume that
	$$
	\left(\phi\left(q_1\right),\dots,\phi\left(q_n\right)\right)=\left(\phi\left(p_1\right),\dots,\phi\left(p_n\right)\right),
	$$
	so $q_j\in\{p_j,\iota\left(p_j\right)\}$ for each $j\in\{1,\dots,n\}$. 
	
	We will prove that $q_1=q_2$. Suppose $q_1=p_1$. Then, since $p_1=p_2$ and $\ell\left(E\right)=1$, we must have that $q_2=p_2$, so $q_1=p_1=p_2=q_2$. Now suppose that $q_1=\iota\left(p_1\right)$. Then, since $p_1=p_2$ and $\ell\left(E\right)=1$, we must have that $q_2=\iota\left(p_2\right)$, so $q_1=\iota\left (p_1\right)=\iota\left(p_2\right)=q_2$. This proves the inclusion.
	
	Therefore
	$$
	\#\left(\mathcal{G}^{-1}\left(W\right)\right)\leq\#\{2q_2+q_3+\dots+q_n\in\mathcal{W}_n:q_j\in \{p_j,\iota\left(p_j\right)\}\}\leq2^{n-1}<2^n.
	$$
	
	Now suppose that some $p_j$ is a fixed point of $\iota$, say $\iota\left(p_1\right)=p_1$. We claim that
	$$
	\mathcal{G}^{-1}\left(W\right)\subset\{p_1+q_2+\dots+q_n\in\mathcal{W}_n:q_j\in\{p_j,\iota\left(p_j \right)\}\}.
	$$
	
	Indeed, given $E=q_1+\dots+q_n\in\mathcal{G}^{-1}\left(W\right)$, as in the previous case, without loss of generality we can assume that
	$$
	\left(\phi\left(q_1\right),\dots,\phi\left(q_n\right)\right)=\left(\phi\left(p_1\right),\dots,\phi\left( p_n\right)\right),
	$$
	so $q_j\in\{p_j,\iota\left(p_j\right)\}$ for all $j$. In particular $q_1\in\{p_1,\iota\left(p_1\right)\}=\{p_1\}$, which proves the claim.
	
	Therefore
	$$
	\#\left(\mathcal{G}^{-1}\left(W\right)\right)\leq\#\{p_1+q_2+\dots+q_n\in\mathcal{W}_n:q_j\in \{p_j,\iota\left(p_j\right)\}\}\leq2^{n-1}<2^n.
	$$
	
	This proves the proposition.
\end{proof}

\subsubsection{Non-hyperelliptic case, $n=g-1$}

\begin{prop}\label{g7}
	Let $C\in\mathscr{M}_g$ be non-hyperelliptic, $D\in\mathcal{W}_{g-1}$. Let $H:=\mathcal{G}\left(D\right)$, $K=\left(H\cdot C\right)$. If $K$ is non-reduced, then $\#\left(\mathcal{G}^{-1}\left(H\right)\right)<\binom{2g-2}{g-1}$.
\end{prop}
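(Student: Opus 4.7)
The plan is to combine a geometric observation with a combinatorial counting argument. Geometrically, every element of the fiber $\mathcal{G}^{-1}(H)$ must be an effective subdivisor of $K$; then the problem reduces to counting the effective subdivisors of $K$ of degree $g-1$, which will be strictly less than $\binom{2g-2}{g-1}$ exactly when $K$ is non-reduced.

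First I would check that $H$ really is a hyperplane. Since $D\in\mathcal{W}_{g-1}$, the Riemann-Kempf Singularity Theorem gives $\ell(D)=1$, and the Geometric Riemann-Roch Theorem yields $\dim\overline{\phi(D)}=(g-1)-1=g-2$. Hence $H=\mathcal{G}(D)$ is a hyperplane in $\mathbb{P}^{g-1}$, and by the definition of the intersection divisor $K=(H\cdot C)=\phi^{*}H$ is a canonical divisor of degree $2g-2$. The key inclusion is then $\mathcal{G}^{-1}(H)\subset\{D'\in C^{(g-1)}:D'\le K\}$: for $D'\in\mathcal{G}^{-1}(H)$ we have $\overline{\phi(D')}=H$, and since $\overline{\phi(D')}$ is by definition the intersection of all hyperplanes $H'$ satisfying $D'\le\phi^{*}H'$, we obtain $D'\le\phi^{*}H=K$ as divisors.

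The remaining step is purely combinatorial. Writing $K=P_1+\cdots+P_{2g-2}$ with the $P_j\in C$ listed according to their multiplicity, the map
$$\Psi\colon\{I\subset\{1,\dots,2g-2\}:|I|=g-1\}\longrightarrow\{D'\in C^{(g-1)}:D'\le K\},\qquad I\mapsto D_I:=\sum_{j\in I}P_j,$$
is surjective, and its domain has cardinality $\binom{2g-2}{g-1}$. Non-hyperellipticity forces $g\ge3$, so if $K$ is non-reduced, say $P_1=P_2$, then the two distinct subsets $I=\{1,3,4,\dots,g\}$ and $I'=\{2,3,4,\dots,g\}$ satisfy $D_I=D_{I'}$; hence $\Psi$ is not injective and its image has cardinality strictly less than $\binom{2g-2}{g-1}$. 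Combined with the inclusion of the previous paragraph, this yields the stated bound.

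The main subtlety is really just the claim $D'\le K$ as divisors (not merely $\phi(\mathrm{Supp}\,D')\subset H$); this is immediate from the paper's definition of $\overline{\phi(D')}$ as an intersection of hyperplanes, but it is the geometric heart of the argument. The combinatorial reduction afterwards is routine and does not require any finer information about the multiplicities of $K$.
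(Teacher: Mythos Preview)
Your proof is correct and follows essentially the same approach as the paper: establish the inclusion $\mathcal{G}^{-1}(H)\subset\{D'\le K\}$ via $D'\le(\overline{D'}\cdot C)=(H\cdot C)=K$, and then observe that a non-reduced divisor of degree $2g-2$ has strictly fewer than $\binom{2g-2}{g-1}$ effective subdivisors of degree $g-1$. The paper's proof is simply terser, leaving the combinatorial count implicit where you spell it out via the surjection $\Psi$.
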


\begin{proof}
	We claim that
	$$
	\mathcal{G}^{-1}\left(H\right)\subset\{E\in\mathcal{W}_{g-1}:E\leq K\}.
	$$
	
	Indeed, let $E\in\mathcal{G}^{-1}\left(H\right)$. Then $\overline{E}=H$, so
	$$
	E\leq\left(\overline{E}\cdot C\right)=\left(H\cdot C\right)=K.
	$$
	which proves inclusion.
	
	Therefore, since $K$ is non-reduced, we have
	$$
	\#\left(\mathcal{G}^{-1}\left(H\right)\right)\leq\#\{E\in\mathcal{W}_{g-1}:E\leq K \}<\binom{2g-2}{g-1},
	$$
	since $\deg\left(K\right)=2g-2$. This proves the proposition.
\end{proof}

Next, through an example, we will see that the reciprocal implication of Proposition \ref{g7} doesn't hold. Let $C$ be a non-hyperelliptic curve of genus $4$. We will identify $C$ with its image under the canonical embedding. From \cite[IV, Example 5.5.2]{H} we know that $C$ has a $\mathfrak{g}_3^1$, say $L$. Note that $L$ is base point free, because if $L$ had a base point $p\in C$, then $L-p$ would be a $\mathfrak{g}_2^1$ in $C$, i.e., $C$ would be hyperelliptic, which it cannot be. Then, by Bertini's Theorem we can take a reduced $p+q+r\in L$, i.e., $p,q,r\in C$ are different.

We claim that $L=|p+q+r|$. Indeed, as $\ell\left(p+q+r\right)\geq2$, by the Geometric Riemann-Roch Theorem we have
\begin{equation*}
	\begin{split}
		1\leq\dim\left(\overline{p+q+r}\right)&=\deg\left(p+q+r\right)-\ell\left(p+q+r\right) \\
		&\leq3-2=1,
	\end{split}
\end{equation*}
so $\dim\left(\overline{p+q+r}\right)=1$. Using the Geometric Riemann-Roch Theorem again, we obtain
\begin{equation*}
	\begin{split}
		\ell\left(p+q+r\right)&=\deg\left(p+q+r\right)-\dim\left(\overline{p+q+r}\right)\\
		&=3-1=2,
	\end{split}
\end{equation*}
which proves $L=|p+q+r|$.

Now, by the Riemann-Roch Theorem \cite[IV, Theorem 1.3]{H}
\begin{equation*}
	\begin{split}
		\ell\left(K-p-q-r\right)&=\ell\left(p+q+r\right)-\deg\left(p+q+r\right)+4-1\\
		&=2-3+4-1=2,
	\end{split}
\end{equation*}
so $|K-p-q-r|$ is a complete $\mathfrak{g}_3^1$. As before $|K-p-q-r|$ must be base point free, because otherwise $C$ would have a $\mathfrak{g}_2^1$, i.e., $C$ would be hyperelliptic, which is not the case.

Then, in particular
$$
\ell\left(K-2p-q-r\right)=\ell\left(K-p-2q-r\right)=\ell\left(K-p-q-2r\right)=1,
$$
so
$$
|K-2p-q-r|+p\subsetneqq|K-p-q-r|,
$$
$$
|K-p-2q-r|+q\subsetneqq|K-p-q-r|,
$$
$$
|K-p-q-2r|+r\subsetneqq|K-p-q-r|.
$$

Since $\dim|K-p-q-r|=1$, we can take $D\in|K-p-q-r|$ reduced such that
$$
D\notin\left(|K-2p-q-r|+p\right)\cup\left(|K-p-2q-r|+q\right)\cup\left(|K-p-q-2r|+r\right)
$$

This last condition implies that $p,q,r\notin\Supp\left(D\right)$. Indeed, let's assume by contradiction that $p\leq D$. Then, since $D\sim K-p-q-r$, we have $0\leq D-p\sim K-2p-q-r$, i.e., $D-p\in|K-2p-q-r|$, but this implies $D=\left( D-p\right)+p\in|K-2p-q-r|+p$, which is a contradiction. Similarly, it is proven that $q,r\notin\Supp\left(D\right)$. Therefore $D+p+q+r$ is reduced.

Now let $H=\overline{D+p+q+r}$. By the Geometric Riemann-Roch Theorem
\begin{equation*}
	\begin{split}
		\dim\left(H\right)&=\deg\left(D+p+q+r\right)-\ell\left(D+p+q+r\right)\\
		&=6-4=2,
	\end{split}
\end{equation*}
i.e., $H$ is a plane in $\mathbb{P}^3$.

Let's write down $D=p_1+p_2+p_3$, so that $H\cap C=\{p_1,p_2,p_3,p,q,r\}$.

There must exist a linearly independent subset $\{q_1,q_2,q_3\}$ of $H\cap C$, i.e.,
$$
\overline{q_1+q_2+q_3}=H.
$$

Then $\mathcal{G}:\mathcal{W}_3\to\mathbb{G}\left(2,3\right)$ is defined on $q_1+q_2+q_3$ and $\mathcal{G}\left (q_1+q_2+q_3\right)=\overline{q_1+q_2+q_3}=H$, so $q_1+q_2+q_3\in\mathcal{G}^{-1}\left(H\right)$. Furthermore $\mathcal{G}$ is not defined in $p+q+r$, since $\dim\left(\overline{p+q+r}\right)=1$, i.e., $\overline{p +q+r}$ is a line. Therefore $\left(H\cdot C\right)=D+p+q+r$ is reduced, and we have that
$$
1\leq\#\left(\mathcal{G}^{-1}\left(H\right)\right)<\binom{6}{3}.
$$

This example proves that the reciprocal statement of Proposition \ref{g7} doesn't hold.

Note that in this example $W_3$ is singular, since the condition $\ell\left(p+q+r\right)=2$ implies that $\mathscr{O}_C\left(p+q+r\right)\in\Sing\left(W_3\right)$.

\begin{prop}\label{g8}
	Let $C\in\mathscr{M}_3$ be a non-hyperelliptic curve, so $W_2$ is smooth. Let $D\in C^{(2)}$, $H:=\mathcal{G}\left(D\right)$, $K=\left(H\cdot C\right)$. If $\#\left(\mathcal{G}^{-1}\left(H\right)\right)<\binom{4}{2}$, then $K$ is non-reduced.
\end{prop}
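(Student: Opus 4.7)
The plan is to establish the contrapositive: assuming that $K=\left(H\cdot C\right)$ is reduced, I would show that $\#\left(\mathcal{G}^{-1}\left(H\right)\right)=\binom{4}{2}$. Since $C$ is non-hyperelliptic of genus $3$, the canonical map embeds $C$ as a smooth plane quartic in $\mathbb{P}^2=\mathbb{P}^{g-1}$, and $\mathbb{G}\left(n-1,g-1\right)=\mathbb{G}\left(1,2\right)$ parametrizes lines in $\mathbb{P}^2$. Because $W_2$ is smooth, Theorem \ref{gwn1}(a) ensures that $C$ has no $\mathfrak{g}_2^1$, so $\ell\left(D\right)=1$ for every $D\in C^{(2)}=\mathcal{W}_2$. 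By the Geometric Riemann-Roch Theorem, $\dim\overline{D}=\deg\left(D\right)-\ell\left(D\right)=1$, hence $\mathcal{G}$ is defined on all of $C^{(2)}$ and sends each divisor to the line it spans in $\mathbb{P}^2$.

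\medskip

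First I would invoke the inclusion established in the proof of Proposition \ref{g7}, namely
\[
\mathcal{G}^{-1}\left(H\right)\subset\{E\in C^{(2)}:E\leq K\}.
\]
Writing the reduced divisor as $K=p_1+p_2+p_3+p_4$ with the $p_j\in C$ pairwise distinct, this already bounds the cardinality above by $\binom{4}{2}=6$. The crux is then to show that each of the six divisors $p_i+p_j$ with $1\leq i<j\leq 4$ actually belongs to $\mathcal{G}^{-1}\left(H\right)$. Fix $i\neq j$: since $p_i\neq p_j$ and $\dim\overline{p_i+p_j}=1$ by the previous paragraph, the span $\overline{p_i+p_j}$ is the unique projective line through $p_i$ and $p_j$. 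Both of these points lie on $H$, so this line must equal $H$, giving $\mathcal{G}\left(p_i+p_j\right)=H$.

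\medskip

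The distinctness of $p_1,\ldots,p_4$ makes the six divisors $p_i+p_j$ pairwise distinct elements of $C^{(2)}$, so $\#\left(\mathcal{G}^{-1}\left(H\right)\right)\geq 6$. Combined with the upper bound, this yields the equality $\#\left(\mathcal{G}^{-1}\left(H\right)\right)=\binom{4}{2}$, which is the contrapositive of the proposition. I do not anticipate any genuine obstacle here: the argument only uses the Geometric Riemann-Roch Theorem and the fact that smoothness of $W_2$ forces $\ell\left(D\right)=1$ throughout $C^{(2)}$, so that the span of every degree-two divisor is an honest line; a direct count over the $\binom{4}{2}$ pairs of points in the support of $K$ then closes the argument.
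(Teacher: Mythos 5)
Your proposal is correct and follows essentially the same route as the paper: both prove the contrapositive and identify $\mathcal{G}^{-1}\left(H\right)$ with the set of degree-two subdivisors of the reduced divisor $K$, which has exactly $\binom{4}{2}$ elements. The paper states this identification in one line using the smoothness of $W_2$, while you supply the details (the upper bound via the inclusion from Proposition \ref{g7} and the lower bound via the fact that any two distinct points of $K$ span the line $H$), so your write-up is simply a fleshed-out version of the same argument.
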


\begin{proof}
	We will prove the contrapositive. Let's assume that $K$ is reduced. Since $W_2$ is smooth, we have
	$$
	\mathcal{G}^{-1}\left(H\right)=\{E\in C^{(2)}:E\leq K\},
	$$
	which has exactly $\binom{4}{2}$ elements. This proves the proposition.	
\end{proof}

By \cite[Proposition 11.2.8.]{BL} if $C\in\mathscr{M}_g$ in non-hyperelliptic and $g\geq4$ then we have that $\dim\left(\Sing\left(W_{g-1}\right)\right)=g-4$, so $W_{g-1}$ is singular. Question: Is there a condition on $W_{g-1}$ for $g\geq4$ such that if the fiber $\mathcal{G}^{-1}\left(H\right)$ has lower cardinality than the generic fiber, then $K=\left(H\cdot C\right)$ is non-reduced?

\section{Multiple locus of the Gauss map}\label{lm}

This section is divided into three subsections. In the first we introduce the notion of multiple locus of a morphism of varieties, to then study the multiple locus of the Gauss map in $\mathcal{W}_n$. We focus on the non-hyperelliptic case with $2\leq n\leq g-2$, the other cases being well known. We first show that, in this case, the multiple locus of the Gauss map is a proper closed subset of $\mathcal{W}_n$. In the second subsection we study conditions under which the multiple locus of the Gauss map is or isn't empty in terms of Brill-Noether Theory. The third subsection, being the longest of this section, is dedicated to proving Theorem \ref{lm1}, which provides a description of the multiple locus of the Gauss map. For this we use several basic topological results.

\subsection{Definition and basic facts}

\begin{defn}
	Let $f:X\to Y$ be a morphism of quasi-projective varieties. Consider the set
	$$
	S_f:=\{x\in X:\text{exists }z\in X, \ z\neq x, \ f\left(z\right)=f\left(x\right)\}.
	$$
	
	We define the \textit{multiple locus} $R_f\subset X$ \textit{of} $f$ as the closure of the set $S_f$, i.e., $R_f:=\overline{S_f}$.
\end{defn}

Consider the multiple locus $R_{\mathcal{G}_n}$ of the Gauss map $\mathcal{G}_n:\mathcal{W}_n\to\mathbb{G}\left(n-1,g-1\right)$. We will write simply $S_n=S_{\mathcal{G}_n}$ and $R_n=R_{\mathcal{G}_n}$. Note that, although $\mathcal{W}_n:=\rho^{-1}\left(\left(W_n\right)_\sm\right)$ is open in $C^{(n)}$, when we consider $\mathcal{W}_n$ as the total space, then $\mathcal{W}_n$ is at the same time open and closed. Now, if $C$ is hyperelliptic, then $R_n=\mathcal{W}_n$ by Theorem \ref{g1}.(a). If $C$ is non-hyperelliptic: $R_1=\varnothing$; $R_{g-1}=\mathcal{W}_{g-1}$ by Theorem \ref{g1}.(b); if $2\leq n\leq g-2$, $R_n$ is a proper closed subset of $\mathcal{W}_n$. More precisely, we have the following. Recall the definition of the closed set $\mathcal{C}_n:=p\left(s^{-1}\left(C_{n+1}^1\right)\right)$ from the previous chapter.

\begin{prop}\label{lm11}
	Let $C\in\mathscr{M}_g$ be non-hyperelliptic and $2\leq n\leq g-2$. Then $R_n\subset\mathcal{C}_n\cap\mathcal{W}_n$.
\end{prop}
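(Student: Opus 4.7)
The plan is to leverage Proposition \ref{g4}, which says that on the open dense subset $\mathcal{U}_n := \mathcal{W}_n\setminus\mathcal{C}_n$ one has $(\overline{D}\cdot C)=D$ for every $D\in\mathcal{U}_n$. The key observation is that this identity already forces the Gauss map to be injective on $\mathcal{U}_n$, so the set $S_n$ of non-injectivity points is automatically disjoint from $\mathcal{U}_n$. Once this is established, the closedness of $\mathcal{C}_n$ in $C^{(n)}$ lets us pass from $S_n$ to its closure $R_n$.

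More precisely, suppose for contradiction that there is some $D\in S_n\cap\mathcal{U}_n$. By the definition of $S_n$, we can pick $E\in\mathcal{W}_n$ with $E\neq D$ and $\mathcal{G}(E)=\mathcal{G}(D)$, i.e.\ $\overline{E}=\overline{D}$ in $\mathbb{G}(n-1,g-1)$. For any hyperplane $H\subset\mathbb{P}^{g-1}$ containing $\overline{E}$, the definition of the linear span gives $E\leq \phi^*H$, whence $E\leq(\overline{E}\cdot C)$ by taking the greatest common divisor over such $H$. Combining this with $\overline{E}=\overline{D}$ and with Proposition \ref{g4} applied to $D\in\mathcal{U}_n$, we obtain
\[
E\leq(\overline{E}\cdot C)=(\overline{D}\cdot C)=D.
\]
Since $\deg(E)=n=\deg(D)$, the divisor $D-E$ is effective of degree zero, so $E=D$, contradicting $E\neq D$. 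Therefore $S_n\cap\mathcal{U}_n=\varnothing$, and since $\mathcal{U}_n=\mathcal{W}_n\setminus\mathcal{C}_n$, this is exactly the inclusion $S_n\subset\mathcal{W}_n\cap\mathcal{C}_n$.

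To conclude, I would just pass to closures. By construction $\mathcal{C}_n=p(s^{-1}(C_{n+1}^1))$ is closed in $C^{(n)}$ (as the image under the proper projection $p$ of a closed subvariety of $C^{(n)}\times C$), so $\mathcal{W}_n\cap\mathcal{C}_n$ is closed in $\mathcal{W}_n$. Taking the closure in $\mathcal{W}_n$ of both sides of $S_n\subset\mathcal{W}_n\cap\mathcal{C}_n$ yields $R_n=\overline{S_n}\subset\mathcal{W}_n\cap\mathcal{C}_n$, as required. The argument has essentially no obstacle of substance; the only mildly subtle point worth being explicit about is the elementary identity $E\leq(\overline{E}\cdot C)$, which is what turns the equality of linear spans $\overline{E}=\overline{D}$ into a divisor inequality and thereby makes Proposition \ref{g4} applicable.
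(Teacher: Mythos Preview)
Your proof is correct and follows essentially the same approach as the paper's: both argue by contradiction that $S_n\cap\mathcal{U}_n=\varnothing$ using Proposition~\ref{g4}, then take closures. The only cosmetic difference is that the paper refers back to the proof of Theorem~\ref{g1}(c) for the step $E\leq(\overline{D}\cdot C)=D\Rightarrow E=D$, whereas you spell this out explicitly.
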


\begin{proof}
	If $S_n=\varnothing$, then $R_n=\varnothing\subset\mathcal{C}_n\cap\mathcal{W}_n$. Let's then assume that $S_n\neq\varnothing$. Let $D\in S_n$, so that there exists $E\in\mathcal{W}_n$ such that $E\neq D$ and $\overline{E}=\overline{D}$. This implies that $\{D,E\}\subset\mathcal{G}^{-1}\left(\overline{D}\right)$. We must prove that $D\in\mathcal{C}_n$. By contradiction, let's assume $D\notin\mathcal{C}_n$, then $D$ is in the open set $\mathcal{U}_n$ of Proposition \ref{g4}. As we saw in the proof of Theorem \ref{g1}.(c) we have $\{D,E\}\subset\mathcal{G}^{-1}\left(\overline{D}\right) =\{D\}$, which is a contradiction, since $E\neq D$. Therefore we must have that $D\in\mathcal{C}_n$. This proves that $S_n\subset\mathcal{C}_n\cap\mathcal{W}_n$. Taking closure in $\mathcal{W}_n$ gives $R_n\subset\mathcal{C}_n\cap\mathcal{W}_n$.
\end{proof}

Next we will study conditions so that $S_n\neq\varnothing$.

\begin{prop}\label{lm10}
	Let $C\in\mathscr{M}_g$ be non-hyperelliptic and $2\leq n\leq g-2$. Then $S_n\neq\varnothing$ if and only if $C_{n+1}^1\neq\varnothing$
\end{prop}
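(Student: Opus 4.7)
The plan is to prove the two directions separately. The forward direction is essentially free: by Proposition \ref{lm11} we have $S_n \subset \mathcal{C}_n \cap \mathcal{W}_n$, so $S_n \neq \varnothing$ forces $\mathcal{C}_n \neq \varnothing$; since the sum map $s : C^{(n)} \times C \to C^{(n+1)}$ is surjective, $\mathcal{C}_n = p(s^{-1}(C_{n+1}^1))$ is non-empty exactly when $C_{n+1}^1$ is, which gives the implication.

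The converse requires actually producing two distinct $D, E \in \mathcal{W}_n$ with the same Gauss image starting from the hypothesis $C_{n+1}^1 \neq \varnothing$. My strategy is first to upgrade this hypothesis to the existence of a divisor $F \in C^{(n+1)}$ with $\ell(F) = 2$ exactly, and then to read $D$ and $E$ off from $F$ as subdivisors. For the upgrade I would start with any $L \in W_{n+1}^1$ with $\ell(L) = r + 1 \geq 2$; if $r = 1$ we are done. Otherwise I would iteratively replace $L$ by $L \otimes \mathscr{O}_C(p - q)$ for generic $p, q \in C$: two Riemann-Roch checks are needed, first that $\ell(L(-q)) = \ell(L) - 1$ for $q$ outside the finite base locus of $|L|$, and second that $\ell(L(-q+p)) = \ell(L(-q))$ for $p$ outside the base locus of the residual system $|K - L + q|$, which is positive-dimensional since $\ell(K - L + q) = g - n - 2 + \ell(L) \geq 2$ by Riemann-Roch. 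Iterating $r - 1$ times produces a line bundle of degree $n + 1$ with $\ell = 2$, and I take $F$ to be any divisor in its complete linear system.

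Given such $F$, I would let $B$ be the base divisor of $|F|$, so that $|F - B|$ is a base-point-free pencil of degree $n + 1 - \deg B$. Because $C$ is non-hyperelliptic this degree is at least $3$, hence $\deg B \leq n - 2$. I would then choose by Bertini a reduced $F' \in |F - B|$ with $\Supp(F') \cap \Supp(B) = \varnothing$, set $F_0 = F' + B$, and pick two distinct points $p \neq p'$ in $\Supp(F')$. The divisors $D := F_0 - p$ and $E := F_0 - p'$ are distinct and both satisfy $\ell = \ell(F_0) - 1 = 1$ since $p, p'$ are not base points of $|F|$, so $D, E \in \mathcal{W}_n$. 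By the Geometric Riemann-Roch Theorem, $\dim \overline{F_0} = (n+1) - 2 = n - 1 = \dim \overline{D} = \dim \overline{E}$; since $\overline{D}, \overline{E} \subset \overline{F_0}$ all three spans coincide, and therefore $D \in S_n$.

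The main obstacle is the perturbation step producing $F$ with $\ell(F) = 2$: a priori every element of $C_{n+1}^1$ could have $\ell \geq 3$, so the equivalence rests on the non-emptiness of the stratum $C_{n+1}^1 \setminus C_{n+1}^2$ whenever $C_{n+1}^1 \neq \varnothing$, which is exactly what the Riemann-Roch iteration is designed to ensure.
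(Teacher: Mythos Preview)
Your proof is correct and follows the same overall architecture as the paper: the forward direction via Proposition~\ref{lm11}, and the converse by first reducing to a divisor $F$ of degree $n+1$ with $\ell(F)=2$ exactly, then splitting off two degree-$n$ subdivisors with the same span.

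The one substantive difference is how you obtain $\ell(F)=2$. The paper invokes \cite[IV, (1.7) Lemma]{ACGH}, which says (since $g-(n+1)+1=g-n\geq 2$) that no irreducible component of $C_{n+1}^1$ lies in $C_{n+1}^2$, so a divisor with $\dim|D|=1$ exists automatically. Your Riemann--Roch iteration $L\mapsto L(-q+p)$ achieves the same conclusion by hand, and is more self-contained; it is in effect a direct proof of the special case of that lemma needed here. For the bound $\deg(F-B)\geq 3$ you use the absence of a $\mathfrak g_2^1$ directly, whereas the paper phrases the same fact via the strict Clifford inequality; these are equivalent. You are also slightly more careful than the paper in arranging $\Supp(F')\cap\Supp(B)=\varnothing$ so that the chosen points $p,p'$ are genuinely non-base points of $|F_0|$; this is a valid extra precaution (a generic $F'\in|F-B|$ avoids the finite set $\Supp(B)$), and the paper's wording at that step is a bit loose.
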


\begin{proof}
	Let's first assume that $S_n\neq\varnothing$. So
	$\varnothing\neq S_n\subset R_n\subset\mathcal{C}_n\cap\mathcal{W}_n\subset\mathcal{C}_n\Rightarrow\mathcal{C}_n\neq\varnothing$. Since $\mathcal{C}_n:=p\left(s^{-1}\left(C_{n+1}^1\right)\right)$, this implies that $C_{n+1}^1 \neq\varnothing$.	
	
	Conversely, suppose that $C_{n+1}^1\neq\varnothing$. Since $g-\left(n+1\right)-1=g-n\geq2$, by \cite[IV, (1.7) Lemma]{ACGH} no irreducible component of $C_{n+1}^1$ is completely contained in $C_{n+1}^2$. This implies that there exists $D\in C_{n+1}^1\setminus C_{n+1}^2$, i.e., $\deg\left(D\right)=n+1$ and $\dim| D|=1$.
	
	We have two cases: $|D|$ is base point free or $|D|$ has base points.
	
	Let's first assume that $|D|$ is base point free. Then by Bertini's Theorem, there is a reduced $E\in|D|$. By the Geometric Riemann-Roch Theorem, we have $\dim\left(\overline{E}\right)=n-1$. Now, since $E$ is reduced we can take $p_1,p_2\in\Supp\left(E\right)$ such that $p_1\neq p_2$. Since $|E|=|D|$ is base point free, then $p_1$ and $p_2$ aren't base points of $|E|$, so by the Geometric Riemann-Roch Theorem
	$$
	\dim\left(\overline{E-p_1}\right)=\dim\left(\overline{E-p_2}\right)=n-1,
	$$
	which implies $\overline{E-p_1}=\overline{E-p_2}=\overline{E}$, and $E-p_1\neq E-p_2$. Therefore $E-p_1\in S_n\neq\varnothing$.
	
	Now suppose that $|D|$ has base points and let $B$ be its base locus. Since $D-B$ is a special divisor, by Clifford's Theorem we have
	$$
	1=\dim|D-B|<\frac{\deg\left(D-B\right)}{2}\Rightarrow\deg\left(D-B\right)\geq3.
	$$
	
	By Bertini's Theorem, there exists $E\in|D-B|$ reduced, since $|D-B|$ is base point free. Note that $\ell\left(E+B\right)=2$, since $E+B\in|D|$. Then, by the Geometric Riemann-Roch Theorem $\dim\left(\overline{E+B}\right)=n-1$. Since $E$ is reduced and $\deg\left(E\right)=\deg\left(D-B\right)\geq3$, we can take $p_1,p_2\in\Supp\left(E\right)$ such that $p_1\neq p_2$. Since these $p_1,p_2$ aren't base points of $|E+B|$, by the Geometric Riemann-Roch Theorem we have
	$$
	\dim\left(\overline{E+B-p_1}\right)=\dim\left(\overline{E+B-p_2}\right)=n-1,
	$$
	which implies $\overline{E+B-p_1}=\overline{E+B-p_2}=\overline{E+B}$, and $E+B-p_1\neq E+B-p_2$. Therefore $E+B-p_1\in S_n\neq\varnothing$. This proves the proposition.
\end{proof}

\begin{prop}\label{lm12}
	Let $C\in\mathscr{M}_g$ and $n\geq0$. Then $C_{n+1}^1\neq\varnothing$ if and only if $C$ has a $\mathfrak{g}_{n+1}^1$.
\end{prop}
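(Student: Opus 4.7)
The plan is to prove both implications directly from the definitions, as no geometric machinery is really needed here.

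For the forward implication, I would take $D\in C_{n+1}^1$, so that $\deg(D)=n+1$ and $\dim|D|\geq 1$. Then $L(D)$ has dimension at least $2$, and choosing any $2$-dimensional subspace $V\subset L(D)$ gives $\mathbb{P}(V)$, which by definition is a $\mathfrak{g}_{n+1}^1$ on $C$. (Alternatively, if $|D|$ itself is a pencil, it already furnishes the required $\mathfrak{g}_{n+1}^1$; if $\dim|D|>1$, we just restrict to any $2$-dimensional linear subseries.)

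For the converse, suppose $C$ carries a $\mathfrak{g}_{n+1}^1$. By definition this is $\mathbb{P}(V)$ for some $2$-dimensional subspace $V\subset L(D_0)$ with $D_0\in\Div^{n+1}(C)$. Pick any nonzero $f\in V$ and set $E:=D_0+(f)$; then $E\geq 0$ (because $f\in L(D_0)$) and $\deg(E)=n+1$, so $E\in C^{(n+1)}$. Multiplication by $f^{-1}$ identifies $L(D_0)\hookrightarrow L(E)$, so $V$ embeds as a $2$-dimensional subspace of $L(E)$, giving $\ell(E)\geq 2$, i.e.\ $\dim|E|\geq 1$. Hence $E\in C_{n+1}^1$, so $C_{n+1}^1\neq\varnothing$.

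There is no real obstacle here; the only point requiring a moment of care is that in the definition of a $\mathfrak{g}_{n+1}^1$ the underlying divisor $D_0$ need not be effective, so in the converse direction one has to translate by a nonzero element of $V$ to land in $C^{(n+1)}$. Once this is observed, both implications are immediate.
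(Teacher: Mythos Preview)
Your proof is correct and follows essentially the same approach as the paper. The only difference is cosmetic: in the converse the paper simply picks an element $D\in L$ of the linear series (automatically effective of degree $n+1$, with $\dim|D|\ge\dim L=1$), whereas you unpack this step by starting from the possibly non-effective underlying divisor $D_0$ and translating by $(f)$ to obtain an effective $E$---but this $E$ is precisely an element of $L$, so the two arguments coincide.
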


\begin{proof}
	
	Suppose $C_{n+1}^1\neq\varnothing$. Then given $D\in C_{n+1}^1$, we have $\deg\left(D\right)=n+1$ and $\ell\left(D\right)\geq2$. Then we can take a $\mathbb{F}$-vector subspace $V$ of $L\left(D\right)$ of dimension $2$, so that $\mathbb{P}\left(V\right)$ is a $\mathfrak{g}_{n+1}^1$.
	
	Conversely, suppose $C$ has a $\mathfrak{g}_{n+1}^1$, say $L$. Then, given $D\in L$ we have $\deg\left(D\right)=n+1$ and $\dim|D|\geq1$, i.e., $D\in C_{n+1}^ 1\neq\varnothing$. This proves the proposition.
\end{proof}

\begin{cor}\label{lm13}
	Let $C\in\mathscr{M}_g$ be non-hyperelliptic and $2\leq n\leq g-2$. We have $S_n\neq\varnothing$ if and only if $C$ has a $\mathfrak{g}_{n+1}^1$.
\end{cor}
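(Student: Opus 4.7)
The statement is an immediate consequence of the two preceding propositions, so my plan is essentially to chain them together. Proposition \ref{lm10} already gives the equivalence $S_n\neq\varnothing \iff C_{n+1}^1 \neq \varnothing$ in the non-hyperelliptic range $2\leq n\leq g-2$, and Proposition \ref{lm12} (valid for arbitrary $C\in\mathscr{M}_g$ and $n\geq 0$) asserts that $C_{n+1}^1\neq\varnothing$ if and only if $C$ carries a $\mathfrak{g}_{n+1}^1$. Concatenating these two biconditionals yields the claim.

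Concretely, I would write: assume $S_n\neq\varnothing$; by Proposition \ref{lm10} this forces $C_{n+1}^1\neq\varnothing$, and then Proposition \ref{lm12} produces a $\mathfrak{g}_{n+1}^1$ on $C$. For the converse, starting from a $\mathfrak{g}_{n+1}^1$ on $C$, Proposition \ref{lm12} gives $C_{n+1}^1\neq\varnothing$, and Proposition \ref{lm10} then yields $S_n\neq\varnothing$.

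There is no real obstacle here; the content of the corollary is already packaged in the two propositions it cites. The only thing to check is that the hypotheses of Proposition \ref{lm10} (non-hyperelliptic, $2\leq n\leq g-2$) are exactly those assumed in the corollary, and that Proposition \ref{lm12} applies without hypothesis. Both are immediate, so the proof is a one-line citation. I would not attempt to reprove either proposition in the body of the corollary; a sentence of the form \emph{``This follows directly from Propositions \ref{lm10} and \ref{lm12}.''} is enough.
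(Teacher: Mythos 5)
Your proof is correct and is exactly the paper's argument: the corollary is proved there by the one-line citation ``It follows directly from Propositions \ref{lm10} and \ref{lm12}.'' Nothing further is needed.
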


\begin{proof}
	It follows directly from Propositions \ref{lm10} and \ref{lm12}.
\end{proof}

\subsection{An application of Brill-Noether Theory}

In this subsection we will determine when $S_n\neq\varnothing$ in terms of Brill-Noether Theory.

\begin{prop}
	Let $2\leq n\leq g-2$. We have the following:
	\begin{itemize}
		\item[(a)] If $g\leq2n$, for every non-hyperelliptic $C\in\mathscr{M}_g$ we have $S_n\neq\varnothing$.
		
		\item[(b)] If $g>2n$, there exists an open (dense) subset $\mathscr{U}_n$ of $\mathscr{M}_g$ such that: $C\in\mathscr{M}_g$ is non-hyperelliptic and $S_n=\varnothing$ if and only if $C\in\mathscr{U}_n$.
		
		\item[(c)] If $g>2n$, there exists a subvariety $\mathscr{V}_n$ of $\mathscr{M}_g$ of positive codimension and positive dimension such that: $C\in\mathscr{ M}_g$ is non-hyperelliptic and $S_n\neq\varnothing$ if and only if $C\in\mathscr{V}_n$.
	\end{itemize}
\end{prop}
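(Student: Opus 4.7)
The plan is to reduce all three statements to a single numerical criterion via Corollary \ref{lm13}, which for non-hyperelliptic $C\in\mathscr{M}_g$ with $2\leq n\leq g-2$ identifies the condition $S_n\neq\varnothing$ with the existence of a $\mathfrak{g}_{n+1}^1$ on $C$. The relevant Brill-Noether number is
$$
\rho(g,1,n+1) \;=\; g - 2\bigl(g-(n+1)+1\bigr) \;=\; 2n - g,
$$
so the sign of $\rho$ flips exactly at the dichotomy $g\leq 2n$ versus $g>2n$ in the statement.

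For part (a), $g\leq 2n$ means $\rho(g,1,n+1)\geq 0$, so by the existence statement in \cite[V.(1.1)]{ACGH} every curve $C\in\mathscr{M}_g$ possesses a $\mathfrak{g}_{n+1}^1$; then Corollary \ref{lm13} gives $S_n\neq\varnothing$ for every non-hyperelliptic $C$. For parts (b) and (c), assume $g>2n$ so that $\rho<0$. A preliminary but crucial observation is that every hyperelliptic curve has a $\mathfrak{g}_{n+1}^1$: if $|D_0|$ is the unique $\mathfrak{g}_2^1$, then $|D_0+(n-1)p|$ is a $\mathfrak{g}_{n+1}^1$ for any $p\in C$. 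Hence the locus of curves \emph{without} a $\mathfrak{g}_{n+1}^1$ is automatically contained in the non-hyperelliptic locus. For (b) I would take $\mathscr{U}_n\subset\mathscr{M}_g$ to be precisely this locus; by the non-existence part of \cite[V.(1.1)]{ACGH} it is open and dense in $\mathscr{M}_g$, and combining with the observation and Corollary \ref{lm13} gives the biconditional.

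For (c) I would define $\mathscr{V}_n$ as the locus of non-hyperelliptic curves admitting a $\mathfrak{g}_{n+1}^1$, i.e.\ the complement of $\mathscr{U}_n$ minus the hyperelliptic locus. The dimension count I would use is the Hurwitz/gonality stratum computation: when $\rho(g,1,n+1)<0$, the $(n+1)$-gonal locus (i.e.\ curves with a $\mathfrak{g}_{n+1}^1$) has codimension $-\rho=g-2n>0$ in $\mathscr{M}_g$, equivalently dimension $3g-3-(g-2n)=2g+2n-3>0$. Since the hyperelliptic locus has dimension $2g-1<2g+2n-3$ (using $n\geq 2$), removing it changes neither the dimension nor the codimension of $\mathscr{V}_n$. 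Invoking Corollary \ref{lm13} one more time closes the biconditional.

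The main obstacle is invoking the correct dimension of the $(n+1)$-gonal locus in $\mathscr{M}_g$ for $\rho<0$; the rest is routine bookkeeping around the dichotomy $\rho\geq 0$ vs.\ $\rho<0$ and the elementary remark that hyperellipticity forces the existence of $\mathfrak{g}_{n+1}^1$'s, so the open locus appearing in (b) is automatically contained in the non-hyperelliptic locus.
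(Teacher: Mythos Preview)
Your proposal is correct and follows essentially the same route as the paper: reduce everything to Corollary \ref{lm13}, compute $\rho(g,1,n+1)=2n-g$, invoke the Brill--Noether existence/non-existence dichotomy from \cite[V.(1.1), V.(1.5)]{ACGH} for (a) and (b), and for (c) set $\mathscr{V}_n=\mathscr{M}_{g,n+1}^1\setminus\mathscr{H}_g$ and compare dimensions. The only cosmetic difference is that you assert the exact codimension $-\rho$ of the $(n+1)$-gonal locus, whereas the paper only needs (and only uses, via \cite[Theorem 0.1]{S}) the lower bound $\dim\mathscr{C}\geq 3g-3+\rho(g,1,n+1)=2g+2n-3$ for every component $\mathscr{C}$; either suffices to conclude $2g+2n-3>2g-1=\dim\mathscr{H}_g$.
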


\begin{proof}
	(a) Hypothesis $g\leq2n$ is equivalent to $\rho\left(g,1,n+1\right)\geq0$, where 
	$$
	\rho\left(g,r,d\right):=g-\left(r+1\right)\left(g-d+r\right).
	$$
	Then, by \cite[V. (1.1)]{ACGH} we have $\mathscr{M}_{g,n+1}^1=\mathscr{M}_g$, i.e., each $C\in\mathscr{ M}_g$ has a $\mathfrak{g}_{n+1}^1$. In particular, each $C\in\mathscr{M}_g\setminus\mathscr{H}_g$ has a $\mathfrak{g}_{n+1}^1$, where $\mathscr{H}_g=\mathscr{M}_{g,2}^1$ is the hyperelliptic locus. By Corollary \ref{lm13} for each $C\in\mathscr{M}_g\setminus\mathscr{H}_g$ we have $S_n\neq\varnothing$. This proves item (a).

	(b) Hypothesis $g>2n$ is equivalent to $\rho\left(g,1,n+1\right)<0$. Then, by \cite[V. (1.5)]{ACGH} we have that $\mathscr{M}_{g,n+1}^1$ is a proper closed subvariety of $\mathscr{M}_g$. Note that $\mathscr{H}_g=\mathscr{M}_{g,2}^1\subset\mathscr{M}_{g,n+1}^1$, so $\mathscr{M}_{g,n+1}^1\neq\varnothing$. By Corollary \ref{lm13} the open set $\mathscr{U}_n:=\mathscr{M}_g\setminus\mathscr{M}_{g,n+1}^1$ satisfies the property of item (b).
	
	(c) As in item (b), the hypothesis $g>2n$ implies that $\mathscr{M}_{g,n+1}^1\subsetneqq\mathscr{M}_g$. Moreover, we have that $\mathscr{H}_g=\mathscr{M}_{g,2}^1\subset\mathscr{M}_{g,n+1}^1$. We claim that $\mathscr{H}_g\subsetneqq\mathscr{M}_{g,n+1}^1$ (recall that $\mathscr{H}_g$ is irreducible and of dimension $2g-1$). Indeed, by \cite[Theorem 0.1]{S} each component $\mathscr{C}$ of $\mathscr{M}_{g,n+1}^1$ satisfies
	\begin{equation*}
		\begin{split}
			\dim\left(\mathscr{C}\right)\geq3g-3+\rho\left(g,1,n+1\right)&=2g-3+2n\\
			&\geq2g-3+4 \\
			&=2g+1>2g-1=\dim\left(\mathscr{H}_g\right).
		\end{split}
	\end{equation*}
	
	In particular, if $\mathscr{C}$ is an irreducible component of $\mathscr{M}_{g,n+1}^1$ that contains $\mathscr{H}_g$, by the previous dimension inequality we must have that $\mathscr{ H}_g\subsetneqq\mathscr{C}\subset\mathscr{M}_{g,n+1}^1$, which proves the claim.
	
	We define the variety $\mathscr{V}_n:=\mathscr{M}_{g,n+1}^1\setminus\mathscr{H}_g\neq\varnothing$. By Corollary \ref{lm13} this variety $\mathscr{V}_n$ satisfies the property of item (c). This completes the proof of the proposition.
\end{proof}

\subsection{Description of the multiple locus of the Gauss map}

In this subsection we will obtain a description of the multiple locus of the Gauss map in $\mathcal{W}_n$, by proving the reciprocal inclusion of Proposition \ref{lm11}.

\begin{thm}\label{lm1}
	Let $C\in\mathscr{M}_g$ be non-hyperelliptic and $2\leq n\leq g-2$. Then $R_n=\mathcal{C}_n\cap\mathcal{W}_n$.
\end{thm}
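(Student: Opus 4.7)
The plan is to strengthen Proposition \ref{lm11}, which already gives $R_n \subset \mathcal{C}_n \cap \mathcal{W}_n$, by establishing the reverse inclusion. Fix $D \in \mathcal{C}_n \cap \mathcal{W}_n$: by definition of $\mathcal{C}_n$ there is $q \in C$ with $D + q \in C_{n+1}^1$, and since $\ell(D) = 1$ this forces $\ell(D+q) = 2$, so $M := |D+q|$ is a complete $\mathfrak{g}_{n+1}^1$. A first observation I would record is that $q$ cannot be a base point of $M$, since otherwise $\ell(D) = \ell(D+q-q) = \ell(D+q) = 2$; this will be crucial below. The overall strategy is to produce a one-parameter family $\{D_t\} \subset S_n$ specializing to $D = D_0$ as one moves the divisor $E' \in M$; this will place $D \in \overline{S_n} = R_n$.

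To implement this I would consider the incidence variety $J := s^{-1}(M) \subset C \times C^{(n)}$, where $s: C \times C^{(n)} \to C^{(n+1)}$ is the sum morphism. Since $s$ is finite (of generic degree $n+1$), $J$ is of pure dimension $1$. Let $J_0$ be the irreducible component of $J$ through $(q, D)$, so $s|_{J_0}: J_0 \to M$ is a finite surjective morphism. Let $J_{NB} \subset J$ be the open subset consisting of pairs $(p', D')$ with $p'$ not a base point of $M$; since $(q,D) \in J_{NB}$, the intersection $J_0 \cap J_{NB}$ is dense open in $J_0$. For any $(p', D') \in J_0 \cap J_{NB}$, setting $E' := D' + p' \in M$, the fact that $p'$ is not a base point gives $\ell(D') = \ell(E') - 1 = 1$, so $D' \in \mathcal{W}_n$, and by the Geometric Riemann-Roch Theorem $\dim \overline{D'} = n - 1 = \dim \overline{E'}$, hence $\overline{D'} = \overline{E'}$.

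The remaining task is to show $D' \in S_n$ for the generic $(p', D') \in J_0 \cap J_{NB}$. Writing $E' = B + F$ with $B$ the base locus of $M$ and $F \in |M - B|$, the residual system is a base-point-free pencil, which has degree $\geq 2$ because $C \not\simeq \mathbb{P}^1$; so $\deg F = (n+1) - \deg B \geq 2$. By Bertini's theorem, for generic $E'$, $F$ is reduced with $\Supp(F) \cap \Supp(B) = \emptyset$, so $\Supp(F)$ contains at least two distinct non-base-points of $M$. Choosing $p'' \in \Supp(F) \setminus \{p'\}$ and setting $D'' := E' - p''$, the same reasoning yields $D'' \in \mathcal{W}_n$, $D'' \neq D'$, and $\overline{D''} = \overline{E'} = \overline{D'}$, whence $D' \in S_n$. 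Denoting by $\pi_2: J \to C^{(n)}$ the second projection, this gives $\pi_2(U) \subset S_n$ for some dense open $U \subset J_0$, and by continuity $\pi_2(J_0) \subset \overline{\pi_2(U)} \subset \overline{S_n} = R_n$. In particular $D = \pi_2(q, D) \in R_n$, closing the argument. The hardest point is managing a possibly non-trivial base locus of $M$; it is precisely the bound $\deg F \geq 2$ (forced by the non-rationality of $C$) that guarantees the existence of a second non-base-point $p''$ distinct from $p'$, and hence the second decomposition $D''$ needed to realize $D' \in S_n$.
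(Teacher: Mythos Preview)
Your argument is correct. The core computational step---given $E'$ in the pencil $M$ with reduced moving part $F$, removing two distinct non-base-points $p', p''$ yields two distinct divisors $D', D'' \in \mathcal{W}_n$ with $\overline{D'} = \overline{D''}$---is exactly the content of the paper's Proposition \ref{lm2}. Where you diverge from the paper is in the closure step. The paper works globally: it decomposes $s^{-1}(C_{n+1}^1)$ into irreducible components and proves a chain of density lemmas (Lemmas \ref{lm9}--\ref{lm7}, Corollary \ref{lm3}) to show that the inclusion of Proposition \ref{lm2} survives passage to closures. You instead work locally inside the single pencil $M = |D+q|$: the irreducible curve $J_0 \subset s^{-1}(M)$ through $(q,D)$ surjects onto $M$ under $s$, so its generic point lies over a generic (hence Bertini-good) member of $M$, and projecting back gives a one-parameter family in $S_n$ specializing to $D$. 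This is more direct and avoids the paper's sequence of topological lemmas; the price is that one must check $J_0$ is one-dimensional and dominates $M$, which follows from finiteness (and, implicitly, flatness or openness) of $s$. One small remark: since $C$ is non-hyperelliptic, Clifford's Theorem actually gives $\deg F \geq 3$ (this is what the paper uses), but your weaker bound $\deg F \geq 2$ already suffices for the argument.
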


To prove this theorem we will use some notations. Given a complete linear system $L$, we define $L_\red$ as follows: If $\dim\left(L\right)=0$, we define $L_\red:=L$. Suppose $\dim(L)\geq1$. If $L$ is base point free, we define $L_\red:=\{D\in L:D\text{ reduced}\}$. If $L$ has a base locus $B$, we define $L_\red:=\{D\in L:D-B\text{ reduced}\}$. In any case $L_\red$ is an open dense subset of $L$ by Bertini's Theorem.

For $r\geq0$ and $d\geq1$ let's consider the closed variety (see \cite[IV. \textsection3]{ACGH}) 
$$
W_d^r\left(C\right):=\{|D|:\deg\left(D\right)=d, \ \dim|D|\geq r\}.
$$

Observe that: $C^{(n+1)}=\bigcup_{L\in W_{n+1}^0\left(C\right)}L$; $C_{n+k}^k=\bigcup_{L\in W_{n+k}^k\left(C\right)}L$ for each $k\geq1$, in particular $C_{n+1}^1=\bigcup_{L\in W_{n+1}^1\left(C\right)}L$. We define $C_\red^{(n+1)}:=\bigcup_{L\in W_{n+1}^0\left(C\right)}L_\red$.

In what follows $C\in\mathscr{M}_g$ is non-hyperelliptic and $2\leq n\leq g-2$. 

Now we proceed to prove Theorem \ref{lm1}. The inclusion $R_n\subset\mathcal{C}_n\cap\mathcal{W}_n$ was proved in Proposition \ref{lm11}. Thus we just need to prove $\mathcal{C}_n\cap\mathcal{W}_n\subset R_n$. To prove this the idea will be to show the inclusion $p\left(s^{-1}\left(C_{n+1}^1\right)\cap s^{-1}\left(C_\red^{ (n+1)}\right)\right)\cap\mathcal{W}_n\subset S_n$ (see Proposition \ref{lm2}), then take closures on this inclusion to deduce that $\mathcal{C}_n\cap\mathcal{W}_n\subset R_n$. To prove this last inclusion we are going to decompose $s^{-1}\left(C_{n+1}^1\right)$ into its irreducible components and prove that the operations between these sets behave well when taking closures. To do this we will use several basic topological results. In particular we will need to use the fact that the morphism $s$ is open, since for these morphisms the closure of the preimage of a set is the preimage of the closure of the set.

\begin{lema}
	The morphism $s$ is open.
\end{lema}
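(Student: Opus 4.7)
The plan is to deduce openness of $s$ from the fact that finite quotient maps are open, via a diagram chase. Consider the commutative square
\[
\begin{CD}
C^n \times C^k @>\sigma>> C^{n+k} \\
@V\pi_n \times \pi_k VV @VV\pi_{n+k}V \\
C^{(n)} \times C^{(k)} @>>s> C^{(n+k)}
\end{CD}
\]
where $\pi_m : C^m \to C^{(m)}$ is the natural projection (the quotient by the action of the symmetric group $\mathfrak{S}_m$), and $\sigma$ is the obvious isomorphism given by concatenation of tuples. Commutativity is immediate from the definitions, since concatenating two tuples and then summing the points gives the same divisor as summing each tuple separately and then adding the resulting divisors.

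Now let $U \subset C^{(n)} \times C^{(k)}$ be an open subset; I want to prove that $s(U)$ is open in $C^{(n+k)}$. The set $V := \sigma\bigl((\pi_n \times \pi_k)^{-1}(U)\bigr)$ is open in $C^{n+k}$, because $\pi_n \times \pi_k$ is continuous and $\sigma$ is an isomorphism. Since the symmetric group $\mathfrak{S}_{n+k}$ is finite, the projection $\pi_{n+k}$ is an open map (the quotient map by a finite group action on a topological space is always open, as the saturation of any open set under the group action is a finite union of open sets). Therefore $\pi_{n+k}(V)$ is open in $C^{(n+k)}$.

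Finally, commutativity of the diagram together with the surjectivity of $\pi_n \times \pi_k$ gives
\[
\pi_{n+k}(V) \;=\; \pi_{n+k}\bigl(\sigma((\pi_n \times \pi_k)^{-1}(U))\bigr) \;=\; s\bigl((\pi_n \times \pi_k)((\pi_n \times \pi_k)^{-1}(U))\bigr) \;=\; s(U),
\]
so $s(U)$ is open in $C^{(n+k)}$, proving that $s$ is an open morphism. I expect the main (very mild) obstacle to be nothing more than being careful that $\pi_n \times \pi_k$ is indeed open and surjective and that the diagram commutes as claimed; no nontrivial geometry is needed beyond the openness of quotients by finite group actions.
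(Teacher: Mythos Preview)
Your proof is correct and takes a genuinely different route from the paper's. The paper argues as follows: $s$ is projective, hence of finite type; its fibers are finite, so $s$ is quasi-finite; a quasi-finite surjective morphism between regular varieties is flat; and a flat morphism of finite type between Noetherian schemes is open. Your argument instead bypasses flatness entirely, exploiting the factorisation through the Cartesian powers $C^n\times C^k \cong C^{n+k}$ and the elementary fact that the quotient map $\pi_{m}:C^m\to C^{(m)}$ by a finite group action is open. The paper's approach is the ``standard algebro-geometric'' one and has the virtue of fitting into a general framework (miracle flatness, openness of flat morphisms), but it invokes considerably heavier theorems for a statement that, as you show, is really a topological triviality once one unwinds the definition of the symmetric product. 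Your approach is more elementary, self-contained, and arguably more illuminating here; it also makes no use of the smoothness of the symmetric products, whereas the paper's flatness argument implicitly relies on regularity of source and target.
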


\begin{proof}
	First note that $s$ is a projective morphism, since $C^{(n)}\times C$ is a projective variety. This implies that $s$ is of finite type. Moreover, each fiber of $s$ is a finite set. From this it follows that $s$ is a quasi-finite morphism. Since $s$ is quasi-finite, surjective morphism between regular varieties, then $s$ is a flat morphism. Now, since $s$ is a flat morphism, of finite type and between Noetherian schemes, we obtain that $s$ is an open morphism.
\end{proof}

To begin with the proof of the Theorem \ref{lm1}, our first objective will be to prove four results on closures of the sets involved that we will then use for the decomposition of $s^{-1}\left(C_{n+1}^ 1\right)$ into its irreducible components. This is the longest and most technical proof in this article although it only depends on several lemmas based on basic facts of set theory and topology. First we will prove that $C_\red^{(n+1)}$ is dense in $C^{(n+1)}$.

\begin{lema}\label{lm9}
	We have $\overline{C_\red^{(n+1)}}=C^{(n+1)}$.
\end{lema}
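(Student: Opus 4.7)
The plan is to produce an explicit open dense subset of $C^{(n+1)}$ contained in $C_\red^{(n+1)}$; density of $C_\red^{(n+1)}$ will then follow at once from the irreducibility of $C^{(n+1)}$.

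Concretely, I would let $U \subset C^{(n+1)}$ denote the set of reduced effective divisors of degree $n+1$ on $C$. This $U$ is the complement of the ``big diagonal'' (the closed subset consisting of divisors in which at least two points coincide), so $U$ is open in $C^{(n+1)}$. Since $C^{(n+1)}$ is an irreducible variety of dimension $n+1$ and $U$ is manifestly non-empty (any choice of $n+1$ distinct points of $C$ yields an element), we get that $U$ is open and dense in $C^{(n+1)}$.

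The remaining step is to prove the inclusion $U \subset C_\red^{(n+1)}$. Given $D\in U$, set $L:=|D|\in W_{n+1}^0(C)$, and let $B$ be the base locus of $L$ (with the convention $B=0$ when $L$ is base point free). Since $B\leq D'$ for every $D'\in L$, in particular $B\leq D$, so the coefficients of $B$ are bounded by the corresponding coefficients of $D$. Because $D$ is reduced, every coefficient of $D$ lies in $\{0,1\}$, hence the same holds for $D-B\geq 0$. Therefore $D-B$ is reduced, which by the definition of $L_\red$ (both in the base point free case and the general case) yields $D\in L_\red \subset C_\red^{(n+1)}$.

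Taking closures, we conclude
\[
C^{(n+1)} \;=\; \overline{U} \;\subset\; \overline{C_\red^{(n+1)}} \;\subset\; C^{(n+1)},
\]
and the lemma follows. I do not anticipate any real obstacle here: the argument is essentially bookkeeping about reducedness, and the only place where one has to be slightly careful is verifying that $D\in |D|_\red$ whenever $D$ is reduced, which as above is immediate from the definition of $L_\red$ and the fact that any sub-divisor of a reduced divisor is reduced.
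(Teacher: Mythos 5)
Your proof is correct, but it takes a different route from the paper's. The paper argues fibrewise over the linear systems: it uses the (Bertini-based) fact that each $L_\red$ is dense in $L$, together with the general containment $\overline{\bigcup_L L_\red}\supset\bigcup_L\overline{L_\red}=\bigcup_L L=C^{(n+1)}$. You instead exhibit a single global open dense subset, namely the locus $U$ of reduced divisors (the complement of the big diagonal), and check directly that $U\subset C_\red^{(n+1)}$ by running through the three cases in the definition of $L_\red$; the key observation that any subdivisor of a reduced divisor is reduced handles the base-locus case, and the $\dim(L)=0$ case is trivial since there $L_\red=L$. Your approach buys a more elementary argument (no appeal to Bertini, only the openness and non-emptiness of the reduced locus in the irreducible variety $C^{(n+1)}$) and in fact proves the slightly sharper statement that $C_\red^{(n+1)}$ contains all reduced divisors of degree $n+1$; the paper's approach has the advantage of being the exact template reused immediately afterwards for Lemma \ref{lm4}, where one must show $C_{n+1}^1\cap C_\red^{(n+1)}$ is dense in $C_{n+1}^1$ and a global "reduced locus" argument would no longer suffice, since $C_{n+1}^1$ need not meet the open set $U$ in a dense subset of each of its components a priori.
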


\begin{proof}
First we have $C_\red^{(n+1)}\subset C^{(n+1)}$, so $\overline{C_\red^{(n+1)}}\subset\overline{C^{(n+1)}}=C^{(n+1)}$. On the other hand,
	\begin{equation*}
		\begin{split}
			\overline{C_\red^{(n+1)}}=\overline{\bigcup_{L\in W_{n+1}^0\left(C\right)}L_\red}\supset\bigcup_{L\in W_{n+1}^0\left(C\right)}\overline{L_\red}=\bigcup_{L\in W_{n+1}^0\left(C\right)}L=C^{(n+1)}.
		\end{split}
	\end{equation*}
	
	This proves the lemma.
\end{proof}

Using this lemma and the fact that $s$ is open, we obtain that $s^{-1}\left(C_\red^{(n+1)}\right)$ is dense in $C^{ (n)}\times C$.

\begin{cor}
	We have $\overline{s^{-1}\left(C_\red^{(n+1)}\right)}=C^{(n)}\times C$.
\end{cor}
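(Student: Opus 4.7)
The plan is to deduce this corollary directly from the two facts just established: the previous lemma shows $\overline{C_\red^{(n+1)}}=C^{(n+1)}$, and the preceding lemma shows that $s:C^{(n)}\times C\to C^{(n+1)}$ is an open morphism. The main observation I would use is the general topological fact that for a continuous open map $f:X\to Y$ and any subset $S\subset Y$, one has the equality $\overline{f^{-1}(S)}=f^{-1}\left(\overline{S}\right)$. The inclusion $\overline{f^{-1}(S)}\subset f^{-1}(\overline{S})$ is just continuity of $f$ (the right side is closed and contains $f^{-1}(S)$).

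For the reverse inclusion I would argue point by point: take $x\in f^{-1}(\overline{S})$ and an arbitrary open neighborhood $U$ of $x$ in $X$. Since $f$ is open, $f(U)$ is an open neighborhood of $f(x)$ in $Y$. Because $f(x)\in\overline{S}$, we get $f(U)\cap S\neq\varnothing$, so pick $y=f(u)\in S$ with $u\in U$. Then $u\in U\cap f^{-1}(S)$, showing that every neighborhood of $x$ meets $f^{-1}(S)$, i.e.\ $x\in\overline{f^{-1}(S)}$.

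Applying this with $f=s$ and $S=C_\red^{(n+1)}$, and using the previous lemma,
\[
\overline{s^{-1}\left(C_\red^{(n+1)}\right)}=s^{-1}\left(\overline{C_\red^{(n+1)}}\right)=s^{-1}\left(C^{(n+1)}\right)=C^{(n)}\times C,
\]
where the last equality holds because $s$ is defined on all of $C^{(n)}\times C$ with image in $C^{(n+1)}$.

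There is essentially no obstacle here; the corollary is a purely formal consequence of the openness of $s$ and the density of $C_\red^{(n+1)}$ in $C^{(n+1)}$. The only thing to be mildly careful about is invoking the open-map characterization of preimage-closure correctly, which is why I would spell out the neighborhood argument rather than cite it as a black box.
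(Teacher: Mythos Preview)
Your argument is correct and follows essentially the same approach as the paper: apply the identity $\overline{s^{-1}(S)}=s^{-1}(\overline{S})$ (valid because $s$ is open) together with the previous lemma $\overline{C_\red^{(n+1)}}=C^{(n+1)}$. The only difference is that you spell out the neighborhood argument for the open-map/closure identity, whereas the paper invokes it without proof.
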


\begin{proof}
	Using Lemma \ref{lm9}, the fact that $s$ is open, we obtain
	$$
	\overline{s^{-1}\left(C_\red^{(n+1)}\right)}=s^{-1}\left(\overline{C_\red^{(n+1) }}\right)=s^{-1}\left(C^{(n+1)}\right)=C^{(n)}\times C.
	$$	
\end{proof}

Now we will prove that $C_{n+1}^1\cap C_\red^{(n+1)}$ is dense in $C_{n+1}^1$.

\begin{lema}\label{lm4}
	We have $\overline{C_{n+1}^1\cap C_\red^{(n+1)}}=C_{n+1}^1$.
\end{lema}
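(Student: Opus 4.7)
The plan is to rewrite the intersection as a union over complete linear systems of positive dimension, then apply Bertini. The key observation is that a divisor $D \in C^{(n+1)}$ determines its own complete linear system $|D|$, and if $L$ is a complete linear system containing $D$, then necessarily $L = |D|$. Therefore $D \in C_{n+1}^1 \cap C_\red^{(n+1)}$ if and only if $|D| \in W_{n+1}^1(C)$ and $D \in |D|_\red$, which yields the decomposition
$$C_{n+1}^1 \cap C_\red^{(n+1)} \;=\; \bigcup_{L \in W_{n+1}^1(C)} L_\red.$$

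Once this identification is in hand, the rest is formal. Taking closures and using that the closure of a union contains the union of closures, I get
$$\overline{C_{n+1}^1 \cap C_\red^{(n+1)}} \;\supset\; \bigcup_{L \in W_{n+1}^1(C)} \overline{L_\red} \;=\; \bigcup_{L \in W_{n+1}^1(C)} L \;=\; C_{n+1}^1,$$
where the middle equality uses that $L_\red$ is open and dense in $L$ by Bertini's Theorem, as recorded just before the lemma, and the last equality is the description of $C_{n+1}^1$ recalled at the start of the subsection. For the reverse inclusion, the containment $C_{n+1}^1 \cap C_\red^{(n+1)} \subset C_{n+1}^1$ is trivial, and $C_{n+1}^1$ is a closed subvariety of $C^{(n+1)}$ (from \cite[IV. \textsection 3]{ACGH}), so its closure stays inside $C_{n+1}^1$.

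I do not foresee any real obstacle here: the lemma is essentially a uniform application of Bertini across the family of complete $\mathfrak{g}_{n+1}^1$'s on $C$. The only step deserving a brief justification is the opening identification, where one has to note that $D \in L_\red$ for some $L \in W_{n+1}^0(C)$ forces $L = |D|$, so that membership in $C_\red^{(n+1)}$ is really a condition on $D$ relative to its own complete linear system.
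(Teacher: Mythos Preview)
Your proof is correct and follows essentially the same approach as the paper: both identify $C_{n+1}^1\cap C_\red^{(n+1)}$ with $\bigcup_{L\in W_{n+1}^1(C)}L_\red$, then use that $\overline{L_\red}=L$ by Bertini to obtain the nontrivial inclusion, and close with the observation that $C_{n+1}^1$ is closed. The only difference is that you spell out why the intersection equals that union (via the uniqueness $L=|D|$), whereas the paper writes the equality without comment.
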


\begin{proof}
	Since $C_{n+1}^1\cap C_\red^{(n+1)}\subset C_{n+1}^1$, then $\overline{C_{n+1}^1\cap C_\red^{(n+1)}}\subset\overline{C_{n+1}^1}=C_{n+1}^1$. On the other hand
	\begin{equation*}
		\begin{split}
			\overline{C_{n+1}^1\cap C_\red^{(n+1)}}=\overline{\bigcup_{L\in W_{n+1}^1\left(C\right)}L_\red}\supset\bigcup_{L\in W_{n+1}^1\left(C\right)}\overline{L_\red}=\bigcup_{L\in W_{n+1}^1\left(C\right)}L=C_{n+1}^1,
		\end{split}
	\end{equation*}
which proves the lemma.
\end{proof}

Using this lemma and the fact that $s$ is open, we will deduce that $s^{-1}\left(C_{n+1}^1\right)\cap s^{-1}\left(C_\red ^{(n+1)}\right)$ is dense in $s^{-1}\left(C_{n+1}^1\right)$.

\begin{cor}\label{lm5}
	We have $\overline{s^{-1}\left(C_{n+1}^1\right)\cap s^{-1}\left(C_\red^{(n+1)}\right)}=s^{-1}\left(C_{n+1}^1\right)$.
\end{cor}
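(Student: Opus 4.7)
The plan is to reduce the statement to Lemma~\ref{lm4} via the openness of $s$, using a general topological fact: if $f:X\to Y$ is a continuous open map of topological spaces, then for every subset $A\subset Y$ we have $\overline{f^{-1}(A)}=f^{-1}(\overline{A})$.

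First I would recall why this identity holds. The inclusion $\overline{f^{-1}(A)}\subset f^{-1}(\overline{A})$ is immediate because $f^{-1}(\overline{A})$ is closed (by continuity) and contains $f^{-1}(A)$. For the reverse inclusion, take $x\in f^{-1}(\overline{A})$ and any open neighborhood $U$ of $x$. Since $f$ is open, $f(U)$ is an open neighborhood of $f(x)\in\overline{A}$, so $f(U)\cap A\neq\varnothing$; picking a point of $A$ in $f(U)$ and lifting it to $U$ shows $U\cap f^{-1}(A)\neq\varnothing$, hence $x\in\overline{f^{-1}(A)}$.

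Next I would apply this identity with $A=C_{n+1}^1\cap C_\red^{(n+1)}$ and $f=s$, using the previous lemma stating that $s$ is open. Since $s^{-1}$ commutes with finite intersections,
\begin{equation*}
\overline{s^{-1}(C_{n+1}^1)\cap s^{-1}(C_\red^{(n+1)})}=\overline{s^{-1}\!\left(C_{n+1}^1\cap C_\red^{(n+1)}\right)}=s^{-1}\!\left(\overline{C_{n+1}^1\cap C_\red^{(n+1)}}\right).
\end{equation*}
Finally, by Lemma~\ref{lm4}, the right-hand side equals $s^{-1}(C_{n+1}^1)$, which finishes the proof.

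There is essentially no obstacle here beyond the preceding lemma on the openness of $s$ and the density statement of Lemma~\ref{lm4}; the corollary is a purely formal consequence. The only subtlety is being careful that $\overline{f^{-1}(A)}=f^{-1}(\overline{A})$ requires openness of $f$ (the inclusion $\overline{f^{-1}(A)}\supset f^{-1}(\overline{A})$ is exactly where openness is used), which is why the lemma on the openness of $s$ was established first.
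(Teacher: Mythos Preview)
Your proof is correct and follows essentially the same approach as the paper: both use the openness of $s$ to identify $\overline{s^{-1}(A)}$ with $s^{-1}(\overline{A})$ for $A=C_{n+1}^1\cap C_\red^{(n+1)}$, and then invoke Lemma~\ref{lm4}. The only difference is that you spell out the general topological identity $\overline{f^{-1}(A)}=f^{-1}(\overline{A})$ for open continuous maps, whereas the paper uses it without justification.
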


\begin{proof}
By Lemma \ref{lm4} and since $s$ is an open morphism, we have
	\begin{equation*}
		\begin{split}
			\overline{s^{-1}\left(C_{n+1}^1\cap C_\red^{(n+1)}\right)}=s^{-1}\left(\overline{ C_{n+1}\cap C_\red^{(n+1)}}\right)=s^{-1}\left(C_{n+1}^1\right),
		\end{split}
	\end{equation*}
\end{proof}

This achieve our first objective. Now, our second objective to prove the Theorem \ref{lm1}, will be to show, through four results, that for each irreducible component $X$ of $s^{-1}\left(C_{n+1}^ 1\right)$ such that $p\left(X\right)\cap\mathcal{W}_n\neq\varnothing$, we have that $p\left(X\cap s^{-1}\left( C_\red^{(n+1)}\right)\right)\cap\mathcal{W}_n$ is dense in $p\left(X\right)$ (see Lemma \ref{lm3}). To do this we will first prove, using the previous results, that for each of these irreducible components $X$ we have that $X\cap s^{-1}\left(C_\red^{(n+1)}\right) $ is dense in $X$, and, thus, irreducible.

\begin{lema}\label{lm6}
	Let $X_1,\dots,X_m$ be the irreducible components of $s^{-1}\left(C_{n+1}^1\right)$. Then
	$$
	\overline{X_j\cap s^{-1}\left(C_\red^{(n+1)}\right)}=X_j
	$$
	for each $j$. Moreover $X_j\cap s^{-1}\left(C_\red^{(n+1)}\right)$ is irreducible for every $j$.
\end{lema}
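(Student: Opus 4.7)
The plan is to prove the entire lemma from Corollary \ref{lm5} by a purely topological argument; no further geometric input is needed. Set
$V := s^{-1}\bigl(C_{n+1}^1\bigr) \cap s^{-1}\bigl(C_\red^{(n+1)}\bigr)$,
so that Corollary \ref{lm5} reads $\overline{V} = s^{-1}(C_{n+1}^1) = X_1 \cup \cdots \cup X_m$, and note that since $X_j \subseteq s^{-1}(C_{n+1}^1)$ we have $X_j \cap s^{-1}(C_\red^{(n+1)}) = X_j \cap V$. The whole argument then reduces to checking two topological statements about $V$ sitting inside the reducible variety $X_1 \cup \cdots \cup X_m$.

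For the density claim I would use that the closure commutes with finite unions: writing $\overline{V} = \overline{\bigcup_{j} V \cap X_j} = \bigcup_{j} \overline{V \cap X_j}$, with $\overline{V \cap X_j} \subseteq X_j$, and comparing with $\overline{V} = \bigcup_j X_j$, one concludes as follows. Fix $j$; since the irreducible subvariety $X_j$ is contained in the finite union of closed sets $\bigcup_i \overline{V \cap X_i}$, by irreducibility it must be contained in a single one, say $X_j \subseteq \overline{V \cap X_i} \subseteq X_i$. Maximality of the irreducible components of $s^{-1}(C_{n+1}^1)$ forces $i = j$ and therefore $X_j = \overline{V \cap X_j}$, which is the first conclusion.

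For the irreducibility claim I would invoke the general topological fact that if $S$ is a subset of a topological space whose closure $\overline{S}$ is irreducible, then $S$ itself is irreducible in the subspace topology. Indeed, if $S = A_1 \cup A_2$ with each $A_i$ closed in $S$, write $A_i = F_i \cap S$ with $F_i$ closed in the ambient space; then $S \subseteq F_1 \cup F_2$, so $\overline{S} \subseteq F_1 \cup F_2$, and by irreducibility some $F_i$ contains $\overline{S}$, whence $A_i = F_i \cap S = S$. Applied to $S = V \cap X_j$ inside the ambient $C^{(n)} \times C$, with $\overline{S} = X_j$ irreducible (just established), this yields that $X_j \cap s^{-1}(C_\red^{(n+1)}) = V \cap X_j$ is irreducible.

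The main ``obstacle'' is in fact not really an obstacle: all of the geometric content has already been absorbed into Corollary \ref{lm5} (which in turn was built from Lemma \ref{lm4} via openness of $s$), so what remains is only careful bookkeeping of closures with respect to the subspace topologies on $X_j$ and on $V \cap X_j$. No further use of the structure of $C_\red^{(n+1)}$ or of the morphism $s$ is required beyond what is packaged in Corollary \ref{lm5}.
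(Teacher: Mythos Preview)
Your proof is correct and follows essentially the same approach as the paper: both use Corollary~\ref{lm5} to write $X_1\cup\cdots\cup X_m=\bigcup_i \overline{X_i\cap s^{-1}(C_\red^{(n+1)})}$, then use irreducibility and maximality of the components to force $X_j=\overline{X_j\cap s^{-1}(C_\red^{(n+1)})}$, and finally deduce irreducibility of the intersection from that of its closure. The only difference is cosmetic: you introduce the shorthand $V$ and spell out the standard topological fact about irreducibility of dense subsets, which the paper simply asserts.
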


\begin{proof}
	By Lemma \ref{lm5}
	\begin{equation*}
		\begin{split}
			X_1\cup\dots\cup X_m=s^{-1}\left(C_{n+1}^1\right)&=\overline{s^{-1}\left(C_{n+1}^1\right)\cap s^{-1}\left(C_\red^{(n+1)}\right)}\\
			&=\overline{X_1\cap s^{-1}\left(C_\red^{(n+1)}\right)}\cup\dots\cup\overline{X_m\cap s^{-1}\left(C_\red^{(n+1)}\right)}.
		\end{split}
	\end{equation*}
	
	Then, given $j\in\{1,\dots,m\}$, for some $i\in\{1,\dots,m\}$ we have
	$$
	X_j\subset\overline{X_i\cap s^{-1}\left(C_\red^{(n+1)}\right)}\subset\overline{X_i}\cap\overline{s^{-1}\left(C_\red^{(n+1)}\right)}=X_i,
	$$
	therefore $i=j$, since the $X_1,\dots,X_m$ are the irreducible components of $s^{-1}\left(C_{n+1}^1\right)$.
	
	Thus $X_j=\overline{X_j\cap s^{-1}\left(C_\red^{(n+1)}\right)}$. 
	
	Finally each $X_j\cap s^{-1}\left(C_\red^{(n+1)}\right)$ is irreducible, since each $X_j$ is irreducible.
\end{proof}

Next, using the previous lemma, we will prove that for each component $X$ of $s^{-1}\left(C_{n+1}^1\right)$ the set $p\left(X\cap s^{-1}\left(C_\red^{(n+1)}\right)\right)$ is dense in $p\left(X\right)$, and, thus, irreducible.

\begin{lema}\label{lm8}
	Let $X_1,\dots,X_m$ be the irreducible components of $s^{-1}\left(C_{n+1}^1\right)$. Then
	$$
	\overline{p\left(X_i\cap s^{-1}\left(C_\red^{(n+1)}\right)\right)}=p\left(X_j\right)
	$$
	for each $j$. Moreover $p\left(X_j\cap s^{-1}\left(C_\red^{(n+1)}\right)\right)$ is irreducible for every $j$.
\end{lema}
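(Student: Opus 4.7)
The displayed equation appears to contain a typo: the $X_i$ on the left must read $X_j$ in order to match the quantifier ``for each $j$'' and the subsequent clause about $p(X_j\cap s^{-1}(C_\red^{(n+1)}))$. Under this reading, the assertion to be proved is: for every $j$, the set $p(X_j\cap s^{-1}(C_\red^{(n+1)}))$ is irreducible and its closure in $C^{(n)}$ equals $p(X_j)$.

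My plan is to deduce both claims directly from Lemma \ref{lm6} by pushing forward along the projection $p\colon C^{(n)}\times C\to C^{(n)}$, using only two standard topological facts: the continuous image of an irreducible set is irreducible, and a closed morphism $f$ satisfies $f(\overline{A})=\overline{f(A)}$. The closedness of $p$ is immediate since $C$ is projective and projection from a product with a projective factor is closed. From Lemma \ref{lm6} one already has $X_j=\overline{X_j\cap s^{-1}(C_\red^{(n+1)})}$ together with the irreducibility of $X_j\cap s^{-1}(C_\red^{(n+1)})$. Applying the closed morphism $p$ to this identity yields
$$p(X_j)=p\bigl(\overline{X_j\cap s^{-1}(C_\red^{(n+1)})}\bigr)=\overline{p\bigl(X_j\cap s^{-1}(C_\red^{(n+1)})\bigr)},$$
which is the desired equality, while the irreducibility of the image follows at once from continuity of $p$ and the irreducibility supplied by Lemma \ref{lm6}.

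The main obstacle is essentially nonexistent under the diagonal reading: the statement becomes a formal consequence of Lemma \ref{lm6} and the closedness of $p$, with no further input required about the specific geometry of the components $X_j$. If one insists on the literal reading, the displayed equation would force $p(X_1)=\dots=p(X_m)$, and this cannot hold in general: for instance, when $C$ carries two distinct complete $\mathfrak{g}_{n+1}^1$'s, say $|D_1|$ and $|D_2|$, the ``residual'' loci $p(s^{-1}(|D_1|))$ and $p(s^{-1}(|D_2|))$ in $C^{(n)}$ are typically different, so the literal form cannot be established by any method and is inconsistent with the role the lemma plays in the remainder of the argument (where the union $\bigcup_j p(X_j)=\mathcal{C}_n$ is decomposed component by component). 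This reinforces the conclusion that the $X_i$ on the left is a misprint for $X_j$, and it is the diagonal version just proved that the subsequent proof of Theorem \ref{lm1} actually needs.
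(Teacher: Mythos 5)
Your proposal is correct and follows essentially the same route as the paper: both reduce the claim to Lemma \ref{lm6} and push the density statement $X_j=\overline{X_j\cap s^{-1}\left(C_\red^{(n+1)}\right)}$ through the projection $p$, the paper via $\overline{p\left(A\right)}=\overline{p\left(\overline{A}\right)}$ plus properness of $p$, you via closedness of $p$ directly, which amounts to the same thing. Your reading of the $X_i$ versus $X_j$ misprint also matches the paper, whose own proof ends with the same typo.
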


\begin{proof}
	By Lemma \ref{lm6}, for each $j\in\{1,\dots,m\}$ we have
	\begin{equation*}
		\begin{split}
			\overline{p\left(X_j\cap s^{-1}\left(C_\red^{(n+1)}\right)\right)}=\overline{p\left(\overline{X_j\cap s^{-1}\left(C_\red^{(n+1)}\right)}\right)}=\overline{p\left(X_j\right)}=p\left(X_i\right).
		\end{split},
	\end{equation*}
	and $p\left(X_j\cap s^{-1}\left(C_\red^{(n+1)}\right)\right)$ is irreducible, because $p\left(X_j\right)$ is irreducible.
\end{proof}

To achieve our second objective we will use the following lemma.

\begin{lema}\label{lm7}
	Let $X_1,\dots,X_m$ be the irreducible components of $s^{-1}\left(C_{n+1}^1\right)$. Then, we have that $p\left(X_j\right)\cap\mathcal{W}_n\neq\varnothing$ if and only if $p\left(X_j\cap s^{-1}\left(C_\red^{(n+1)}\right)\right)\cap\mathcal{W}_n\neq\varnothing$.
\end{lema}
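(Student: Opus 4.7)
The forward implication is immediate: since $X_j\cap s^{-1}\left(C_\red^{(n+1)}\right)\subset X_j$, we have $p\left(X_j\cap s^{-1}\left(C_\red^{(n+1)}\right)\right)\subset p\left(X_j\right)$, so any point of $p\left(X_j\cap s^{-1}\left(C_\red^{(n+1)}\right)\right)\cap\mathcal{W}_n$ already belongs to $p\left(X_j\right)\cap\mathcal{W}_n$. No work is required in this direction.

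For the non-trivial direction, the plan is to combine Lemma \ref{lm8} with the elementary topological principle that a dense subset of a space meets every non-empty open subset. Assuming $p\left(X_j\right)\cap\mathcal{W}_n\neq\varnothing$, I would first note that $\mathcal{W}_n$ is open in $C^{(n)}$, so the intersection $p\left(X_j\right)\cap\mathcal{W}_n$ is a non-empty open subset of $p\left(X_j\right)$ with respect to the subspace topology. By Lemma \ref{lm8}, the set $p\left(X_j\cap s^{-1}\left(C_\red^{(n+1)}\right)\right)$ is dense in $p\left(X_j\right)$. It therefore meets the non-empty open subset $p\left(X_j\right)\cap\mathcal{W}_n$, and any point in the resulting common intersection lies in $p\left(X_j\cap s^{-1}\left(C_\red^{(n+1)}\right)\right)\cap\mathcal{W}_n$, which is exactly what is needed.

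There is no real obstacle here: once Lemma \ref{lm8} has been proved, Lemma \ref{lm7} becomes a formal consequence of density together with the openness of $\mathcal{W}_n$ inside $C^{(n)}$. The whole technical weight of the argument has already been absorbed into the earlier lemmas (in particular into the openness of the morphism $s$, which was used to pass closures through $s^{-1}$ and, via Lemma \ref{lm6}, eventually through $p$).
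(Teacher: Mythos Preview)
Your argument is correct. The only cosmetic slip is calling the trivial direction the ``forward'' implication; in the statement as written, the trivial direction is $\Leftarrow$, but you describe and prove both directions correctly, so this is harmless.

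Your route differs slightly from the paper's. The paper does not invoke Lemma~\ref{lm8}; instead it works upstairs in $C^{(n)}\times C$: from $p(X_j)\cap\mathcal{W}_n\neq\varnothing$ it produces a point of $X_j\cap(\mathcal{W}_n\times C)$, notes that this is a non-empty open subset of the irreducible set $X_j$, and then uses Lemma~\ref{lm6} (density of $X_j\cap s^{-1}(C_\red^{(n+1)})$ in $X_j$) to conclude that $X_j\cap s^{-1}(C_\red^{(n+1)})\cap(\mathcal{W}_n\times C)\neq\varnothing$, finally projecting down via $p$. You instead let Lemma~\ref{lm8} push the density through $p$ first, and then run the ``dense meets non-empty open'' argument directly inside $p(X_j)\subset C^{(n)}$. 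Since Lemma~\ref{lm8} is proved from Lemma~\ref{lm6} alone (no circularity), your version is a legitimate and slightly more streamlined packaging of the same idea.
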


\begin{proof}
	To prove the non-trivial implication assume $p\left(X_j\right)\cap\mathcal{W}_n\neq\varnothing$. Given $D\in p\left(X_j\right)\cap\mathcal{W}_n$, there exists $q\in C$ such that $(D,q)\in X_j$ and $D=p\left( D,q\right)$, so $\left(D,q\right)\in X_j\cap\left(\mathcal{W}_n\times C\right)\neq\varnothing$. Now, since $X_j\cap\left(\mathcal{W}_n\times C\right)$ is open (dense) on $X_j$ (irreducible), and $X_j\cap s^{-1}\left (C_\red^{(n+1)}\right)$ is dense in $X_j$ by Lemma \ref{lm6}, then $X_j\cap s^{-1}\left(C_\red ^{(n+1)}\right)\cap\left(\mathcal{W}_n\times C\right)$ must be dense in $X_j$, so this intersection must be non-empty. Then, given $\left(D,p\right)\in X_j\cap s^{-1}\left(C_\red^{(n+1)}\right)\cap\left(\mathcal{W }_n\times C\right)$, we have
	$$
	D=p\left(D,q\right)\in p\left(X_j\cap s^{-1}\left(C_\red^{(n+1)}\right)\right)\cap\mathcal{W}_n\neq\varnothing.
	$$
	
	This proves the lemma.
\end{proof}

From this lemma we deduce the following result, with which we achieve our second objective.

\begin{cor}\label{lm3}
	Let $X_1,\dots,X_m$ be the irreducible components of $s^{-1}\left(C_{n+1}^1\right)$. Then we have that if $p\left(X_j\right)\cap\mathcal{W}_n\neq\varnothing$, then $p\left(X_j\cap s^{-1}\left(C_\red^ {(n+1)}\right)\right)\cap\mathcal{W}_n$ is dense in $p\left(X_j\right)$.
\end{cor}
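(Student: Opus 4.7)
The plan is to deduce this corollary directly from Lemmas \ref{lm7} and \ref{lm8}, by exploiting the fact that $\mathcal{W}_n$ is an open subset of $C^{(n)}$. The key abstract principle is that inside an irreducible topological space, any non-empty open subset is dense.

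First, I would apply Lemma \ref{lm7} to move the non-emptiness hypothesis across: since we are assuming $p(X_j) \cap \mathcal{W}_n \neq \varnothing$, that lemma gives
$$T_j := p\left(X_j \cap s^{-1}\left(C_\red^{(n+1)}\right)\right) \cap \mathcal{W}_n \neq \varnothing.$$
This is precisely the set whose density in $p(X_j)$ we need to establish.

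Next, I would observe that because $\mathcal{W}_n$ is open in $C^{(n)}$ (as $\mathcal{W}_n = \rho^{-1}((W_n)_\sm)$), its intersection with any subset $Y \subset C^{(n)}$ is open in $Y$ with the induced topology. Applied to $Y = p(X_j \cap s^{-1}(C_\red^{(n+1)}))$, this means $T_j$ is a non-empty open subset of $p(X_j \cap s^{-1}(C_\red^{(n+1)}))$. By Lemma \ref{lm8} this latter set is irreducible, so $T_j$ is dense in it, i.e.
$$\overline{T_j} \supset p\left(X_j \cap s^{-1}\left(C_\red^{(n+1)}\right)\right).$$

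Finally, I would invoke the density half of Lemma \ref{lm8}, namely $\overline{p(X_j \cap s^{-1}(C_\red^{(n+1)}))} = p(X_j)$, and take closures in the previous inclusion to conclude that $\overline{T_j} \supset p(X_j)$; the reverse inclusion is automatic from $T_j \subset p(X_j)$ together with the fact that $p(X_j)$ is closed (being the image of the closed irreducible component $X_j$ under the proper morphism $p$). I do not anticipate any real obstacle here: all technical work — the density of $C_\red^{(n+1)}$ in $C^{(n+1)}$, the openness of $s$, the preservation of irreducibility under $p$ — has already been packaged into Lemmas \ref{lm6}--\ref{lm8}, and the remaining argument is a purely formal topological deduction.
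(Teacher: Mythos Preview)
Your proposal is correct and follows essentially the same route as the paper: invoke Lemma~\ref{lm7} for non-emptiness, use openness of $\mathcal{W}_n$ together with the irreducibility from Lemma~\ref{lm8} to get density of $T_j$ in $p(X_j\cap s^{-1}(C_\red^{(n+1)}))$, and then transitivity of density via Lemma~\ref{lm8} to conclude. Your additional remark that $p(X_j)$ is closed (as $p$ is proper) makes explicit the reverse inclusion that the paper leaves implicit.
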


\begin{proof}
	Assume $p\left(X_j\right)\cap\mathcal{W}_n\neq\varnothing$. Then, by Lemma \ref{lm7}, we have
	$$
	p\left(X_j\cap s^{-1}\left(C_\red^{(n+1)}\right)\right)\cap\mathcal{W}_n\neq\varnothing.
	$$
	
Then $p\left(X_j\cap s^{-1}\left(C_\red^{(n+1)}\right)\right)\cap\mathcal{W}_n$ is open (dense) in $p\left(X_j\cap s^{-1}\left(C_\red^{(n+1)}\right)\right)$, because this last set is irreducible by Lemma \ref{lm8}. Since $p\left(X_j\cap s^{-1}\left(C_\red^{(n+1)}\right)\right)$ is dense in $p\left(X_j\right)$ also by Lemma \ref{lm8}, then $p\left(X_j\cap s^{-1}\left(C_\red^{(n+1)}\right)\right)\cap\mathcal{ W}_n$ must be dense in $p\left(X_j\right)$.
\end{proof}

This achieve our second objective. Now we proceed to prove the inclusion on which we will then take closures and apply the previous results.

\begin{prop}\label{lm2}
	We have that $p\left(s^{-1}\left(C_{n+1}^1\right)\cap s^{-1}\left(C_\red^{(n+1)}\right)\right)\cap\mathcal{W}_n\subset S_n$.
\end{prop}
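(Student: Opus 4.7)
The plan is to take $D \in \mathcal{W}_n$ and $q \in C$ with $D+q \in C_{n+1}^1 \cap C_\red^{(n+1)}$, and exhibit $E \in \mathcal{W}_n$ with $E \neq D$ and $\overline{E} = \overline{D}$; this will witness $D \in S_n$.

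First I would pin down the system $L := |D+q|$. Geometric Riemann-Roch gives $\dim \overline{D+q} = (n+1) - \ell(D+q) \leq n-1$ since $\ell(D+q)\geq 2$, while $D \leq D+q$ combined with $D \in \mathcal{W}_n$ forces $\overline{D}\subseteq\overline{D+q}$ with $\dim\overline{D} = n-1$. Hence $\ell(D+q)=2$, $L$ is a pencil, and $\overline{D+q}=\overline{D}$. The point $q$ is not a base point of $L$, since otherwise $\ell(D) = \ell(D+q) = 2$ would contradict $D \in \mathcal{W}_n$. The crucial observation I would exploit is that the span $\overline{F}$ depends only on the class $|F|$ (it is cut out by the hyperplanes in $H^0(K-F) \subseteq H^0(K)$), so $\overline{F} = \overline{D}$ for every $F \in L$.

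Writing $B$ for the base locus of $L$, the system $|D+q-B|$ is a base-point-free pencil of degree $n+1-\deg B \geq 2$, defining a morphism $f:C\to\mathbb{P}^1$ whose fibre through $q$ equals $D+q-B$. I would then pick $t_1 \in \mathbb{P}^1$ generic, avoiding the branch locus of $f$, the value $f(q)$, and the finite set $f(\Supp B)$, so that $f^*(t_1)$ is reduced with $\Supp f^*(t_1) \cap \Supp B = \varnothing$. Setting $F := f^*(t_1) + B \in L$ and choosing any $p \in \Supp f^*(t_1)$, the point $p$ is not a base point of $L$, so $E := F - p$ satisfies $\ell(E) = \ell(F) - 1 = 1$, hence $E \in \mathcal{W}_n$. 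Since $E \leq F$ and $\dim\overline{E} = n-1 = \dim\overline{F}$ by Geometric Riemann-Roch, $\overline{E} = \overline{F} = \overline{D}$. Finally, $\deg f^*(t_1) \geq 2$ provides $r \in \Supp f^*(t_1) \setminus \{p\}$; the disjointness of the fibres of $f$ and the choice of $t_1$ give $r \in \Supp E$ but $r \notin \Supp D = \Supp(f^*(f(q)) + B - q)$, so $E \neq D$.

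The main obstacle, in my view, is that one cannot in general take $E = (D+q) - p'$ for $p' \in \Supp(D+q)\setminus\{q\}$: all such $p'$ may sit in the base locus $B$ (e.g.\ when $D+q = 2p_1 + q$ with $B = p_1$), so they fail to give $\ell(E) = 1$. The remedy is to move to a different member of $L$ via the morphism $f$, and the span-invariance under linear equivalence guarantees that this substitution preserves $\overline{D}$.
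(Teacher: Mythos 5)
Your argument has a fatal gap at the ``crucial observation'' that the span $\overline{F}$ depends only on the class $|F|$. This is false: the subspace $H^0\left(K-F\right)\subset H^0\left(K\right)$ of canonical sections vanishing on $F$ depends on the divisor $F$ itself, not merely on its linear equivalence class (only its dimension is a class invariant). Indeed, if every member of the pencil $L=|D+q|$ had the same span $W$ of dimension $n-1$, then, since the moving part of $L$ sweeps out all of $C$ and $\phi\left(\Supp F\right)\subset\overline{F}$ for each $F\in L$, we would get $\phi\left(C\right)\subset W\subsetneq\mathbb{P}^{g-1}$, contradicting the nondegeneracy of the canonical embedding. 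Concretely, for a non-hyperelliptic curve of genus $4$ lying on a quadric in $\mathbb{P}^3$ with $n=2$, the members of the $\mathfrak{g}_3^1$ span the distinct lines of a ruling, not a single line. Consequently your divisor $E=F-p$, built from a member $F\neq D+q$ of $L$, satisfies $\overline{E}=\overline{F}\neq\overline{D}$ in general, so it does not witness $D\in S_n$.

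The obstacle you invoke to justify moving to another member of the pencil does not actually arise, and this is exactly what the hypothesis $D+q\in C_\red^{(n+1)}$ is for. Since $C$ is non-hyperelliptic and $D+q$ is special with $\ell\left(D+q\right)=2$, Clifford's Theorem forces the moving part $D+q-B$ to have degree at least $3$ (your example $D+q=2p_1+q$ with $B=p_1$ would produce a $\mathfrak{g}_2^1$); and membership in $C_\red^{(n+1)}$ says precisely that $D+q-B$ is reduced. Hence $\Supp\left(D-B\right)$ contains a point $q_1\neq q$ that is not a base point of $L$, and the paper's argument is simply to set $E:=D+q-q_1$: then $\ell\left(E\right)=\ell\left(D+q\right)-1=1$, so $E\in\mathcal{W}_n$, $\overline{E}=\overline{D+q}=\overline{D}$ by the Geometric Riemann--Roch Theorem, and $E\neq D$ because $q_1\neq q$. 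Staying inside the single divisor $D+q$ is not only sufficient but necessary, since the spans of the other members of the pencil are different planes.
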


\begin{proof}
	If $S_n=\varnothing$, we trivially obtain the inclusion. Suppose then that $S_n\neq\varnothing$. Then, by Proposition \ref{lm11} we have $\mathcal{C}_n\cap\mathcal{W}_n\neq\varnothing$. By Lemma \ref{lm7} this implies that
	$$
	p\left(s^{-1}\left(C_{n+1}^1\right)\cap s^{-1}\left(C_\red^{(n+1)}\right)\right)\cap\mathcal{W}_n\neq\varnothing.
	$$
	
	Then, given $D\in p\left(s^{-1}\left(C_{n+1}^1\right)\cap s^{-1}\left(C_\red^{(n+ 1)}\right)\right)\cap\mathcal{W}_n$, we have $D=p\left(D,q\right)$ for some $q\in C$ such that $\left(D ,q\right)\in s^{-1}\left(C_{n+1}^1\cap C_\red^{(n+1)}\right)$, so $D+q=s \left(D,q\right)\in C_{n+1}^1\cap C_\red^{(n+1)}$. Now, since $D\in\mathcal{W}_n$, we must have that $\dim|D+q|=1$. By the Geometric Riemann-Roch Theorem this implies that $\dim\left(\overline{D+q}\right)=n-1$, so $\overline{D+q}=\overline{D}$.
	
	We have two cases: $|D+q|$ is base point free or has base points.
	
	Let's first assume that $|D+q|$ is base point free. Then $D+q$ is reduced, since $D+q\in C_\red^{(n+1)}$. Then, taking $q_1\in\Supp\left(D\right)$ we have that $q_1$ isn't a base point of $|D+q|$, so
	$$
	\overline{D+q-q_1}=\overline{D+q}=\overline{D},
	$$
	and $D+q-q_1\neq D$, since $q_1\neq q$, which proves $D\in S_n$.
	
	Now suppose that $|D+q|$ has base points. Let $B$ be the base locus of $|D+q|$. Then
	$$
	|D+q|=|D+q-B|+B,
	$$
	where $|D+q-B|$ is base point free. Note that $q$ isn't a base point of $|D+q|$, i.e., $q\notin\Supp\left(B\right)$. Since $D+q-B$ is special divisor, by Clifford's Theorem we have
	$$
	1=\dim|D+q-B|<\frac{\deg\left(D+q-B\right)}{2}\Rightarrow\deg\left(D+q-B\right)\geq3.
	$$
	
	Since $D+q-B$ is reduced, we can take $q_1\in\Supp\left(D-B\right)$ such that $q_1\neq q$. Since $q_1$ is not a base point of $|D+q|$ we have
	$$
	\overline{D+q-q_1}=\overline{D+q}=\overline{D},
	$$
	and $D+q-q_1\neq D$, since $q_1\neq q$, which proves $D\in S_n$. This proves the proposition.
\end{proof}

Now, using this proposition, we will prove Theorem \ref{lm1}.

\begin{proof}[Proof of Theorem \ref{lm1}]
	If $C_{n+1}^1=\varnothing$ is clear that $R_n=\varnothing=\mathcal{C}_n\cap\mathcal{W}_n$. 
	
	Suppose then that $C_{n+1}^1\neq\varnothing$. This implies that $S_n\neq\varnothing$ by Proposition \ref{lm10}. Then we have that $p\left(s^{-1}\left(C_{n+1}^1\right)\cap s^{-1}\left(C_\red^{(n+1)}\right )\right)\cap\mathcal{W}_n\neq\varnothing$ by Propositions \ref{lm11} and \ref{lm7}.

	The inclusion $R_n\subset\mathcal{C}_n\cap\mathcal{W}_n$ was proved in Proposition \ref{lm11}. Then we just need to prove $\mathcal{C}_n\cap\mathcal{W}_n\subset R_n$. As we said before, the main idea will be to take closures in the inclusion
	$$
	p\left(s^{-1}\left(C_{n+1}^1\right)\cap s^{-1}\left(C_\red^{(n+1)}\right)\right)\cap\mathcal{W}_n\subset S_n
	$$
	given by Proposition \ref{lm2}.

	Let $X_1,\dots,X_m$ be the irreducible components of $s^{-1}\left(C_{n+1}^1\right)$. Since $p$ is a proper and surjective morphism, then each irreducible component of $\mathcal{C}_n$ is some of the $p\left(X_1\right),\dots,p\left(X_m\right)$. Discarding the possible sets that are contained in the union of the others and rearranging if necessary, without loss of generality we can assume that $p\left(X_1\right),\dots,p\left(X_k\right)$ are the irreducible components of $\mathcal{C}_n$, without redundant sets, where $k\leq m$.
	
	Since $\mathcal{C}_n\cap\mathcal{W}_n\supset S_n\neq\varnothing$, there exists $j\in\{1,\dots,k\}$ such that $p\left(X_j\right )\cap\mathcal{W}_n\neq\varnothing$. Then, rearranging if necessary, without loss of generality we can assume that there exists $d\leq k$ such that $p\left(X_j\right)\cap\mathcal{W}_n\neq\varnothing$ for all $j\in\{1,\dots,d\}$, and $p\left(X_j\right)\cap\mathcal{W}_n=\varnothing$ for each $j\in\{d+1,\dots, k\}$ in the case that $d<k$. By Proposition \ref{lm2} and Lemma \ref{lm7} this implies that
	\begin{equation*}
		\begin{split}
			S_n&\supset p\left(s^{-1}\left(C_{n+1}\right)\cap s^{-1}\left(C_\red^{(n+1)}\right)\right)\cap\mathcal{W}_n\\
				&\supset\left(p\left(X_1\cap s^{-1}\left(C_\red^{(n+1)}\right)\right)\cap\mathcal{W}_n\right)\cup\dots\cup\left(p\left(X_d\cap s^{-1}\left(C_\red^{(n+1)}\right)\right)\cap\mathcal{W}_n\right).\\
		\end{split}
	\end{equation*}
	
	By Corollary \ref{lm3}, taking closures we obtain
	\begin{equation*}
		\begin{split}
			\overline{S_n}&\supset\overline{p\left(X_1\cap s^{-1}\left(C_\red^{(n+1)}\right)\right)\cap\mathcal{W}_n}\cup\dots\cup\overline{p\left(X_d\cap s^{-1}\left(C_\red^{(n+1)}\right)\right)\cap\mathcal{W}_n}\\
			&=p\left(X_1\right)\cup\dots\cup p\left(X_d\right).
		\end{split}
	\end{equation*}	
	
	So 
	\begin{equation*}
		\begin{split}
			R_n=\overline{S_n}^{\mathcal{W}_n}=\overline{S_n}\cap\mathcal{W}_n&\supset\left(p\left(X_1\right)\cap\mathcal{W}_n\right)\cup\dots\cup\left(p\left(X_d\right)\cap\mathcal{W}_n\right)\\
			&=\left(p\left(X_1\right)\cap\mathcal{W}_n\right)\cup\dots\cup\left(p\left(X_k\right)\cap\mathcal{W}_n\right)=\mathcal{C}_n\cap\mathcal{W}_n,
		\end{split}
	\end{equation*}
	which proves the theorem.
\end{proof}

Using this theorem we can obtain a more explicit description of the multiple locus of $\mathcal{G}$.
\begin{cor}
	Let $C\in\mathscr{M}_g$ be non-hyperelliptic and $2\leq n\leq g-2$. If $S_n\neq\varnothing$, then
	$$
	R_n=\{D\in\mathcal{W}_n:\text{exists }q\in C\text{ such that }\dim|D+q|=1\}.
	$$
\end{cor}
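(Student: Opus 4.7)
The plan is to deduce this corollary directly from Theorem \ref{lm1}, which gives $R_n = \mathcal{C}_n \cap \mathcal{W}_n = p(s^{-1}(C_{n+1}^1)) \cap \mathcal{W}_n$. So the task reduces to rewriting the condition $D \in p(s^{-1}(C_{n+1}^1)) \cap \mathcal{W}_n$ in a more intrinsic form.

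First I would unpack the definitions: for $D \in \mathcal{W}_n$, the condition $D \in \mathcal{C}_n$ means that there exists $q \in C$ with $(D,q) \in s^{-1}(C_{n+1}^1)$, i.e., $D+q = s(D,q) \in C_{n+1}^1$, which is equivalent to $\dim|D+q| \geq 1$. Thus
\[
R_n = \{D \in \mathcal{W}_n : \exists\, q \in C \text{ with } \dim|D+q| \geq 1\}.
\]
To upgrade the inequality $\dim|D+q|\geq 1$ to the equality $\dim|D+q|=1$, I would use that $D \in \mathcal{W}_n$ forces $\ell(D) = 1$, so $\dim|D| = 0$. The standard fact that adding a single point to an effective divisor increases $\ell$ by at most one (since the evaluation map $L(D+q) \to \mathbb{F}_q$ has kernel $L(D)$, whence $\ell(D+q) \leq \ell(D) + 1 = 2$) then yields $\dim|D+q| \leq 1$. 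Combined with $\dim|D+q| \geq 1$, this gives $\dim|D+q| = 1$.

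The converse inclusion is immediate: if $D \in \mathcal{W}_n$ and there exists $q \in C$ with $\dim|D+q| = 1 \geq 1$, then $(D,q) \in s^{-1}(C_{n+1}^1)$, so $D = p(D,q) \in \mathcal{C}_n \cap \mathcal{W}_n = R_n$. There is no real obstacle here; the content of the corollary is entirely in Theorem \ref{lm1}, and this is simply a cosmetic reformulation making the description of $R_n$ depend only on linear systems of degree $n+1$ rather than on the auxiliary incidence variety $s^{-1}(C_{n+1}^1)$. The hypothesis $S_n \neq \varnothing$ is merely expository: by Corollary \ref{lm13} it is equivalent to the existence of a $\mathfrak{g}_{n+1}^1$ on $C$, and without it both sides of the asserted equality are empty, so the statement remains correct but vacuous.
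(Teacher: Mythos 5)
Your argument is correct and is exactly the deduction the paper intends (the paper states this corollary without proof immediately after Theorem \ref{lm1}): unpack $\mathcal{C}_n\cap\mathcal{W}_n$ to get the condition $\dim|D+q|\geq1$, then use $\ell(D)=1$ for $D\in\mathcal{W}_n$ together with $\ell(D+q)\leq\ell(D)+1$ to upgrade to equality, and observe that the hypothesis $S_n\neq\varnothing$ only rules out the vacuous case. The only blemish is the notation $\mathbb{F}_q$ for the target of the evaluation map, which should be the base field $\mathbb{F}$.
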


Here the following question naturally arises: What is $R_n\setminus S_n$?

Note that this corollary generalizes the result of Smith and Tapia-Recillas \cite[p. 232, Lemma]{STR1} for arbitrary dimension $n$ and $W_n$ not necessarily smooth.

\section{Larger intersection loci}\label{lmi}

This section is divided into three subsections. In the first we introduce the notion of larger  intersection loci and we prove some basic facts about them, for example, that they are closed subsets of $\mathcal{W}_n$. We also prove some results on linear systems. The notion of larger intersection loci will be used in the next section to study the Generalized Torelli Problem in the non-hyperelliptic case. In the second subsection we study numerical conditions such that the larger intersection loci are not empty or are empty, in terms of Brill-Noether Theory. In the third subsection we will find bounds for the dimensions of the larger intersection loci.

\subsection{Definition and basic facts}

In this subsection $C\in\mathscr{M}_g$ is a non-hyperelliptic curve and $2\leq n\leq g-2$. Now, for each $k\geq0$ we define the $(n+k)$-\textit{intersection locus} as
$$
R_{n,k}:=\{D\in\mathcal{W}_n:\deg\left(\overline{D}\cdot C\right)\geq n+k\}.
$$

Note that $R_{n,0}=\mathcal{W}_n$ and $R_{n,k+1}\subset R_{n,k}$ for all $k\geq0$.

\begin{prop}
	We have $R_{n,k}=\varnothing$ for all $k\geq n$.
\end{prop}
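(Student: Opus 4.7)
The plan is to assume for contradiction that some $D\in R_{n,k}$ exists with $k\geq n$, and extract a contradiction from Clifford's theorem applied to the intersection divisor $E:=\left(\overline{D}\cdot C\right)$.

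First I would pin down the numerical data of $E$. Since $D\in\mathcal{W}_n$, the Riemann--Kempf Singularity Theorem gives $\ell\left(D\right)=1$, so by the Geometric Riemann--Roch Theorem $\dim\left(\overline{D}\right)=n-1$. A short argument shows $\overline{E}=\overline{D}$: the inclusion $\overline{D}\subset\overline{E}$ follows from $D\leq E$, while $E\leq\phi^{*}H$ for every hyperplane $H\supset\overline{D}$ gives $\overline{E}\subset\overline{D}$. Thus $\dim\left(\overline{E}\right)=n-1$, and Geometric Riemann--Roch applied to $E$ yields $\ell\left(E\right)=\deg\left(E\right)-\left(n-1\right)$. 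Writing $\deg\left(E\right)=n+j$ with $j\geq k\geq n$, this gives $\dim\left|E\right|=j$.

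Next, Riemann--Roch produces $\ell\left(K-E\right)=\left(j+1\right)-\left(n+j\right)+g-1=g-n$, which is at least $2$ because $n\leq g-2$; hence $E$ is a special effective divisor. Clifford's inequality then forces $j\leq\left(n+j\right)/2$, i.e., $j\leq n$, so together with $j\geq n$ we land exactly at $j=n$ and $\deg\left(E\right)=2n$, the equality case of Clifford. Since $C$ is non-hyperelliptic, this case only allows $E=0$ or $E\sim K$. The first is impossible because $\deg\left(E\right)=2n\geq 4$, and the second would force $\dim\left|E\right|=g-1$, contradicting $j=n\leq g-2$. This gives the desired contradiction, so $R_{n,k}=\varnothing$.

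The only substantive point is realizing that the hypothesis $k\geq n$, via $\deg\left(E\right)\geq 2n$ and $\dim\left(\overline{E}\right)=n-1$, drops $E$ exactly into Clifford's equality case, and that the combined hypotheses (non-hyperelliptic and $n\leq g-2$) rule out both divisor classes where equality is permitted. The rest is straightforward Riemann--Roch bookkeeping and will cause no difficulty.
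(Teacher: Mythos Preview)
Your proof is correct and follows essentially the same strategy as the paper: extract from $D\in R_{n,k}$ a special divisor whose span equals $\overline{D}$, compute its $\ell$ via Geometric Riemann--Roch, and apply Clifford's Theorem. The only cosmetic difference is that the paper takes a subdivisor $E\leq\left(\overline{D}\cdot C\right)$ of degree exactly $n+k$ and invokes the strict form of Clifford directly to get $k<n$, whereas you work with the full intersection divisor and argue through the equality case of Clifford; both are valid and equally short.
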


\begin{proof}
	Suppose $R_{n,k}\neq\varnothing$ and let $D\in R_{n,k}$. We will prove $k\leq n-1$. Since $\deg\left(\overline{D}\cdot C\right)\geq n+k$, we can take $E\in C^{(n+k)}$ such that $D\leq E\leq \left(\overline{D}\cdot C\right)$. Note that $\overline{E}=\overline{D}$. Now, by the Geometric Riemann-Roch Theorem \cite[p. 12]{ACGH}
	\begin{equation*}
		\begin{split}
			\ell\left(E\right)&=\deg\left(E\right)-\dim\left(\overline{E}\right)\\
			&=\left(n+k\right)-\left(n-1\right)=k+1,
		\end{split}
	\end{equation*}
	i.e., $\dim|E|=k$. Since $E$ is a special divisor, by Clifford's Theorem we have
	$$
	k=\dim|E|<\frac{\deg\left(E\right)}{2}=\frac{n+k}{2}\Rightarrow k\leq n-1.
	$$
	
	By contrapositive, if $k\geq n$, then $R_{n,k}=\varnothing$. This proves the proposition.
\end{proof}

Now we will prove some results about linear systems.

\begin{prop}
	Let $1\leq k\leq n-1$. Then $C$ has a complete $\mathfrak{g}_{n+k}^k$ if and only if $C$ has a $\mathfrak{g}_{n+k}^k$.	
\end{prop}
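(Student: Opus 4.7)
The plan is to prove the two directions separately, with only the reverse direction requiring real content. First, the forward implication is immediate: any complete $\mathfrak{g}_{n+k}^k$ is, by definition, a $\mathfrak{g}_{n+k}^k$, so this takes only one line.

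For the reverse implication, I would first translate the problem into Brill--Noether language. Having a $\mathfrak{g}_{n+k}^k$ on $C$ is equivalent to $W_{n+k}^k\left(C\right) \neq \varnothing$, while having a complete $\mathfrak{g}_{n+k}^k$ is equivalent to $W_{n+k}^k\left(C\right) \setminus W_{n+k}^{k+1}\left(C\right) \neq \varnothing$ (a complete linear system of the given degree and dimension corresponds to a line bundle $L$ with $h^0\left(L\right) = k+1$ exactly, not strictly larger). So the task reduces to showing that starting from non-emptiness of $W_{n+k}^k\left(C\right)$, one can produce a line bundle whose complete linear system has dimension exactly $k$.

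To accomplish this, I would invoke the same tool already used in the proof of Proposition~\ref{lm10}, namely \cite[IV, (1.7) Lemma]{ACGH}. Applied with degree $d = n+k$ and index $r = k$, the numerical hypothesis required is $g - d + r = g - n \geq 2$, which holds under the standing assumption $2 \leq n \leq g-2$. The lemma then guarantees that no irreducible component of $W_{n+k}^k\left(C\right)$ is entirely contained in $W_{n+k}^{k+1}\left(C\right)$; in particular the set-theoretic difference is non-empty, and any line bundle $L$ in it yields $|L|$ as a complete $\mathfrak{g}_{n+k}^k$.

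The only real obstacle is verifying the numerical hypothesis of the Brill--Noether lemma with the correct indices and ensuring the translation between the existence of a (possibly non-complete) linear system and the non-emptiness of the corresponding $W^r_d$ is stated explicitly; once these are checked, the argument is essentially immediate and mirrors the structure of the appeal to \cite[IV, (1.7) Lemma]{ACGH} in the proof of Proposition~\ref{lm10}.
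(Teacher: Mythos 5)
Your proof is correct, but it takes a genuinely different route from the paper's. The paper argues constructively: starting from a divisor $E$ whose complete linear system is a $\mathfrak{g}_{n+k}^l$ with $l>k$, it computes $\dim\left(\overline{E}\right)=n+k-l-1$ by the Geometric Riemann--Roch Theorem, adds $l-k$ general points to raise the span to dimension $n-1$, and then extracts a subdivisor $F\in C^{(n+k)}$ with $\overline{F}$ equal to that span, so that $\ell\left(F\right)=k+1$ and $|F|$ is a complete $\mathfrak{g}_{n+k}^k$. You instead translate the hypothesis into $C_{n+k}^k\neq\varnothing$ (essentially Proposition \ref{lmi3}, which in the paper is only stated afterwards, but the verification is immediate) and invoke \cite[IV, (1.7) Lemma]{ACGH} with $g-\left(n+k\right)+k=g-n\geq2$ to get a point of $C_{n+k}^k\setminus C_{n+k}^{k+1}$. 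This is legitimate, and in fact the paper applies exactly this lemma with exactly this numerical check later, in the proof of Proposition \ref{lmi4}; your argument is shorter and delegates the content to a standard Brill--Noether fact, while the paper's version is self-contained modulo Geometric Riemann--Roch and produces a complete $\mathfrak{g}_{n+k}^k$ sharing a subdivisor with the given one, which fits the hands-on style of the surrounding section. One small point worth making explicit if you keep your version: the lemma in \cite{ACGH} is stated for the cycle-theoretic loci $C_d^r$ rather than $W_d^r\left(C\right)$, but the non-emptiness of $C_{n+k}^k\setminus C_{n+k}^{k+1}$ is equivalent to that of $W_{n+k}^k\left(C\right)\setminus W_{n+k}^{k+1}\left(C\right)$, so nothing is lost.
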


\begin{proof}
	To prove the non-trivial implication suppose that $C$ has a $\mathfrak{g}_{n+k}^k$, say $L$. Let $E\in L$. Then $|E|$ is a complete $\mathfrak{g}_{n+k}^l$ for some $l\geq k$. If $l=k$ there is nothing to do. Let's assume then that $l>k$. By the Geometric Riemann-Roch Theorem
	\begin{equation*}
		\begin{split}
			\dim\left(\overline{E}\right)&=\deg\left(E\right)-\ell\left(E\right)\\
			&=\left(n+k\right)-\left(l+1\right)=n+k-l-1.
		\end{split}
	\end{equation*}
	
	We can take $P_1,\dots,P_{l-k}\in C$ general points such that
	\begin{equation*}
		\begin{split}
			\dim\left(\overline{E+P_1+\dots+P_{l-k}}\right)&=n+k-l-1+\left(l-k\right)\\
			&=n-1.
		\end{split}
	\end{equation*}
	
	Let's now take $D\in C^{(n)}$ such that $D\leq E+P_1+\dots+P_{l-k}$, and $\overline{D}=\overline{E+P_1+\dots+P_ {l-k}}$. We can also take $F\in C^{(n+k)}$ with $D\leq F\leq E+P_1+\dots+P_{l-k}$. Then we have
	$$
	\overline{F}=\overline{E+P_1+\dots+P_{l-k}}.
	$$
	
	Then, by the Geometric Riemann-Roch Theorem
	\begin{equation*}
		\begin{split}
			\ell\left(F\right)&=\deg\left(F\right)-\dim\left(\overline{F}\right)\\
			&=\left(n+k\right)-\left(n-1\right)=k+1,
		\end{split}
	\end{equation*}
	so $|F|$ is a complete $\mathfrak{g}_{n+k}^k$.
\end{proof}

Now we will study when $R_{n,k}\neq\varnothing$ in terms of existence of linear systems on $C$.

\begin{prop}\label{lmi1}
	For $1\leq k\leq n-1$ we have $R_{n,k}\neq\varnothing$ if and only if $C$ has a (complete) $\mathfrak{g}_{n+k}^k$.
\end{prop}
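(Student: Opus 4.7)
The plan is to prove each direction using the Geometric Riemann--Roch Theorem together with the fundamental properties of the intersection divisor $(\overline{D}\cdot C)$.

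For the forward direction, suppose $D\in R_{n,k}$, so $\deg(\overline{D}\cdot C)\geq n+k$. I would choose an effective divisor $E$ with $D\leq E\leq(\overline{D}\cdot C)$ and $\deg E=n+k$. The key point is then to check that $\overline{D}=\overline{E}$: from $D\leq E$ one gets $\overline{D}\subseteq\overline{E}$, and for the reverse inclusion, any hyperplane $H\supseteq\overline{D}$ satisfies $\phi^*H\geq(\overline{D}\cdot C)\geq E$, which forces $H\supseteq\overline{E}$. Since $D\in\mathcal{W}_n$ gives $\ell(D)=1$, the Geometric Riemann--Roch Theorem provides $\dim\overline{D}=n-1$, so $\dim\overline{E}=n-1$ as well, and a second application yields
\[
\ell(E)=\deg(E)-\dim(\overline{E})=(n+k)-(n-1)=k+1,
\]
so $|E|$ is a complete $\mathfrak{g}_{n+k}^k$ in $C$.

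For the converse, by the previous proposition I may assume there is a complete $\mathfrak{g}_{n+k}^k = |E_0|$; by Geometric Riemann--Roch $\dim\overline{E_0}=n-1$. Writing $B$ for the base locus of $|E_0|$ and $b:=\deg B$, Clifford's Theorem applied to the base-point-free special system $|E_0-B|$ gives $2k\leq n+k-b$, whence $b\leq n-k\leq n-1$. Using Bertini's Theorem on $|E_0-B|$, I would pick a reduced $F\in|E_0-B|$ whose support is disjoint from $\Supp(B)$, and set $E:=B+F\in|E_0|$. The goal is then to produce a subdivisor $D\leq E$ of degree $n$ with $\overline{D}=\overline{E}$: once this is done, Geometric Riemann--Roch forces $\ell(D)=1$, so $D\in\mathcal{W}_n$, and $(\overline{D}\cdot C)=(\overline{E}\cdot C)\geq E$ yields $\deg(\overline{D}\cdot C)\geq n+k$, i.e., $D\in R_{n,k}$.

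The hardest step is this last extraction. In the base-point-free case ($b=0$) it is immediate: $E$ is reduced and its $n+k$ distinct image points span the $(n-1)$-dimensional linear subspace $\overline{E}$, so any $n$ of them that are linearly independent define the required $D$. When $B\neq 0$, one includes $B$ (scheme-theoretically) in $D$ and adjoins $n-b$ further points taken from the reduced part $F$, chosen so that the images in $\overline{E}$ of this $D$ span the $(n-1)$-plane $\overline{E}$; the bound $b\leq n-k$ guarantees that there is enough room in $F$ to complete the span, while the non-reduced contributions of $\phi(B)$ are controlled using that the canonical map $\phi$ is a closed embedding, as $C$ is non-hyperelliptic.
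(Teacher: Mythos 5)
Your argument is correct and follows essentially the same route as the paper: in both directions the key step is the Geometric Riemann--Roch computation together with extracting a degree-$n$ subdivisor $D\leq E$ with $\overline{D}=\overline{E}$, and your forward direction coincides with the paper's word for word. In the converse, your detour through the base locus $B$ is more elaborate than necessary (the paper simply extracts a spanning degree-$n$ subdivisor from an arbitrary $E$ in the complete system), and it silently uses that $\ell\left(B\right)=1$: if one had $\ell\left(B\right)\geq2$, then any $D\geq B$ would satisfy $\ell\left(D\right)\geq2$, hence $D\notin\mathcal{W}_n$, and $B$ would not contribute $b$ independent span conditions, so "including $B$ in $D$" would fail. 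This is in fact automatic --- for effective divisors one has $\ell\left(A+B\right)\geq\ell\left(A\right)+\ell\left(B\right)-1$, so $k+1=\ell\left(E_0\right)\geq\ell\left(E_0-B\right)+\ell\left(B\right)-1=k+\ell\left(B\right)$ forces $\ell\left(B\right)=1$ --- but you should either record this or sidestep the issue by extracting the spanning degree-$n$ subdivisor greedily from $E$ without insisting that $B\leq D$.
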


\begin{proof}
	Assume that $R_{n,k}\neq\varnothing$. Let $D\in R_{n,k}$, so that $\deg\left(\overline{D}\cdot C\right)\geq n+k$. Then, we can take $E\in C^{(n+k)}$ such that $D\leq E\leq\left(\overline{D}\cdot C\right)$. Note that $\overline{E}=\overline{D}$. By the Geometric Riemann-Roch Theorem
	\begin{equation*}
		\begin{split}
			\ell\left(E\right)&=\deg\left(E\right)-\dim\left(\overline{E}\right)\\
			&=\left(n+k\right)-\left(n-1\right)=k+1,
		\end{split}
	\end{equation*}
	so $|E|$ is a complete $\mathfrak{g}_{n+k}^k$. This proves the first implication.
	
	Conversely, let's assume that $C$ has a complete $\mathfrak{g}_{n+k}^k$, say $|E|$. By the Geometric Riemann-Roch Theorem
	\begin{equation*}
		\begin{split}
			\dim\left(\overline{E}\right)&=\deg\left(E\right)-\ell\left(E\right)\\
			&=\left(n+k\right)-\left(k+1\right)=n-1.
		\end{split}
	\end{equation*}
	
	Then, we can take $D\in C^{(n)}$ such that $D\leq E$ and $\overline{D}=\overline{E}$. Moreover $\left(\overline{D}\cdot C\right)=\left(\overline{E}\cdot C\right)\geq E$, so $\deg\left(\overline{D}\cdot C\right)\geq n+k$, and therefore $D\in R_{n,k}\neq\varnothing$. This proves the proposition.
\end{proof}

From this proposition is deduced the following corollary.

\begin{cor}
	For $1\leq k\leq n-1$ if $C$ has a (complete) $\mathfrak{g}_{n+k}^k$, then $C$ has a (complete) $\mathfrak{g}_ {n+k-1}^{k-1}$.
\end{cor}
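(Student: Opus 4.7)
The plan is to deduce the corollary directly from Proposition \ref{lmi1} together with the elementary observation that the loci $R_{n,k}$ are nested. By definition,
$$
R_{n,k} = \{D \in \mathcal{W}_n : \deg(\overline{D} \cdot C) \geq n+k\},
$$
so if $D \in R_{n,k}$ then certainly $\deg(\overline{D}\cdot C) \geq n+k-1$, which gives $D \in R_{n,k-1}$. Hence $R_{n,k} \subset R_{n,k-1}$ for every $k \geq 1$.

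Now suppose $C$ has a (complete) $\mathfrak{g}_{n+k}^k$ with $2 \leq k \leq n-1$. By Proposition \ref{lmi1}, this is equivalent to $R_{n,k} \neq \varnothing$. From the inclusion above, $R_{n,k-1} \neq \varnothing$ as well, and since $1 \leq k-1 \leq n-2 \leq n-1$, Proposition \ref{lmi1} applies again with $k$ replaced by $k-1$, yielding a (complete) $\mathfrak{g}_{n+k-1}^{k-1}$ on $C$.

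It remains to handle the boundary case $k=1$, where the conclusion is the existence of a $\mathfrak{g}_{n+k-1}^{k-1} = \mathfrak{g}_n^0$. This is trivial: any effective divisor of degree $n$ generates a (complete) linear system of dimension at least $0$, so every curve $C$ has a $\mathfrak{g}_n^0$. Combining the two cases gives the corollary.

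There is essentially no obstacle here: the entire content has been packaged into Proposition \ref{lmi1}, and the step from $R_{n,k}$ to $R_{n,k-1}$ is a one-line set-theoretic observation. If anything, the only thing to double-check is that Proposition \ref{lmi1} indeed applies to the index $k-1$ in the range we use, which holds precisely because we restrict to $k \geq 2$ and treat $k=1$ separately.
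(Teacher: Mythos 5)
Your proof is correct and follows essentially the same route as the paper: apply Proposition \ref{lmi1} to get $R_{n,k}\neq\varnothing$, use the inclusion $R_{n,k}\subset R_{n,k-1}$, and apply Proposition \ref{lmi1} again. Your separate treatment of the boundary case $k=1$ (where Proposition \ref{lmi1} does not literally apply to the index $k-1=0$) is a small point of extra care that the paper's own proof glosses over.
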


\begin{proof}
	If $C$ has a complete $\mathfrak{g}_{n+k}^k$, then $R_{n,k}\neq\varnothing$ by Proposition \ref{lmi1}. Then, $R_{n,k-1}\neq\varnothing$, since $R_{n,k-1}\supset R_{n,k}$. Then $C$ has a complete $\mathfrak{g}_{n+k-1}^{k-1}$ by Proposition \ref{lmi1}.
\end{proof}

\begin{prop}\label{lmi3}
	For $1\leq k\leq n-1$ we have that $C$ has a $\mathfrak{g}_{n+k}^k$ if and only if $C_{n+k}^k\neq \varnothing$.
\end{prop}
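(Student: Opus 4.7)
The proposition is a direct definitional equivalence between the existence of a linear series $\mathfrak{g}_{n+k}^k$ on $C$ and the non-emptiness of the Brill--Noether locus $C_{n+k}^k \subset C^{(n+k)}$. The plan is to prove each implication by unwinding the definitions: recall that a $\mathfrak{g}_d^r$ is, by the convention fixed in the introduction, the projectivization of a $(r+1)$-dimensional subspace $V \subset L(D)$ for some $D \in \Div^d(C)$, while $C_{n+k}^k = \{D \in C^{(n+k)} : \dim|D| \geq k\}$.

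For the ``only if'' direction, suppose $C$ has a $\mathfrak{g}_{n+k}^k$, say $L = \mathbb{P}(V)$ with $V \subset L(D)$ a $(k+1)$-dimensional subspace and $\deg(D) = n+k$. I would pick any element $E \in L$; then $E$ is an effective divisor linearly equivalent to $D$, so $\deg(E) = n+k$, and since $L \subset |E|$ we have $\dim|E| \geq \dim L = k$. Therefore $E \in C_{n+k}^k$, showing $C_{n+k}^k \neq \varnothing$.

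For the converse, suppose $D \in C_{n+k}^k$. Then $D$ is an effective divisor of degree $n+k$ with $\ell(D) \geq k+1$. Choose any $(k+1)$-dimensional subspace $V \subset L(D)$; then $\mathbb{P}(V)$ is by definition a $\mathfrak{g}_{n+k}^k$ on $C$. (Alternatively, one could just observe that $|D|$ itself contains a complete $\mathfrak{g}_{n+k}^k$ if $\dim|D| = k$, or invoke the preceding proposition relating complete and arbitrary linear series to obtain a $\mathfrak{g}_{n+k}^k$ from the complete series $|D|$.)

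Since both directions follow immediately from linear algebra on $L(D)$ and the definition of $C_{n+k}^k$, there is no real obstacle to the argument; in particular, no use of Clifford's Theorem or of the hypothesis $C$ non-hyperelliptic is required here. The range $1 \leq k \leq n-1$ enters only through the usual Brill--Noether framework in which the proposition will be applied in the sequel.
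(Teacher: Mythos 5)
Your proof is correct and follows essentially the same route as the paper: the forward direction by taking any $E$ in the given $\mathfrak{g}_{n+k}^k$ and noting $\dim|E|\geq k$, and the converse by choosing a $(k+1)$-dimensional subspace of $L(D)$. Your observation that no Clifford's Theorem or non-hyperellipticity is needed matches the paper, whose proof is likewise purely definitional.
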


\begin{proof}
	Suppose $C$ has a $\mathfrak{g}_{n+k}^k$, say $L$. Then, given $E\in L$, we have $\deg\left(E\right)=n+k$ and $\dim|E|\geq k$, i.e., $E\in C_{n+k }^k\neq\varnothing$.
	
	Conversely, suppose that $C_{n+k}^k\neq\varnothing$. Given $E\in C_{n+k}^k$ we have $\deg\left(E\right)=n+k$ and $\dim|E|\geq k$, i.e., $\ell\left (E\right)\geq k+1$. Then, we can take a $\mathbb{F}$-vector subspace $V$ of $L\left(E\right)$ of dimension $k+1$. Therefore $\mathbb{P}\left(V\right)$ is a $\mathfrak{g}_{n+k}^k$.	This proves the proposition.
\end{proof}

Recall the closed subsets $\mathcal{C}_{n,k}:=p\left(s^{-1}\left(C_{n+k}^k\right)\right)$ of $C^{(n)}$ defined in Subsection \ref{ck}. Using these sets we will prove that the $R_{n,k}$ are closed subsets of $\mathcal{W}_n$.

\begin{prop}\label{lmi2}
	For every $1\leq k\leq n-1$ we have that $R_{n,k}$ is closed in $\mathcal{W}_n$. More precisely $R_{n,k}=\mathcal{C}_{n,k}\cap\mathcal{W}_n$.
\end{prop}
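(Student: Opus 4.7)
The plan is to prove the precise equality $R_{n,k}=\mathcal{C}_{n,k}\cap\mathcal{W}_n$ by double inclusion; once this is established, the closedness of $R_{n,k}$ in $\mathcal{W}_n$ is automatic. Indeed, $C_{n+k}^k$ is closed in $C^{(n+k)}$ by \cite[IV. \textsection3]{ACGH}, so $s^{-1}(C_{n+k}^k)$ is closed in $C^{(n)}\times C^{(k)}$, and since $p$ is proper its image $\mathcal{C}_{n,k}$ is closed in $C^{(n)}$; intersecting with $\mathcal{W}_n$ gives a closed subset of $\mathcal{W}_n$.

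For the inclusion $R_{n,k}\subset\mathcal{C}_{n,k}\cap\mathcal{W}_n$, I would take $D\in R_{n,k}$ and use $\deg(\overline{D}\cdot C)\geq n+k$ to choose $F\in C^{(k)}$ with $E:=D+F\leq(\overline{D}\cdot C)$. The key step is to verify $\overline{E}=\overline{D}$. The inclusion $\overline{D}\subseteq\overline{E}$ follows from $D\leq E$ and the definition of linear span. Conversely, if $H$ is any hyperplane containing $\overline{D}$ then by the definition of the intersection divisor as a gcd, $\phi^{*}H\geq(\overline{D}\cdot C)\geq E$, so $H$ belongs to the family of hyperplanes whose intersection defines $\overline{E}$; hence $\overline{E}\subseteq\overline{D}$. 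With $\dim\overline{E}=n-1$ in hand, the Geometric Riemann-Roch theorem yields $\ell(E)=(n+k)-(n-1)=k+1$, so $E\in C_{n+k}^k$ and $(D,F)\in s^{-1}(C_{n+k}^k)$, giving $D=p(D,F)\in\mathcal{C}_{n,k}$.

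For the reverse inclusion I would take $D\in\mathcal{C}_{n,k}\cap\mathcal{W}_n$, write $D=p(D,F)$ with $(D,F)\in s^{-1}(C_{n+k}^k)$, and set $E:=D+F$, so $\ell(E)\geq k+1$. Since $D\in\mathcal{W}_n$ we have $\ell(D)=1$, hence $\dim\overline{D}=n-1$ by Geometric Riemann-Roch, and similarly $\dim\overline{E}\leq(n+k)-(k+1)=n-1$. Combined with $\overline{D}\subseteq\overline{E}$ coming from $D\leq E$, this forces $\overline{D}=\overline{E}$. Therefore $(\overline{D}\cdot C)=(\overline{E}\cdot C)\geq E$ (by definition of the intersection divisor), so $\deg(\overline{D}\cdot C)\geq\deg E=n+k$ and $D\in R_{n,k}$.

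The only non-routine point is the identification $\overline{D}=\overline{E}$ in each direction; everything else reduces to unwinding the definitions of $\mathcal{W}_n$, $R_{n,k}$, $\mathcal{C}_{n,k}$, and $(\overline{D}\cdot C)$, together with the Geometric Riemann-Roch theorem controlling linear spans through $\ell(\cdot)$.
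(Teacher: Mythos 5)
Your proposal is correct and follows essentially the same route as the paper: both directions are proved by passing between $D$ and $E=D+F$ via the Geometric Riemann--Roch computation $\ell(E)=(n+k)-\dim(\overline{E})$, using $\overline{D}=\overline{E}$ and the gcd definition of the intersection divisor, with closedness coming from properness of $p$. Your justification of $\overline{E}=\overline{D}$ in the forward direction is in fact spelled out more carefully than in the paper, which simply asserts it.
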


\begin{proof}
	If $R_{n,k}=\varnothing$, then $C$ doesn't have a $\mathfrak{g}_{n+k}^k$ by Proposition \ref{lmi1}. This implies that $C_{n+k}^k=\varnothing$ by Proposition \ref{lmi3}, so $\mathcal{C}_{n,k}\cap\mathcal{W}_n=p\left(s^{ -1}\left(\varnothing\right)\right)\cap\mathcal{W}_n=\varnothing=R_{n,k}$, which is closed.
	
	Suppose then $R_{n,k}\neq\varnothing$. We will prove that $R_{n,k}=\mathcal{C}_{n,k}\cap\mathcal{W}_n$. Indeed, let $D\in R_{n,k}$, so that $\deg\left(\overline{D}\cdot C\right)\geq n+k$. Then we can take $E\in C^{(n+k)}$ such that $D\leq E\leq\left(\overline{D}\cdot C\right)$. Note that $\overline{E}=\overline{D}$. By the Geometric Riemann-Roch Theorem \cite[p. 12]{ACGH}
	\begin{equation*}
		\begin{split}
			\ell\left(E\right)&=\deg\left(E\right)-\dim\left(\overline{E}\right)=\left(n+k\right)-\left(n-1\right)=k+1,
		\end{split}
	\end{equation*}
	so $s\left(D,E-D\right)=E\in C_{n+k}^k$, which implies $D=p\left(D,E-D\right)\in p\left(s^{-1 }\left(C_{n+k}^k\right)\right)=:\mathcal{C}_{n,k}$. Thus $D\in\mathcal{C}_{n,k}\cap\mathcal{W}_n$.
	
	Conversely, let $D\in\mathcal{C}_{n,k}\cap\mathcal{W}_n$. Then $D=p\left(D,F\right)$ with $\left(D,F\right)\in s^{-1}\left(C_{n+k}^k\right)$. Then we have $D+F=s\left(D,F\right)\in C_{n+k}^k$, i.e., $\dim|D+F|\geq k$. Now, by the Geometric Riemann-Roch Theorem
	\begin{equation*}
		\begin{split}
			n-1=\dim\left(\overline{D}\right)\leq\dim\left(\overline{D+F}\right)&=\deg\left(D+F\right)-\ell\left(D+F\right)\\
			&\leq\left(n+k\right)-\left(k+1\right)=n-1,
		\end{split}
	\end{equation*}
	so $\overline{D+F}=\overline{D}$. This implies that $\left(\overline{D}\cdot C\right)=\left(\overline{D+F}\cdot C\right)\geq D+F$, so $\deg\left( \overline{D}\cdot C\right)\geq n+k$. This shows that $D\in R_{n,k}$. Therefore $R_{n,k}=\mathcal{C}_{n,k}\cap\mathcal{W}_n$, which proves the proposition.	
\end{proof}

\begin{cor}
	We have $R_{n,1}=R_n$.
\end{cor}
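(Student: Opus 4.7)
The plan is to observe that this corollary is essentially a tautological consequence of the two main structural results already established: Theorem \ref{lm1} (which computes $R_n$ as $\mathcal{C}_n \cap \mathcal{W}_n$) and Proposition \ref{lmi2} (which computes $R_{n,k}$ as $\mathcal{C}_{n,k} \cap \mathcal{W}_n$). Since the hypotheses of the subsection are in force ($C$ non-hyperelliptic, $2 \leq n \leq g-2$), both of these results apply.

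First, I would apply Proposition \ref{lmi2} with $k=1$ to obtain
$$
R_{n,1} = \mathcal{C}_{n,1} \cap \mathcal{W}_n.
$$
Next, I would unwind the definition of $\mathcal{C}_{n,k}$ from Subsection \ref{ck}: by definition $\mathcal{C}_{n,k} := p(s^{-1}(C_{n+k}^k))$, so taking $k=1$ gives $\mathcal{C}_{n,1} = p(s^{-1}(C_{n+1}^1)) = \mathcal{C}_n$, where the last equality is just the notational convention $\mathcal{C}_n := \mathcal{C}_{n,1}$ introduced in that same subsection.

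Finally, I would invoke Theorem \ref{lm1}, which asserts $R_n = \mathcal{C}_n \cap \mathcal{W}_n$, and chain the equalities:
$$
R_{n,1} = \mathcal{C}_{n,1} \cap \mathcal{W}_n = \mathcal{C}_n \cap \mathcal{W}_n = R_n.
$$
There is no real obstacle here; the corollary's content is precisely that the two descriptions coincide in the case $k=1$, and all the work has been done in Theorem \ref{lm1} and Proposition \ref{lmi2}. The only thing to verify carefully is that the notational identification $\mathcal{C}_{n,1} = \mathcal{C}_n$ matches, but this is exactly the convention stated when $\mathcal{C}_n$ was introduced.
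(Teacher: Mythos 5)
Your proposal is correct and is exactly the argument the paper gives: it chains Proposition \ref{lmi2} (with $k=1$) and Theorem \ref{lm1} through the identity $\mathcal{C}_{n,1}=\mathcal{C}_n$. Nothing is missing.
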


\begin{proof}
	By Theorem \ref{lm1} and Proposition \ref{lmi2} we have $R_n=\mathcal{C}_n\cap\mathcal{W}_n=R_{n,1}$.
\end{proof}

By Proposition \ref{lmi2} we have a stratification
$$
\mathcal{W}_n=R_{n,0}\supset R_n=R_{n,1}\supset R_{n,2}\supset\dots\supset R_{n,n-1}.
$$

\subsection{An application of Brill-Noether Theory}

In this subsection we will study conditions under which $R_{n,k}\neq\varnothing$ in terms of Brill-Noether Theory. For integers $r,d$, with $r\geq0$ and $d\geq1$, we denote $G_d^r\left(C\right):=\{L:L\text{ is a } \mathfrak{ g}_d^r\text{ in }C\}$ which has the structure of a variety (see \cite[IV. \textsection3]{ACGH}).

\begin{prop}\label{lmi6}
	Let $1\leq k\leq n-1\leq g-3$. Suppose $g\leq\frac{(k+1)n}{k}$. Then for every non-hyperelliptic curve $C\in\mathscr{M}_g$ we have $R_{n,k}\neq\varnothing$. If also $k\geq3$, then there exists an open (dense) subset $\mathscr{U}_{n,k}$ of $\mathscr{M}_g$ such that: $C\in\mathscr{M}_g$ is non-hyperelliptic and the general element of $G_{n+k}^k\left(C\right)$ is base point free if and only if $C\in\mathscr{U}_{n,k}$.
	
\end{prop}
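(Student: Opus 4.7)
The plan is to prove the two assertions separately. A direct computation rewrites the hypothesis $g\leq\frac{(k+1)n}{k}$ as $(k+1)n\geq kg$, which is precisely the non-negativity of the Brill-Noether number
$$
\rho(g,k,n+k)=g-(k+1)(g-n)=-kg+(k+1)n.
$$
For the first assertion, this together with \cite[V.\,(1.1)]{ACGH} forces every $C\in\mathscr{M}_g$ to admit a $\mathfrak{g}_{n+k}^k$, and Proposition \ref{lmi1} then yields $R_{n,k}\neq\varnothing$ for every non-hyperelliptic $C\in\mathscr{M}_g$.

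For the second assertion, assume $k\geq3$ and define
$$
\mathscr{U}_{n,k}:=\{C\in\mathscr{M}_g:C\text{ is non-hyperelliptic and the general }L\in G_{n+k}^k(C)\text{ is base point free}\},
$$
so that the ``if and only if'' characterization becomes tautological and it suffices to check that $\mathscr{U}_{n,k}$ is open and dense in $\mathscr{M}_g$. For density I would appeal to Brill-Noether theory for a general curve $C$: the variety $G_{n+k}^k(C)$ is non-empty of pure dimension $\rho(g,k,n+k)$, while $G_{n+k-1}^k(C)$ has dimension at most $\rho(g,k,n+k-1)=\rho(g,k,n+k)-(k+1)$, with the convention that it is empty when negative. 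Since every $\mathfrak{g}_{n+k}^k$ with a base point is of the form $L'+p$ for some $L'\in G_{n+k-1}^k(C)$ and $p\in C$, the base-point sublocus inside $G_{n+k}^k(C)$ has dimension at most $\rho(g,k,n+k)-k<\rho(g,k,n+k)$, so its generic element is base point free. Combined with $\mathscr{H}_g\subsetneqq\mathscr{M}_g$ (which holds since $g\geq3$), this exhibits a general curve in $\mathscr{U}_{n,k}$, yielding density.

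For openness I would invoke the universal Brill-Noether variety $\pi:\mathscr{G}_{n+k}^k\to\mathscr{M}_g$, working if necessary over a suitable cover of $\mathscr{M}_g$ to handle automorphisms. The base-point locus $\mathscr{B}\subset\mathscr{G}_{n+k}^k$ is a closed subscheme, realized as the image of the proper morphism $\mathscr{G}_{n+k-1}^k\times_{\mathscr{M}_g}\mathscr{C}_g\to\mathscr{G}_{n+k}^k$, $(C,L',p)\mapsto(C,L'+p)$, where $\mathscr{C}_g$ is the universal curve. Upper semicontinuity of fiber dimensions then implies that the locus of $C$ for which $\mathscr{B}\cap\pi^{-1}(C)$ dominates some irreducible component of $G_{n+k}^k(C)$ is closed in $\mathscr{M}_g$; intersecting its complement with the open non-hyperelliptic locus $\mathscr{M}_g\setminus\mathscr{H}_g$ gives an open set that by construction coincides with $\mathscr{U}_{n,k}$.

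The main obstacle I foresee is this openness step, since it requires controlling the irreducible components of $\mathscr{G}_{n+k}^k$ over $\mathscr{M}_g$ uniformly in $C$; the hypothesis $k\geq3$ plausibly enters here to guarantee that on a suitable open subset the Brill-Noether scheme is well-behaved (in particular, that ``the general element of $G_{n+k}^k(C)$'' has an unambiguous meaning compatible with the semicontinuity argument), while the cases $k=1,2$ have already been addressed by Smith and Tapia-Recillas. Subsidiary technical issues around the non-representability of $\mathscr{M}_g$ could be handled by passing to an étale cover or to Teichmüller space, and the case $\rho(g,k,n+k-1)<0$ is separately easy since then $G_{n+k-1}^k(C)=\varnothing$ generically, forcing every $\mathfrak{g}_{n+k}^k$ on a general curve to be base point free.
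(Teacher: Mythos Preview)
Your first assertion is handled exactly as in the paper: rewrite the numerical hypothesis as $\rho(g,k,n+k)\geq0$, invoke \cite[V.\,(1.1)]{ACGH} to get $G_{n+k}^k(C)\neq\varnothing$, and conclude via Proposition~\ref{lmi1}.

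For the second assertion your approach diverges substantially from the paper's. The paper does not attempt any direct semicontinuity argument: it simply cites \cite[Theorem~1]{EH} (Eisenbud--Harris), which already furnishes an open set $\mathscr{U}^1_{n,k}\subset\mathscr{M}_g$ with exactly the base-point-free property, and then intersects with the non-hyperelliptic locus. The hypothesis $k\geq3$ is not something to be rediscovered in the argument; it is literally a hypothesis of the cited Eisenbud--Harris theorem, which is why the paper imposes it.

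Your direct route has a genuine gap precisely where you flag it. The density argument for a general curve is fine (and indeed does not need $k\geq3$), but the openness step is not justified. The condition ``the general element of $G_{n+k}^k(C)$ is base point free'' is not manifestly open in $C$: the scheme $G_{n+k}^k(C)$ may have several components whose dimensions jump and whose configuration changes as $C$ varies, so the locus where $\mathscr{B}\cap\pi^{-1}(C)$ fails to dominate every component is not produced by a single semicontinuity statement. Your sketch implicitly assumes a uniform component structure of $\mathscr{G}_{n+k}^k\to\mathscr{M}_g$ that you have not established, and you give no mechanism by which $k\geq3$ enters. Rather than trying to rebuild this from scratch, the intended move is to quote \cite[Theorem~1]{EH} as a black box, which is exactly what the paper does.
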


\begin{proof}
	The hypothesis $g\leq\frac{(k+1)n}{k}$ is equivalent to $\rho\left(g,k,n+k\right)\geq0$, so by \cite[V. (1.1)]{ACGH} for all $C\in\mathscr{M}_g$ we have $G_{n+k}^k\left(C\right)\neq\varnothing$. In particular for all non-hyperelliptic $C\in\mathscr{M}_g$ we have $G_{n+k}^k\left(C\right)\neq\varnothing$. Then, by Proposition \ref{lmi1}, for all non-hyperelliptic $C\in\mathscr{M}_g$ we have $R_{n,k}\neq\varnothing$.
	
	Now if in addition $k\geq3$, by \cite[Theorem 1]{EH} there exists an open $\mathscr{U}^1_{n,k}$ of $\mathscr{M}_g$ such that for $C\in\mathscr{M }_g$  the general element of $G_{n+k}^k\left(C\right)$ is base point free if and only if $C\in\mathscr{U}^1_{n,k}$. Then, the open set $\mathscr{U}_{n,k}:=\mathscr{U}^1_{n,k}\cap\left(\mathscr{M}_g\setminus\mathscr{H}_g\right)$ satisfies the requirement of the proposition.
\end{proof}

\begin{prop}
	Let $1\leq k\leq n-1\leq g-3$. Suppose $g>\frac{\left(k+1\right)n}{k}$. Then there exists an open (dense) subset $\mathscr{S}_{n,k}$ of $\mathscr{M}_g$ such that: $C\in\mathscr{M}_g$ is non-hyperelliptic and $R_{n,k}=\varnothing$ if and only if $C\in\mathscr{S}_{n,k}$.
\end{prop}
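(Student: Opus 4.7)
The plan is to follow closely the template used in item (b) of the analogous Brill-Noether proposition in Section \ref{lm} (the one producing $\mathscr{U}_n$), with Proposition \ref{lmi1} now taking the place of Corollary \ref{lm13}. Set
$$
\mathscr{S}_{n,k} := \mathscr{M}_g \setminus \mathscr{M}_{g,n+k}^k,
$$
where $\mathscr{M}_{g,n+k}^k$ denotes the locus of curves in $\mathscr{M}_g$ admitting a $\mathfrak{g}_{n+k}^k$. The whole argument then has three ingredients.

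First, the numerical translation: a direct computation with the Brill-Noether number gives $\rho(g,k,n+k) = g - (k+1)(g-n)$, so the hypothesis $g > \frac{(k+1)n}{k}$ is equivalent to $\rho(g,k,n+k) < 0$. By \cite[V. (1.5)]{ACGH}, the locus $\mathscr{M}_{g,n+k}^k$ is therefore a proper closed subvariety of $\mathscr{M}_g$, which makes $\mathscr{S}_{n,k}$ open and, by irreducibility of $\mathscr{M}_g$, dense. Second, the biconditional is essentially tautological once Proposition \ref{lmi1} is invoked: $R_{n,k} = \varnothing$ if and only if $C$ admits no $\mathfrak{g}_{n+k}^k$, if and only if $C \in \mathscr{S}_{n,k}$.

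Third, and this is the main conceptual obstacle, I must ensure that membership in $\mathscr{S}_{n,k}$ already forces $C$ to be non-hyperelliptic, i.e., that $\mathscr{H}_g \subset \mathscr{M}_{g,n+k}^k$. For this, let $C$ be hyperelliptic with involution $\iota$, pick any $p \in C$, and set $D = p + \iota(p)$. The bounds $k \leq n-1 \leq g-3$ give $k \leq g-1$, so $|kD|$ is a complete special $\mathfrak{g}_{2k}^k$. Since $n-k \geq 1$, for any auxiliary points $p_1,\dots,p_{n-k}\in C$ the complete system $|kD + p_1 + \dots + p_{n-k}|$ has degree $n+k$ and $\ell \geq k+1$, so passing to a suitable subsystem produces a $\mathfrak{g}_{n+k}^k$ on $C$. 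Alternatively, one can invoke the description of special complete linear systems on hyperelliptic curves already used in the proof of Theorem \ref{gwn1}(b) via \cite[I. D-9]{ACGH}. Combining these three ingredients proves the proposition.
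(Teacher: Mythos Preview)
Your proof is correct and follows essentially the same approach as the paper's: both define $\mathscr{S}_{n,k}=\mathscr{M}_g\setminus\mathscr{M}_{g,n+k}^k$, use \cite[V. (1.5)]{ACGH} together with $\rho(g,k,n+k)<0$ to see this is open dense, and combine Proposition \ref{lmi1} with the fact that hyperelliptic curves always carry a $\mathfrak{g}_{n+k}^k$. The only cosmetic difference is that the paper packages your third ingredient as a citation of Proposition \ref{a1}, whose proof is exactly the construction $k\mathfrak{g}_2^1+P_1+\dots+P_{n-k}$ you wrote out by hand.
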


\begin{proof}
	The hypothesis $g>\frac{(k+1)n}{k}$ is equivalent to $\rho\left(g,k,n+k\right)<0$, so by \cite[V. (1.5)]{ACGH} we have $\mathscr{M}_{g,n+k}^k\subsetneqq\mathscr{M}_g$. Consider the open set $\mathscr{S}_{n,k}:=\mathscr{M}_g\setminus\mathscr{M}_{g,n+k}^k$. Now note that if $C$ doesn't have a $\mathfrak{g}_{n+k}^k$, then $C$ cannot be hyperelliptic by Proposition \ref{a1}. Thus $\mathscr{S}_{n,k}\subset\left(\mathscr{M}_g\setminus\mathscr{H}_g\right)$. By Proposition \ref{lmi1} the open set $\mathscr{S}_{n,k}$ satisfies the required property.
\end{proof}

\begin{prop}\label{lmi7}
	Let $1\leq k\leq n-1\leq g-3$. Suppose that $g>\frac{(k+1)n}{k}$ and that $n\geq\frac{\left(k-1\right)g+3}{k+1}$. Then there exists a subvariety $\mathscr{V}_{n,k}$ of $\mathscr{M}_g$ of positive codimension and positive dimension such that: $C\in\mathscr{M}_g$ is non-hyperelliptic and $R_{n,k}\neq \varnothing$ if and only if $C\in\mathscr{V}_{n,k}$.
\end{prop}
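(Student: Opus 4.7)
The plan is to mimic closely the proof of item (c) of the analogous proposition for $\mathscr{V}_n$ (the one concerning $S_n$ in the multiple locus section), replacing $\mathscr{M}_{g,n+1}^1$ with $\mathscr{M}_{g,n+k}^k$ throughout. By Proposition \ref{lmi1}, the condition $R_{n,k}\neq\varnothing$ is equivalent to $C$ having a (complete) $\mathfrak{g}_{n+k}^k$, so the natural candidate is
$$
\mathscr{V}_{n,k}:=\mathscr{M}_{g,n+k}^k\setminus\mathscr{H}_g.
$$

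First I would verify positive codimension. The hypothesis $g>\frac{(k+1)n}{k}$ rearranges to $\rho\left(g,k,n+k\right)<0$, so by \cite[V.\,(1.5)]{ACGH} we have $\mathscr{M}_{g,n+k}^k\subsetneqq\mathscr{M}_g$; since $\mathscr{V}_{n,k}$ sits inside $\mathscr{M}_{g,n+k}^k$, it has positive codimension in $\mathscr{M}_g$. Next I would observe that every hyperelliptic curve carries a $\mathfrak{g}_2^1$, hence a $\mathfrak{g}_{2k}^k$, and by adjoining $n-k$ base points (using $k\leq n-1$) a $\mathfrak{g}_{n+k}^k$; this gives $\mathscr{H}_g\subset\mathscr{M}_{g,n+k}^k$.

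The main point, and the step where the second numerical hypothesis enters, is to show that $\mathscr{V}_{n,k}$ has positive dimension. For this I would pick an irreducible component $\mathscr{C}$ of $\mathscr{M}_{g,n+k}^k$ containing $\mathscr{H}_g$. By the existence bound of \cite[Theorem 0.1]{S},
\begin{equation*}
\begin{split}
\dim\left(\mathscr{C}\right)&\geq3g-3+\rho\left(g,k,n+k\right)\\
&=3g-3+g-\left(k+1\right)\left(g-n\right)\\
&=4g-3-\left(k+1\right)\left(g-n\right).
\end{split}
\end{equation*}
A direct computation shows that the strict inequality $4g-3-(k+1)(g-n)>2g-1=\dim\left(\mathscr{H}_g\right)$ is equivalent to $n>\frac{(k-1)g+2}{k+1}$, which is implied by the integer hypothesis $n\geq\frac{(k-1)g+3}{k+1}$. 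Consequently $\mathscr{H}_g\subsetneqq\mathscr{C}$, so $\mathscr{V}_{n,k}\supset\mathscr{C}\setminus\mathscr{H}_g$ is a non-empty open subset of the irreducible variety $\mathscr{C}$, and therefore has positive dimension.

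Finally, the biconditional characterization is immediate from Proposition \ref{lmi1}: if $C\in\mathscr{M}_g$ is non-hyperelliptic with $R_{n,k}\neq\varnothing$, then $C$ has a $\mathfrak{g}_{n+k}^k$, so $C\in\mathscr{M}_{g,n+k}^k\setminus\mathscr{H}_g=\mathscr{V}_{n,k}$, and conversely. The main (but still routine) obstacle is the arithmetic verification that the hypothesis $n\geq\frac{(k-1)g+3}{k+1}$ is sharp enough to produce strict inequality in the Brill–Noether dimension bound; all other ingredients are direct substitutions into results already cited in the paper.
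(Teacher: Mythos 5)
Your proposal is correct and follows essentially the same route as the paper: the same candidate $\mathscr{V}_{n,k}=\mathscr{M}_{g,n+k}^k\setminus\mathscr{H}_g$, the same appeal to \cite[V. (1.5)]{ACGH} for positive codimension, to Proposition \ref{a1} for $\mathscr{H}_g\subset\mathscr{M}_{g,n+k}^k$, and to \cite[Theorem 0.1]{S} together with the hypothesis $n\geq\frac{(k-1)g+3}{k+1}$ to force $\dim\left(\mathscr{C}\right)>2g-1=\dim\left(\mathscr{H}_g\right)$ for a component $\mathscr{C}$ containing $\mathscr{H}_g$. The only cosmetic difference is that you rewrite the dimension inequality as the equivalent condition $n>\frac{(k-1)g+2}{k+1}$ rather than substituting the bound directly, which changes nothing.
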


\begin{proof}
	The hypothesis $g>\frac{(k+1)n}{k}$ is equivalent to $\rho\left(g,k,n+k\right)<0$. Then, by \cite[V. (1.5)]{ACGH} we have $\mathscr{M}_{g,n+k}^k\subsetneqq\mathscr{M}_g$. Now, if $C$ doesn't have a $\mathfrak{g}_{n+k}^k$, then $C$ cannot be hyperelliptic by Proposition \ref{a1}. This implies $\left(\mathscr{M}_g\setminus\mathscr{M}_{g,n+k}^k\right)\subset\left(\mathscr{M}_g\setminus\mathscr{H}_g\right)$, i.e., $\mathscr{H}_g\subset\mathscr{M}_{g,n+k}^k$. Recall that $\mathscr{H}_g$ is irreducible and of dimension $2g-1$. We claim that $\mathscr{H}_g\subsetneqq\mathscr{M}_{g,n+k}^k$.
	
	By Theorem \cite[Theorem 0.1]{S} for each irreducible component $\mathscr{C}$ of $\mathscr{M}_{g,n+k}^k$ we have
	\begin{equation*}
		\begin{split}
			\dim\left(\mathscr{C}\right)\geq3g-3+\rho\left(g,k,n+k\right)&=\left(3-k\right)g-3+\left (k+1\right)n\\
			&\geq\left(3-k\right)g-3+\left(k-1\right)g+3\\
			&=2g>2g-1=\dim\left(\mathscr{H}_g\right).
		\end{split}
	\end{equation*}
	
	In particular, if $\mathscr{C}$ is an irreducible component of $\mathscr{M}_{g,n+k}^k$ that contains $\mathscr{H}_g$, by the previous dimension inequality we must have that $\mathscr{ H}_g\subsetneqq\mathscr{C}\subset\mathscr{M}_{g,n+k}^k$, which proves the claim.
	
	Define $\mathscr{V}_{n,k}:=\mathscr{M}_{g,n+k}^k\setminus\mathscr{H}_g$. By Proposition \ref{lmi1}, this variety $\mathscr{V}$ satisfies the requirements.
\end{proof}

\subsection{Bounds for the dimension}

In this subsection we will obtain some bounds for the dimension of the larger intersection loci and their images via the Gauss map. For each $0\leq k\leq n-1$ we define $B_{n,k}:=\mathcal{G}\left(R_{n,k}\right)\subset\mathbb{G}\left(n-1,g-1\right)$ . Note that $B_{n,k}$ is not necessarily a variety, but just a constructible set. Now for integers $r,d$, with $r\geq0$ and $d\geq1$, we define
$$
\mathcal{L}_d^r\left(C\right):=\{L:L\text{ is a complete }\mathfrak{g}_d^r\text{ in }C\}.
$$

\begin{prop}\label{lmi8}
	Let $1\leq k\leq n-1$. Assume that $C$ has a complete $\mathfrak{g}_{n+k}^k$. Then the map
	$$
	\beta:\bigcup_{L\in\mathcal{L}_{n+k}^k\left(C\right)}L\to B_{n,k}, \ E\mapsto\overline{E}
	$$
	is regular, surjective and has finite fibers.
\end{prop}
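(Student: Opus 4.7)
The plan is to handle the three assertions separately, leaning heavily on the characterization of $R_{n,k}$ given by Proposition \ref{lmi2} and on the Geometric Riemann-Roch Theorem.

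First, I would identify the domain. Since every $L \in \mathcal{L}_{n+k}^k(C)$ is a complete linear system, $\bigcup_L L$ is the locally closed subvariety $C_{n+k}^k \setminus C_{n+k}^{k+1}$ of $C^{(n+k)}$. For $E$ in this domain $\ell(E) = k+1$, so by the Geometric Riemann-Roch Theorem $\dim(\overline{E}) = (n+k) - (k+1) = n-1$, confirming that $\beta$ takes values in $\mathbb{G}(n-1,g-1)$. For regularity, I would argue that the span map $E \mapsto \overline{\phi(E)}$ is a morphism on this stratum: the canonical map $\phi:C \to \mathbb{P}^{g-1}$ induces, on the universal divisor $\mathcal{D} \subset C^{(n+k)} \times C$, a family of length-$(n+k)$ subschemes of $\mathbb{P}^{g-1}$, and on the locus of constant span dimension $n-1$ the linear span is given algebraically by minors of the corresponding matrix of sections of $\mathcal{O}_{\mathbb{P}^{g-1}}(1)$, yielding a morphism to the Grassmannian.

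For surjectivity, given $W \in B_{n,k}$, by definition $W = \overline{D}$ for some $D \in R_{n,k}$, so $\deg(\overline{D} \cdot C) \geq n+k$. I would then choose $E \in C^{(n+k)}$ with $D \leq E \leq (\overline{D} \cdot C)$; this forces $\overline{D} \subset \overline{E} \subset \overline{D}$, hence $\overline{E} = W$. Applying Geometric Riemann-Roch gives $\ell(E) = k+1$, so $|E| \in \mathcal{L}_{n+k}^k(C)$ and $E$ is in the domain with $\beta(E) = W$. For finite fibers, any $E \in \beta^{-1}(W)$ satisfies $E \leq (W \cdot C)$; taking any hyperplane $H \supset W$ gives $(W \cdot C) \leq H \cap C$, which has degree $2g-2$, so $(W \cdot C)$ is an effective divisor of bounded degree and only finitely many $E \in C^{(n+k)}$ lie below it.

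The main obstacle is the regularity claim: the surjectivity and finiteness of fibers are essentially immediate consequences of the arguments already developed in the proof of Proposition \ref{lmi2}, but a clean justification that $E \mapsto \overline{E}$ defines a morphism (as opposed to merely a set-theoretic map) requires care about how the linear span varies algebraically as the divisor moves, particularly when $E$ has repeated points so that $\overline{E}$ must be understood as the span incorporating tangent/osculating information. Once this is set up — either via the universal family over $C^{(n+k)}$ and minors as sketched above, or by invoking a standard construction of the span morphism on Grassmannian strata — the proposition follows.
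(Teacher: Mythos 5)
Your treatment of surjectivity and of the finiteness of the fibers is essentially the paper's own argument (choose $E\in C^{(n+k)}$ with $D\leq E\leq\left(\overline{D}\cdot C\right)$, apply the Geometric Riemann--Roch Theorem to get $\ell\left(E\right)=k+1$; bound the fiber by the divisors dominated by $\left(W\cdot C\right)$, whose degree is at most $2g-2$). Your extra care about why $E\mapsto\overline{E}$ is a \emph{morphism} on the stratum of constant span dimension is if anything more explicit than the paper, which simply treats the span map as regular there.

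There is, however, one step you skip that the paper does prove and that the statement requires: well-definedness into $B_{n,k}$. You only verify that $\overline{E}$ lies in $\mathbb{G}\left(n-1,g-1\right)$, but the declared codomain is $B_{n,k}=\mathcal{G}\left(R_{n,k}\right)$, so you must show $\overline{E}\in B_{n,k}$ for every $E$ in the domain; your surjectivity argument gives the containment $B_{n,k}\subset\beta\left(\bigcup_L L\right)$, which is the opposite inclusion and does not substitute for it. The fix is the reverse of your surjectivity step, exactly as in the paper: since $\dim\left(\overline{E}\right)=n-1$, choose $D\in C^{(n)}$ with $D\leq E$ and $\overline{D}=\overline{E}$; then $\left(\overline{D}\cdot C\right)=\left(\overline{E}\cdot C\right)\geq E$ forces $\deg\left(\overline{D}\cdot C\right)\geq n+k$, so $D\in R_{n,k}$ and $\overline{E}=\mathcal{G}\left(D\right)\in B_{n,k}$. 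With that line added your argument is complete and follows the paper's route.
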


\begin{proof}
	Let's first check that $\beta$ is well-defined. Let $E\in L\in\mathcal{L}_{n+k}^k\left(C\right)$, i.e., $\ell\left(E\right)=k+1$. We want to check that $\overline{E}\in B_{n,k}$. By the Geometric Riemann-Roch Theorem \cite[p. 12]{ACGH}
	\begin{equation*}
		\begin{split}
			\dim\left(\overline{E}\right)&=\deg\left(E\right)-\ell\left(E\right)\\
			&=\left(n+k\right)-\left(k+1\right)=n-1,
		\end{split}
	\end{equation*}
	i.e., $\overline{E}\in\mathbb{G}\left(n-1,g-1\right)$.
	
	Now let's take $D\in C^{(n)}$ such that $D\leq E$ and $\overline{D}=\overline{E}$. Note that $\left(\overline{D}\cdot C\right)=\left(\overline{E}\cdot C\right)\geq E$. Then $\deg\left(\overline{D}\cdot C\right)\geq n+k$, i.e., $D\in R_{n,k}$, so $\overline{E}=\overline{D}\in \mathcal{G}\left(R_{n,k}\right)=:B_{n,k}$. Thus $\beta$ is well defined.
	
	Let's now check that $\beta$ is surjective. Let $\overline{D}\in\mathcal{G}\left(R_{n,k}\right)=:B_{n,k}$. We have $\deg\left(\overline{D}\cdot C\right)\geq n+k$. Then, we can take $E\in C^{(n+k)}$ such that $D\leq E\leq\left(\overline{D}\cdot C\right)$. Note that $\overline{E}=\overline{D}$. By the Geometric Riemann-Roch Theorem \cite[p. 12]{ACGH}	
	\begin{equation*}
		\begin{split}
			\ell\left(E\right)&=\deg\left(E\right)-\dim\left(\overline{E}\right)\\
			&=\left(n+k\right)-\left(n-1\right)=k+1,
		\end{split}
	\end{equation*}
	i.e., $L:=|E|$ is a complete $\mathfrak{g}_{n+k}^k$. Thus $\overline{D}=\overline{E}=\beta\left(E\right)$ and $E\in L\in\mathcal{L}_{n+k}^k\left(C \right)$, so $\beta$ is surjective.
	
	Finally we will prove that the fibers of $\beta$ have finite cardinality. Given $\overline{E}\in B_{n,k}$, we have that $\deg\left(\overline{E}\cdot C\right)\geq n+k$. Let's write $\deg\left(\overline{E}\cdot C\right)=n+l$ for some $l\geq k$. We want to prove that $\beta^{-1}\left(\overline{E}\right)$ is a finite set. Let $F\in\beta^{-1}\left(\overline{E}\right)$, i.e., $\overline{F}=\overline{E}$. Then $F\leq\left(\overline{F}\cdot C\right)=\left(\overline{E}\cdot C\right)$. Therefore the fiber $\beta^{-1}\left(\overline{E}\right)$ has at most $\binom{n+l}{n+k}$ elements.
\end{proof}

The following proposition gives bounds for the dimension of $R_{n,k}$ and of $\overline{B_{n,k}}$.

\begin{prop}\label{lmi4}
	Let $1\leq k\leq n-1$. Assume that $C$ has a complete $\mathfrak{g}_{n+k}^k$. Then
	$$
	\dim\left(C_{n+k}^k\right)=\dim\left(\overline{B_{n,k}}\right)=\dim\left(R_{n,k}\right)\leq n-1
	$$
	Moreover, for each irreducible component $Z$ of $\overline{B_{n,k}}$ we have
	$$
	\max\{k,k+\rho\left(g,k,n+k\right)\}\leq\dim\left(Z\right)\leq n-1.
	$$
\end{prop}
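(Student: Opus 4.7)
The approach chains the equalities $\dim(C_{n+k}^k)=\dim\bigl(\bigcup_{L\in\mathcal{L}_{n+k}^k(C)}L\bigr)=\dim(\overline{B_{n,k}})=\dim(R_{n,k})$, then establishes the upper bound $\leq n-1$ and the componentwise lower bounds.

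The middle two equalities are immediate from the finite-fiber surjections at hand. By Proposition \ref{lmi8}, $\beta$ is surjective with finite fibers, so $\dim(\bigcup L)=\dim(\overline{B_{n,k}})$. Since the Gauss map has finite fibers throughout $\mathcal{W}_n$ (by the general result proved at the start of Section 3), its restriction $\mathcal{G}|_{R_{n,k}}\colon R_{n,k}\to B_{n,k}$ also has finite fibers, yielding $\dim(R_{n,k})=\dim(\overline{B_{n,k}})$. For the upper bound $\leq n-1$, I would invoke Proposition \ref{g4}: on the dense open $\mathcal{U}_n\subset\mathcal{W}_n$ where $(\overline{D}\cdot C)=D$, one has $\deg(\overline{D}\cdot C)=n<n+k$, so $\mathcal{U}_n\cap R_{n,k}=\varnothing$ and $R_{n,k}$ is a proper closed subvariety of the $n$-dimensional $\mathcal{W}_n$.

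The main obstacle is the remaining equality $\dim(C_{n+k}^k)=\dim(\bigcup L)$. Since $\bigcup L=C_{n+k}^k\setminus C_{n+k}^{k+1}$ is a nonempty open subset of $C_{n+k}^k$, the inequality $\dim(\bigcup L)\leq\dim(C_{n+k}^k)$ is immediate. For the reverse I would stratify $C_{n+k}^k$ via the Abel-Jacobi morphism $\pi\colon C^{(n+k)}\to\Pic^{n+k}$: each stratum $\pi^{-1}(W_{n+k}^s\setminus W_{n+k}^{s+1})$, for $s\geq k$, is a $\mathbb{P}^s$-bundle of dimension $\dim(W_{n+k}^s\setminus W_{n+k}^{s+1})+s$. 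For non-hyperelliptic $C$, Mumford's refinement of Clifford's theorem gives $\dim W_{n+k}^s\leq n+k-2s-1$, so each stratum has dimension at most $n+k-s-1$; this upper bound is strictly decreasing in $s$, so the maximum over $s\geq k$ is attained at $s=k$, namely on $\bigcup L$.

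For the componentwise lower bounds: each $L\in\mathcal{L}_{n+k}^k(C)$ is isomorphic to $\mathbb{P}^k$ and $\beta|_L$ has finite fibers, so $\overline{\beta(L)}\subset\overline{B_{n,k}}$ is of dimension $k$. Since $\beta$ is surjective with finite fibers, every irreducible component $Z$ of $\overline{B_{n,k}}$ contains some such $\overline{\beta(L)}$, hence $\dim(Z)\geq k$. For the sharper bound $\geq k+\rho(g,k,n+k)$: the domain $\bigcup L$ is a $\mathbb{P}^k$-bundle over $\mathcal{L}_{n+k}^k(C)=W_{n+k}^k\setminus W_{n+k}^{k+1}$, and by \cite[IV. (1.7)]{ACGH} every irreducible component of this base has dimension $\geq\rho(g,k,n+k)$; hence every irreducible component of $\bigcup L$ has dimension $\geq k+\rho$, which transfers via the finite-fiber map $\beta$ to the same lower bound on each irreducible component of $\overline{B_{n,k}}$.
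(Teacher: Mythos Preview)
Your overall architecture is sound and matches the paper's: use the finite-fiber surjections $\beta$ and $\mathcal{G}$ to chain the three dimensions together, bound $R_{n,k}$ above by $n-1$ as a proper closed subset of $\mathcal{W}_n$, and get the componentwise lower bounds from the $\mathbb{P}^k$-fibration over $W_{n+k}^k\setminus W_{n+k}^{k+1}$ and the Brill--Noether dimension estimate. The finite-fiber arguments and the bundle argument for $\dim(Z)\geq k+\rho$ are fine (though your citation of \cite[IV.(1.7)]{ACGH} is for the non-containment lemma, not the dimension bound; the latter is on \cite[p.~159]{ACGH}).

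There is, however, a genuine gap in your proof of $\dim(C_{n+k}^k)=\dim\bigl(\bigcup_L L\bigr)$. You invoke Martens' theorem (not Mumford's) to bound $\dim W_{n+k}^s\leq n+k-2s-1$, but Martens requires $d\leq g-1$. Here $d=n+k$ can exceed $g-1$: for instance $n=g-2$, $k=n-1$ gives $n+k=2g-5$. One can rescue this via residuation $D\mapsto K-D$, but you do not do so, and the detour is unnecessary. The paper instead applies \cite[IV, Lemma~(1.7)]{ACGH} directly: since $g-(n+k)+k=g-n\geq 2$, no irreducible component $X_j$ of $C_{n+k}^k$ is contained in $C_{n+k}^{k+1}$. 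Hence $Z_j:=X_j\setminus C_{n+k}^{k+1}$ is open and dense in $X_j$, so $\bigcup_L L=\bigcup_j Z_j$ is dense in $C_{n+k}^k$ and the equality of dimensions is immediate---indeed componentwise.

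This same density also plugs a second hole in your argument: you assert that every irreducible component $Z$ of $\overline{B_{n,k}}$ contains some $\overline{\beta(L)}$, but give no reason. The paper supplies it: each $Z$ equals $\overline{\beta(Z_j)}$ for some $j$, and since $Z_j=\bigcup_{L\subset Z_j}L$ (each $L$ being irreducible lands in exactly one $Z_j$, and if some $Z_{j_0}$ contained no $L$ it would be redundant among the components), one gets $\dim(Z)=\dim(Z_j)\geq\dim(L)=k$. Your stratification argument, even if repaired, only controls the global maximum of the higher strata and does not by itself force every component of $C_{n+k}^k$ to meet the $s=k$ stratum.
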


\begin{proof}
	Let $X_1,\dots,X_m$ be the irreducible components of $C_{n+k}^k$. Since $g-\left(n+k\right)+k=g-n\geq2$, by \cite[IV, (1.7) Lemma]{ACGH} we have that no $X_j$ is completely contained in $C_{n +k}^{k+1}$. This implies that $X_j\setminus C_{n+k}^{k+1}\neq\varnothing$ for each $j\in\{1,\dots,m\}$. For each $j\in\{1,\dots,m\}$ we define $Z_j:=X_j\setminus C_{n+k}^{k+1}$. Since $Z_j$ is an open (dense) subset of $X_j$, we have $\dim\left(Z_j\right)=\dim\left(X_j\right)$ for each $j\in\{1,\dots,m \}$. Note that $Z_1,\dots,Z_m$ are the irreducible components of $C_{n+k}^k\setminus C_{n+k}^{k+1}=\bigcup_{L\in\mathcal{L} _{n+k}^k\left(C\right)}L$.
	
	Since $\beta$ is surjective, we have
	$$
	B_{n,k}=\beta\left(Z_1\right)\cup\dots\cup\beta\left(Z_m\right),
	$$
	and the $\beta\left(Z_j\right)$ are irreducible. Taking closure
	$$
	\overline{B_{n,k}}=\overline{\beta\left(Z_1\right)}\cup\dots\cup\overline{\beta\left(Z_m\right)},
	$$
	so that each irreducible component of $\overline{B_{n,k}}$ is one of the $\overline{\beta\left(Z_j\right)}$.
	
	For each $j\in\{1,\dots,m\}$ the restriction $\beta:Z_j\to\overline{\beta\left(Z_j\right)}$ is a dominant morphism of quasi-projective varieties with finite fibers. Then, by the Fiber Dimension Theorem \cite[Theorem 1.25]{SH} for each $j\in\{1,\dots,m\}$ we obtain
	$$
	\dim\left(\overline{\beta\left(Z_j\right)}\right)=\dim\left(Z_j\right).
	$$
	
	Without loss of generality we can assume that $\dim\left(C_{n+k}^k\right)=\dim\left(X_1\right)$. We will prove that $\dim\left(\overline{B_{n,k}}\right)=\dim\left(C_{n+k}^k\right)$. Indeed, let's assume that $\dim\left(\overline{B_{n,k}}\right)=\dim\left(\overline{\beta\left(Z_j\right)}\right)$.
	
	Then
	$$
	\dim\left(\overline{B_{n,k}}\right)=\dim\left(\overline{\beta\left(Z_j\right)}\right)=\dim\left(Z_j\right)=\dim\left(X_j\right)\leq\dim\left(X_1\right)=\dim\left(C_{n+k}^k\right).
	$$
	
	On the other hand,
	$$
	\dim\left(C_{n+k}^k\right)=\dim\left(X_1\right)=\dim\left(Z_1\right)=\dim\left(\overline{\beta\left(Z_1\right)}\right)\leq\dim\left(\overline{B_{n,k}}\right),
	$$
	which proves $\dim\left(\overline{B_{n,k}}\right)=\dim\left(C_{n+k}^k\right)$.
	
	Now we will prove that each irreducible component of $\overline{B_{n,k}}$ has dimension at least $k$. For this it is enough to prove that
	
	$$
	\dim\left(\overline{\beta\left(Z_j\right)}\right)\geq k \ \text{ for all } j\in\{1,\dots,m\}.
	$$
	
	Note that given $L\in\mathcal{L}_{n+k}^k\left(C\right)$, we have that $L\subset Z_j$ for some $j\in\{1,\dots ,m\}$, since $L$ is irreducible. We claim that for every $j\in\{1,\dots,m\}$ there exists $L\in\mathcal{L}_{n+k}^k\left(C\right)$ such that $L\subset Z_j$. By contradiction, assume that $L\not\subset Z_1$ for all $L\in\mathcal{L}_{n+k}^k\left(C\right)$. This implies that for every $L\in\mathcal{L}_{n+k}^k\left(C\right)$ there exists $j\in\{2,\dots,m\}$ such that $L \subset Z_j$. Then
	$$
	Z_1\cup\dots\cup Z_m=\bigcup_{L\in\mathcal{L}_{n+k}^k\left(C\right)}L\subset Z_2\cup\dots\cup Z_m.
	$$
	
	In particular $Z_1\subset Z_2\cup\dots\cup Z_m$, so $Z_1\subset Z_j$ for some $j\in\{2,\dots,m\}$, which is a contradiction. This proves the claim.
	
	Then, given $j\in\{1,\dots,m\}$ there exists $L\in\mathcal{L}_{n+k}^k\left(C\right)$ such that $L\subset Z_j$. This implies that
	$$
	k=\dim\left(L\right)\leq\dim\left(Z_j\right)=\dim\left(\overline{\beta\left(Z_j\right)}\right),
	$$
	so each component of $\overline{B_{n,k}}$ has dimension at least $k$.
	
	Moreover, each component of $C_{n+k}^k$ has dimension greater than or equal to $k+\rho\left(g,k,n+k\right)$ by \cite[p. 159]{ACGH}, so for each $j\in\{1,\dots,m\}$ we have
	$$
	\dim\left(\overline{\beta\left(Z_j\right)}\right)=\dim\left(Z_j\right)=\dim\left(X_j\right)\geq k+\rho\left(g ,k,n+k\right).
	$$
	
	Thus each component $Z$ of $\overline{B_{n,k}}$ satisfies
	$$
	\dim\left(Z\right)\geq\max\{k,k+\rho\left(g,k,n+k\right)\}.
	$$
	
	Now let $Y_1,\dots,Y_r$ be the irreducible components of $R_{n,k}$. Then
	$$
	B_{n,k}:=\mathcal{G}\left(R_{n,k}\right)=\mathcal{G}\left(Y_1\right)\cup\dots\cup\mathcal{G}\left(Y_r\right),
	$$
	where each $\mathcal{G}\left(Y_j\right)$ is irreducible. Taking closure
	$$
	\overline{B_{n,k}}=\overline{\mathcal{G}\left(Y_1\right)}\cup\dots\cup\overline{\mathcal{G}\left(Y_r\right)},
	$$
	so that each irreducible component of $\overline{B_{n,k}}$ is one of the $\overline{\mathcal{G}\left(Y_j\right)}$.

	For each $j\in\{1,\dots,r\}$ the restriction $\mathcal{G}:Y_j\to\overline{\mathcal{G}\left(Y_j\right)}$ is a dominant morphism of quasi-projective varieties with finite fibers. Then, by the Fiber Dimension Theorem \cite[Theorem 1.25]{SH} for each $j\in\{1,\dots,r\}$ we have
	
	$$
	\dim\left(Y_j\right)=\dim\left(\overline{\mathcal{G}\left(Y_j\right)}\right),
	$$
	so $\dim\left(R_{n,k}\right)=\dim\left(\overline{B_{n,k}}\right)=\dim\left(C_{n+k}^k\right)$.
	
	Now, since $R_{n,k}$ is a proper closed subset of $\mathcal{W}_n$, for each $j\in\{1,\dots,r\}$ we have
	$$
	\dim\left(\overline{\mathcal{G}\left(Y_j\right)}\right)=\dim\left(Y_j\right)\leq\dim\left(R_{n,k}\right)\leq\dim\left(\mathcal{W}_n\right)-1=n-1,
	$$
	which also implies that $\dim\left(C_{n+k}^k\right)\leq n-1$. This proves the proposition.
\end{proof}

Here we ask the following questions:

Do we have $\dim\left(Y\right)\geq k$ for each irreducible component $Y$ of $R_{n,k}$? 

Is there an example where an component of $R_{n,k}$ is also an component of $R_{n,k-1}$?

\vspace{0.2cm} 

Now, note that for each $k\in\{1,\dots,n-1\}$
\begin{equation*}
	\begin{split}
		\rho\left(g,k-1,n+k-1\right)-\rho\left(g,k,n+k\right)&=g-k\left(g-n\right)-\left(g-\left(k+1\right)\left(g-n\right)\right)\\
		&=\left(k+1\right)\left(g-n\right)-k\left(g-n\right)\\
		&=g-n\geq2.
	\end{split}
\end{equation*}

Then,
\begin{equation*}
	\begin{split}
		k-1+\rho\left(g,k-1,n+k-1\right)-\left(k+\rho\left(g,k,n+k\right)\right)&=g-n-1\geq1.
	\end{split}
\end{equation*}

Using the previous result we can obtain bounds for the dimension of the multiple locus of $\mathcal{G}$.

\begin{cor}\label{lmi9}
	Let $1\leq k\leq n-1$. Assume that $C$ has a complete $\mathfrak{g}_{n+k}^k$. Then
	$$
	\max\{k,1+\rho\left(g,1,n+1\right)\}\leq\dim\left(R_n\right)\leq n-1.
	$$
\end{cor}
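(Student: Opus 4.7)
The plan is to read off both bounds from Proposition \ref{lmi4} by comparing the multiple locus $R_n$ with the larger intersection loci. The key identity is $R_n=R_{n,1}$ (the corollary just after Proposition \ref{lmi2}), which puts $R_n$ at the top of the stratification
\begin{equation*}
\mathcal{W}_n=R_{n,0}\supset R_n=R_{n,1}\supset R_{n,2}\supset\dots\supset R_{n,n-1}.
\end{equation*}

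For the lower bound I would first use the hypothesis that $C$ has a complete $\mathfrak{g}_{n+k}^k$ together with the corollary after Proposition \ref{lmi1} (applied inductively on decreasing index) to conclude that $C$ has a complete $\mathfrak{g}_{n+j}^j$ for every $1\leq j\leq k$; in particular it has a complete $\mathfrak{g}_{n+1}^1$, so Proposition \ref{lmi4} is available at the index $1$. Applying Proposition \ref{lmi4} at the index $k$ gives that every irreducible component of $\overline{B_{n,k}}$ has dimension at least $k$, hence $\dim(R_{n,k})\geq k$; since $R_n\supset R_{n,k}$, we get $\dim(R_n)\geq k$. Applying Proposition \ref{lmi4} at the index $1$ gives
\begin{equation*}
\dim(R_n)=\dim(R_{n,1})\geq\max\{1,1+\rho(g,1,n+1)\}\geq 1+\rho(g,1,n+1).
\end{equation*}
Combining the two lower estimates yields $\dim(R_n)\geq\max\{k,1+\rho(g,1,n+1)\}$.

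For the upper bound, $R_n$ is a proper closed subset of the irreducible variety $\mathcal{W}_n$ of dimension $n$ (by the inclusion $R_n\subset\mathcal{C}_n\cap\mathcal{W}_n$ from Proposition \ref{lm11}, which is proper), so $\dim(R_n)\leq n-1$; the same bound is also the final inequality in Proposition \ref{lmi4}.

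There is no real obstacle: everything is a direct consequence of Proposition \ref{lmi4} and the chain of implications among the existence of complete $\mathfrak{g}_{n+j}^j$'s. The only point requiring a moment's care is making sure that when we invoke Proposition \ref{lmi4} at the index $1$, the existence hypothesis is met — which is exactly what the iterated corollary after Proposition \ref{lmi1} provides. The computation displayed just before the corollary, namely $k-1+\rho(g,k-1,n+k-1)\geq k+\rho(g,k,n+k)+1$, is the conceptual reason why descending all the way to $k=1$ produces the sharpest Brill--Noether contribution to the bound.
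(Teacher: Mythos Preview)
Your proposal is correct and follows essentially the same route as the paper: apply Proposition \ref{lmi4} at the index $1$ (after verifying the existence of a complete $\mathfrak{g}_{n+1}^1$) to obtain the upper bound and the Brill--Noether part of the lower bound, and at the index $k$ together with the inclusion $R_{n,k}\subset R_n$ to obtain $\dim(R_n)\geq k$. The only cosmetic difference is that the paper checks the hypothesis at index $1$ via $R_{n,1}\supset R_{n,k}\neq\varnothing$ and Proposition \ref{lmi1} directly, rather than iterating the corollary after Proposition \ref{lmi1}.
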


\begin{proof}
	Since $C$ has a complete $\mathfrak{g}_{n+k}^k$, then $R_{n,k}\neq\varnothing$ by Proposition \ref{lmi1}. Then $R_n=R_{n,1}\neq\varnothing$, since $R_{n,k}\subset R_{n,1}$, so
	$$
	\max\{1,1+\rho\left(g,1,n+1\right)\}\leq\dim\left(R_n\right)\leq n-1
	$$
	by Proposition \ref{lmi4}. Moreover,
	$$
	\dim\left(R_n\right)\geq\dim\left(R_{n,k}\right)\geq\max\{k,k+\rho\left(g,k,n+k\right)\}
	$$
	also by Proposition \ref{lmi4}. This proves the corollary.	
\end{proof}

If $C$ has a complete $\mathfrak{g}_{2n-1}^{n-1}$ we can know exactly the dimensions of the $R_{n,k}$ and the $\overline{B_{n,k}}$.

\begin{cor}\label{lmi10}
	If $C$ has a complete $\mathfrak{g}_{2n-1}^{n-1}$, then for all $k\in\{1,\dots,n-1\}$
	$$
	\dim\left(R_{n,k}\right)=\dim\left(\overline{B_{n,k}}\right)=\dim\left(C_{n+k}^k\right)=n-1.
	$$
\end{cor}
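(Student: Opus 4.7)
The plan is to combine Proposition \ref{lmi4}, which supplies the upper bound, with an explicit construction yielding the matching lower bound. First I would verify that the hypothesis of Proposition \ref{lmi4} holds for every $k\in\{1,\dots,n-1\}$: since $C$ has a complete $\mathfrak{g}_{2n-1}^{n-1}$, Proposition \ref{lmi1} gives $R_{n,n-1}\neq\varnothing$, and the stratification $\mathcal{W}_n\supset R_{n,1}\supset\cdots\supset R_{n,n-1}$ then yields $R_{n,k}\neq\varnothing$, so Proposition \ref{lmi1} again supplies a complete $\mathfrak{g}_{n+k}^k$ on $C$. Proposition \ref{lmi4} therefore furnishes
$$
\dim\left(R_{n,k}\right)=\dim\left(\overline{B_{n,k}}\right)=\dim\left(C_{n+k}^k\right)\leq n-1,
$$
so it remains to prove $\dim\left(C_{n+k}^k\right)\geq n-1$.

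For the lower bound I would fix $|E|\in\mathcal{L}_{2n-1}^{n-1}\left(C\right)$ and introduce the incidence variety
$$
I:=\{\left(D,F\right)\in C^{(n-1-k)}\times C^{(n+k)}:F+D\in|E|\}.
$$
The first projection $p_1:I\to C^{(n-1-k)}$ has fiber $|E-D|$ over $D$, and for generic $D$ one has $\ell\left(E-D\right)=\ell\left(E\right)-\deg\left(D\right)=k+1$, because $n-1-k$ general points of $C$ impose independent conditions on (the base-point-free part of) $|E|$. Thus $\dim\left(I\right)=\left(n-1-k\right)+k=n-1$. The second projection $p_2:I\to C^{(n+k)}$ lands in $C_{n+k}^k$, since $|p_2\left(D,F\right)|\supset|E-D|$ is a $\mathfrak{g}_{n+k}^k$. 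Its fiber over $F$ is $\{D:D\sim E-F\}=|E-F|$, and for generic $F\in p_2\left(I\right)$ this is a single point: indeed $E-F\sim D'$ for a generic $D'\in C^{(n-1-k)}$, and $\dim|D'|=0$ by \cite[IV, (1.7) Lemma]{ACGH} applied to $C^{(n-1-k)}=C_{n-1-k}^0$ (using $g-\left(n-1-k\right)\geq g-n+1\geq 3$). Hence $\dim p_2\left(I\right)=n-1$, giving $\dim\left(C_{n+k}^k\right)\geq n-1$ and completing the chain of equalities.

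The main obstacle is the generic-fiber analysis on both projections: one must show simultaneously that generic $D\in C^{(n-1-k)}$ is rigid (so the fiber of $p_2$ is zero-dimensional) and that generic $D$ imposes $n-1-k$ independent conditions on $|E|$ (so the fiber of $p_1$ has the expected dimension $k$). Both rest on the ACGH lemma \cite[IV, (1.7) Lemma]{ACGH} controlling how components of $C^{(d)}$ lie inside $C_d^1$, applicable throughout because $g-n\geq 2$; if $|E|$ happens to have a base locus $B$, the independence-of-conditions argument is performed after passing to the base-point-free part $|E-B|$, noting that a generic $D$ disjoint from the finite set $B$ imposes the same conditions on $|E|$ as on $|E-B|$.
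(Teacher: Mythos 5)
Your argument is correct, but the lower bound is obtained by a genuinely different route from the paper's. The paper's proof is a two-line consequence of Proposition \ref{lmi4}: applying its component bound $\dim\left(Z\right)\geq\max\{k,k+\rho\left(g,k,n+k\right)\}$ at the top stratum $k=n-1$ gives $\dim\left(R_{n,n-1}\right)\geq n-1$ directly (ultimately because $C_{2n-1}^{n-1}$ contains the $\left(n-1\right)$-dimensional projective space $|E|$ itself), and the inclusion $R_{n,n-1}\subset R_{n,k}$ then propagates this bound to every $k$, where it meets the upper bound $n-1$. You instead re-derive the lower bound for each $k$ separately via the incidence variety $I\subset C^{(n-1-k)}\times C^{(n+k)}$ of decompositions $F+D\in|E|$, computing $\dim I=n-1$ from the generic fiber $|E-D|\cong\mathbb{P}^k$ of $p_1$ and showing $p_2$ is generically finite onto its image in $C_{n+k}^k$ because a generic $D\in C^{(n-1-k)}$ satisfies $\ell\left(D\right)=1$. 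This is more work but more explicit: it exhibits a concrete $\left(n-1\right)$-dimensional family of complete $\mathfrak{g}_{n+k}^k$'s, namely the systems $|E-D|$ swept out as $D$ varies, whereas the paper's argument only records the dimension count. Both are valid; your hypothesis check via Proposition \ref{lmi1} and the stratification matches the paper's Corollary following that proposition, and the generic-fiber claims are justified by the standard facts you cite (general points impose independent conditions on $|E|$ since $n-1-k\leq\dim|E|$, and \cite[IV, (1.7) Lemma]{ACGH} applies since $g-\left(n-1-k\right)\geq2$).
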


\begin{proof}
	For each $k\in\{1,\dots,n-1\}$ we have
	$$
	n-1\leq\max\{n-1,\rho\left(g,n-1,2n-1\right)\}\leq\dim\left(R_{n,n-1}\right)\leq \dim\left(R_{n,k}\right)\leq n-1
	$$
	by Proposition \ref{lmi4}, so that
	$$
	\dim\left(C_{n+k}^k\right)=\dim\left(\overline{B_{n,k}}\right)=\dim\left(R_{n,k}\right)=n-1,
	$$
	also by Proposition \ref{lmi4}. This proves the corollary.
\end{proof}

From Proposition \ref{lmi4} it also follows:

\begin{cor}
	If $C$ has a $\mathfrak{g}_{2n-1}^{n-1}$, then 
	$$
	\dim\left(C_{2n-1}^{n-1}\right)=\dim\left(B_{n,n-1}\right)=\dim\left(R_{n,n-1 }\right)=n-1.
	$$
	Moreover $C_{2n-1}^{n-1}$ and $B_{n,n-1}$ are equidimensional and $C$ has a finite number of $\mathfrak{g}_{2n-1} ^{n-1}$'s.
\end{cor}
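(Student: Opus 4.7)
My plan is to deduce this corollary directly from Proposition \ref{lmi4} specialized to $k = n-1$. First I would note that the lower bound on the dimensions of the irreducible components of $\overline{B_{n,n-1}}$ coming from that proposition reads $\max\{n-1,\, n-1 + \rho(g,n-1,2n-1)\} \leq \dim(Z) \leq n-1$. Regardless of the sign of $\rho(g,n-1,2n-1)$, this forces $\dim(Z) = n-1$ for every component $Z$, so $\overline{B_{n,n-1}}$ is equidimensional of dimension $n-1$. The equalities $\dim(C_{2n-1}^{n-1}) = \dim(\overline{B_{n,n-1}}) = \dim(R_{n,n-1}) = n-1$ are then Corollary \ref{lmi10}, and since $B_{n,n-1}$ is constructible we have $\dim(B_{n,n-1}) = \dim(\overline{B_{n,n-1}}) = n-1$.

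Next I would transport equidimensionality from $\overline{B_{n,n-1}}$ to both $C_{2n-1}^{n-1}$ and $B_{n,n-1}$. The argument inside the proof of Proposition \ref{lmi4} already puts the irreducible components $X_j$ of $C_{2n-1}^{n-1}$ in bijection with the components $\overline{\beta(Z_j)}$ of $\overline{B_{n,n-1}}$ (with $Z_j = X_j \setminus C_{2n-1}^n$) via the surjective morphism $\beta$ with finite fibers, and crucially $\dim(X_j) = \dim(\overline{\beta(Z_j)})$ via the Fiber Dimension Theorem. Since every component of $\overline{B_{n,n-1}}$ has dimension $n-1$, the same holds for every $X_j$, so $C_{2n-1}^{n-1}$ is equidimensional. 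Equidimensionality of the constructible set $B_{n,n-1}$ follows because each of its irreducible components is open and dense in a component of its closure and hence has the same dimension.

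For the finiteness of $\mathcal{L}_{2n-1}^{n-1}(C)$, the idea is to realize each complete $L \in \mathcal{L}_{2n-1}^{n-1}(C)$ as an irreducible component of $C_{2n-1}^{n-1}$. Each such $L$ is a closed irreducible subvariety of $C^{(2n-1)}$ isomorphic to $\mathbb{P}^{n-1}$, contained in $C_{2n-1}^{n-1} \setminus C_{2n-1}^n \subset C_{2n-1}^{n-1}$. Since $C_{2n-1}^{n-1}$ is equidimensional of dimension $n-1$ and a proper closed irreducible subset of an irreducible variety has strictly smaller dimension, $L$ must coincide with one of the irreducible components of $C_{2n-1}^{n-1}$. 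Distinct complete linear systems on $C$ of the same degree are pairwise disjoint in $C^{(2n-1)}$ (each divisor belongs to a unique complete linear system via $\rho$), so distinct elements of $\mathcal{L}_{2n-1}^{n-1}(C)$ correspond to distinct components, and since $C_{2n-1}^{n-1}$ has only finitely many irreducible components, the set $\mathcal{L}_{2n-1}^{n-1}(C)$ is finite.

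The only step that is not purely a bookkeeping consequence of Proposition \ref{lmi4} is the identification of each complete $\mathfrak{g}_{2n-1}^{n-1}$ with an irreducible component of $C_{2n-1}^{n-1}$; I expect this to be the main, though still elementary, obstacle, relying on equidimensionality together with the disjointness of distinct complete linear systems in the symmetric product.
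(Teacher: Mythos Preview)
Your proposal is correct and matches the paper's approach, which simply asserts that the corollary follows from Proposition \ref{lmi4}; you have filled in the details the paper omits. One minor remark: the proof of Proposition \ref{lmi4} does not literally establish a bijection between the $X_j$ and the components of $\overline{B_{n,n-1}}$ (some $\overline{\beta(Z_j)}$ could in principle be redundant), but this is harmless since that same proof shows every $Z_j$ contains some $L\in\mathcal{L}_{2n-1}^{n-1}(C)$, giving $\dim X_j\geq n-1$ directly and hence the equidimensionality of $C_{2n-1}^{n-1}$.
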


\section{The Generalized Torelli Problem}

In this section we will study the Generalized Torelli Problem, i.e., the problem of reconstructing a curve $C\in\mathscr{M}_g$ and families of linear systems on $C$ vía information encoded in the Gauss map on $\mathcal{W}_n$. As mentioned in the introduction, this problem has been studied before for non-hyperelliptic curves defined over $\C$: in \cite{STR1} the case of pencils of linearly equivalent divisors (dimension one), and in \cite{STR2} the case of nets of linearly equivalent divisors (dimension two). There was an attempt to generalize the result of \cite{STR1} by Faucette in his notes \cite{F}, but these contain an error as mentioned in the introduction. In the first subsection of this chapter we are going to generalize the results of Smith and Tapia-Recillas mentioned for the case of linear systems of dimension $k\geq3$ on non-hyperelliptic curves, and without assuming that $W_n$ is smooth. We will also study analogous results for the hyperelliptic case in the second subsection.

\subsection{Non-hyperelliptic case}\label{tg}

We now state the main result of this section, which solves the Generalized Torelli Problem in the non-hyperelliptic case and generalizes the results of \cite{STR1,STR2}.

\begin{thm}\label{tg4}
	Let $1\leq k\leq n-1\leq g-3$, $C\in\mathscr{M}_g$ non-hyperelliptic. Let's assume that $C$ has a $\mathfrak{g}_{n+k}^k$, but not a $\mathfrak{g}_{n+k+1}^{k+1}$. Let $\mathcal{G}:\mathcal{W}_n\to\mathbb{G}\left(n-1,g-1\right)$ be the Gauss map on $\mathcal{W}_n$. Let $L$ be an arbitrary $\mathfrak{g}_{n+k}^k$ on $C$ and consider the morphism $\phi_L:C\to\mathbb{P}^k$ associated to $L$. Then $L$ and the dual hypersurface $\phi_L\left(C\right)^*$ can be reconstructed from the restriction of the Gauss map to the $(n+k)$-intersection locus $R_{n,k}$, i.e., from $\mathcal{G}:R_{n,k}\to B_{n,k}$.
\end{thm}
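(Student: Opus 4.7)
The plan is to fix $L \in \mathcal{L}_{n+k}^k(C)$ and reconstruct both $L$ and $\phi_L(C)^*$ as subvarieties of $\beta_L(L) \subset \overline{B_{n,k}}$, where $\beta_L$ denotes the restriction to $L$ of the map $\beta$ of Proposition~\ref{lmi8}, sending $E \mapsto \overline{E}$.

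First I would show that $\beta_L$ is birational onto its image. For any $E \in L$, the Geometric Riemann-Roch Theorem gives $\dim\overline{E} = (n+k) - (k+1) = n-1$. If there existed $q \in C \setminus \Supp(E)$ with $\phi(q) \in \overline{E}$, then $\overline{E+q} \subseteq \overline{E}$ would have dimension at most $n-1$, so Geometric Riemann-Roch would yield
$$
\ell(E+q) = \deg(E+q) - \dim \overline{E+q} \geq (n+k+1)-(n-1) = k+2,
$$
producing a $\mathfrak{g}_{n+k+1}^{k+1}$ and contradicting the hypothesis. Consequently $\Supp(\overline{E}\cdot C) \subseteq \phi(\Supp E)$, and for a reduced $E \in L$ with transverse intersection along $\phi(\Supp E)$---the generic situation by Bertini's Theorem---we obtain $(\overline{E}\cdot C) = E$, recovering $E$ from $\overline{E}$. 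Combined with the finite fibers of $\beta$ from Proposition~\ref{lmi8}, this generic injectivity implies $\beta_L : L \to \beta_L(L)$ is birational; hence $\beta_L(L) \subseteq \overline{B_{n,k}}$ is a subvariety birational to $\mathbb{P}^k \cong L$, thereby reconstructing $L$ from the restricted Gauss map $\mathcal{G}: R_{n,k} \to B_{n,k}$.

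To reconstruct $\phi_L(C)^*$: under the natural identification $L \cong (\mathbb{P}^k)^*$ via $H \mapsto \phi_L^* H$, the dual hypersurface is the closure of the locus of hyperplanes tangent to $\phi_L(C)$ at a smooth point, equivalently of the non-reduced divisors $E \in L$ arising at smooth immersion points of $\phi_L$. By Bertini's Theorem, this non-reduced locus is a proper closed subvariety of $L$. Via $\beta_L$, the non-reducedness of $E$ can be read off from the non-reducedness of the intersection divisor $(\overline{E}\cdot C)$, which is computable from the point $\overline{E} \in B_{n,k}$ since the canonical image $\phi(C) \subset \mathbb{P}^{g-1}$ is intrinsic to $C$. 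Thus $\phi_L(C)^*$ is reconstructed as the closed subvariety $\{\,\overline{E} \in \beta_L(L) : (\overline{E}\cdot C)\text{ is non-reduced}\,\}$ of $\beta_L(L)$.

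The main obstacle will be upgrading the generic injectivity of $\beta_L$ to the statement that its generic fiber has size exactly one (not merely finite), and ensuring the non-reduced locus in $L$ matches $\phi_L(C)^*$ precisely, avoiding spurious contributions from base points of $L$ or from the singular locus of $\phi_L(C)$; careful deployment of Bertini's Theorem together with the no-$\mathfrak{g}_{n+k+1}^{k+1}$ hypothesis is the delicate technical point.
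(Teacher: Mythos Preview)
Your approach is essentially the same as the paper's, but you stop short of two strengthenings that the paper obtains almost for free and that make the reconstruction genuinely intrinsic.

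First, your birationality of $\beta_L$ can be upgraded to a \emph{bijection} on all of $C_{n+k}^k$, not just generic injectivity on each $L$. You already observe that an extra point $q$ with $\phi(q)\in\overline{E}$ would force a $\mathfrak{g}_{n+k+1}^{k+1}$; the same Riemann--Roch computation applied to any $E'\in C^{(n+k+1)}$ with $E\le E'\le(\overline{E}\cdot C)$ shows $\deg(\overline{E}\cdot C)=n+k$ exactly. Since always $E\le(\overline{E}\cdot C)$, equality of degrees gives $(\overline{E}\cdot C)=E$ for \emph{every} $E\in C_{n+k}^k$, with no appeal to reducedness or transversality. This is the paper's Proposition~\ref{tg2}/\ref{tg12}, and it removes the ``generic fiber size one'' obstacle you flag at the end.

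Second, and more importantly for the meaning of ``reconstruct'': you show that $\beta_L(L)\subset B_{n,k}$ is birational to $L$, but you do not explain how to recognise the subsets $\beta_L(L)$ inside $B_{n,k}$ without already knowing $L$. The paper closes this gap (Proposition~\ref{tg3}, Corollary~\ref{tg5}) by proving that the $\beta(L)$ are \emph{exactly} the closed subvarieties of $B_{n,k}$ birational to $\mathbb{P}^k$: given any such $X$, the preimage $\beta^{-1}(X)\subset C_{n+k}^k$ is rational of dimension $k$, hence its image under $\rho:C^{(n+k)}\to\Pic^{n+k}(C)$ is a point (no nonconstant map from a rational variety to an abelian variety), forcing $\beta^{-1}(X)$ to lie in a single complete linear system $|E|$, whence $X=\beta(|E|)$. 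This step is what makes the reconstruction of $L$ from $\mathcal{G}:R_{n,k}\to B_{n,k}$ intrinsic rather than tautological.

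For the dual hypersurface your idea is the paper's: $\phi_L(C)^*$ corresponds to $\beta(L\setminus L_{\mathrm{red}})$. The paper handles the base-point case by writing $E=\phi_L^*H+B$ and declaring $E\in L_{\mathrm{red}}$ when $E-B$ is reduced; with the global bijectivity above, the map $H\mapsto\overline{\phi_L^*H+B}$ is a regular bijection $\phi_L(C)^*\to\beta(L\setminus L_{\mathrm{red}})$, so the ``spurious contributions from base points'' worry disappears once you separate the fixed part $B$.
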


Here reconstructing means that $B_{n,k}$ contains a copy of each $L=\mathfrak{g}_{n+k}^k$ and a copy of the dual hypersurfaces $\phi_L\left(C\right )^*$, but we can be more specific about how these copies are constructed, as we will see later. In this theorem we don't request that $W_n$ be smooth, as in \cite{STR1} and one case of \cite{STR2}. In fact, we have the following.

\begin{prop}\label{tg10}
	Let $1\leq k\leq n-1\leq g-3$. Suppose that $W_n$ is smooth and that the generic curve $C\in\mathscr{M}_g$ has a $\mathfrak{g}_{n+k}^k$. Then $k\in\{1,2\}$.
\end{prop}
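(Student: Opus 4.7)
The plan is to recast the two hypotheses as numerical Brill-Noether inequalities on $\rho$ and then combine them arithmetically. First, since $W_n$ is smooth, Theorem \ref{gwn1}(a) gives that $C$ carries no $\mathfrak{g}_n^1$. Since such a $C\in\mathscr{M}_g$ exists, the contrapositive of \cite[V. (1.1)]{ACGH} (equivalently, of Theorem \ref{gwn1}(d)) forces
$$
\rho\left(g,1,n\right)=2n-g-2<0,\qquad\text{i.e., }\ g>2n-2.
$$

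Second, the hypothesis that the generic $C\in\mathscr{M}_g$ has a $\mathfrak{g}_{n+k}^k$ means $\mathscr{M}_{g,n+k}^k=\mathscr{M}_g$ (this closed subvariety contains a dense open, hence equals everything), whence the contrapositive of \cite[V. (1.5)]{ACGH} yields
$$
\rho\left(g,k,n+k\right)=g-\left(k+1\right)\left(g-n\right)\geq0,\qquad\text{i.e., }\ kg\leq\left(k+1\right)n.
$$

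Multiplying the first strict inequality by $k\geq 1$ and combining with the second I would obtain $2kn-2k<kg\leq\left(k+1\right)n$, which simplifies to $\left(k-1\right)n<2k$. For $k\geq 3$ this forces $n<\frac{2k}{k-1}=2+\frac{2}{k-1}\leq 3$, so $n\leq 2$; but the hypothesis $k\leq n-1$ would then give $k\leq 1$, contradicting $k\geq 3$. Hence $k\in\{1,2\}$. The argument reduces to a short arithmetic manipulation once the two Brill-Noether inequalities are in hand, so there is no substantive obstacle; the only care needed is in correctly tracking which of the two inequalities is strict and in noting that $\mathscr{M}_{g,n+k}^k$ being closed makes "generic" and "every" coincide for this kind of existence property.
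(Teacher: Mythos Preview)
Your proof is correct and follows essentially the same approach as the paper: both extract the two Brill--Noether inequalities $\rho(g,1,n)<0$ and $\rho(g,k,n+k)\geq 0$ and combine them arithmetically. The only cosmetic difference is in the final step: the paper substitutes $n<\tfrac{g}{2}+1$ into $g\leq\tfrac{(k+1)n}{k}$ to obtain $g(k-1)<2k+2$ and then invokes $g\geq k+3$, whereas you eliminate $g$ to get $(k-1)n<2k$ and invoke $n\geq k+1$; these are equivalent rearrangements of the same inequality.
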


\begin{proof}
	Since $W_n$ is smooth, then $C$ doesn't have a $\mathfrak{g}_n^1$ by Theorem \ref{gwn1}(b). By \cite[V. (1.1)]{ACGH}, this implies that $\rho\left(g,1,n\right)<0$, i.e., $n<\frac{g}{2}+1$. On the other hand, by \cite[V. (1.1)]{ACGH} the generic curve $C\in\mathscr{M}_g$ has a $\mathfrak{g}_{n+k}^k$ if and only if $\rho\left (g,k,n+k\right)\geq0$, which is equivalent to $g\leq\frac{\left(k+1\right)n}{k}$. Using these two inequalities
	$$
	g\leq\frac{\left(k+1\right)n}{k}<\frac{k+1}{k}\left(\frac{g}{2}+1\right)\Rightarrow g\left(k-1\right)<2k+2.
	$$
	
	Since $g\geq k+3$, it follows that
	$$
	\left(k+3\right)\left(k-1\right)<2k+2\Rightarrow k^2<5,
	$$
	which implies $k\in\{1,2\}$.
\end{proof}

The cases $k\in\{1,2\}$, with $W_n$ smooth, were studied in \cite{STR1} and \cite{STR2}. If one wants to study the case $k\geq3$, with $W_n$ smooth, then, by Proposition \ref{tg10}, the generic curve $C\in\mathscr{M}_g$ couldn't have a $\mathfrak {g}_{n+k}^k$ and in fact it isn't clear that there exists a curve $C$ that has a $\mathfrak{g}_{n+k}^k$, but not a $\mathfrak {g}_n^1$, nor a $\mathfrak{g}_{n+k+1}^{k+1}$. 

We ask a question here: Does there exists a $C\in\mathscr{M}_g$ that has a $\mathfrak{g}_{n+k}^k$, but not a $\mathfrak{g}_n^1$ , nor a $\mathfrak{g}_{n+k+1}^{k+1}$, for $3\leq k\leq n-1\leq g-3$?

We will now prove that the hypothesis that the curve $C$ does not have a $\mathfrak{g}_{n+k+1}^{k+1}$ implies that $C$ is non-hyperelliptic. More precisely:

\begin{prop}\label{a1}
	Let $C\in\mathscr{M}_g$ be hyperelliptic and $1\leq n\leq g-1$. Then $C$ has a complete $\mathfrak{g}_{n+k}^k$ for all $k\in\{1,\dots,n\}$.
\end{prop}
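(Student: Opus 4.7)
The plan is to construct the desired linear system explicitly, invoking the classification of complete special linear systems on hyperelliptic curves given in \cite[I. D-9]{ACGH}, which the paper has already used repeatedly (e.g.\ in the proof of Theorem~\ref{gwn1}(b)). Let $|D_0|=\mathfrak{g}_2^1$ be the unique hyperelliptic pencil, where $D_0=p+\iota(p)$ for any $p\in C$ and $\iota$ denotes the hyperelliptic involution. Given $k\in\{1,\dots,n\}$, set $s:=n-k\geq 0$ and pick $s$ points $P_1,\dots,P_s\in C$ no two of which are conjugate under $\iota$; this is possible since for $s\geq 1$ any $s$ general points work. Define
$$E:=kD_0+P_1+\cdots+P_s,$$
which has degree $2k+s=n+k$.

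The key step is to verify that $\ell(E)=k+1$, so that $|E|$ is a complete $\mathfrak{g}_{n+k}^k$. The quickest route is a direct appeal to \cite[I. D-9]{ACGH}, which asserts that every complete special $\mathfrak{g}_d^r$ on a hyperelliptic curve has the form $rD_0+Q_1+\cdots+Q_t$ with $r\geq 1$, $d=2r+t$ and no two $Q_j$'s $\iota$-conjugate, and conversely that any such divisor defines a complete special linear system of exactly this type; applying this with $r=k$ and $t=s$ gives the statement at once. For a self-contained verification one uses $K\sim(g-1)D_0$ on a hyperelliptic curve to rewrite $K-E\sim(g-1-k)D_0-P_1-\cdots-P_s$. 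Since sections of $\mathscr{O}_C((g-1-k)D_0)$ are pulled back from $\mathbb{P}^1$ under the hyperelliptic map $\pi:C\to\mathbb{P}^1$, any such section vanishing at $P_j$ automatically vanishes at $\iota(P_j)$. Iterating this observation and using that the $P_j$'s are pairwise non-conjugate yields $\ell(K-E)=g-n\geq 1$, and Riemann-Roch then produces
$$\ell(E)=(n+k)-g+1+(g-n)=k+1,$$
as required.

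There is essentially no obstacle beyond writing down the right divisor: the classification of \cite[I. D-9]{ACGH} carries the entire burden of the proof. The extremal cases $k=n$ (where $s=0$ and $E=kD_0$ is the standard complete $\mathfrak{g}_{2k}^k$ obtained as $k$ times the hyperelliptic pencil) and $k=n=g-1$ (where $E=K$ is a canonical divisor, so $|E|$ is the canonical $\mathfrak{g}_{2g-2}^{g-1}$) both fit into the same formula and require no separate treatment.
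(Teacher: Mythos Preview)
Your proof is correct and takes essentially the same approach as the paper: the paper's proof is the one-line assertion that $k\mathfrak{g}_2^1+P_1+\dots+P_{n-k}$ (with no two $P_j$ conjugate under $\iota$) is a complete $\mathfrak{g}_{n+k}^k$, implicitly appealing to \cite[I. D-9]{ACGH}. You give the same construction and additionally supply the Riemann--Roch verification that $\ell(E)=k+1$, which the paper leaves to the reader.
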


\begin{proof}
	If $P_1,\dots,P_{n-k}\in C$ are such that no two of them are conjugated under involution $\iota$, then $k\mathfrak{g}_2^1+P_1+\dots+P_{n-k}$ is a complete $\mathfrak{g}_{n+k}^k$.
\end{proof}

\begin{cor}\label{tg1}
	Let $1\leq k\leq n-1\leq g-3$ and $C\in\mathscr{M}_g$. If $C$ doesn't have a $\mathfrak{g}_{n+k+1}^{k+1}$, then $C$ is non-hyperelliptic.
\end{cor}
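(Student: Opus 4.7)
The plan is to prove the contrapositive: assume $C\in\mathscr{M}_g$ is hyperelliptic and deduce that $C$ has a $\mathfrak{g}_{n+k+1}^{k+1}$. This is essentially a direct application of Proposition \ref{a1}, so the work lies in checking that the numerical hypotheses of that proposition are met in the ranges provided.

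First I would verify the ranges. Proposition \ref{a1} applies to a hyperelliptic curve $C\in\mathscr{M}_g$ and produces a complete $\mathfrak{g}_{m+j}^j$ for every $1\leq j\leq m$, provided $1\leq m\leq g-1$. I want to apply it with $m=n$ and $j=k+1$, obtaining a $\mathfrak{g}_{n+k+1}^{k+1}$. The hypothesis $1\leq k\leq n-1\leq g-3$ gives $2\leq k+1\leq n$, so $1\leq j\leq m$ is satisfied, and also $n\leq g-2\leq g-1$ with $n\geq 2\geq 1$, so $1\leq m\leq g-1$ is satisfied as well.

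With these ranges checked, Proposition \ref{a1} yields a complete (in particular a) $\mathfrak{g}_{n+k+1}^{k+1}$ on $C$, which contradicts the hypothesis. Therefore $C$ cannot be hyperelliptic. There is no real obstacle here; the only point requiring care is to confirm that the index $k+1$ lies in the range $\{1,\dots,n\}$ demanded by Proposition \ref{a1}, which is immediate from $1\leq k\leq n-1$.
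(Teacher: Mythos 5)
Your proof is correct and follows exactly the paper's argument: take the contrapositive and apply Proposition \ref{a1} with index $k+1$, which lies in $\{1,\dots,n\}$ since $k\leq n-1$. The careful verification of the numerical ranges is a nice touch but the substance is identical.
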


\begin{proof}
If $C$ is hyperelliptic, then, by Proposition \ref{a1}, $C$ has a $\mathfrak{g}_{n+k+1}^{k+1}$, since $k+1\leq n$.
\end{proof}

Before proving Theorem \ref{tg4}, we will check that there exist examples of curves $C\in\mathscr{M}_g$ that satisfy the hypotheses of this theorem, i.e., curves $C\in\mathscr{M}_g$ that have a $\mathfrak{g}_{n+k}^k$, but not a $\mathfrak{g}_{n+k+1}^{k+1}$ and therefore non-hyperelliptic by Proposition \ref{tg1}. Note that, removing a point that is not a base point, we have a stratification
$$
\mathscr{M}_g\supset\mathscr{M}_{g,n+1}^1\supset\mathscr{M}_{g,n+2}^2\supset\dots\supset\mathscr{M }_{g,2n-1}^{n-1}\supset\mathscr{M}_{g,2n}^n=\mathscr{H}_g\supset\mathscr{M}_{2n+1} ^{n+1}=\varnothing.
$$
Since $\dim\left(\mathscr{H}_g\right)=2g-1<3g-3=\dim\left(\mathscr{M}_g\right)$, then there must exist $k\in\{1,\dots,n-1\}$ such that $\mathscr{M}_{g,n+k}^k\supsetneqq\mathscr{M}_{g,n+k+1}^{k+ 1}$. To determine these $k$ we will use the Brill-Noether Theory.

\begin{prop}\label{tg15}
	Let $1\leq k\leq n-1\leq g-3$. If $\frac{(k+2)n}{k+1}<g\leq\frac{(k+1)n}{k}$, there exists an open (dense) subset $\mathscr{U}_{n,k}$ of $\mathscr{M}_g$ such that: $C\in\mathscr{M}_g$ is non-hyperelliptic, it has a $\mathfrak{g}_{n+k}^k$, but not a $\mathfrak{g}_{n+k+1}^{k+1}$ if and only if $C\in\mathscr{U}_{n,k}$. 
\end{prop}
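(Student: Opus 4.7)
The plan is to recycle, essentially verbatim, the Brill--Noether strategy used in Propositions \ref{lmi6} and \ref{lmi7}. The first step is to rewrite the two inequalities in the hypothesis as sign conditions on the Brill--Noether number. A direct algebraic manipulation shows
$$
\rho(g,k,n+k)=g-(k+1)(g-n),\qquad \rho(g,k+1,n+k+1)=g-(k+2)(g-n),
$$
so that $g\leq\frac{(k+1)n}{k}$ is equivalent to $\rho(g,k,n+k)\geq 0$, and $g>\frac{(k+2)n}{k+1}$ is equivalent to $\rho(g,k+1,n+k+1)<0$. This converts the numerical hypothesis into the familiar moduli-theoretic picture.

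With these translations in hand, I would invoke \cite[V.~(1.1)]{ACGH} to obtain $\mathscr{M}_{g,n+k}^{k}=\mathscr{M}_g$, i.e.\ every $C\in\mathscr{M}_g$ carries a $\mathfrak{g}_{n+k}^{k}$; simultaneously, \cite[V.~(1.5)]{ACGH} yields that $\mathscr{M}_{g,n+k+1}^{k+1}$ is a proper closed subvariety of $\mathscr{M}_g$. The natural candidate for the desired open set is then
$$
\mathscr{U}_{n,k}:=\mathscr{M}_g\setminus\mathscr{M}_{g,n+k+1}^{k+1},
$$
which is non-empty and, because $\mathscr{M}_g$ is irreducible, automatically open and dense.

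Finally I would check the claimed equivalence. If $C\in\mathscr{U}_{n,k}$, then by construction $C$ admits no $\mathfrak{g}_{n+k+1}^{k+1}$; since $k\leq n-1\leq g-3$, Corollary \ref{tg1} applies and forces $C$ to be non-hyperelliptic, while the first step above furnishes the required $\mathfrak{g}_{n+k}^{k}$. Conversely, any non-hyperelliptic curve that admits a $\mathfrak{g}_{n+k}^{k}$ but no $\mathfrak{g}_{n+k+1}^{k+1}$ lies, by definition, in the complement $\mathscr{U}_{n,k}$ of $\mathscr{M}_{g,n+k+1}^{k+1}$.

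I do not foresee any substantive obstacle: the argument is a purely formal assembly of existing Brill--Noether inputs together with Corollary \ref{tg1}. The only mild point of care is bookkeeping the inequality chain $k+1\leq n\leq g-3$ needed to apply Corollary \ref{tg1} in its stated form, but this is built into the standing hypotheses of the proposition.
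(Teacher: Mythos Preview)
Your proposal is correct and follows essentially the same approach as the paper: translate the numerical hypotheses into the sign conditions $\rho(g,k,n+k)\geq 0$ and $\rho(g,k+1,n+k+1)<0$, invoke \cite[V.~(1.1), (1.5)]{ACGH} to obtain $\mathscr{M}_{g,n+k}^k=\mathscr{M}_g$ and $\mathscr{M}_{g,n+k+1}^{k+1}\subsetneq\mathscr{M}_g$, set $\mathscr{U}_{n,k}:=\mathscr{M}_g\setminus\mathscr{M}_{g,n+k+1}^{k+1}$, and appeal to Corollary~\ref{tg1} for non-hyperellipticity. The only cosmetic difference is that you spell out both directions of the equivalence explicitly, which the paper leaves implicit.
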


\begin{proof}
	The hypothesis $g\leq\frac{(k+1)n}{k}$ is equivalent to $\rho\left(g,k,n+k\right)\geq0$. Then, by \cite[V. (1.1)]{ACGH}, we have $\mathscr{M}_{g,n+k}^k=\mathscr{M}_g$, i.e., each $C\in\mathscr {M}_g$ has a $\mathfrak{g}_{n+k}^k$. On the other hand, the hypothesis $g>\frac{(k+2)n}{k+1}$ is equivalent to $\rho\left(g,k+1,n+k+1\right)<0 $, so, by \cite[V. (1.5)]{ACGH} we have that $\mathscr{M}_{g,n+k+1}^{k+1}$ is a proper closed subset of $\mathscr {M}_g$ possibly empty. Then the open set $\mathscr{U}_{n,k}:=\mathscr{M}_g\setminus\mathscr{M}_{g,n+k+1}^{k+1}$ is non-empty and thus dense. By Proposition \ref{tg1} the open set $\mathscr{U}_{n,k}$ satisfies the required properties.
\end{proof}

From this proposition it follows that for each $g\geq8$, there exist curves $C\in\mathscr{M}_g$ that satisfy the hypotheses of Theorem \ref{tg4} with $k\geq3$.

\begin{cor}
	For all $g\geq8$ there exist $k,n$ with $3\leq k\leq n-1\leq g-3$ such that the variety $\mathscr{M}_{g,n+k}^k\setminus\mathscr{M}_{g,n+k+1}^{k+1}$ is an open (dense) subset of $\mathscr{M}_g$.
\end{cor}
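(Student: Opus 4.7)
The plan is to apply Proposition \ref{tg15}, which guarantees that any pair of integers $(k, n)$ satisfying $3 \leq k \leq n-1 \leq g-3$ together with $\tfrac{(k+2)n}{k+1} < g \leq \tfrac{(k+1)n}{k}$ produces an open dense subset $\mathscr{U}_{n,k}$ of $\mathscr{M}_g$ contained in $\mathscr{M}_{g,n+k}^k \setminus \mathscr{M}_{g,n+k+1}^{k+1}$. Since the ambient equality $\mathscr{M}_{g,n+k}^k = \mathscr{M}_g$ already holds in this Brill--Noether regime (by the right-hand inequality, $\rho(g,k,n+k)\geq 0$), the target set coincides with $\mathscr{U}_{n,k}$ and is therefore itself open and dense. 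Hence the corollary reduces to producing such a pair for each $g \geq 8$.

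I would take $n := g - 2$, the largest admissible value of $n$. With this substitution the right-hand inequality $g \leq \tfrac{(k+1)(g-2)}{k}$ rearranges to $k \leq (g-2)/2$, while the left-hand one $\tfrac{(k+2)(g-2)}{k+1} < g$ rearranges to $k > (g-4)/2$. Thus it suffices to pick an integer $k$ in the half-open interval $\bigl((g-4)/2,\,(g-2)/2\bigr]$, which has length exactly $1$ and therefore contains the unique integer
$$
k := \left\lfloor \frac{g-2}{2} \right\rfloor,
$$
equal to $(g-2)/2$ when $g$ is even and $(g-3)/2$ when $g$ is odd.

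To finish, I would verify the admissibility bounds $3 \leq k \leq n - 1 = g - 3$. The lower bound $k \geq 3$ is exactly the hypothesis $g \geq 8$, while $\lfloor (g-2)/2 \rfloor \leq g - 3$ is trivial for $g \geq 5$. Plugging $(n,k) = \bigl(g-2,\,\lfloor (g-2)/2 \rfloor\bigr)$ into Proposition \ref{tg15} then concludes the proof.

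The only subtlety is the choice $n = g-2$; fixing $k$ small (for example $k = 3$) fails for isolated values of $g$ such as $g = 10$ or $g = 15$, because the corresponding range of admissible $n$ may skip every integer. Maximising $n$ instead collapses the condition on $k$ to a length-$1$ window, which always catches an integer, so no genuine obstacle remains.
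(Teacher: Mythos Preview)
Your proof is correct and follows the same route as the paper, namely reducing to Proposition~\ref{tg15} and then exhibiting a pair $(k,n)$ in the required Brill--Noether window. In fact you supply more than the paper does: the paper merely records the single example $(k,n,g)=(3,6,8)$ and leaves the general case implicit, whereas your choice $n=g-2$, $k=\lfloor(g-2)/2\rfloor$ gives a clean uniform construction for every $g\geq 8$ (and your remark that fixing $k=3$ fails for $g=10,15$ shows this care is not superfluous).
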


The first of this examples is $k=3$, $n=6$ and $g=8$.

Now we will verify the existence of curves $C\in\mathscr{M}_g$ that satisfy the hypotheses of Theorem \ref{tg4} in the case $g>\tfrac{\left(k+1\right)n}{k}$ with $1\leq k\leq n-1\leq g-3$.

\begin{prop}\label{tg16}
	Let $1\leq k\leq n-1\leq g-3$. Assume that $g>\tfrac{\left(k+1\right)n}{k}$. If $n+k+1\neq g-1$ and $g\leq\tfrac{\left(k+2\right)n-2}{k}$ or if $n+k+1=g-1 $ and $g\leq\tfrac{\left(k+2\right)n-3}{k}$, then there exists a subvariety $\mathscr{V}_{n,k}$ of $\mathscr{M}_g$ of positive codimension and of positive dimension such that: $C\in\mathscr{M}_g$ is non-hyperelliptic, it has a $\mathfrak{g}_{n+k}^k$, but not a $\mathfrak{g} _{n+k+1}^{k+1}$ if and only if $C\in\mathscr{V}_{n,k}$.
\end{prop}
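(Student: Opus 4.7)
The plan is to mirror the construction of Proposition~\ref{lmi7} and set
\[
\mathscr{V}_{n,k} := \mathscr{M}_{g,n+k}^{k}\setminus\mathscr{M}_{g,n+k+1}^{k+1}.
\]
I then verify three properties in turn: that $\mathscr{V}_{n,k}$ has positive codimension in $\mathscr{M}_g$, that its points are exactly the non-hyperelliptic curves with the prescribed linear series, and that it has positive dimension.

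Positive codimension is immediate: since $g>\tfrac{(k+1)n}{k}$ is equivalent to $\rho(g,k,n+k)<0$, by \cite[V.(1.5)]{ACGH} the Brill--Noether locus $\mathscr{M}_{g,n+k}^{k}$ is a proper closed subvariety of $\mathscr{M}_g$, and hence so is $\mathscr{V}_{n,k}$. The set-theoretic characterization also follows quickly: if $C\in\mathscr{V}_{n,k}$, then $C$ has no $\mathfrak{g}_{n+k+1}^{k+1}$ and is therefore non-hyperelliptic by Corollary~\ref{tg1}; conversely any non-hyperelliptic curve admitting a $\mathfrak{g}_{n+k}^{k}$ but not a $\mathfrak{g}_{n+k+1}^{k+1}$ lies in $\mathscr{M}_{g,n+k}^{k}\setminus\mathscr{M}_{g,n+k+1}^{k+1}=\mathscr{V}_{n,k}$.

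The main part is showing $\mathscr{V}_{n,k}$ is non-empty of positive dimension. The strategy, exactly as in Proposition~\ref{lmi7}, is to play Steffen's lower bound \cite[Theorem~0.1]{S} for any irreducible component $\mathscr{C}$ of $\mathscr{M}_{g,n+k}^{k}$,
\[
\dim(\mathscr{C})\ \geq\ 3g-3+\rho(g,k,n+k)\ =\ (3-k)g+(k+1)n-3,
\]
against a suitable upper bound for the dimension of each irreducible component of $\mathscr{M}_{g,n+k+1}^{k+1}$. The numerical hypothesis $kg\leq (k+2)n-2$ (respectively $kg\leq (k+2)n-3$ when $n+k+1=g-1$) is calibrated precisely so that Steffen's lower bound strictly exceeds this upper bound. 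Once such a strict dimension inequality is established, at least one irreducible component $\mathscr{C}$ of $\mathscr{M}_{g,n+k}^{k}$ cannot be contained in $\mathscr{M}_{g,n+k+1}^{k+1}$, and the locally closed set $\mathscr{C}\setminus\mathscr{M}_{g,n+k+1}^{k+1}$ is open and dense in $\mathscr{C}$, hence has dimension equal to $\dim(\mathscr{C})>0$.

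The main obstacle I expect is identifying and verifying the correct upper bound for $\dim\mathscr{M}_{g,d}^{r}$ when $\rho(g,r,d)<0$, and in particular handling separately the degenerate case $d=g-1$: there residuation $D\mapsto K-D$ is an involution on the set of $\mathfrak{g}_{g-1}^{k+1}$'s (since $h^{0}(K-D)=h^{0}(D)$ when $\deg D=g-1$), which shifts the relevant count by one and accounts for the tighter hypothesis $g\leq\tfrac{(k+2)n-3}{k}$. Once that dimension input is secured, the remaining verification is a purely numerical rearrangement of the two hypotheses on $g$.
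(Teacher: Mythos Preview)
Your approach is essentially the same as the paper's: define $\mathscr{V}_{n,k}=\mathscr{M}_{g,n+k}^{k}\setminus\mathscr{M}_{g,n+k+1}^{k+1}$, use $\rho(g,k,n+k)<0$ and \cite[V.(1.5)]{ACGH} for positive codimension, use Corollary~\ref{tg1} for the set-theoretic characterization, and compare Steffen's lower bound on components of $\mathscr{M}_{g,n+k}^{k}$ against a dimension statement for $\mathscr{M}_{g,n+k+1}^{k+1}$.

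The one point worth sharpening is the nature of the external dimension input. You phrase it as an \emph{upper bound} on every component of $\mathscr{M}_{g,n+k+1}^{k+1}$; the paper instead invokes \cite[Theorem~2.1]{MT}, which under the hypothesis $-\rho(g,k+1,n+k+1)\leq g-2$ (respectively $\leq g-3$ when $n+k+1=g-1$) produces an irreducible component of $\mathscr{M}_{g,n+k+1}^{k+1}$ of dimension \emph{exactly} $3g-3+\rho(g,k+1,n+k+1)$. This is logically weaker than a global upper bound but suffices: if the inclusion $\mathscr{M}_{g,n+k+1}^{k+1}\subset\mathscr{M}_{g,n+k}^{k}$ were an equality, every component of the former would satisfy Steffen's bound $\geq 3g-3+\rho(g,k,n+k)$, contradicting the existence of the smaller component since $\rho(g,k,n+k)>\rho(g,k+1,n+k+1)$. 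This version also gives non-emptiness of $\mathscr{M}_{g,n+k}^{k}$ for free. The numerical hypotheses in the statement are precisely the translations of the conditions $-\rho\leq g-2$ and $-\rho\leq g-3$ needed to apply \cite{MT}; your instinct that the case $n+k+1=g-1$ is special is correct, though the reason is internal to Teixidor i Bigas's argument rather than the residuation heuristic you suggest.
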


\begin{proof}
The hypothesis $g>\tfrac{\left(k+1\right)n}{k}$ implies that $\rho\left(g,k+1,n+k+1\right)<\rho \left(g,k,n+k\right)<0$. Then, by \cite[V. (1.5)]{ACGH}, $\mathscr{M}_{g,n+k}^k$ and $\mathscr{M}_{g,n+k+1}^{ k+1}$ are proper closed subsets of $\mathscr{M}_g$. Now, the condition $g\leq\tfrac{\left(k+2\right)n-2}{k}$ is equivalent to $-\rho\left(g,k+1,n+k+1\right)\leq g-2$ and the condition $g\leq\tfrac{\left(k+2\right)n-3}{k}$ is equivalent to $-\rho\left(g,k+1 ,n+k+1\right)\leq g-3$. Therefore, if $n+k+1\neq g-1$ and $g\leq\tfrac{\left(k+2\right)n-2}{k}$ or if $n+k+1= g-1$ and $g\leq\tfrac{\left(k+2\right)n-3}{k}$, we can apply \cite[Theorem 2.1]{MT} to obtain that $\mathscr{M }_{g,n+k+1}^{k+1}$ has at least one irreducible component of dimension $3g-3+\rho\left(g,k+1,n+k+1\right)$. On the other hand, by \cite[Theorem 0.1]{S} each irreducible component of $\mathscr{M}_{g,n+k}^k$ is of dimension $\geq3g-3+\rho\left(g ,k,n+k\right)>3g-3+\rho\left(g,k+1,n+k+1\right)$. Thus $\mathscr{M}_{g,n+k+1}^{k+1}\subsetneqq\mathscr{M}_{g,n+k}^k$. Therefore the variety $\mathscr{V}_{n,k}:=\mathscr{M}_{g,n+k}^k\setminus\mathscr{M}_{g,n+k+1}^{k+1} $ satisfies the requirements of the proposition.
\end{proof}

From this proposition we deduce the following result.

\begin{cor}
	For all $g\geq7$ there exist $k,n$ with $3\leq k\leq n-1\leq g-3$ such that the variety $\mathscr{M}_{g,n+k}^k\setminus\mathscr{M}_{g,n+k+1}^{k+1}$ is of positive codimension in $\mathscr{M}_g$ and of positive dimension.
\end{cor}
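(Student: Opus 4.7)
The plan is to apply Proposition \ref{tg16} with a single explicit pair $(n,k)$, namely $n = g-2$ and $k = g-3$, and verify that all hypotheses are met whenever $g \geq 7$.

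First I would check the membership constraint $3 \leq k \leq n-1 \leq g-3$. With $k = g-3$ and $n = g-2$ this chain reads $3 \leq g-3 \leq g-3 \leq g-3$, which holds for $g \geq 6$, and in particular for $g \geq 7$. Next I would observe that $n+k+1 = 2g-4$, which differs from $g-1$ whenever $g \geq 4$, so the second-alternative clause of Proposition \ref{tg16} ($n+k+1 = g-1$) is irrelevant and only the first alternative needs to be checked.

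Then I would verify the two arithmetic inequalities. The condition $g > \frac{(k+1)n}{k}$ becomes $g(g-3) > (g-2)^2$, which simplifies to $g > 4$ and is satisfied. Similarly, $g \leq \frac{(k+2)n - 2}{k}$ becomes $g(g-3) \leq g(g-2) - 2$, i.e., $g \geq 2$, which is also satisfied. Thus Proposition \ref{tg16} applies and produces the subvariety $\mathscr{V}_{n,k} = \mathscr{M}_{g,n+k}^k \setminus \mathscr{M}_{g,n+k+1}^{k+1}$ of positive codimension and positive dimension in $\mathscr{M}_g$. There is essentially no obstacle here: the deep Brill--Noether content (existence of components of the expected dimension, the Martens--Teixidor type bounds) has been absorbed into Proposition \ref{tg16}, and the present corollary reduces to exhibiting one admissible pair of numerical parameters, which the choice $(n,k)=(g-2,g-3)$ accomplishes uniformly for all $g\geq 7$.
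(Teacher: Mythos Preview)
Your approach is correct and matches the paper's: apply Proposition \ref{tg16} with an explicit choice of $(n,k)$. The paper only records the single example $k=3$, $n=5$, $g=7$ and leaves the general verification implicit, whereas you supply a uniform choice $(n,k)=(g-2,g-3)$ that works for every $g\geq 7$, which is slightly more informative.

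One small arithmetic slip: in checking $g \leq \frac{(k+2)n-2}{k}$ you wrote the right-hand side as $g(g-2)-2$, but $(k+2)n = (g-1)(g-2)$, not $g(g-2)$. With the correct value one gets $(g-1)(g-2)-2 = g(g-3)$, so the inequality becomes $g(g-3)\leq g(g-3)$, an equality that holds trivially. Your stated conclusion ($g\geq 2$) is therefore off, but the needed inequality is still satisfied for all $g\geq 7$, so the argument stands.
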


The first of this examples is $k=3$, $n=5$ and $g=7$. Now we will prove that the linear systems we will work with will always be complete.

\begin{prop}
	Let $1\leq k\leq n-1\leq g-3$. Suppose $C$ has a $\mathfrak{g}_{n+k}^k$, but not a $\mathfrak{g}_{n+k+1}^{k+1}$. Then every $\mathfrak{g}_{n+k}^k$ in $C$ is complete.
\end{prop}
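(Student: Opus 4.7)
The plan is to argue by contradiction. Suppose, toward a contradiction, that there exists an incomplete $\mathfrak{g}_{n+k}^k$ on $C$, call it $L$. By definition, $L = \mathbb{P}(V)$ for some $(k+1)$-dimensional subspace $V \subset L(D)$, where $D \in L$ is any member, so $\deg(D) = n+k$. Incompleteness of $L$ means $V \subsetneq L(D)$, which forces $\ell(D) \geq k+2$, i.e., $\dim|D| \geq k+1$.

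Next, I would fix an arbitrary point $p \in C$ and consider the sum map $|D| \to |D+p|$, $E \mapsto E+p$. Since $V\cdot\mathscr{O}_C(p) \subset L(D+p)$ via multiplication by the section defining $p$, this map is a projective linear injection, and therefore $\dim|D+p| \geq \dim|D| \geq k+1$. Since $\deg(D+p) = n+k+1$, the complete linear system $|D+p|$ is at least a $\mathfrak{g}_{n+k+1}^{k+1}$.

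Finally, by restricting to a $(k+2)$-dimensional subspace of $L(D+p)$, one obtains a (not necessarily complete) $\mathfrak{g}_{n+k+1}^{k+1}$ on $C$. This contradicts the hypothesis that $C$ does not have a $\mathfrak{g}_{n+k+1}^{k+1}$. Hence every $\mathfrak{g}_{n+k}^k$ on $C$ must be complete.

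There is essentially no obstacle here: the argument is a one-line application of the fact that $\dim|D+p| \geq \dim|D|$ for any $p \in C$, combined with the definition of completeness of a linear system. The only point to watch is to pass from a complete linear system of dimension possibly larger than $k+1$ to a (possibly incomplete) $\mathfrak{g}_{n+k+1}^{k+1}$, which is routine by choosing an appropriate subspace of sections.
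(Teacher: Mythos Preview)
Your proof is correct and in fact more direct than the paper's. Both argue by contradiction from an incomplete $\mathfrak{g}_{n+k}^k$, obtaining $\dim|D|\geq k+1$ for $D\in L$. The paper then adds $l-k$ general points (where $l=\dim|D|$) to produce a \emph{complete} $\mathfrak{g}_{n+l}^l$, and invokes the intersection-locus machinery---Proposition~\ref{lmi1} together with the inclusion $R_{n,k+1}\supset R_{n,l}$---to conclude that $C$ carries a $\mathfrak{g}_{n+k+1}^{k+1}$. You bypass this detour entirely: a single added point and the trivial inequality $\dim|D+p|\geq\dim|D|$ already yield a linear series of degree $n+k+1$ and dimension at least $k+1$, hence a $\mathfrak{g}_{n+k+1}^{k+1}$. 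The paper's route illustrates how its $R_{n,k}$ framework encodes such statements, but for this proposition your elementary argument is shorter and self-contained. (Indeed, you could shorten it further: since $\dim|D|\geq k+1$ already, $|D|$ itself contains a $\mathfrak{g}_{n+k}^{k+1}$, and adding any point as a fixed part gives a $\mathfrak{g}_{n+k+1}^{k+1}$.)
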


\begin{proof}
	By contradiction, assume that $C$ has a non complete $\mathfrak{g}_{n+k}^k$, say $L$. Then, given $D\in L$, we have that $|E|$ is a $\mathfrak{g}_{n+k}^l$ for some $l\geq k+1$. We can take $P_1,\dots,P_{l-k}\in C$ general points such that $|D+P_1+\dots+P_{l-k}|$ is a complete $\mathfrak{g}_{n+l}^l$. Then $R_{n,k+1}\supset R_{n,l}\neq\varnothing$ by Proposition \ref{lmi1}, so by this same proposition $C$ must have a $\mathfrak{g}_{n+k+1} ^{k+1}$, which contradicts the initial hypothesis. 
\end{proof}

Under the hypothesis of this proposition, the $R_{n,k}$ take a more particular form.

\begin{prop}\label{tg2}
	Let $1\leq k\leq n-1\leq g-3$ and $C\in\mathscr{M}_g$. Assume that $C$ has a $\mathfrak{g}_{n+k}^k$, but not a $\mathfrak{g}_{n+k+1}^{k+1}$. Then $R_{n,k+1}=\varnothing$ and
	$$
	R_{n,k}=\{D\in\mathcal{W}_n:\deg\left(\overline{D}\cdot C\right)=n+k\}.
	$$
\end{prop}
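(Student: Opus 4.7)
The plan is to prove the two assertions in sequence, both following almost immediately from Proposition \ref{lmi1} and the definition of the $(n+k)$-intersection locus.

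First I would establish $R_{n,k+1}=\varnothing$. Recall that $R_{n,k+1}$ consists of those $D \in \mathcal{W}_n$ with $\deg\left(\overline{D}\cdot C\right) \geq n+k+1$. By Proposition \ref{lmi1}, the locus $R_{n,k+1}$ is non-empty if and only if $C$ has a (complete) $\mathfrak{g}_{n+k+1}^{k+1}$. Since by hypothesis $C$ does not possess such a linear system, the contrapositive gives $R_{n,k+1}=\varnothing$ immediately. Note that applying Proposition \ref{lmi1} requires $1\leq k+1\leq n-1$; if $k+1 = n$ (the edge case) we instead invoke the earlier proposition showing $R_{n,j}=\varnothing$ for $j\geq n$, which gives the same conclusion.

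For the second assertion, the inclusion $\{D\in\mathcal{W}_n : \deg(\overline{D}\cdot C) = n+k\} \subset R_{n,k}$ is immediate from the definition $R_{n,k}:=\{D\in\mathcal{W}_n:\deg(\overline{D}\cdot C)\geq n+k\}$. For the reverse inclusion, suppose by contradiction that some $D\in R_{n,k}$ satisfies $\deg(\overline{D}\cdot C) \geq n+k+1$. Then by definition $D\in R_{n,k+1}$, contradicting the first part of the proposition. Hence every $D\in R_{n,k}$ must satisfy $\deg(\overline{D}\cdot C)=n+k$, establishing the desired equality.

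Neither step presents any real obstacle: the statement is essentially a bookkeeping consequence of the definitions together with the characterization of non-emptiness of $R_{n,j}$ in terms of the existence of complete $\mathfrak{g}_{n+j}^j$'s. The only subtlety worth flagging is making sure the hypothesis applies correctly at the boundary $k+1=n$, which is handled by the earlier dimension-based vanishing result for $R_{n,j}$ with $j\geq n$ rather than by Proposition \ref{lmi1}.
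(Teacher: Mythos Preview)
Your proposal is correct and follows essentially the same approach as the paper: invoke Proposition~\ref{lmi1} for $R_{n,k+1}=\varnothing$, then derive the equality for $R_{n,k}$ by contradiction. Your version is in fact slightly tidier in two respects: you reuse the first assertion directly for the second (the paper instead repeats the Geometric Riemann--Roch construction to produce a $\mathfrak{g}_{n+k+1}^{k+1}$), and you explicitly handle the boundary case $k+1=n$ via the earlier vanishing result $R_{n,j}=\varnothing$ for $j\geq n$, a point the paper glosses over.
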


\begin{proof}
	The fact that $R_{n,k+1}=\varnothing$ is obtained from Proposition \ref{lmi1}, since $C$ does not have a $\mathfrak{g}_{n+k+1}^{k +1}$. Now, let $D\in R_{n,k}$, so that $\deg\left(\overline{D}\cdot C\right)\geq n+k$. We will prove that $\deg\left(\overline{D}\cdot C\right)=n+k$. By contradiction let's assume that $\deg\left(\overline{D}\cdot C\right)\geq n+k+1$. Then we can take $E\in C^{(n+k+1)}$ such that $D\leq E\leq\left(\overline{D}\cdot C\right)$. Note that $\overline{E}=\overline{D}$. Then, by the Geometric Riemann-Roch Theorem 
	\begin{equation*}
		\begin{split}
			\ell\left(E\right)&=\deg\left(E\right)-\dim\left(\overline{E}\right)=\left(n+k\right)-\left(n-1\right)=k+1,
		\end{split}
	\end{equation*}
	so $|E|$ is a complete $\mathfrak{g}_{n+k+1}^{k+1}$, but this contradicts the hypothesis of the proposition. Therefore $\deg\left(\overline{D}\cdot C\right)=n+k$ must be satisfied. This proves the proposition.	
\end{proof}

Let $1\leq k\leq n-1\leq g-3$ and $C\in\mathscr{M}_g$ non-hyperelliptic. If $C$ has a $\mathfrak{g}_{n+k}^k$, but not a $\mathfrak{g}_{n+k+1}^{k+1}$, then we have a stratification
$$
\mathcal{W}_n=R_{n,0}\supset R_n=R_{n,1}\supset R_{n,2}\supset\dots\supset R_{n,k}.
$$

Remember that for $0\leq k\leq n-1$ we defined $B_{n,k}:=\mathcal{G}\left(R_{n,k}\right)\subset\mathbb{G}\left(n-1,g-1\right) $. Note that
$$
\mathcal{G}\left(\mathcal{W}_n\right)=B_{n,0}\supset \mathcal{G}\left(R_n\right)=B_{n,1}\supset B_{n,2}\supset\dots\supset B_{n,k}.
$$

Under the hypothesis of the previous proposition we can obtain an intrinsic description of $B_{n,k}$ that not depends on the Gauss map.

\begin{prop}\label{tg13}
	Let $1\leq k\leq n-1\leq g-3$ and $C\in\mathscr{M}_g$. Assume that $C$ has a $\mathfrak{g}_{n+k}^k$, but not a $\mathfrak{g}_{n+k+1}^{k+1}$. Then
	$$
	B_{n,k}=\{W\in\mathbb{G}\left(n-1,g-1\right):\deg\left(W\cdot C\right)=n+k\}.
	$$
\end{prop}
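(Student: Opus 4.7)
The plan is to prove both inclusions of the claimed set equality. Write $K:=(W\cdot C)$ throughout.

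For the inclusion $B_{n,k}\subset\{W:\deg(W\cdot C)=n+k\}$: given $W\in B_{n,k}$, by definition $W=\mathcal{G}(D)=\overline{D}$ for some $D\in R_{n,k}$. Proposition \ref{tg2}, whose proof uses the no-$\mathfrak{g}_{n+k+1}^{k+1}$ hypothesis, gives $\deg(\overline{D}\cdot C)=n+k$, whence $\deg(W\cdot C)=n+k$.

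For the reverse inclusion, fix $W\in\mathbb{G}(n-1,g-1)$ with $\deg(W\cdot C)=n+k$. My strategy is to show that $|K|$ is a complete $\mathfrak{g}_{n+k}^k$ with $\overline{K}=W$, and then to invoke Proposition \ref{lmi8}: since the map $\beta:\bigcup_{L\in\mathcal{L}_{n+k}^k(C)}L\to B_{n,k}$ is surjective and $K\in|K|\in\mathcal{L}_{n+k}^k(C)$, one obtains $W=\overline{K}=\beta(K)\in B_{n,k}$.

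The heart of the argument is establishing $\ell(K)=k+1$. By the definition of $K$ as an intersection divisor, $K\leq\phi^{*}H$ for every hyperplane $H\supset W$, so $\overline{K}\subset W$ and hence $\dim\overline{K}\leq n-1$. The Geometric Riemann--Roch Theorem then yields $\ell(K)=\deg K-\dim\overline{K}\geq(n+k)-(n-1)=k+1$. For the opposite bound, suppose toward contradiction that $\ell(K)\geq k+2$. For any $P\in C$ the inclusion $L(K)\subset L(K+P)$ gives $\ell(K+P)\geq\ell(K)\geq k+2$, so $|K+P|$ is a complete linear system of degree $n+k+1$ and dimension at least $k+1$, producing a $\mathfrak{g}_{n+k+1}^{k+1}$ on $C$ and contradicting the hypothesis. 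Hence $\ell(K)=k+1$, so $\dim\overline{K}=n-1=\dim W$; combined with $\overline{K}\subset W$ this forces $\overline{K}=W$, and $|K|$ is a complete $\mathfrak{g}_{n+k}^{k}$, which completes the argument as outlined. The main obstacle is precisely the two-sided bound on $\ell(K)$, where both the Geometric Riemann--Roch Theorem and the hypothesis of non-existence of a $\mathfrak{g}_{n+k+1}^{k+1}$ are essential; everything else is a direct unwinding of definitions.
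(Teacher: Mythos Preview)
Your proof is correct and follows essentially the same route as the paper. The forward inclusion is identical (via Proposition~\ref{tg2}); for the reverse inclusion, the paper argues directly that $\dim\overline{K}=n-1$ by contradiction (as you do) and then picks $D\leq K$ with $\overline{D}=\overline{K}=W$ to land in $R_{n,k}$, whereas you instead observe that $|K|\in\mathcal{L}_{n+k}^k(C)$ and cite the well-definedness of $\beta$ from Proposition~\ref{lmi8}---but that well-definedness is proved in the paper by exactly the same ``choose $D\leq K$'' construction, so the two arguments coincide.
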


\begin{proof}
	Define $L_{n,k}:=\{W\in\mathbb{G}\left(n-1,g-1\right):\deg\left(W\cdot C\right)=n+k\}$. If $D\in R_{n,k}$, then $\deg\left(\overline{D}\cdot C\right)=n+k$ by Proposition \ref{tg2}, so $\overline{D}\in L_{n,k}$, which shows $B_{n,k}\subset L_{n,k}$.
	
	Conversely, let $W\in L_{n,k}$, that is, $W\in\mathbb{G}\left(n-1,g-1\right)$ satisfies $\deg\left(W\cdot C\right) =n+k$. Let $E:=\left(W\cdot C\right)$. Since $\overline{E}\subset W$, we have $\dim\left(\overline{E}\right)\leq\dim\left(W\right)=n-1$. We claim that $\dim\left(\overline{E}\right)=n-1$. We will prove it by contradiction. Let's assume that $\dim\left(\overline{E}\right)<n-1$. Then, by the Geometric Riemann-Roch Theorem
	\begin{equation*}
		\begin{split}
			\ell\left(E\right)&=\deg\left(E\right)-\dim\left(\overline{E}\right)>\left(n+k\right)-\left(n-1\right)=k+1,
		\end{split}
	\end{equation*}
	but this implies that $C$ has a $\mathfrak{g}_{n+k+1}^{k+1}$, which contradicts the hypothesis of the proposition. Therefore we must have that $\dim\left(\overline{E}\right)=n-1$.
	
	Now, let's take $D\in C^{(n)}$ such that $D\leq E$ and $\overline{D}=\overline{E}=W$. Then $D\in R_{n,k}$, because $\deg\left(\overline{D}\cdot C\right)=\deg\left(W\cdot C\right)=n+k$, so $W=\overline{D}=\mathcal{G}\left(D\right)\in\mathcal{G}\left(R_{n,k}\right)=:B_{n,k}$.
\end{proof}

Through the following proposition we will begin to relate the set $B_{n,k}$ (then we will see that is a variety) with all the linear systems $\mathfrak{g}_{n+k}^k$ on $C$ assuming that $C$ doesn't have a $\mathfrak{g}_{n+k+1}^{k+1}$.

\begin{prop}\label{tg12}
	Let $1\leq k\leq n-1\leq g-3$ and $C\in\mathscr{M}_g$. Assume that $C$ has a $\mathfrak{g}_{n+k}^k$, but not a $\mathfrak{g}_{n+k+1}^{k+1}$. Then the map
	$$
	\beta:C_{n+k}^k\to B_{n,k}, \ E\mapsto\overline{E}
	$$
	is a regular bijection.
\end{prop}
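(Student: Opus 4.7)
My plan is to check well-definedness, then establish surjectivity and injectivity separately (relying on Proposition \ref{tg13} to control $(W\cdot C)$), and finally comment on regularity.

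First, well-definedness. Under the stated hypotheses, every $\mathfrak{g}_{n+k}^k$ on $C$ is complete (by the proposition just above in the text). I claim moreover that for every $E\in C_{n+k}^k$ one has $\ell(E)=k+1$ exactly: if $\ell(E)\geq k+2$, then $|E|$ is a (complete) $\mathfrak{g}_{n+k}^l$ with $l\geq k+1$, and adding a general point $P\in C$ yields a complete $\mathfrak{g}_{n+k+1}^{l}$ (hence a $\mathfrak{g}_{n+k+1}^{k+1}$), contradicting the hypothesis. By the Geometric Riemann--Roch Theorem, $\dim(\overline{E})=(n+k)-(k+1)=n-1$, so $\overline{E}\in\mathbb{G}(n-1,g-1)$. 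To see $\overline{E}\in B_{n,k}$, choose $D\in C^{(n)}$ with $D\leq E$ and $\overline{D}=\overline{E}$ (possible since $\phi(E)$ spans an $(n-1)$-plane, one can pick $n$ points of $\mathrm{Supp}(E)$, counted with multiplicity, that span it). Then $(\overline{D}\cdot C)\geq E$, so $\deg(\overline{D}\cdot C)\geq n+k$, i.e. $D\in R_{n,k}$, whence $\overline{E}=\overline{D}=\mathcal{G}(D)\in B_{n,k}$.

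For surjectivity, let $W\in B_{n,k}$. By Proposition \ref{tg13}, $\deg(W\cdot C)=n+k$. Set $E:=(W\cdot C)\in C^{(n+k)}$. The argument given in the proof of Proposition \ref{tg13} shows $\dim(\overline{E})=n-1$, so by Geometric Riemann--Roch $\ell(E)=k+1$, hence $E\in C_{n+k}^k$. Since $\overline{E}\subset W$ and both have dimension $n-1$, we get $\overline{E}=W$, so $\beta(E)=W$.

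For injectivity, suppose $\beta(E_1)=\beta(E_2)=W$. Then for $j=1,2$, $E_j\leq (\overline{E_j}\cdot C)=(W\cdot C)$, and $\deg(E_j)=n+k=\deg(W\cdot C)$ by Proposition \ref{tg13}. Hence $E_1=(W\cdot C)=E_2$.

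Finally, regularity. The linear span map $E\mapsto \overline{\phi(E)}$ is a morphism to $\mathbb{G}(n-1,g-1)$ wherever $\dim\overline{E}=n-1$ (this is essentially the same construction used to define the Gauss map on $\mathcal{W}_n$); since we showed that this dimension is identically $n-1$ on $C_{n+k}^k$, the restriction $\beta$ is regular. The main obstacle I anticipate is entirely in the first paragraph: one must carefully exploit the non-existence of a $\mathfrak{g}_{n+k+1}^{k+1}$ to pin down $\ell(E)=k+1$ (not merely $\geq k+1$), for only then does Geometric Riemann--Roch give the sharp equality $\dim\overline{E}=n-1$ that makes $\beta$ take values in $\mathbb{G}(n-1,g-1)$ and makes the injectivity argument work with equality of degrees.
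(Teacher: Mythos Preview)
Your proof is correct and follows essentially the same approach as the paper. The only organizational difference is that the paper dispatches well-definedness, surjectivity, and regularity in one stroke by citing Proposition~\ref{lmi8} (whose domain $\bigcup_{L\in\mathcal{L}_{n+k}^k(C)}L$ coincides with $C_{n+k}^k$ under the hypothesis, since $C_{n+k}^{k+1}=\varnothing$), whereas you re-prove these directly and invoke Proposition~\ref{tg13} for surjectivity; the injectivity arguments are identical, both resting on $\deg(\overline{E}\cdot C)=n+k$.
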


\begin{proof}
	By Proposition \ref{lmi8} we have that $\beta$ is well defined, surjective and regular. So we just need to prove that $\beta$ is injective. Let's assume that $E,F\in C_{n+k}^k$ satisfy that $\beta\left(E\right)=\beta\left(F\right)$, i.e., $\overline{E}= \overline{F}$. Then, since $\deg\left(\overline{E}\cdot C\right)=\deg\left(\overline{F}\cdot C\right)=n+k$, we have
	$$
	E=\left(\overline{E}\cdot C\right)=\left(\overline{F}\cdot C\right)=F,
	$$
	so $\beta$ is injective.
\end{proof}

Note that under the hypothesis of Proposition \ref{tg12} we have that $B_{n,k}$ is a projective variety. The next proposition shows that $B_{n,k}$ is a disjoint union of copies of all the $\mathfrak{g}_{n+k}^k$ on $C$ via $\beta$, assuming that $C$ doesn's have a $\mathfrak{g}_{n+k+1}^{k+1}$.

\begin{prop}\label{tg3}
	Let $1\leq k\leq n-1\leq g-3$ and $C\in\mathscr{M}_g$. Assume that $C$ has a $\mathfrak{g}_{n+k}^k$, but not a $\mathfrak{g}_{n+k+1}^{k+1}$. Then $B_{n,k}$ is in an unique way a disjoint union of a family of closed subvarieties of $B_{n,k}$ birational to $\mathbb{P}^k$; more precisely any such subvariety of $B_{n,k}$ is of the form $\beta\left(L\right)$ with $L\in\mathcal{L}_{n+k}^k\left(C\right)$.
\end{prop}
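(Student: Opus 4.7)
The plan is to transfer a disjoint decomposition of $C_{n+k}^k$ to $B_{n,k}$ via the regular bijection $\beta\colon C_{n+k}^k\to B_{n,k}$ of Proposition \ref{tg12}. First I would show that $C_{n+k}^k=\bigsqcup_{L\in\mathcal{L}_{n+k}^k\left(C\right)}L$ as a disjoint union. For every $E\in C_{n+k}^k$ the equality $\dim|E|=k$ holds exactly: if $\dim|E|\geq k+1$, then for any $P\in C$ the complete linear system $|E+P|$ would have degree $n+k+1$ and dimension $\geq k+1$, producing a $\mathfrak{g}_{n+k+1}^{k+1}$ and contradicting the hypothesis. Hence $|E|\in\mathcal{L}_{n+k}^k\left(C\right)$, so the $L$'s cover $C_{n+k}^k$; disjointness is automatic since $E\in L_1\cap L_2$ forces $L_1=|E|=L_2$.

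Applying $\beta$ then produces the decomposition $B_{n,k}=\bigsqcup_L\beta\left(L\right)$. Each $L\simeq\mathbb{P}^k$ is closed in the projective variety $C_{n+k}^k$, so by properness of $\beta$ the image $\beta\left(L\right)$ is closed in $B_{n,k}$. The restriction $\beta|_L\colon L\to\beta\left(L\right)$ is a regular bijection between irreducible projective varieties of dimension $k$, hence birational in characteristic zero; therefore $\beta\left(L\right)$ is birational to $\mathbb{P}^k$.

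For uniqueness, I would take any decomposition $B_{n,k}=\bigsqcup_\alpha V_\alpha$ into closed subvarieties birational to $\mathbb{P}^k$ and show that each $V_\alpha$ equals some $\beta\left(L\right)$. Fix $\alpha$. Since the $L$'s partition $C_{n+k}^k$, the preimage $\beta^{-1}\left(V_\alpha\right)$ has the form $\bigcup_{L\in I_\alpha}L$ for some $I_\alpha\subset\mathcal{L}_{n+k}^k\left(C\right)$. If $I_\alpha$ contained two distinct $L_1,L_2$, then $V_\alpha=\beta\left(L_1\right)\cup\bigcup_{L\in I_\alpha\setminus\{L_1\}}\beta\left(L\right)$ would express $V_\alpha$ as a union of two closed subsets (each closed by properness of $\beta$); irreducibility of $V_\alpha$ would force one of them to equal $V_\alpha$, and then bijectivity of $\beta$ would force $\beta^{-1}\left(V_\alpha\right)$ to equal either $L_1$ or $\bigcup_{L\in I_\alpha\setminus\{L_1\}}L$, contradicting $|I_\alpha|\geq 2$. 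Hence $|I_\alpha|=1$ and $V_\alpha=\beta\left(L\right)$ for a unique $L$. The main obstacle I expect is the bookkeeping in this uniqueness step, specifically tracking the interplay between the set-theoretic bijectivity of $\beta$ and the Zariski topology on $B_{n,k}$; irreducibility of the $V_\alpha$ together with properness of $\beta$ is precisely what closes the argument.
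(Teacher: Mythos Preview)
Your existence argument is fine, and matches what the paper implicitly does. The gap is in your uniqueness step.

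You write: ``Since the $L$'s partition $C_{n+k}^k$, the preimage $\beta^{-1}\left(V_\alpha\right)$ has the form $\bigcup_{L\in I_\alpha}L$ for some $I_\alpha\subset\mathcal{L}_{n+k}^k\left(C\right)$.'' This does not follow. That the $L$'s partition $C_{n+k}^k$ says nothing about an arbitrary closed subset of $C_{n+k}^k$ being a union of blocks of that partition; a priori $\beta^{-1}\left(V_\alpha\right)$ could cut across several $L$'s without containing any of them. Your subsequent irreducibility bookkeeping never recovers from this, because it already presupposes the conclusion you need.

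What is missing is precisely the mechanism the paper uses: one must explain \emph{why} a closed subvariety of $C_{n+k}^k$ birational to $\mathbb{P}^k$ is forced to be a single complete linear system. The paper pulls $V_\alpha$ back to $Z:=\beta^{-1}\left(V_\alpha\right)$, observes (via your own irreducibility and char-zero arguments, which are correct) that $Z$ is itself birational to $\mathbb{P}^k$, and then applies the Abel--Jacobi type map $\rho\colon C^{(n+k)}\to\Pic^{n+k}\left(C\right)$. Since any morphism from a rational variety to an abelian variety is constant, $\rho|_Z$ is constant, i.e.\ $Z\subset|E|$ for some $E$; then $\dim Z=k=\dim|E|$ and irreducibility of $|E|$ force $Z=|E|$, hence $V_\alpha=\beta\left(|E|\right)$. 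This ``rational varieties map to points in abelian varieties'' step is the key idea your argument lacks, and without it the uniqueness does not go through.
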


\begin{proof}
	Let $X$ be a closed subvariety of $B_{n,k}$ birational to $\mathbb{P}^k$. Then $Z:=\beta^{-1}\left(X\right)\subset C_{n+k}^k$ is closed and birational to $\mathbb{P}^k$. Let's consider the map
	$$
	\rho:C^{(n+k)}\to W_{n+k}\subset\Pic^{n+k}\left(C\right), \ D\mapsto\mathscr{O}_C\left (D\right).
	$$
	By \cite[V, Corollary 3.9]{CS} its restriction to $Z$ must be constant, i.e., $\rho\left(Z\right)=\{\rho\left(E\right)\}$ for some divisor $E\in Z\subset C_{n+k}^k$. We claim that $Z=|E|$.
	
	Given $F\in Z$, we have $\rho\left(F\right)=\rho\left(E\right)$, i.e., $F\sim E$, so $F\in|E| $. This shows that $Z\subset|E|$. Now since $Z$ is closed, $|E|$ is irreducible, $Z\subset|E|$ and both are of the same dimension, then we must have that $Z=|E|$. Thus $X=\beta\left(|E|\right)$, with $|E|\in\mathcal{L}_{n+k}^k\left(C\right)$. This proves the proposition.
\end{proof}

From this proposition we deduce the following.

\begin{cor}\label{tg5}
	Let $1\leq k\leq n-1\leq g-3$ and $C\in\mathscr{M}_g$ non-hyperelliptic. Let's assume that $C$ has a $\mathfrak{g}_{n+k}^k$, but not a $\mathfrak{g}_{n+k+1}^{k+1}$. Then we have a bijection
	\begin{equation*}
		\begin{split}
			\mathcal{L}_{n+k}^k\left(C\right)&\to\{X\subset B_{n,k}:X\text{ is a closed subvariety of }B_{n,k}\text{ birational to }\mathbb{P }^k\}\\
			L&\mapsto\beta\left(L\right).
		\end{split}
	\end{equation*}
	Moreover, $L\neq M$ if and only if $\beta\left(L\right)\cap\beta\left(M\right)=\varnothing$.
\end{cor}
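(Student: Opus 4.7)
The plan is to deduce this corollary almost mechanically from Propositions \ref{tg12} and \ref{tg3} by checking that the map $L \mapsto \beta(L)$ is well-defined, bijective onto the indicated target set, and satisfies the disjointness criterion. First, for well-definedness, given $L \in \mathcal{L}_{n+k}^k(C)$, I would observe that $L$ is a fiber of the natural morphism $\rho: C^{(n+k)} \to \Pic^{n+k}(C)$, hence a closed subvariety of $C^{(n+k)}$ isomorphic to $\mathbb{P}^k$ and contained in $C_{n+k}^k$. Since $\beta$ is regular on the projective variety $C_{n+k}^k$ by Proposition \ref{tg12}, it is proper, so $\beta(L)$ is closed in $B_{n,k}$; the restriction $\beta|_L : \mathbb{P}^k \to \beta(L)$ is then a regular bijection between varieties in characteristic zero, hence generically \'etale and in particular birational, so $\beta(L)$ is a closed subvariety of $B_{n,k}$ birational to $\mathbb{P}^k$.

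Next, for the bijectivity of the map $L \mapsto \beta(L)$, surjectivity onto the set of closed subvarieties of $B_{n,k}$ birational to $\mathbb{P}^k$ is exactly the second assertion of Proposition \ref{tg3}. Injectivity follows immediately from the bijectivity of $\beta : C_{n+k}^k \to B_{n,k}$ established in Proposition \ref{tg12}: if $\beta(L) = \beta(M)$ as subsets of $B_{n,k}$, then applying the inverse of $\beta$ setwise gives $L = \beta^{-1}(\beta(L)) = \beta^{-1}(\beta(M)) = M$.

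Finally, for the disjointness criterion, I would use that two distinct complete linear systems $L$ and $M$ correspond to distinct points of $\Pic^{n+k}(C)$ under $\rho$, so they are disjoint fibers of $\rho$ in $C^{(n+k)}$; by the injectivity of $\beta$ on $C_{n+k}^k$, this transfers at once to $\beta(L) \cap \beta(M) = \beta(L \cap M) = \varnothing$. The converse is trivial, since $L = M$ forces $\beta(L) \cap \beta(M) = \beta(L) \neq \varnothing$.

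I anticipate no genuine obstacle here: all the substantive content (the bijectivity of $\beta$ and the classification of closed subvarieties of $B_{n,k}$ birational to $\mathbb{P}^k$) has already been carried out in Propositions \ref{tg12} and \ref{tg3}, and the corollary amounts to reorganizing those statements as a single bijection of sets, with the disjointness criterion following from the fact that $\beta$ is compatible with the partition of $C_{n+k}^k$ into fibers of $\rho$.
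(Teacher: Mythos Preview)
Your proposal is correct and follows essentially the same approach as the paper, which simply states that the corollary is a direct consequence of Proposition~\ref{tg3}. You have spelled out in detail the steps that the paper leaves implicit---well-definedness via properness of $\beta$ on the projective variety $C_{n+k}^k$, surjectivity from Proposition~\ref{tg3}, and injectivity together with the disjointness criterion from the bijectivity of $\beta$ in Proposition~\ref{tg12}---but the underlying argument is the same.
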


\begin{proof}
	This is a direct consequence of Proposition \ref{tg3}.
\end{proof}

Now we will prove Theorem \ref{tg4}.

\begin{proof}[Proof of Theorem \ref{tg4}]
	By Corollary \ref{tg5}, every linear system $L\in\mathcal{L}_{n+k}^k\left(C\right)$ can be reconstructed from a unique closed subvariety of $B_{n,k}$ birational to $\mathbb{P}^k$, namely $\beta\left(L\right)$. This proves the first part of the theorem.
	
	Now, let $L\in\mathcal{L}_{n+k}^k\left(C\right)$ and consider the morphism $\phi_L:C\to\mathbb{P}^k$ associated to $ L$. We will prove that the dual hypersurface $\phi_L\left(C\right)^*$ can be reconstructed from the restriction of the Gauss map to the $(n+k)$-intersection locus $R_{n,k}$, i.e., from $\mathcal{G}:R_{n,k}\to B_{n,k}$.
	
	Let's first assume that $L$ is base point free. In this case each $E\in L$ can be written as $E=\phi_L^*H$ for a unique hyperplane $H$ in $\mathbb{P}^k$, i.e.,
	$$
	L=\{\phi_L^*H:H\text{ hyperplane in }\mathbb{P}^k\}.
	$$
	
	In this way we obtain
	\begin{equation*}
		\begin{split}
			\beta\left(L\right)&=\{\overline{E}\in\mathbb{G}\left(n-1,g-1\right):E\in L\}\\
			&=\{\overline{\phi_L^*H}\in\mathbb{G}\left(n-1,g-1\right):H\text{ hyperplane in }\mathbb{P}^k\}.
		\end{split}
	\end{equation*}
	
	Let's now consider
	\begin{equation*}
		\begin{split}
			\beta\left(L\setminus L_\red\right)&=\{\overline{\phi_L^*H}\in\beta\left(L\right):\phi_L^*H\text{ is non-reduced }\}\\
			&=\{\overline{\phi_L^*H}\in\beta\left(L\right):\overline{\phi_L^*H}\text{ is tangent to }C\}\\
			&=\{\overline{\phi_L^*H}\in\beta\left(L\right):H\text{ is a hyperplane tangent to }\phi_L\left(C\right)\}.
		\end{split}
	\end{equation*}
	
	Using Bertini's Theorem and the fact that $\beta$ is a projective morphism, we obtain that $\beta\left(L\setminus L_\red\right)$ is a proper closed subset of $\beta \left(L\right)$. The map
	$$
	\phi_L\left(C\right)^*\to\beta\left(L\setminus L_\red\right), \ H\mapsto\overline{\phi_L^*H}
	$$
	is a regular bijection, so $\beta\left(L\right)\subset B_{n,k}$ contains a copy of the dual hypersurface $\phi_L\left(C\right)^*$, namely $\beta\left(L\setminus L_\red\right)$.
	
	Now suppose that $L$ has base points and let $B$ be its base locus. We define $\phi_L:=\phi_{L-B}$. Note that each $E\in L$ can be written as $E=\phi_L^*H+B$ for a unique hyperplane $H$ in $\mathbb{P}^k$, i.e.,
	$$
	L=\{\phi_L^*H+B:H\text{ hyperplane in }\mathbb{P}^k\}.
	$$
	
	As in the previous case, we have
	\begin{equation*}
		\begin{split}
			\beta\left(L\right)&=\{\overline{E}\in\mathbb{G}\left(n-1,g-1\right):E\in L\}\\
			&=\{\overline{\phi_L^*H+B}\in\mathbb{G}\left(n-1,g-1\right):H\text{ hyperplane in }\mathbb{P}^k\}.
		\end{split}
	\end{equation*}
	
	Let's now consider
	\begin{equation*}
		\begin{split}
			\beta\left(L\setminus L_\red\right)&=\{\overline{\phi_L^*H+B}\in\beta\left(L\right):\phi_L^*H\text{ is non-reduced}\}\\
			&=\{\overline{\phi_L^*H+B}\in\beta\left(L\right):H\text{ is a hyperplane tangent to }\phi_L\left(C\right)\}.
		\end{split}
	\end{equation*}
	
	As in the previous case, using Bertini's Theorem and the fact that $\beta$ is a projective morphism, we obtain that $\beta\left(L\setminus L_\red\right)$ is a proper closed subset of $\beta\left(L\right)$. Similarly to the previous case, the map
	$$
	\phi_L\left(C\right)^*\to\beta\left(L\setminus L_\red\right), \ H\mapsto\overline{\phi_L^*H+B}
	$$
	is a regular bijection, so $\beta\left(L\right)\subset B_{n,k}$ contains a copy of the dual hypersurface $\phi_L\left(C\right)^*$, namely $\beta\left(L\setminus L_\red\right)$. This completes the proof of Theorem \ref{tg4}.
\end{proof}

Summary of the problem studied: We take $1\leq k\leq n-1\leq g-3$ and a curve $C\in\mathscr{M}_g$ such that it has a $\mathfrak{g}_{n+k}^k$, but not a $\mathfrak{g}_{n+k+1}^{k+1}$. This implies that $C$ is non-hyperelliptic. We identify $C$ with its image via the canonical morphism. The set $\{W\in\mathbb{G}\left(n-1,g-1\right):\left(W\cdot C\right)=n+k\}$ turns out to be $B_{n ,k}$. If $X$ is a rational closed subvariety of $B_{n,k}$ of dimension $k$ the set $\{\left(W\cdot C\right):W\in X\}$ is a $\mathfrak{g}_{n+k}^k$, say $L$. Every $\mathfrak{g}_{n+k}^k$ in $C$ is obtained this way. Moreover $X$ contains a copy of the dual hypersurface $\phi_L\left(C\right)^*$ namely $\{W\in X:\left(W\cdot C\right)\in L\setminus L_ \red\}=\beta\left(L\setminus L_\red\right)$, where $\phi_L:C\to\mathbb{P}^k$ is the morphism associated to $L$. In this way, each $L=\mathfrak{g}_{n+k}^k$ and the dual hypersurface $\phi_L\left(C\right)^*$ can be reconstructed through information encoded in the rational and closed subvarieties of dimension $k$ of $B_{n,k}$. This remains to be done for concrete examples.

\subsection{Hyperelliptic case}\label{tg0}

In this subsection we will study some results analogous to those of the previous subsection, that relate linear systems on a hyperelliptic curve $C\in\mathscr{M}_g$ to the image of the Gauss map. In this case $C$ has a complete $\mathfrak{g}_{n+k}^k$ for every $1\leq k\leq n\leq g-1$ by Proposition \ref{a1}. Now we characterize all the complete $\mathfrak{g}_{n+k}^k$'s on a hyperelliptic curve $C\in\mathscr{M}_g$.

\begin{prop}\label{tg6}
	Let $C\in\mathscr{M}_g$ be hyperelliptic and $1\leq k\leq n\leq g-1$. Then each complete $\mathfrak{g}_{n+k}^k$ is of the form
	$$
	k\mathfrak{g}_2^1+P_1+\dots+P_{n-k},
	$$
	where no two of the $P_1,\dots,P_{n-k}$ are conjugate under the hyperelliptic involution $\iota$.
\end{prop}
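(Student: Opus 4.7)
My plan is to reduce the statement to the characterization of complete special linear systems on hyperelliptic curves given in \cite[I. D-9]{ACGH}, which the paper has already been using repeatedly (for instance in the proof of Theorem \ref{gwn1}(b) and in Subsection \ref{g0}). That reference states that on a hyperelliptic curve every complete special linear system has the shape $r\mathfrak{g}_2^1+P_1+\dots+P_s$, where no two of the $P_j$ are conjugate under $\iota$. So the content of the proposition is just that the $\mathfrak{g}_{n+k}^k$ in question is special, together with an arithmetic check to identify $r$ and $s$.

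First I would let $L$ be a complete $\mathfrak{g}_{n+k}^k$ on $C$ and pick any $D\in L$, so $\deg(D)=n+k$ and $\ell(D)=k+1$. By the Riemann-Roch Theorem,
$$
\ell(K-D)=\ell(D)-\deg(D)+g-1=(k+1)-(n+k)+g-1=g-n.
$$
Since by hypothesis $n\leq g-1$, this gives $\ell(K-D)\geq 1$, i.e.\ $D$ is a special divisor. Hence $|D|=L$ is a complete special linear system on the hyperelliptic curve $C$.

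Next I would apply \cite[I. D-9]{ACGH} to obtain a representation
$$
L=r\mathfrak{g}_2^1+P_1+\dots+P_s,
$$
where $r,s\geq 0$, the $P_j\in C$, and no two of the $P_j$ are conjugate under $\iota$. A linear system of this form has dimension $r$ (the $s$ points $P_j$ are fixed by the non-conjugacy condition, while each copy of $\mathfrak{g}_2^1$ contributes one parameter) and degree $2r+s$. Comparing with $\dim L=k$ and $\deg L=n+k$ forces $r=k$ and $s=n-k$, which yields the claimed form. The only point requiring care is verifying that the $r,s$ in the representation are indeed determined by the degree and dimension of $L$; this follows from \cite[I. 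D-9]{ACGH} itself, since that reference gives the form as a statement about the unique decomposition. There is no real obstacle here: the proposition is essentially a one-line consequence of that classical characterization together with the Riemann-Roch computation above, and the novelty lies only in recording it in the precise form needed for the proof of Theorem \ref{2} in the hyperelliptic case.
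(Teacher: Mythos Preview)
Your proposal is correct and follows essentially the same approach as the paper: the paper's proof simply states that by \cite[I. D-9]{ACGH} it suffices to show $E$ is special, and that this follows from the numerical conditions via Riemann--Roch. Your Riemann--Roch computation $\ell(K-D)=g-n\geq 1$ is exactly what the paper has in mind, and your identification of $r=k$, $s=n-k$ just makes explicit what the paper leaves implicit.
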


\begin{proof}
	Let $|E|$ be a complete $\mathfrak{g}_{n+k}^k$, with $E$ effective. By \cite[I. D-9]{ACGH} it is enough to prove that $E$ is special. This is obtained by the numerical conditions and the Riemann-Roch Theorem.
\end{proof}

Now we want to relate the image of the Gauss map, i.e., $\mathcal{G}\left(\mathcal{W}_n\right)$, with the complete linear systems $L=\mathfrak{g}_{n+k }^k$ on $C$. We begin this study with the following proposition.

\begin{prop}\label{tg11}
	Let $C\in\mathscr{M}_g$ be hyperelliptic and $1\leq k\leq n\leq g-1$. Then the map
	$$
	\beta:\bigcup_{L\in\mathcal{L}_{n+k}^k\left(C\right)}L\to\mathbb{G}\left(n-1,g-1\right ), \ F\mapsto\overline{\phi\left(F\right)}
	$$
	is regular. Moreover, for each $L\in\mathcal{L}_{n+k}^k\left(C\right)$, the restriction $\beta|_L:L\to\beta\left(L\right )$ is bijective.
\end{prop}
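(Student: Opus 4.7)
The plan is to verify three things in turn: that $\beta$ takes values in $\mathbb{G}(n-1,g-1)$; that it is regular; and that each restriction $\beta|_L$ is injective (surjectivity onto $\beta(L)$ being immediate from the definition). For well-definedness, take any $F$ in some $L\in\mathcal{L}_{n+k}^k(C)$: one has $\deg F=n+k$ and $\ell(F)=k+1$, so the Geometric Riemann--Roch Theorem gives $\dim\overline{\phi(F)}=(n+k)-(k+1)=n-1$. For regularity, observe that $\bigcup_L L$ coincides with the locally closed subvariety $C_{n+k}^k\setminus C_{n+k}^{k+1}$ of $C^{(n+k)}$, on which $\ell(F)=k+1$ and hence $\dim\overline{\phi(F)}=n-1$ are constant; the usual construction of the spanning map on such a locus of constant rank (as used implicitly in Proposition \ref{lmi8}) makes $\beta$ a morphism to $\mathbb{G}(n-1,g-1)$.

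For bijectivity of $\beta|_L$, I will fix $L=k\mathfrak{g}_2^1+P_1+\dots+P_{n-k}$ (Proposition \ref{tg6}) and exploit the factorisation $\phi=\nu\circ\pi$, where $\pi\colon C\to\mathbb{P}^1$ is the hyperelliptic double cover and $\nu\colon\mathbb{P}^1\xrightarrow{\sim}\phi(C)$ exhibits $\phi(C)$ as a rational normal curve of degree $g-1$ in $\mathbb{P}^{g-1}$. Every hyperplane pullback is of the form $\phi^*H=\pi^*(H\cap\phi(C))$, hence is $\iota$-invariant. Writing $F\in L$ as $F=D_1+\dots+D_k+P_1+\dots+P_{n-k}$ with $D_j=q_j+\iota(q_j)\in\mathfrak{g}_2^1$, a pointwise multiplicity computation shows that the minimal effective divisor $D_F$ on $\mathbb{P}^1$ satisfying $F\le\pi^*D_F$ has degree exactly $n$, with support and multiplicities recording the $\pi$-images of the $q_j$ and of the $P_i$. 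Dualising, it follows that $\overline{\phi(F)}$ is the linear span of the length-$n$ subscheme $\nu(D_F)\subset\phi(C)$.

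The key rigidity fact closes the argument: since $n\le g-1$, any length-$n$ subscheme $Z$ of a rational normal curve of degree $g-1$ in $\mathbb{P}^{g-1}$ satisfies $\overline Z\cap\phi(C)=Z$ scheme-theoretically (a standard consequence of the fact that every length-$m$ subscheme of the curve spans an $(m-1)$-plane for $m\le g$). Hence $D_F$ is determined by $\beta(F)=\overline{\phi(F)}$; subtracting the fixed divisor $\pi(P_1)+\dots+\pi(P_{n-k})$ on $\mathbb{P}^1$ yields the degree-$k$ divisor $\pi(q_1)+\dots+\pi(q_k)$, and this encodes the multiset $\{D_1,\dots,D_k\}$ since each $D_j$ is the $\pi$-fibre over $\pi(q_j)$. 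Reassembling, one recovers $F$, giving injectivity.

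The main obstacle I anticipate is the multiplicity bookkeeping needed to pin down $D_F$ cleanly: the $q_j$ may coincide among themselves or with $P_i$'s, and some of these points may be ramification points of $\pi$. All such cases are handled uniformly by writing $F\le\pi^*E$ point by point and using the ramification indices; what makes the argument robust is that both the determination of $D_F$ and the rigidity of length-$n$ subschemes of a rational normal curve are local statements, so coincidences and ramification never obstruct the recovery of $F$ from $\beta(F)$.
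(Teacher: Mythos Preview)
Your proof is correct and rests on the same underlying mechanism as the paper's: the canonical map of a hyperelliptic curve factors as $\phi=\nu\circ\pi$ with $\nu(\mathbb{P}^1)$ a rational normal curve of degree $g-1$, and the span $\overline{\phi(F)}$ is determined by (and determines) a length-$n$ subscheme of that curve. The paper's proof is more compressed: it writes $F_1,F_2\in L$ explicitly, asserts that $\overline{\phi(F_1)}=\overline{\phi(F_2)}$ forces the multisets $\{\phi(p_1),\dots,\phi(p_k)\}$ and $\{\phi(q_1),\dots,\phi(q_k)\}$ to coincide, and then reads off $p_j+\iota(p_j)=q_j+\iota(q_j)$ for each $j$. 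That assertion is exactly the rational-normal-curve rigidity you invoke, but the paper leaves it implicit and does not separate out the multiplicity bookkeeping.

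Your route via the minimal $D_F$ with $F\le\pi^*D_F$ is a genuinely cleaner packaging: it handles all the coincidences (repeated $q_j$'s, $q_j$ equal to some $P_i$, ramification points) uniformly and makes the recovery of $F$ from $\beta(F)$ constructive rather than comparative. The paper's argument is shorter but relies on the reader to fill in precisely the scheme-theoretic step you spell out. Both buy the same conclusion; yours is more self-contained.
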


\begin{proof}
	First we will check that $\beta$ is well defined. Given $F\in L\in\mathcal{L}_{n+k}^k\left(C\right)$, we have $\ell\left(F\right)=k+1$. Then, by the Geometric Riemann-Roch Theorem
	\begin{equation*}
		\begin{split}
			\dim\left(\overline{\phi\left(F\right)}\right)&=\deg\left(F\right)-\ell\left(F\right)\\
			&=\left(n+k\right)-\left(k+1\right)=n-1,
		\end{split}
	\end{equation*}
	so $\beta\left(F\right)\in\mathbb{G}\left(n-1,g-1\right)$. This proves that $\beta$ is well defined.
	
	Now let $L\in\mathcal{L}_{n+k}^k\left(C\right)$. We will prove that $\beta|_L:L\to\beta\left(L\right)$ is bijective, for which it is enough to prove that it is injective. By Proposition \ref{tg6} the linear system $L$ is of the form
	$$
	L=k\mathfrak{g}_2^1+P_1+\dots+P_{n-k},
	$$
	where no two of the $P_1,\dots,P_{n-k}$ are conjugate under the involution $\iota$. Now assume that $F_1,F_2\in L$ satisfy $\beta\left(F_1\right)=\beta\left(F_2\right)$, i.e., $\overline{\phi\left(F_1\right)} =\overline{\phi\left(F_2\right)}$. Let's write 
	$$
	F_1=p_1+\iota\left(p_1\right)+\dots+p_k+\iota\left(p_k\right)+P_1+\dots+P_{n-k},
	$$
	$$
	F_2=q_1+\iota\left(q_1\right)+\dots+q_k+\iota\left(q_k\right)+P_1+\dots+P_{n-k}.
	$$
	
	From the equality $\overline{\phi\left(F_1\right)}=\overline{\phi\left(F_2\right)}$, we deduce
	$$
	\left(\phi\left(p_1\right),\dots,\phi\left(p_k\right)\right)=\left(\phi\left(q_{j_1}\right),\dots,\phi \left(q_{j_k}\right)\right)
	$$
	for some $j_i$ such that $\{j_1,\dots,j_k\}=\{1,\dots,k\}$. Relabeling if necessary, without loss of generality we can assume that
	$$
	\left(\phi\left(p_1\right),\dots,\phi\left(p_k\right)\right)=\left(\phi\left(q_1\right),\dots,\phi\left( q_k\right)\right).
	$$
	
	From this it follows that $\{p_j,\iota\left(p_j\right)\}=\{q_j,\iota\left(q_j\right)\}$ for each $j\in\{1,\dots ,k\}$, which implies
	$$
	p_j+\iota\left(p_j\right)=q_j+\iota\left(q_j\right),
	$$
	for all $j\in\{1,\dots,k\}$. Thus $F_1=F_2$, which proves that $\beta|_L:L\to\beta\left(L\right)$ is injective.
\end{proof}

In general the image of $\beta$ can be larger than the image of the Gauss map $\mathcal{G}\left(\mathcal{W}_n\right)$. To characterize $\mathcal{G}\left(\mathcal{W}_n\right)$ in terms of the $L=\mathfrak{g}_{n+k}^k$ and $\beta$, we need to restrict a little these $L$. We will do this through the following definition. 

Let $C\in\mathscr{M}_g$ be hyperelliptic and $r,d$ be integers, with $r\geq0$, $d\geq1$. Given a special linear system $L\in\mathcal{L}_d^r\left(C\right)$, by \cite[I. D-9]{ACGH}, it must be of the form
$$
L=r\mathfrak{g}_2^1+P_1+\dots+P_{n-2r},
$$
where no two of the $P_1,\dots,P_{n-2r}$ are conjugate under the involution $\iota$. Let $B:=P_1+\dots+P_{n-2r}$.

We define $L_\nc$ as the set of divisors $Q_1+\iota\left(Q_1\right)+\dots+Q_r+\iota\left(Q_r\right)+B\in L$ such that for each $i\in\{1,\dots,r\}$ exists $q_i\in\{Q_i,\iota\left(Q_i\right)\}$ with the property that no two of the $q_1 ,\dots,q_r,P_1,\dots,P_{n-2r}$ are conjugate under the hyperelliptic involution $\iota$.

Here we ask a question: is $L_\nc$ open in $L$? 

The following result proves that $L_\nc$ is dense in $L$.

\begin{lema}
	Let $C\in\mathscr{M}_g$ be hyperelliptic and $r,d$ be integers with $r\geq0,d\geq1$. Given a special linear system $L\in\mathcal{L}_d^r\left(C\right)$, we have that $L_\red\subset L_\nc$.
\end{lema}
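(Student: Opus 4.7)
The proof will be a direct combinatorial verification of the two definitions. Given $D\in L_\red$, write $D = Q_1+\iota(Q_1)+\dots+Q_r+\iota(Q_r)+B$ with $B=P_1+\dots+P_{n-2r}$; by definition $D-B$ is reduced, so the $2r$ points $Q_1,\iota(Q_1),\dots,Q_r,\iota(Q_r)$ are pairwise distinct. From this I will extract two standing observations used throughout: (i) no $Q_i$ is a Weierstrass point, for otherwise $Q_i=\iota(Q_i)$; and (ii) the orbits $\{Q_i,\iota(Q_i)\}$ are pairwise disjoint as $i$ varies.

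The goal is to select, for each $i$, an element $q_i\in\{Q_i,\iota(Q_i)\}$ so that no two of $q_1,\dots,q_r,P_1,\dots,P_{n-2r}$ are $\iota$-conjugate. The key step---and the only place the hypothesis on $B$ enters---is to show that, for each fixed $i$, at most one of the two elements $Q_i,\iota(Q_i)$ belongs to the ``forbidden set'' $F:=\{\iota(P_1),\dots,\iota(P_{n-2r})\}$. Suppose instead that both $Q_i=\iota(P_a)$ and $\iota(Q_i)=\iota(P_b)$ for some indices $a,b$; applying $\iota$ to the second equation yields $Q_i=P_b$, so $P_b=\iota(P_a)$. When $a\neq b$ this contradicts the assumption on $B$ that no two $P_j$'s are conjugate under $\iota$, and when $a=b$ it forces $P_a$ to be a Weierstrass point, whence $Q_i=P_a=\iota(P_a)=\iota(Q_i)$, contradicting (i). Consequently one can pick $q_i\in\{Q_i,\iota(Q_i)\}\setminus F$ for every $i$.

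With the $q_i$'s chosen this way, three pairwise non-conjugacy conditions remain to be verified: (a) no $q_i$ is $\iota$-conjugate to any $P_j$, which is immediate from $q_i\notin F$; (b) no two distinct $q_i,q_j$ are conjugate, for if $q_i=\iota(q_j)$ then $q_j=\iota(q_i)$ lies in both $\{Q_i,\iota(Q_i)\}$ and $\{Q_j,\iota(Q_j)\}$, contradicting (ii); and (c) the $P_j$'s are pairwise non-conjugate by the standing hypothesis on $L$. This yields $D\in L_\nc$, which is what we want. The main---indeed only---obstacle is the dichotomy argument in the second paragraph, which leverages both the no-two-conjugate hypothesis on the $P_j$'s and the fact that the $Q_i$'s are non-Weierstrass; everything else is bookkeeping, so I expect the lemma to go through essentially as outlined.
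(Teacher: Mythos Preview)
Your proof is correct, and it takes a genuinely different---and more careful---route than the paper's. The paper lets $q_i\in\{Q_i,\iota(Q_i)\}$ be \emph{arbitrary} and argues that $q_i=\iota(P_j)$ forces $\iota(q_i)=P_j$, whence ``$F$ is not reduced,'' a contradiction. But membership in $L_\red$ only guarantees that $F-B$ is reduced, not $F$ itself; if, say, $P_1=Q_1$ (which is not excluded by the hypotheses), then $F$ is non-reduced and the choice $q_1=\iota(Q_1)$ really is conjugate to $P_1$, so the paper's ``any choice works'' claim is too strong as written.

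You instead construct a \emph{specific} choice by showing that at most one element of each orbit $\{Q_i,\iota(Q_i)\}$ lies in the forbidden set $\{\iota(P_1),\dots,\iota(P_{n-2r})\}$. Your dichotomy (the case $a\neq b$ using the no-two-conjugate hypothesis on the $P_j$'s, the case $a=b$ using that $Q_i$ is not a Weierstrass point) is exactly what is needed to close the gap, and the remaining verifications (a)--(c) are clean. What your approach buys is correctness in the edge cases where the base locus meets the moving part; what the paper's approach would buy, were it valid, is a slightly shorter argument---but as stated it does not quite go through.
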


\begin{proof}
	By \cite[I. D-9]{ACGH}, the linear system $L$ must be of the form
	$$
	L=r\mathfrak{g}_2^1+P_1+\dots+P_{n-2r},
	$$
	where no two of the $P_1,\dots,P_{n-2r}$ are conjugate under the involution $\iota$.
	
	Let $F=Q_1+\iota\left(Q_1\right)+\dots+Q_r+\iota\left(Q_r\right)+P_1+\dots+P_{n-2r}\in L_\red$. We will prove that $F\in L_\nc$. Since $F\in L_\red$, we have that $Q_1+\iota\left(Q_1\right)+\dots+Q_r+\iota\left(Q_r\right)$ is reduced. Let $q_i\in\{Q_i,\iota\left(Q_i\right)\}$ for each $i\in\{1,\dots,r\}$. It is enought to prove that no two of the $q_1,\dots,q_r,P_1,\dots,P_{n-2r}$ are conjugate under $\iota$. Since $F$ is reduced, then no two of the $q_1,\dots,q_r$ are conjugate under $\iota$. Then it is enough to prove that $q_i\neq\iota\left(P_j\right)$ for all $i\in\{1,\dots,r\}$ and all $j\in\{1,\dots,n- 2r\}$. But if $q_i=\iota\left(P_j\right)$ for some $i,j$, then $\iota\left(q_i\right)=P_j$, so $F$ is not reduced, which is a contradiction.
\end{proof}

Note that $L_\red\subset L_\nc$ implies that $L=\overline{L_\red}\subset\overline{L_\nc}\subset\overline{L}=L$, i.e., $L_\nc $ is dense in $L$. Now we characterize the image of the Gauss map, i.e., $\mathcal{G}\left(\mathcal{W}_n\right)$, in terms of $\beta$ and the $L_\nc$.

\begin{prop}\label{tg7}
	Let $C\in\mathscr{M}_g$ be hyperelliptic and $1\leq k\leq n\leq g-1$. Consider the morphism $\beta$ of Proposition \ref{tg11}. Then we have that
	$$
	\bigcup_{L\in\mathcal{L}_{n+k}^k\left(C\right)}\beta\left(L_\nc\right)=\mathcal{G}\left(\mathcal{W}_n\right).
	$$
\end{prop}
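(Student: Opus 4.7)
The plan is to prove the two inclusions $\bigcup_{L}\beta(L_\nc)\subseteq\mathcal{G}(\mathcal{W}_n)$ and $\mathcal{G}(\mathcal{W}_n)\subseteq\bigcup_{L}\beta(L_\nc)$ using two tools throughout. The first is Proposition \ref{tg6} (equivalently \cite[I. D-9]{ACGH}), which describes every complete special linear system on the hyperelliptic curve $C$ as $r\mathfrak{g}_2^1$ plus base points no two of which are $\iota$-conjugate. The second is the factorization of the canonical morphism as $\phi=\phi_0\circ\pi$ through the hyperelliptic double cover $\pi:C\to\mathbb{P}^1$: it gives $\phi(p)=\phi(\iota(p))$ for every $p\in C$, and shows that every pull-back $\phi^{*}H$ of a hyperplane section is of the form $\pi^{*}E$ for an effective divisor $E$ on $\mathbb{P}^1$. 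The comparison of linear spans $\overline{\phi(D)}=\overline{\phi(F)}$ thereby reduces to the comparison of the numerical conditions $\pi^{*}E\geq D$ and $\pi^{*}E\geq F$ on $\mathbb{P}^1$.

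For the inclusion $\bigcup_{L}\beta(L_\nc)\subseteq\mathcal{G}(\mathcal{W}_n)$, I would fix $L=k\mathfrak{g}_2^1+P_1+\dots+P_{n-k}$ and a divisor $F=Q_1+\iota(Q_1)+\dots+Q_k+\iota(Q_k)+B\in L_\nc$ with $B=P_1+\dots+P_{n-k}$. I choose $q_i\in\{Q_i,\iota(Q_i)\}$ as granted by the definition of $L_\nc$ and set $D:=q_1+\dots+q_k+B\in C^{(n)}$. Then $D$ contains no $\iota$-conjugate pair, so $\ell(D)=1$: indeed, $\ell(D)\geq 2$ would make $|D|$ a complete $\mathfrak{g}_n^{\ell(D)-1}$ with $\ell(D)-1\geq 1$, which by Proposition \ref{tg6} splits off $\mathfrak{g}_2^1$ and thus forces a divisor $Q+\iota(Q)\leq D$. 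Hence $D\in\mathcal{W}_n$. The defining property of $L_\nc$ also makes $\pi$ injective on $\mathrm{Supp}(D)$, so the inequalities on $E$ imposed by $\pi^{*}E\geq D$ at each $x\in\pi(\mathrm{Supp}(D))$ coincide with those imposed by $\pi^{*}E\geq F$; therefore $\overline{\phi(D)}=\overline{\phi(F)}$ and $\beta(F)=\mathcal{G}(D)\in\mathcal{G}(\mathcal{W}_n)$.

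For the reverse inclusion, take $D=p_1+\dots+p_n\in\mathcal{W}_n$, so $\ell(D)=1$. Then $D$ has no $\iota$-conjugate pair: otherwise $D\geq Q+\iota(Q)$ would give the $1$-parameter family $\{(p+\iota(p))+(D-Q-\iota(Q)):p\in C\}\subseteq|D|$, contradicting $\ell(D)=1$. Setting $F:=D+\iota(p_1)+\dots+\iota(p_k)=\sum_{i=1}^{k}(p_i+\iota(p_i))+(p_{k+1}+\dots+p_n)$, the residual divisor $p_{k+1}+\dots+p_n$ has no two points conjugate (inherited from $D$), so Proposition \ref{tg6} identifies $L:=|F|$ as a complete $\mathfrak{g}_{n+k}^k$, and the choice $q_i:=p_i$ exhibits $F\in L_\nc$. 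The same pull-back comparison yields $\overline{\phi(F)}=\overline{\phi(D)}$, so $\mathcal{G}(D)=\beta(F)\in\beta(L_\nc)$.

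The principal technical obstacle I anticipate is making the identity $\overline{\phi(F)}=\overline{\phi(D)}$ completely rigorous in the presence of repeated points in $\mathrm{Supp}(D)$ and of ramification points of $\pi$ lying in $\mathrm{Supp}(D)$. Once one works through $\pi$, the argument reduces to checking that for each $x\in\pi(\mathrm{Supp}(D))=\pi(\mathrm{Supp}(F))$, the minimum multiplicity $E_x$ forced by $\pi^{*}E\geq D$ is the same as that forced by $\pi^{*}E\geq F$. This rests on the fact that $F-D$ consists only of $\iota$-conjugates of points of $\mathrm{Supp}(D)$, together with the $\iota$-injectivity of $\pi$ on $\mathrm{Supp}(D)$ guaranteed by the $L_\nc$-hypothesis; carefully handling multiplicities and ramification is the only point requiring genuine bookkeeping.
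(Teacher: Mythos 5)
Your proposal follows the same overall structure as the paper's proof: both inclusions are handled by the same constructions ($D$ obtained from $F\in L_\nc$ by selecting one point $q_i$ from each conjugate pair, and $F=D+\iota(p_1)+\dots+\iota(p_k)$ in the reverse direction), with membership in $\mathcal{W}_n$ and completeness of $|F|$ both reduced to the characterization of special linear systems in Proposition \ref{tg6} (i.e.\ \cite[I.\ D-9]{ACGH}). The one place you diverge is the step you single out as the ``principal technical obstacle'': establishing $\overline{\phi\left(D\right)}=\overline{\phi\left(F\right)}$. You propose to push everything down through the double cover $\pi$ and compare the multiplicity conditions $\pi^{*}E\geq D$ and $\pi^{*}E\geq F$ fiber by fiber, which indeed requires delicate bookkeeping at ramification points and at points of higher multiplicity. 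The paper avoids this entirely: since $D\leq F$ one has $\overline{\phi\left(D\right)}\subset\overline{\phi\left(F\right)}$, and the Geometric Riemann--Roch Theorem gives $\dim\left(\overline{\phi\left(D\right)}\right)=n-\ell\left(D\right)=n-1$ and $\dim\left(\overline{\phi\left(F\right)}\right)=\left(n+k\right)-\left(k+1\right)=n-1$, so the two linear spans coincide for dimension reasons. Replacing your fiberwise comparison by this two-line dimension count removes the only genuinely unfinished part of your argument; everything else is sound.
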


\begin{proof}
	Let $F\in L_\nc$, for a $L\in\mathcal{L}_{n+k}^k\left(C\right)$. We will prove that $\beta\left(F\right)\in\mathcal{G}\left(\mathcal{W}_n\right)$. By Proposition \ref{tg6} the linear system $L$ is of the form
	$$
	L=k\mathfrak{g}_2^1+P_1+\dots+P_{n-k},
	$$
	where no two of the $P_1,\dots,P_{n-k}$ are conjugate under the involution $\iota$. Let's write
	$$
	F=Q_1+\iota\left(Q_1\right)+\dots+Q_k+\iota\left(Q_k\right)+P_1+\dots+P_{n-k}.
	$$
	Since $F\in L_\nc$, we can assume that no two of the $Q_1,\dots,Q_k,P_1,\dots,P_{n-k}$ are conjugate under $\iota$. This implies that $Q_1+\dots+Q_k+P_1+\dots+P_{n-k}\in\mathcal{G}\left(\mathcal{W}_n\right)$. Then
	
	$$
	n-1=\dim\left(\overline{\phi\left(Q_1+\dots+Q_k+P_1+\dots+P_{n-k}\right)}\right)\leq\dim\left(\overline{\phi\left(F\right)}\right)=n-1,
	$$
	which implies
	\begin{equation*}
		\begin{split}
			\beta\left(F\right)=\overline{\phi\left(F\right)}&=\overline{\phi\left(Q_1+\dots+Q_k+P_1+\dots+P_{n-k}\right)}\\
			&=\mathcal{G}\left(Q_1+\dots+Q_k+P_1+\dots+P_{n-k}\right)\in\mathcal{G}\left(\mathcal{W}_n\right).
		\end{split}
	\end{equation*}
	
	This proves that $\beta\left(L_\nc\right)\subset\mathcal{G}\left(\mathcal{W}_n\right)$.
	
	To prove the other inclusion, let $W\in\mathcal{G}\left(\mathcal{W}_n\right)$, i.e., $W=\overline{\phi\left(D\right)}$ for some $D\in\mathcal{W}_n$. Let's write $D=p_1+\dots+p_n$. Since $D\in\mathcal{W}_n$, then no two of the $p_1,\dots,p_n$ are conjugate under $\iota$. Let $F:=p_1+\iota\left(p_1\right)+\dots+p_k+\iota\left(p_k\right)+p_{k+1}+\dots+p_n$, and let
	$$
	L:=|F|=k\mathfrak{g}_2^1+p_{k+1}+\dots+p_n\in\mathcal{L}_{n+k}^k\left(C\right).
	$$
	
	Note that $F\in L_\nc$. Moreover $W=\overline{\phi\left(D\right)}=\overline{\phi\left(F\right)}=\beta\left(F\right)\in\beta\left(L_\nc\right)$.
\end{proof}

The following example shows that the image of $\beta$ (without restricting the $L$ to $L_\nc$) can be larger than the image of the Gauss map.

\begin{ej}
	Let $k=2$, $n=4$, $g=6$ and $C\in\mathscr{M}_g$ hyperelliptic. We will show an example of a $W\in\beta\left(L\right)$ such that $W\notin\mathcal{G}\left(\mathcal{W}_n\right)$. Let $p\in C$ be a fixed point of $\iota$. Let $P_1,P_2\in C$ be such that they aren't conjugate under $\iota$. Then $L:=|4p|+P_1+P_2$ is a complete $\mathfrak{g}_6^2$. Let $F:=4p+P_1+P_2\in L$. Then $W:=\beta\left(F\right)=\overline{\phi\left(F\right)}\in\mathbb{G}\left(3.5\right)$, but $W\notin\mathcal{G}\left(\mathcal{W}_4\right)$, since $|2p|+P_1+P_2$ is a $\mathfrak{g}_4^1$.
\end{ej}

The following corollary characterizes the closure of the image of the Gauss map in terms of the image of $\beta$ (without restricting the $L$ to $L_\nc$).

\begin{cor}
	Let $C\in\mathscr{M}_g$ be hyperelliptic and $1\leq k\leq n\leq g-1$. We have that
	$$
	\overline{\mathcal{G}\left(\mathcal{W}_n\right)}=\overline{\bigcup_{L\in\mathcal{L}_{n+k}^k\left(C\right)}\beta\left(L\right)}.
	$$
\end{cor}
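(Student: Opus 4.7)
The plan is to derive this corollary directly from Proposition \ref{tg7} by passing to closures on both sides. Taking closures in the equality $\mathcal{G}\left(\mathcal{W}_n\right)=\bigcup_{L\in\mathcal{L}_{n+k}^k\left(C\right)}\beta\left(L_\nc\right)$ reduces the statement to the set-theoretic identity
$$
\overline{\bigcup_{L\in\mathcal{L}_{n+k}^k\left(C\right)}\beta\left(L_\nc\right)}=\overline{\bigcup_{L\in\mathcal{L}_{n+k}^k\left(C\right)}\beta\left(L\right)}.
$$

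For the inclusion $\subset$, I would simply observe that $\beta\left(L_\nc\right)\subset\beta\left(L\right)$ for every $L\in\mathcal{L}_{n+k}^k\left(C\right)$, so the inner union on the left sits inside the one on the right and the closures respect this containment. For the opposite inclusion, the key step is to show that, for each fixed $L\in\mathcal{L}_{n+k}^k\left(C\right)$, one has $\overline{\beta\left(L_\nc\right)}=\beta\left(L\right)$ in $\mathbb{G}\left(n-1,g-1\right)$. Since $L_\nc$ is dense in $L$ by the lemma preceding Proposition \ref{tg7} and $\beta|_L$ is continuous, continuity gives $\beta\left(L\right)=\beta\left(\overline{L_\nc}\right)\subset\overline{\beta\left(L_\nc\right)}$. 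Conversely, $L\simeq\mathbb{P}^k$ is projective and $\beta|_L$ is a morphism by Proposition \ref{tg11}, so $\beta\left(L\right)$ is closed in the Grassmannian, and hence $\overline{\beta\left(L_\nc\right)}\subset\beta\left(L\right)$. From this equality I get $\beta\left(L\right)\subset\overline{\bigcup_{L}\beta\left(L_\nc\right)}$ for every $L$; since the right-hand side is already closed, taking the union over all $L\in\mathcal{L}_{n+k}^k\left(C\right)$ and then the closure yields the remaining inclusion.

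The main (and essentially only) subtlety is the step that $\beta\left(L\right)$ is actually closed in $\mathbb{G}\left(n-1,g-1\right)$ rather than merely constructible; this rests on the projectivity of $L\simeq\mathbb{P}^k$ and on $\beta|_L$ being a genuine morphism, both already provided by Proposition \ref{tg11}. Beyond this point the argument is a formal manipulation of closures and unions, requiring no further geometric input.
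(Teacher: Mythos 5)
Your proposal is correct and follows essentially the same route as the paper: both reduce the statement to Proposition \ref{tg7} and then establish, for each $L$, the identity $\overline{\beta\left(L_\nc\right)}=\beta\left(L\right)$ using the density of $L_\nc$ in $L$, the continuity of $\beta|_L$, and the closedness of $\beta\left(L\right)$ as the image of the projective variety $L\simeq\mathbb{P}^k$ under a morphism. Your version merely spells out the two inclusions of that identity more explicitly than the paper's one-line chain of equalities.
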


\begin{proof}
	By Proposition \ref{tg7}
	$$
	\mathcal{G}\left(\mathcal{W}_n\right)=\bigcup_{L\in\mathcal{L}_{n+k}^k\left(C\right)}\beta\left(L_\nc\right)\subset\bigcup_{L\in\mathcal{L}_{n+k}^k\left(C\right)}\beta\left(L\right).
	$$
	
	Then, taking closure
	$$
	\overline{\mathcal{G}\left(\mathcal{W}_n\right)}\subset\overline{\bigcup_{L\in\mathcal{L}_{n+k}^k\left(C\right)}\beta\left(L\right)}
	$$
	
	On the other hand, for each $L\in\mathcal{L}_{n+k}^k\left(C\right)$ we have $\overline{\beta\left(L_\nc\right)}=\overline{\beta\left(\overline{L_\nc}\right)}=\overline{\beta\left(L\right)}=\beta\left(L\right)$. Then
	\begin{equation*}
		\begin{split}
			\overline{\mathcal{G}\left(\mathcal{W}_n\right)}=\overline{\bigcup_{L\in\mathcal{L}_{n+k}^k\left(C\right)}\beta\left(L_\nc\right)}\supset\bigcup_{L\in\mathcal{L}_{n+k}^k\left(C\right)}\overline{\beta\left(L_\nc\right)}=\bigcup_{L\in\mathcal{L}_{n+k}^k\left(C\right)}\beta\left(L\right),
		\end{split}
	\end{equation*}
	so, taking closure
	$$
	\overline{\mathcal{G}\left(\mathcal{W}_n\right)}\supset\overline{\bigcup_{L\in\mathcal{L}_{n+k}^k\left(C\right)}\beta\left(L\right)},
	$$
	which proves the other inclusion.
\end{proof}

The following result is an analogue of Theorem \ref{tg4}, but for the hyperelliptic case.

\begin{thm}\label{tg8}
	Let $C\in\mathscr{M}_g$ be hyperelliptic, $1\leq k\leq n\leq g-1$. Let $\mathcal{G}:\mathcal{W}_n\to\mathbb{G}\left(n-1,g-1\right)$ be the Gauss map on $\mathcal{W}_n$. Then the image $\mathcal{G}\left(\mathcal{W}_n\right)$ is a union of sets whose closures are projective rational varieties of dimension $k$. Moreover, given $L\in\mathcal{L}_{n+k}^k\left(C\right)$, consider the map $\phi_L:C\to\mathbb{P}^k$ associated to $L$. Then $\beta\left(L\right)\subset\overline{\mathcal{G}\left(\mathcal{W}_n\right)}$ contains a copy of the dual hypersurface $\phi_L\left(C \right)^*$.
\end{thm}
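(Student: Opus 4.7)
The plan is to derive the theorem from Proposition \ref{tg7}, Proposition \ref{tg11}, and the explicit form of hyperelliptic complete linear systems given by Proposition \ref{tg6}, adapting the dual-hypersurface construction used in the proof of Theorem \ref{tg4}.

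For the first assertion, I would start from the equality
$$
\mathcal{G}(\mathcal{W}_n)=\bigcup_{L\in\mathcal{L}_{n+k}^k(C)}\beta(L_{\nc})
$$
provided by Proposition \ref{tg7}. Since $L$ is an irreducible projective variety isomorphic to $\mathbb{P}^k$ and $L_{\nc}$ is dense in $L$ (because $L_{\red}\subset L_{\nc}$ and $L_{\red}$ is open dense in $L$ by Bertini's theorem), continuity of $\beta$ together with the closedness of $\beta(L)$ inside the Grassmannian yields $\overline{\beta(L_{\nc})}=\beta(L)$. By Proposition \ref{tg11}, the restriction $\beta|_L:L\to\beta(L)$ is a bijective regular morphism of irreducible projective varieties; working in characteristic zero, such a morphism is automatically birational (Zariski's Main Theorem together with the triviality of purely inseparable extensions in char $0$). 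Hence each closure $\overline{\beta(L_{\nc})}=\beta(L)$ is a projective variety of dimension $k$ birational to $\mathbb{P}^k$, which establishes the first part of the statement.

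For the second assertion, I would use Proposition \ref{tg6} to write $L=k\mathfrak{g}_2^1+B$ with $B=P_1+\dots+P_{n-k}$, no two $P_i$ being conjugate under $\iota$. The mobile part $L-B=k\mathfrak{g}_2^1$ is base point free, and the associated morphism $\phi_L:C\to\mathbb{P}^k$ factors as the hyperelliptic double cover followed by the $k$-uple Veronese embedding, so $\phi_L(C)$ is a rational normal curve of degree $k$. Every $E\in L$ decomposes uniquely as $E=\phi_L^*H+B$ for some hyperplane $H\subset\mathbb{P}^k$, giving an isomorphism of projective spaces $L\cong(\mathbb{P}^k)^*$. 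A direct computation unravelling the composition with the hyperelliptic involution shows that $\phi_L^*H$ is non-reduced iff $H$ is tangent to $\phi_L(C)$, i.e.\ iff $H\in\phi_L(C)^*$. Composing this identification with the bijection $\beta|_L$ from Proposition \ref{tg11}, the assignment
$$
\phi_L(C)^*\longrightarrow\beta(L\setminus L_{\red})\subset\beta(L)\subset\overline{\mathcal{G}(\mathcal{W}_n)},\qquad H\longmapsto\overline{\phi_L^*H+B}
$$
is a regular bijection, which realizes $\phi_L(C)^*$ as a copy inside $\beta(L)$.

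The main obstacle I anticipate is the verification that non-reducedness of $\phi_L^*H$ corresponds exactly to tangency of $H$ to the rational normal curve $\phi_L(C)$, in the presence of the hyperelliptic involution: a preimage point $p\in C$ can acquire multiplicity only from coincidences among the two preimages under $C\to\mathbb{P}^1$ of the $k$ points of $H\cap\phi_L(C)$, and one must confirm that the base locus $B$ plays no role here because $L_{\red}$ is defined via the mobile part only. A secondary but standard point is invoking characteristic zero to promote the bijection $\beta|_L$ to a birational equivalence so that the closures in question genuinely qualify as rational varieties.
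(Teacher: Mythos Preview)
Your proposal is correct and follows essentially the same approach as the paper: the first part is deduced from Proposition~\ref{tg7} via the density of $L_{\nc}$ in $L$ and the bijectivity of $\beta|_L$ from Proposition~\ref{tg11}, and the second part uses the explicit decomposition $L=k\mathfrak{g}_2^1+B$ from Proposition~\ref{tg6} to realize $\phi_L(C)^*$ as $\beta(L\setminus L_{\red})$ exactly as in the proof of Theorem~\ref{tg4}. Your additional remarks (the factorization of $\phi_L$ through the Veronese, the explicit appeal to characteristic zero for birationality) only make explicit what the paper leaves implicit.
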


\begin{proof}
	The first part follows from Proposition \ref{tg7} and from the fact that
	$$
	\overline{\beta\left(L_\nc\right)}=\overline{\beta\left(\overline{L_\nc}\right)}=\overline{\beta\left(L\right)}= \beta\left(L\right)
	$$
	for each $L\in\mathcal{L}_{n+k}^k\left(C\right)$. This proves the first part of the theorem.
	
	Now, let $L\in\mathcal{L}_{n+k}^k\left(C\right)$. We will prove that $\beta\left(L\right)$ contains a copy of the dual hypersurface $\phi_L\left(C\right)^*$. By Proposition \ref{tg6} this $L$ is of the form
	$$
	L=k\mathfrak{g}_2^1+P_1+\dots+P_{n-k},
	$$
	where no two of the $P_1,\dots,P_{n-k}\in C$ are conjugate under the involution $\iota$. Let's write down $B:=P_1+\dots+P_{n-k}$.
	
	Each $F\in k\mathfrak{g}_2^1$ can be expressed as $F=\phi_L^*H$ for a unique hyperplane $H$ in $\mathbb{P}^k$. Then
	$$
	k\mathfrak{g}_2^1=\{\phi_L^*H:H\text{ hyperplane in }\mathbb{P}^k\}.
	$$
	Thus
	\begin{equation*}
		\begin{split}
			\beta\left(L\right)&=\{\overline{\phi\left(F+B\right)}:F\in k\mathfrak{g}_2^1\}=\{\overline{\phi\left(\phi_L^*H+B\right)}:H\text{ hyperplane in }\mathbb{P}^k\}.
		\end{split}
	\end{equation*}
	
	Now let's consider
	\begin{equation*}
		\begin{split}
			\beta\left(L\setminus L_\red\right)&=\{\overline{\phi\left(F+B\right)}:F\in k\mathfrak{g}_2^1, \ F\text{ is not reduced}\}\\
			&=\{\overline{\phi\left(\phi_L^*H+B\right)}:H\text{ hyperplane in }\mathbb{P}^k\text{ tangent to }\phi_L\left(C \right)\}.
		\end{split}
	\end{equation*}
	
The map $\phi_L\left(C\right)^*\to\beta\left(L\setminus L_\red\right), \ H\mapsto\overline{\phi\left(\phi_L^*H+B\right)}$ is a regular bijection, so $\beta\left(L\right)\subset\overline{\mathcal{G}\left(\mathcal{W}_n\right)}$ contains a copy of the dual hypersurface $\phi_L\left(C \right)^*$, namely $\beta\left(L\setminus L_\red\right)$. 
\end{proof}

Using this theorem with $k=n$ we obtain that $\overline{\mathcal{G}\left(\mathcal{W}_n\right)}$ is a rational variety of dimension $n$.

\begin{cor}\label{tg14}
	Let $C\in\mathscr{M}_g$ be hyperelliptic and $1\leq n\leq g-1$. Let $L$ be the unique $\mathfrak{g}_{2n}^n$ in $C$. Then $\overline{\mathcal{G}\left(\mathcal{W}_n\right)}=\beta\left(L\right)$ is birational to $\mathbb{P}^n$.
\end{cor}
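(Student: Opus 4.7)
The plan is to recognize that this corollary is essentially the $k=n$ specialization of Theorem \ref{tg8} together with the uniqueness of the complete $\mathfrak{g}_{2n}^n$ on a hyperelliptic curve. First I would invoke Proposition \ref{tg6} with $k=n$: any complete $\mathfrak{g}_{2n}^n$ on $C$ has the form $n\mathfrak{g}_2^1 + P_1 + \dots + P_{n-k}$, but here $n-k=0$, so the only such system is $L = n\mathfrak{g}_2^1$. This confirms that $\mathcal{L}_{2n}^n(C)=\{L\}$.

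Next, I would apply Proposition \ref{tg7} with $k=n$, which gives
\[
\mathcal{G}\left(\mathcal{W}_n\right) \;=\; \bigcup_{L'\in\mathcal{L}_{2n}^n(C)} \beta\left(L'_{\nc}\right) \;=\; \beta\left(L_{\nc}\right).
\]
I would then pass to closures. Since $L_{\nc}$ is dense in $L$ (by the lemma stating $L_{\red}\subset L_{\nc}$ combined with Bertini's theorem), and $\beta|_L : L \to \beta(L)$ is a morphism of projective varieties (hence closed), we get
\[
\overline{\mathcal{G}\left(\mathcal{W}_n\right)} \;=\; \overline{\beta\left(L_{\nc}\right)} \;=\; \beta\left(\overline{L_{\nc}}\right) \;=\; \beta(L).
\]

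Finally, to show $\beta(L)$ is birational to $\mathbb{P}^n$, I would note that $L$ is a complete linear system of dimension $n$, hence $L\simeq\mathbb{P}^n$, and by Proposition \ref{tg11} the restriction $\beta|_L : L \to \beta(L)$ is a bijective morphism. In characteristic zero, a bijective morphism between irreducible varieties induces an isomorphism of function fields (the generic fiber has exactly one reduced point, forcing $[K(L):K(\beta(L))]=1$), so $\beta|_L$ is birational. Therefore $\beta(L)$ is birational to $\mathbb{P}^n$.

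The main obstacle I anticipate is the closure identity $\overline{\beta(L_\nc)}=\beta(L)$: one has to verify that density of $L_\nc$ in $L$ is preserved under the morphism $\beta$, which requires using the projectivity of $L$ (so that $\beta|_L$ is closed). Everything else is bookkeeping using previously established results.
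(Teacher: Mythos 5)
Your proposal is correct and follows essentially the same route as the paper, which obtains the corollary by specializing Theorem \ref{tg8} to $k=n$; you simply unpack the ingredients of that theorem's proof explicitly (uniqueness of the complete $\mathfrak{g}_{2n}^n$ via Proposition \ref{tg6}, the identity $\mathcal{G}\left(\mathcal{W}_n\right)=\beta\left(L_\nc\right)$ from Proposition \ref{tg7}, the closure computation $\overline{\beta\left(L_\nc\right)}=\beta\left(L\right)$, and birationality from the bijectivity of $\beta|_L$ in Proposition \ref{tg11} together with characteristic zero). No gaps.
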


\section{Acknowledgment}

I would like to thank Dr Robert Auffarth for carefully reading
the preliminary version of this paper and several useful comments. I was supported by ANID
Doctorado Nacional 21201016.

\bibliographystyle{alpha}

\begin{thebibliography}{A}
	
	
	
\bibitem{A} A. Andreotti, \textit{On a theorem of Torelli}, Amer. J. of Math. 80 (1958) 801--828.

\bibitem{ACGH} E. Arbarello, M. Cornalba, P. Griffiths and J. Harris, \textit{Geometry of Algebraic Curves}, I, Grundl. der Math. Wiss.,
267, Springer-Verlag, New York, 1985.

\bibitem{BL} C. Birkenhake and H. Lange, \textit{Complex abelian varieties}, Grundl. der Math. Wiss., Springer, 302, second edition, 2010.

\bibitem{BO} A. Borel, \textit{Linear Algebraic Groups}, second edition, Graduate Texts in Math. 126, Springer, New York, 1991.

\bibitem{BR} D. Burns and M. Rapoport, \textit{On the Torelli problem for Kahlerian
K3-surfaces}, Ann. Sci. Éc. Norm. Supér. (4), 8 (1975) 235--274.

\bibitem{CG} J. Carlson and P. Griffiths, \textit{Infinitesimal variations of Hodge structure and the global Torelli problem}, Journées de Géométrie Algébriques d'Angers,
Sijthoff-Noordhoff, (1980) 51-76.

\bibitem{C} F. Catanese, \textit{Infinitesimal Torelli theorems and counterexamples to Torelli problems}, Topics in transcendental algebraic geometry (Princeton, N.J., 1981/1982), Ann. of
Math. Stud., vol. 106, Princeton Univ. Press, Princeton, NJ, 1984, pp. 143--156.

\bibitem{CS} G. Cornell and J. Silverman, \textit{Arithmetic Geometry}, Springer, New York, 1986.

\bibitem{D} A. Dhillon, \textit{A generalized Torelli theorem}, Canad. J. Math. Vol 55, 2 (2003) 248--265.

\bibitem{DO} R. Donagi, \textit{Generic Torelli for projective hypersurfaces}, Compos. Math. 50
(1983) 325--353.

\bibitem{EH} D. Eisebund and J. Harris, \textit{Divisors on general curves and cuspidal rational curves}, Invent. Math. 74 (1983) 371--418.

\bibitem{F} W. M. Faucette, \textit{The Generalized Torelli Problem: Reconstructing a Curve and its Linear Series From its Canonical Map and Theta Geometry}, University of West Georgia (2002).

\bibitem{FS} P. Feller and I. van Santen, \textit{Existence of embeddings of smooth varieties into linear algebraic groups}, J. Alg. Geom. 4 (2022) 729--786.

\bibitem{GH} P. Griffiths and J. Harris, \textit{Principles of Algebraic Geometry}, Wiley Classics
Library, Wiley, 2011.

\bibitem{H} R. Hartshorne, \textit{Algebraic Geometry}, Graduate Text in Mathematics 52, Springer-Verlag, New York, Heidelberg, Berlin, 1977.

\bibitem{RH} R. Hidalgo, \textit{A Kleinian group version of Torelli's Theorem}, Mathematical Sciences, Vol. 21 (2011) 107--115.

\bibitem{K} G. Kempf, \textit{On the geometry of a theorem of Riemann}, Ann. of Math. (2) 98 (1973) 178--185.

\bibitem{L} K. Lauter, \textit{Geometric methods for improving the upper bounds on the number of
rational points on algebraic curves over finite fields}, J. Alg. Geom., vol. 10,
no. 1, (2001) 19--36.

\bibitem{PZ}  G. Pearlstein and Z. Zhang, \textit{A generic global Torelli theorem for certain Horikawa surfaces}, Algebr.
Geom. 6 (2) (2019) 132--147.

\bibitem{SH} I. R. Shafarevich, \textit{Basic Algebraic Geometry 1}, Springer-Verlag, second edition, 1994.

\bibitem{STR1} R. Smith and H. Tapia-Recillas, \textit{The Connection Between Linear Series on Curves and Gauss Maps on Subvarieties of their Jacobians}, Proceedings of the Lefschetz Centennial
Conference, Contemporary Math, Amer. Math. Soc. 58 (1986) 225--238.

\bibitem{STR2} R. Smith and H. Tapia-Recillas, \textit{The Gauss map on subvarieties of Jacobians of curves with $\mathfrak{g}_d^2$'s}, Algebraic Geometry and Complex Analysis, Springer, (1989) 169--180.

\bibitem{S} F. Steffen, \textit{A generalized principal ideal theorem with an application to Brill-Noether theory}, Invent. Math. 132(1), (1998) 73--89.

\bibitem{MT} Montserrat Teixidor i Bigas, \textit{Brill-Noether loci}, Preprint, arXiv:2308.10581, 2023.

\bibitem{T} R. Torelli, \textit{Sulle varietà di Jacobi}, Atti Accad. Naz. Lincei Rend. Lincei Mat. Appl., 22 (1913) 98-103.

\end{thebibliography}

\end{document}